\DeclareMathOperator{\spec}{spec}
\DeclareMathOperator{\supp}{supp}
\DeclareMathOperator{\R}{\mathbb{R}}
\DeclareMathOperator{\N}{\mathbb{N}}
\DeclareMathOperator{\dS}{dS}
\DeclareMathOperator{\cotan}{cotan}
\newcommand{\pz}{\partial_{\bar{z}}}
\newcommand{\specd}{\spec_\mathrm{disc}}
\newcommand{\spece}{\spec_\mathrm{ess}}
\renewcommand{\Tilde}{\widetilde}
\def \rr {{\mathbb R}}
\def \nn {{\mathbb N}}
\numberwithin{equation}{section}
\numberwithin{figure}{section}
\theoremstyle{definition}
\newtheorem{definition}{Definition}[section]
\newtheorem{remark}[definition]{Remark}
\newtheorem{notation}[definition]{Notation}
\theoremstyle{plain}
\newtheorem{theorem}[definition]{Theorem}
\newtheorem{proposition}[definition]{Proposition}
\newtheorem{lemma}[definition]{Lemma}
\newtheorem{corollary}[definition]{Corollary}
\title[Strong $\delta$-interactions]{Eigenvalue asymptotics for strong $\delta$-interactions\\
	supported on curves with corners}
\author[B. Benhellal]{Badreddine Benhellal}
\address{Technische Universit\"at Hamburg, Institut f\"ur Mathematik, 
	Am Schwarzenberg-Campus 3, 21073 Hamburg, Germany \& National Higher School of Mathematics, Scientific and Technology Hub of Sidi Abdellah, P.O. Box 75, Algiers, 16093, Algeria}
\email{badreddine.benhellal@tuhh.de and badreddine.benhellal@nhsm.edu.dz}
\author[N. K\"orner]{Noah K\"orner}
\address{Carl von Ossietzky Universit\"at Oldenburg,
	 Fakult\"at V -- Mathematik und Naturwissenschaften, Institut f\"ur Mathematik, 26111 Oldenburg, Germany}
    \email{noah.koerner@uol.de}
   \author[K. Pankrashkin]{Konstantin Pankrashkin}
\address{Carl von Ossietzky Universit\"at Oldenburg,
	Fakult\"at V -- Mathematik und Naturwissenschaften, Institut f\"r Mathematik, 26111 Oldenburg, Germany}
\email{konstantin.pankrashkin@uol.de}
\subjclass[2020]{Primary:  35J50, Secondary: 35P20, 47A75}
\keywords{Schr\"odinger operator, singular potential, Eigenvalue asymptotics,
Curve with corners, Effective operator}
\begin{document}

\begin{abstract} Let $\Gamma\subset\mathbb{R}^2$ be a piecewise smooth closed curve with corners. We discuss the asymptotic behavior of the individual eigenvalues of the two-dimensional Schr\"odinger operator $-\Delta-\alpha\delta_\Gamma$ for $\alpha\to\infty$, where $\delta_\Gamma$ is the Dirac $\delta$-distribution supported by $\Gamma$. It is shown that the asymptotics of several first eigenvalues is determined by the corner opening only, while the main term in the asymptotic expansion for the other eigenvalues is the same as for smooth curves. Under an additional assumption on the corners of $\Gamma$ (which is satisfied, in particular, if $\Gamma$ has no acute corners), a more detailed eigenvalue asymptotics is established in terms of a one-dimensional effective operator on the boundary.
\end{abstract}
	
	\maketitle	
	

\section{Introduction}

\subsection{Motivation} The present work is devoted to the spectral analysis of Schr\"odinger operators with attractive $\delta$-potentials. If $\Sigma\subset\R^n$ is a hypersurface (suitably regular, e.g., Lipschitz, either compact or with an appropriate behavior near boundary or at infinity), the operators of such a type are often formally written as $H_\alpha^\Sigma:=-\Delta-\alpha\delta_\Sigma$, where $-\Delta$ is the usual Laplacian, $\alpha>0$ is a parameter (interpreted as the coupling constant) and $\delta_\Sigma$ is the Dirac $\delta$-distribution supported on $\Sigma$. More rigorously, one defines $H^\Sigma_\alpha$ as the unique self-adjoint operator in $L^2(\R^n)$  generated by the Hermitian sesquilinear form
\[
h^\Sigma_\alpha(u,u):=\int_{\R^n}|\nabla u|^2\mathrm{d}x-\alpha\int_\Sigma |u|^2\dS,
\quad
D(h^\Sigma_\alpha):=H^1(\rr^n),
\]
which is closed and lower semibounded \cite{BEKS} (under suitable geometric assumptions on $\Sigma$). From the physics point of view, such  operators $H^\Sigma_\alpha$ represent an important class of solvable quantum-mechanical models \cite{AGHH}
being the limits of the usual Schr\"odinger operators $-\Delta+V_{\alpha,\Sigma}$ with attractive regular potentials $V_{\alpha,\Sigma}$ strongly localized near $\Sigma$, see e.g. \cite{BEHL} for a rigorous formulation.

It is of natural interest to study the dependence of the spectral and scattering properties of $H^\Sigma_\alpha$ on the interaction support $\Sigma$, which gave
rise to interesting developments, see e.g. the reviews in \cite{E08,galk}. One of the particularly relevant regimes is the case $\alpha\to\infty$ corresponding to strongly attractive $\Sigma$. Elementary considerations show a strong localization of the eigenfunctions of $H^\Sigma_\alpha$ near $\Sigma$, which leads to the standard expectation
that an effective operator on $\Sigma$ may play a key role in the spectral analysis. The first result in this direction was obtained in \cite{EY02} for the case when $n=2$ and $\Sigma$ is a smooth loop: for each $j\in\N$ the $j$-th eigenvalue $E_j$ (if counted in the non-decreasing order with multiplicities taken into account) satisfies
\[
E_j(H^\Gamma_\alpha) = - \frac{\alpha^2}{4} + E_j(T) + \mathcal{O}\Big(\frac{\log \alpha}{\alpha}\Big) \quad \text{ as } \alpha \to \infty,
\]
where $T$ is the effective Schr\"odinger operator in $L^2(\Gamma)$
given by
\[
T:=-\partial^2-\dfrac{k^2}{4},
\]
with $\partial$ being the derivative with respect to the arc-length and $k$ the curvature on $\Gamma$. The analysis of \cite{EY02} used in an essential way both the smoothness and the closedness of $\Sigma$. The later paper \cite{EPan} extended the above result to the case of smooth curves $\Sigma$ with (regular) endpoints: in that case, one has
\[
E_j(H^\Gamma_\alpha) = - \frac{\alpha^2}{4} + E_j(T^D) + \mathcal{O}\Big(\frac{\log \alpha}{\alpha}\Big) \quad \text{ as } \alpha \to \infty,
\]
where $T^D$ is the one-dimensional operator in $L^2(\Gamma)$ given by the same expression as above but with Dirichlet boundary conditions imposed at the endpoints of $\Gamma$. The work \cite{FP20} studied the case of curves with peaks, for which both the eigenvalue asymptotics and the nature of the effective operator turn out to be completely different. Some results are available for the case of smooth $\Sigma$ in higher dimensions, see e.g., \cite{DEKP,EK03}, and several works analyzed very specific non-smooth and non-compact $\Sigma$ related to conical geometries \cite{bel1,LOB,OBP,OBPP}, however, we are not aware of any extension of the asymptotics of individual eigenvalues to curves $\Sigma$ with corners. (However, we mention the very recent work \cite{BCP} showing that the asymptotic of the first eigenvalue is generally different from that in the smooth case.) In fact, the absence of such an extension has been mentioned as one of the principal gaps in the study of $\delta$-potentials for a long time; see, e.g., the review~\cite{E08}. The purpose of the present work is to fill this gap at least partially by considering curves with corners subject to some restrictions on the corner openings.
From the methodological point of view, we are inspired
by the paper \cite{KOBP20} considering Robin Laplacians in a domain with corners, and it was our secondary goal to illustrate the robustness of that machinery.

\subsection{Main results}
We are studying the operators $H^\Gamma_\alpha$ defined as above in two-dimensions ($n=2$)
for injective, closed, piecewise $C^3$-smooth curves $\Gamma$.
More precisely, we assume that one may decompose $\Gamma = \Gamma_1 \cup \dots \cup \Gamma_M $,
where $\Gamma_j $ are $C^3$-smooth regular curves with regular ends such that $\Gamma_{j-1}$ and $\Gamma_j$ meet at the corners $A_j$ at an angle $2 \theta_j$ with $\theta_j \in (0, \frac{\pi}{2}) \cup (\frac{\pi}{2}, \pi)$. This means that each $\Gamma_j$ admits an arc-length parametrization
\[
\gamma_j:[0,l_j]\to\R^2,
\]
where $l_j$ is the length of $\Gamma_j$ and $\gamma_j$ is $C^3$-smooth with $|\gamma_j'(s)|=1$ for all $s$, with 
\[
\gamma_{j-1}(l_{j-1})=\gamma_j(0)=:A_j
\]
(under the convention that $\gamma_0:=\gamma_M$), such that $s\mapsto \gamma_j(s)$ corresponds to the anti-clockwise direction along $\Gamma$ and that for each $j$ the vector $\gamma'_j(0)$ is obtained by the anti-clockwise direction of the vector $-\gamma'_{j-1}(l_{j-1})$ by the angle $2\theta_j$, and that the points $A_j$
are the only intersection points of the curves $\Gamma_j$.  A visualization can be found in Figure \ref{fig: lmao get}. The curvature of $\Gamma_j$ at the point $\gamma_j(s)$ is
\[
k_j(s):=\det\big(\gamma'_j(s),\gamma''_j(s)\big),
\]
and $k_j$ is $C^1$-smooth on $[0,l_j]$ due to the above assumptions. Standard considerations \cite{BEKS} show that $\spece H^\Gamma_\alpha=[0,\infty)$, and we are interested in the discrete eigenvalues.

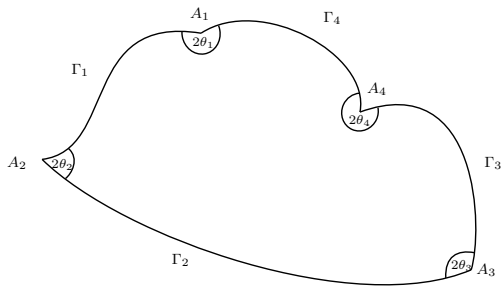
\begin{figure}
  \begin{center}  
\tikzset{every picture/.style={line width=0.75pt}} 
\scalebox{0.7}{
\begin{tikzpicture}[x=0.85pt,y=0.85pt,yscale=-1,xscale=1]

\draw [line width=0.75]    (90,160) .. controls (137.4,154.2) and (113.8,67.8) .. (190,80) ;
\draw [line width=0.75]    (90,160) .. controls (141.4,210.6) and (288.2,261) .. (360,230) ;
\draw [line width=0.75]    (190,80) .. controls (227.4,55.4) and (297,93.4) .. (290,130) ;
\draw [line width=0.75]    (290,130) .. controls (365.8,102.2) and (367.4,205.8) .. (360,230) ;
\draw    (106.67,153.17) .. controls (113.67,159.5) and (108.67,169.17) .. (104.67,172.17) ;
\draw    (344,235.17) .. controls (343.33,221.5) and (351.33,217.17) .. (362,219.17) ;
\draw  [draw opacity=0] (201.07,75.28) .. controls (201.4,76.16) and (201.65,77.08) .. (201.81,78.05) .. controls (203.01,85.29) and (198.68,92.03) .. (192.16,93.11) .. controls (185.64,94.18) and (179.38,89.19) .. (178.19,81.95) .. controls (178.03,80.97) and (177.97,80.01) .. (178,79.06) -- (190,80) -- cycle ; \draw   (201.07,75.28) .. controls (201.4,76.16) and (201.65,77.08) .. (201.81,78.05) .. controls (203.01,85.29) and (198.68,92.03) .. (192.16,93.11) .. controls (185.64,94.18) and (179.38,89.19) .. (178.19,81.95) .. controls (178.03,80.97) and (177.97,80.01) .. (178,79.06) ;  
\draw  [draw opacity=0] (301.32,127.26) .. controls (301.76,129.64) and (301.5,132.2) .. (300.42,134.62) .. controls (297.71,140.73) and (290.85,143.62) .. (285.09,141.06) .. controls (279.34,138.51) and (276.87,131.49) .. (279.58,125.38) .. controls (281.49,121.07) and (285.46,118.37) .. (289.65,118.04) -- (290,130) -- cycle ; \draw   (301.32,127.26) .. controls (301.76,129.64) and (301.5,132.2) .. (300.42,134.62) .. controls (297.71,140.73) and (290.85,143.62) .. (285.09,141.06) .. controls (279.34,138.51) and (276.87,131.49) .. (279.58,125.38) .. controls (281.49,121.07) and (285.46,118.37) .. (289.65,118.04) ;  

\draw (181.93,62.67) node [anchor=north west][inner sep=0.75pt]  [font=\scriptsize,color={rgb, 255:red, 0; green, 0; blue, 0 }  ,opacity=1 ] [align=left] {$\displaystyle A_{1}$};
\draw (66.67,157) node [anchor=north west][inner sep=0.75pt]  [font=\scriptsize,color={rgb, 255:red, 0; green, 0; blue, 0 }  ,opacity=1 ] [align=left] {$\displaystyle A_{2}$};
\draw (362.07,225.13) node [anchor=north west][inner sep=0.75pt]  [font=\scriptsize,color={rgb, 255:red, 0; green, 0; blue, 0 }  ,opacity=1 ] [align=left] {$\displaystyle A_{3}$};
\draw (293.2,110.27) node [anchor=north west][inner sep=0.75pt]  [font=\scriptsize,color={rgb, 255:red, 0; green, 0; blue, 0 }  ,opacity=1 ] [align=left] {$\displaystyle A_{4}$};
\draw (107.07,98.27) node [anchor=north west][inner sep=0.75pt]  [font=\scriptsize,color={black}  ,opacity=1 ] [align=left] {$\displaystyle \Gamma _{1}$};
\draw (183.33,81) node [anchor=north west][inner sep=0.75pt]  [font=\tiny] [align=left] {$\displaystyle 2\theta_{1}$};
\draw (94.67,158.33) node [anchor=north west][inner sep=0.75pt]  [font=\tiny] [align=left] {$\displaystyle 2\theta_{2}$};
\draw (346,222.33) node [anchor=north west][inner sep=0.75pt]  [font=\tiny] [align=left] {$\displaystyle 2\theta_{3}$};
\draw (281.67,130.67) node [anchor=north west][inner sep=0.75pt]  [font=\tiny] [align=left] {$\displaystyle 2\theta_{4}$};
\draw (367.07,157.27) node [anchor=north west][inner sep=0.75pt]  [font=\scriptsize,color={black}  ,opacity=1 ] [align=left] {$\displaystyle \Gamma_{3}$};
\draw (170.4,218.6) node [anchor=north west][inner sep=0.75pt]  [font=\scriptsize,color={black}  ,opacity=1 ] [align=left] {$\displaystyle \Gamma_{2}$};
\draw (265.73,64.93) node [anchor=north west][inner sep=0.75pt]  [font=\scriptsize,color={black}  ,opacity=1 ] [align=left] {$\displaystyle \Gamma_{4}$};

\end{tikzpicture}
}
  \end{center}
  \caption{A curve with corners}
  \label{fig: lmao get}
\end{figure}

Our first result is that the asymptotics of the first few eigenvalues is only dependent on the angles $\theta_j$. For the rigorous formulation, let us consider the operator $H^\alpha_\theta:=H^{\Gamma_\theta}_\alpha$, where $\Gamma_\theta$ is an infinite corner of angle $2\theta \in (0,2\pi)$ as shown on Figure \ref{fig: angle for introduction}. As discussed in Section~\ref{sec: angle}, for any $\alpha>0$ the essential spectrum of $H^\alpha_\theta$ is $[-\frac{\alpha^2}{4},\infty)$ and its discrete spectrum is finite, so we denote
\[
\kappa(\theta) :=  \text{the number of discrete eigenvalues of } H^\alpha_\theta,
\]
which is independent of $\alpha$ due to the obvious unitary equivalence $H^\alpha_\theta\simeq \alpha^2 H^1_\theta$. This allows to define the following quantities associated with the curve $\Gamma$:
\begin{align*}
	\mathcal{K} &:= \kappa(\theta_1) + \dots + \kappa(\theta_M),\\
	\mathcal{E} &:= \text{the disjoint union of the discrete eigenvalues of } H^1_{\theta_j}, \, j = 1,\dots,M,\\
	\mathcal{E}_j &:= \text{the $j$-th element of } \mathcal{E} \text{ if numbered in non-decreasing order.}
\end{align*}

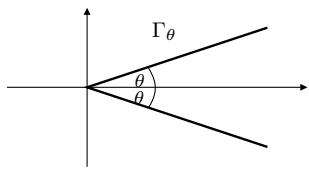
\begin{figure}
		\begin{tikzpicture}[x=0.75pt,y=0.75pt,yscale=-1,xscale=1]
			
			\draw [color={black}  ,draw opacity=1 ] [line width=0.90]  (210,110) -- (120,140) -- (210,170) ;
			\draw    (120,180) -- (120,103) ;
			\draw [shift={(120,100)}, rotate = 90] [fill={rgb, 255:red, 0; green, 0; blue, 0 }  ][line width=0.08]  [draw opacity=0] (3.57,-1.72) -- (0,0) -- (3.57,1.72) -- cycle    ;
			\draw    (80,140) -- (227,140) ;
			\draw [shift={(230,140)}, rotate = 180] [fill={rgb, 255:red, 0; green, 0; blue, 0 }  ][line width=0.08]  [draw opacity=0] (3.57,-1.72) -- (0,0) -- (3.57,1.72) -- cycle    ;
			\draw    (150.67,130.33) .. controls (155.67,138.5) and (154.67,143.5) .. (150.67,150.33) ;
			
			\draw (142,140.67) node [anchor=north west][inner sep=0.75pt]  [font=\tiny] [align=left] {$\displaystyle \theta $};
			\draw (142.33,132) node [anchor=north west][inner sep=0.75pt]  [font=\tiny] [align=left] {$\displaystyle \theta $};
			\draw (151,106) node [anchor=north west][inner sep=0.75pt]  [font=\scriptsize,color={black}  ,opacity=1 ] [align=left] {$\displaystyle \Gamma_{\theta }$};

		\end{tikzpicture} \caption{The infinite angle $\Gamma_\theta$ } 
		\label{fig: angle for introduction}
	\end{figure}

Our first result on the eigenvalue asymptotics is as follows:

\begin{theorem}\label{theo: corner induced} Let $n\in\N$, then there exists $\alpha_0 >0$ such that for all $\alpha \geq \alpha_0$
	the operator $H^\Gamma_\alpha$ has at least $\mathcal{K}+n$ discrete eigenvalues, and
	for $\alpha\to\infty$ one has:
	\begin{gather*}
		E_j(H^\Gamma_\alpha) = \mathcal{E}_j \alpha^2  + \mathcal{O}(\alpha^{\frac{4}{3}}) \text{ for } j=1,\dots,\mathcal{K},\qquad
		E_{j}(H^\Gamma_\alpha)=-\dfrac{\alpha^2}{4}+o(\alpha^2) \text{ for } j\ge\mathcal{K}+1.
	\end{gather*}	
\end{theorem}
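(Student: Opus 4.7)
The plan is to combine the min-max principle with a localization near the corners of $\Gamma$, in the spirit of \cite{KOBP20}. Introduce a corner scale $\rho_\alpha$ (to be chosen), cut out balls $B(A_j,\rho_\alpha)$ around each corner, compare the form inside each ball with the form of the exact infinite angle $H^\alpha_{\theta_j}$, and estimate the bulk part by the smooth-curve result of \cite{EY02,EPan}. The final choice $\rho_\alpha=\alpha^{-2/3}$ balances the two main error sources and produces the stated remainder $\mathcal{O}(\alpha^{4/3})$.

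\emph{Upper bound.} For each corner $A_j$ and each $k=1,\dots,\kappa(\theta_j)$, let $\psi_{j,k}$ be the $k$-th eigenfunction of $H^1_{\theta_j}$ with eigenvalue $e_{j,k}$, and set
\[
u_{j,k}(x):=\chi_j(x)\,\alpha\,\psi_{j,k}\bigl(R_j\,\alpha\,(x-A_j)\bigr),
\]
with $R_j$ a rigid motion sending $\Gamma_{\theta_j}$ onto the tangent cone of $\Gamma$ at $A_j$ and $\chi_j$ a smooth cutoff supported in $B(A_j,\rho_\alpha)$. The exponential decay of $\psi_{j,k}$ (a discrete eigenvalue below the essential-spectrum threshold $-1/4$) makes the truncation error exponentially small, while the $C^3$-smoothness of $\Gamma$ and a straightening diffeomorphism applied inside $B(A_j,\rho_\alpha)$ give
\[
h^\Gamma_\alpha(u_{j,k},u_{j,k})=\alpha^2 e_{j,k}\|u_{j,k}\|^2+\mathcal{O}(\alpha^{4/3})\|u_{j,k}\|^2.
\]
The extra $n$ trial functions are produced on the smooth subarcs via the construction of \cite{EY02,EPan} using low-lying eigenfunctions of the effective operators $-\partial^2-k_j^2/4$ on $L^2(\Gamma_j)$ with Dirichlet data at the endpoints, yielding form values $-\alpha^2/4+\mathcal{O}(1)=-\alpha^2/4+o(\alpha^2)$. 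All these trial functions can be arranged to have essentially disjoint supports (across corners, and between corner and bulk pieces), so min-max delivers the advertised upper bounds and, in particular, at least $\mathcal{K}+n$ negative eigenvalues below $0=\inf\spece H^\Gamma_\alpha$.

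\emph{Lower bound.} Take a quadratic partition of unity $\phi_0^2+\sum_{j=1}^M\phi_j^2\equiv 1$ with $\phi_j$ supported in $B(A_j,2\rho_\alpha)$ and $|\nabla\phi_j|\le C/\rho_\alpha$, so that the IMS identity
\[
h^\Gamma_\alpha(u,u)=\sum_{j=0}^M h^\Gamma_\alpha(\phi_j u,\phi_j u)-\sum_{j=0}^M\int_{\R^2}|\nabla\phi_j|^2|u|^2\,dx
\]
holds with localization error $\mathcal{O}(\rho_\alpha^{-2})\|u\|^2=\mathcal{O}(\alpha^{4/3})\|u\|^2$. For each corner piece ($j\ge 1$), a straightening diffeomorphism $\Phi_j$ on $B(A_j,2\rho_\alpha)$ with $\|D\Phi_j-I\|_\infty=\mathcal{O}(\rho_\alpha)$ transforms $h^\Gamma_\alpha(\phi_j u,\phi_j u)$ into the cone form $h^{\Gamma_{\theta_j}}_\alpha(\phi_j u\circ\Phi_j^{-1},\phi_j u\circ\Phi_j^{-1})$ up to a multiplicative factor $1+\mathcal{O}(\rho_\alpha)$, producing an absolute error $\mathcal{O}(\alpha^2\rho_\alpha)=\mathcal{O}(\alpha^{4/3})$; the min-max characterization of the eigenvalues of $H^\alpha_{\theta_j}$ then supplies the corner-type eigenvalue lower bounds. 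For the bulk piece, $\Gamma$ is smooth on $\mathrm{supp}\,\phi_0$, and an adaptation of \cite{EY02,EPan} with Dirichlet conditions at the endpoint cutouts gives $h^\Gamma_\alpha(\phi_0 u,\phi_0 u)\ge(-\alpha^2/4-o(\alpha^2))\|\phi_0 u\|^2$. Reassembling via min-max on the direct sum of the cone and bulk models produces the matching lower bounds.

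\emph{Main obstacle.} The central technical ingredient is the form comparison $h^\Gamma_\alpha(v,v)=(1+\mathcal{O}(\rho_\alpha))\,h^{\Gamma_{\theta_j}}_\alpha(v\circ\Phi_j^{-1},v\circ\Phi_j^{-1})$ uniformly in arbitrary $v\in H^1(\R^2)$ supported in $B(A_j,2\rho_\alpha)$: the coupling $\alpha$ combines with the curvature-driven correction of size $\rho_\alpha$ in the boundary integral, and the Dirichlet energy must be absorbed via a trace inequality applied to $v$ itself without circularity (the bootstrap $\|\nabla v\|^2\le 2h^\Gamma_\alpha(v,v)+C\alpha^2\|v\|^2$ is the standard trick). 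The balance $\alpha^2\rho_\alpha\sim\rho_\alpha^{-2}$ then fixes $\rho_\alpha=\alpha^{-2/3}$ and yields the final $\mathcal{O}(\alpha^{4/3})$ rate.
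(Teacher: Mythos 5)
Your proposal is correct and follows essentially the same strategy as the paper: localize at scale $\delta=\rho_\alpha\sim\alpha^{-2/3}$ around the corners, straighten each corner neighborhood by a diffeomorphism with $D\Phi=I+\mathcal{O}(|x|)$ (absorbing the resulting $1+\mathcal{O}(\delta)$ factors into a modified coupling constant, which costs $\mathcal{O}(\alpha^2\delta)$ in the eigenvalues), compare with the infinite sector $H^\alpha_{\theta_j}$ using the Agmon decay of its eigenfunctions, treat the bulk by the tubular-coordinate analysis of the smooth case, and balance $\alpha^2\delta\sim\delta^{-2}$. The only differences from the paper are cosmetic (balls and a global IMS partition versus curved kites and Dirichlet--Neumann bracketing, with the IMS step pushed inside the kite in the paper's version), and they do not affect the argument or the $\mathcal{O}(\alpha^{4/3})$ rate.
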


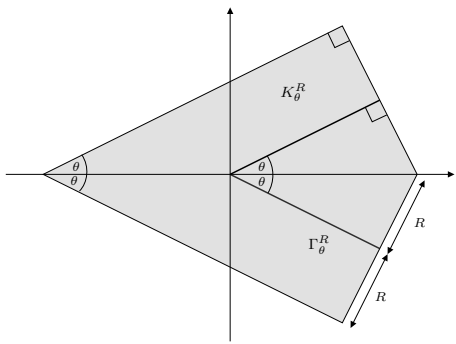
\begin{figure}[b]
\centering
\scalebox{0.7}{
\begin{tikzpicture}[x=1pt,y=1pt,yscale=-1,xscale=1]

\draw [color={black}  ,draw opacity=1 ]  [line width=0.80] (260,140) -- (340,180) ;
\draw  [fill={rgb, 255:red, 155; green, 155; blue, 155 }  ,fill opacity=0.3 ] (320,60) -- (360,140) -- (320,220) -- (160,140) -- cycle ;
\draw    (260,230) -- (260,53) ;
\draw [shift={(260,50)}, rotate = 90] [fill={rgb, 255:red, 0; green, 0; blue, 0 }  ][line width=0.08]  [draw opacity=0] (3.57,-1.72) -- (0,0) -- (3.57,1.72) -- cycle    ;
\draw    (140,140) -- (377,140) ;
\draw [shift={(380,140)}, rotate = 180] [fill={rgb, 255:red, 0; green, 0; blue, 0 }  ][line width=0.08]  [draw opacity=0] (3.57,-1.72) -- (0,0) -- (3.57,1.72) -- cycle    ;
\draw   (323.77,67.98) -- (315.97,71.67) -- (312.2,63.69) ;
\draw   (343.77,107.98) -- (335.97,111.67) -- (332.2,103.69) ;
\draw    (180.6,129.4) .. controls (185,135.8) and (183.8,144.6) .. (179.4,149) ;
\draw    (279.8,130.07) .. controls (283.8,136.07) and (283.8,144.07) .. (280.2,150.07) ;
\draw    (342.86,185.69) -- (325.74,220.11) ;
\draw [shift={(324.4,222.8)}, rotate = 296.45] [fill={rgb, 255:red, 0; green, 0; blue, 0 }  ][line width=0.08]  [draw opacity=0] (3.57,-1.72) -- (0,0) -- (3.57,1.72) -- cycle    ;
\draw [shift={(344.2,183)}, rotate = 116.45] [fill={rgb, 255:red, 0; green, 0; blue, 0 }  ][line width=0.08]  [draw opacity=0] (3.57,-1.72) -- (0,0) -- (3.57,1.72) -- cycle    ;
\draw    (362.66,145.89) -- (345.54,180.31) ;
\draw [shift={(344.2,183)}, rotate = 296.45] [fill={rgb, 255:red, 0; green, 0; blue, 0 }  ][line width=0.08]  [draw opacity=0] (3.57,-1.72) -- (0,0) -- (3.57,1.72) -- cycle    ;
\draw [shift={(364,143.2)}, rotate = 116.45] [fill={rgb, 255:red, 0; green, 0; blue, 0 }  ][line width=0.08]  [draw opacity=0] (3.57,-1.72) -- (0,0) -- (3.57,1.72) -- cycle    ;
\draw [line width=0.80]   (260,140) -- (340,100) ;

\draw (336.4,202.6) node [anchor=north west][inner sep=0.75pt]  [font=\tiny] [align=left] {$\displaystyle R$};
\draw (357.3,162.5) node [anchor=north west][inner sep=0.75pt]  [font=\tiny] [align=left] {$\displaystyle R$};
\draw (174.8,132.4) node [anchor=north west][inner sep=0.75pt]  [font=\tiny] [align=left] {$\displaystyle \theta $};
\draw (173.6,140) node [anchor=north west][inner sep=0.75pt]  [font=\tiny] [align=left] {$\displaystyle \theta $};
\draw (274.07,140.67) node [anchor=north west][inner sep=0.75pt]  [font=\tiny] [align=left] {$\displaystyle \theta $};
\draw (274,132.4) node [anchor=north west][inner sep=0.75pt]  [font=\tiny] [align=left] {$\displaystyle \theta $};
\draw (286,90) node [anchor=north west][inner sep=0.75pt]  [font=\scriptsize] [align=left] {$\displaystyle K_{\theta }^{R}$};
\draw (301,172) node [anchor=north west][inner sep=0.75pt]  [font=\scriptsize,color={black}  ,opacity=1 ] [align=left] {$\displaystyle \Gamma_{\theta }^{R}$};

\end{tikzpicture}
}
    \caption{The kite $K^R_\theta$. It has angles $2\theta$ and $\pi-2 \theta$ and the edges non adjacent to the $2 \theta$ angle have length $2R$.} 
\label{fig: introducing Karl}
 
\end{figure}

Our second result concerns an improvement of the result on $E_j(H^\Gamma_\alpha)$ for $j\ge \mathcal{K}+1$
under additional assumptions on the corners. For that, one analyzes the ``finite-volume'' versions of $H^\Gamma_\alpha$ on the kites $K^R_\theta$ depicted in Figure \ref{fig: introducing Karl}. Namely, denote
by $N^R_{\theta, \alpha}$ the self-adjoint operator in $L^2(K^R_\theta)$ generated by the Hermitian
sesquilinear form $n^R_{\theta,\alpha}$ given by
\[
n^R_{\theta,\alpha} (u) = \int_{K^R_\theta} |\nabla u|^2 \mathrm{d}x  - \alpha \int_{\Gamma^R_\theta} |u|^2 \dS, \quad D(n^R_{\theta,\alpha}) = H^1(K^R_\theta),
\quad
\Gamma^R_\theta:=\Gamma_\theta\cap K^R_\theta,
\]
i.e. $N^R_{\theta,\alpha}$ is the Laplacian with Neumann boundary condition on $\partial K^R_\theta$
and the $\delta$-interaction on $\Gamma^R_\theta$, then one easily shows that the first $\kappa(\theta)$ eigenvalues converge for $R\to\infty$ to the corresponding eigenvalues of $H^\alpha_\theta$.
The following condition will play a central role in our analysis: A half-angle $\theta$ is called \emph{non-resonant} if for some $\alpha>0$ (and then for any $\alpha>0$) there exists $C> 0$ such that
$$E_{\kappa(\theta)+1}(N^R_{\theta, \alpha}) \geq - \frac{\alpha^2}{4} + \frac{C}{R^2}$$
holds for $R \to \infty$. 

\begin{theorem}\label{theo: side induced}
Assume that all corners of $\Gamma$ are non-resonant (i.e. all half-angles $\theta_j$ are non-resonant). 
Then for any $n \in \mathbb{N}$ there exists $\alpha_0 >0$ such that for all $\alpha \geq \alpha_0$ the operator $H^\Gamma_\alpha$ has at least $\mathcal{K}+n$ discrete eigenvalues, and
\[
E_{\mathcal{K}+n}(H^\Gamma_\alpha) = - \frac{\alpha^2}{4} + E_n \bigg(\bigoplus_{j=1}^M\Big(D_j - \frac{k_j^2}{4}\Big)\bigg) + \mathcal{O}\Big(\frac{\log \alpha}{\sqrt\alpha}\Big) \quad \text{ as } \alpha \to \infty,
\]
where $D_j$ is the Dirichlet-Laplacian on the interval $(0,l_j)$.
\end{theorem}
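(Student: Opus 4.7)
The plan is to establish matching upper and lower bounds for $E_{\mathcal{K}+n}(H^\Gamma_\alpha)$ via the min-max principle, extending the machinery developed in \cite{KOBP20} for Robin Laplacians on corner domains. The upper bound uses an explicit $(\mathcal{K}+n)$-dimensional trial subspace, while the lower bound combines an IMS-type localization with the non-resonance hypothesis.

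For the upper bound I would build two near-orthogonal families of trial functions. For each $j=1,\dots,M$ and each $k=1,\dots,\kappa(\theta_j)$, the first family contains a compactly supported truncation of the $k$-th eigenfunction of the infinite-angle operator $H^\alpha_{\theta_j}$; exponential decay of these bound states away from the vertex makes the truncation cost only an exponentially small error, and the Rayleigh quotients stay close to $\mathcal{E}_i\alpha^2<-\alpha^2/4$. The second family consists of $n$ edge test functions $u_m(x)=\eta(s)\,f_m(s)\,\phi_\alpha(t)$, where $f_m$ is the $m$-th eigenfunction of $\bigoplus_j(D_j-k_j^2/4)$ (which automatically vanishes at the corners), $\phi_\alpha(t)=\sqrt{\alpha/2}\,e^{-\alpha|t|/2}$ is the transverse one-dimensional ground state, and $\eta$ is a tubular cutoff. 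Computations in tube coordinates along the lines of \cite{EY02,EPan} yield Rayleigh quotient $-\alpha^2/4+E_n(\bigoplus_j(D_j-k_j^2/4))+O(\log\alpha/\sqrt{\alpha})$ on the edge family.

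For the lower bound, fix a smooth quadratic partition of unity $\chi_0^2+\sum_{j=1}^M\chi_j^2\equiv 1$ on $\R^2$, with each $\chi_j$ ($j\ge 1$) supported in a region identifiable, via a local $C^3$ straightening, with the kite $K^R_{\theta_j}$, and $\chi_0$ supported in a tubular neighborhood of the rest of $\Gamma$. The IMS identity yields
\[
h^\Gamma_\alpha(u)\;\ge\;d_0(\chi_0 u)+\sum_{j=1}^M n^R_{\theta_j,\alpha}(\chi_j u)-\frac{C}{\delta^2}\|u\|^2,
\]
where $\delta$ is the cutoff width and $d_0$ is the quadratic form of a tube operator on $\supp\chi_0$ with Dirichlet conditions at the cutoff ends. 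Since $\sum\chi_j^2\equiv 1$, the map $u\mapsto(\chi_0 u,\chi_1 u,\dots,\chi_M u)$ is an $L^2$-isometry, so by a standard min-max argument $E_m(H^\Gamma_\alpha)\ge E_m(d_0\oplus\bigoplus_j N^R_{\theta_j,\alpha})-C/\delta^2$. Theorem~\ref{theo: corner induced} together with the convergence $N^R_{\theta_j,\alpha}\to H^\alpha_{\theta_j}$ as $R\to\infty$ identifies the $\mathcal{K}$ lowest kite eigenvalues with the corner contributions (strictly below $-\alpha^2/4$); by non-resonance, every remaining kite eigenvalue exceeds $-\alpha^2/4+C/R^2$; and a tube-coordinate analysis identifies the low eigenvalues of $d_0$ as $-\alpha^2/4+E_n(\bigoplus_j(D_j^{\mathrm{red}}-k_j^2/4))+O(\log\alpha/\alpha)$, where $D_j^{\mathrm{red}}$ is the Dirichlet Laplacian on $\Gamma_j\cap\supp\chi_0$.

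The main technical hurdle is to calibrate $R$ and $\delta$ so that the IMS remainder $C/\delta^2$, the arc-shortening discrepancy $|E_n(D_j^{\mathrm{red}})-E_n(D_j)|=O(R)$, the non-resonance gap $C/R^2$ (which must dominate the edge eigenvalues in order for the direct-sum spectrum to be ordered correctly), and the Exner--Yoshitomi-type tube remainder can all simultaneously be absorbed into the target $O(\log\alpha/\sqrt\alpha)$. I expect this to require Agmon-type exponential estimates on the genuine eigenfunctions away from $\Gamma$ to refine the naive IMS bound near the corners, together with a careful calibration of $R$ and $\delta$ after rescaling $y=\alpha x$, in which the kite neighborhoods live at scale $\rho=\alpha R$; the final passage from $D_j^{\mathrm{red}}$ to $D_j$ is then a routine one-dimensional interval-perturbation estimate.
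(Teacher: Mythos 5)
Your upper bound is sound and is a legitimate variant of the paper's: the paper obtains it by Dirichlet bracketing on the corner/edge decomposition (Proposition \ref{lem: upper bound side induced ev}), while you use explicit trial functions of Exner--Yoshitomi type; both work, and both need the non-resonance gap (or the sign of $E_n$) only to order the corner and edge contributions correctly.

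The lower bound, however, has a genuine gap, and it is precisely the part of the theorem where all the work lies. Two things go wrong with the IMS/bracketing strategy. First, the localization error: the transition regions of your partition sit at the corners and must have width comparable to the corner neighborhoods, i.e.\ $\delta\to 0$ (the edge analysis alone forces $\delta\lesssim \log\alpha/\sqrt\alpha$ because of the $\mathcal{O}(\delta)$ curvature corrections in the tube, and the corner asymptotics force $\alpha\delta\to\infty$). Then $C/\delta^{2}\gtrsim \alpha/\log^{2}\alpha$, which is incomparably larger than the target remainder $\mathcal{O}(\log\alpha/\sqrt\alpha)$; no calibration of $R$ and $\delta$ can fix this, and Agmon decay of the \emph{eigenfunctions} cannot be invoked inside a min--max lower bound that must hold for \emph{all} trial functions. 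Second, even with pure Neumann bracketing (no IMS remainder), the effective longitudinal operator on each edge comes out with free (Neumann/Robin) ends, cf.\ Lemma \ref{lem: ev strip neuman}, and its eigenvalues undershoot those of $D_j-k_j^2/4$ by an amount of order $1$ that does not vanish as $\delta\to0$; if instead you force $\chi_0$ to vanish before reaching the corners so as to get Dirichlet ends, you are back to the $1/\delta^2$ problem.

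The missing idea is to abandon bracketing for the lower bound and work on the orthogonal complement $L^{\perp}$ of the span of the first $\mathcal{K}$ eigenfunctions, comparing $H^\Gamma_\alpha|_{L^\perp}$ with $\bigoplus_j(D_{j,\delta}-k_j^2/4)$ via the Exner--Post type Proposition \ref{lem: comparing operators but fancy}. The non-resonance hypothesis is then used \emph{quantitatively}: the spectral gap $E_{\kappa(\theta_j)+1}(N^V_j)\ge -\alpha^2/4+c/\delta^2$ yields, for $u\perp L$, the relative bounds $\|u\|^2_{L^2(V_{j,\delta})}\lesssim\delta^2\big(n^V_j(u)+\tfrac{\alpha^2}{4}\|u\|^2\big)+\dots$ and an analogous trace bound on $\partial_*V_{j,\delta}=\partial_*W_{j,\delta}$ (Lemma \ref{lem: upper bound orthogonal complement in R2}); these simultaneously absorb what would be the localization error and control the subtraction of boundary values needed to put the transversally projected function into $H^1_0(I_{j,\delta})$, which is how the Dirichlet condition is actually produced. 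Making this rigorous requires one more ingredient you do not mention: a Helffer--Sj\"ostrand estimate showing that $L$ is exponentially close to the span of the truncated corner eigenfunctions (Lemma \ref{lem: estimate for subspaces}), without which orthogonality to $L$ gives no information near the corners.
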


Using some elementary observations on the non-resonance condition (see Corollary \ref{corol-nonres} below)
we obtain a more straightforward version:

\begin{corollary}\label{corol3}
	Assume that $\Gamma$ has only right and obtuse corners, i.e., that
	\[
	\theta_j\in \Big[ \frac{\pi}{4}, \frac{\pi}{2}\Big) \cup \Big(\frac{\pi}{2}, \frac{3\pi}{4}\Big] \text{ for all }j=1,\dots,M,
	\]
	then $\mathcal{K}=M$, and for any $n \in \mathbb{N}$ there exists $\alpha_0 >0$ such that for all $\alpha \geq \alpha_0$ the operator $H^\Gamma_\alpha$ has at least $M+n$ discrete eigenvalues, and
	\[
	E_{M+n}(H^\Gamma_\alpha) = - \frac{\alpha^2}{4} + E_n \bigg(\bigoplus_{j=1}^M\Big(D_j - \frac{k_j^2}{4}\Big)\bigg) + \mathcal{O}\Big(\frac{\log \alpha}{\sqrt\alpha}\Big) \quad \text{ as } \alpha \to \infty,
	\]
	where $D_j$ is the Dirichlet-Laplacian in $L^2(0,l_j)$.
\end{corollary}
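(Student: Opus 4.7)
The strategy is to reduce the corollary to a direct application of Theorem \ref{theo: side induced}, so I only need to verify two facts under the hypothesis $\theta_j\in[\pi/4,\pi/2)\cup(\pi/2,3\pi/4]$: first, $\kappa(\theta_j)=1$ for each $j$, yielding $\mathcal{K}=M$; and second, every such half-angle is \emph{non-resonant}. Both assertions are precisely the "elementary observations" announced before the statement, which are to be collected in the referenced Corollary \ref{corol-nonres}.

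For the count $\kappa(\theta_j)=1$, I would rely on the spectral analysis of the infinite broken-line operator $H^\alpha_\theta$ carried out in Section \ref{sec: angle}. On the one hand, for any $\theta\neq\pi/2$ the operator $H^\alpha_\theta$ admits at least one discrete eigenvalue strictly below the threshold $-\alpha^2/4$, which can be exhibited by a trial function localized near the vertex. On the other hand, a second discrete eigenvalue only appears for substantially sharper corners (well inside $(0,\pi/4)$), so throughout the range $[\pi/4,\pi/2)\cup(\pi/2,3\pi/4]$ the infinite corner supports exactly one bound state. Combined with the unitary equivalence $H^\alpha_\theta\simeq\alpha^2 H^1_\theta$, which makes $\kappa(\theta)$ independent of $\alpha$, this yields $\kappa(\theta_j)=1$ for each $j$ and therefore $\mathcal{K}=\kappa(\theta_1)+\dots+\kappa(\theta_M)=M$.

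For the non-resonance condition, I would appeal directly to Corollary \ref{corol-nonres}. The natural mechanism in the obtuse/right regime is a min-max lower bound on $N^R_{\theta,\alpha}$: once it is known that the first eigenvalue converges to the unique eigenvalue of $H^\alpha_\theta$ and the ground state is exponentially localized near the vertex, the restriction of the form $n^R_{\theta,\alpha}$ to the orthogonal complement of the ground state can be estimated using a Poincaré-type inequality on the kite $K^R_\theta$, producing the required $C R^{-2}$ gap above $-\alpha^2/4$. With both ingredients in place, Theorem \ref{theo: side induced} applies verbatim and delivers the stated asymptotics for $E_{M+n}(H^\Gamma_\alpha)$.

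The only substantive step here, and the one I would expect to be the main obstacle if the corollary were attacked from scratch, is the non-resonance verification: one must exclude the possibility that a second eigenvalue of $N^R_{\theta,\alpha}$ approaches $-\alpha^2/4$ faster than $R^{-2}$ as $R\to\infty$. The acute-angle range is delicate because narrow corners can support several bound states, which is exactly why the hypothesis $\theta_j\in[\pi/4,\pi/2)\cup(\pi/2,3\pi/4]$ is imposed. Everything else is bookkeeping: substituting $\mathcal{K}=M$ into the conclusion of Theorem \ref{theo: side induced} gives the displayed expansion.
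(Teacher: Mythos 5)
Your reduction is exactly the paper's: Corollary \ref{corol-nonres} gives $\kappa(\theta_j)=1$ (hence $\mathcal{K}=M$) and non-resonance for every half-angle in $[\frac{\pi}{4},\frac{\pi}{2})\cup(\frac{\pi}{2},\frac{3\pi}{4}]$, after which Theorem \ref{theo: side induced} applies verbatim; this is all the corollary requires, so your proposal is correct and follows the same route.

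One caution on your aside about how the non-resonance in Corollary \ref{corol-nonres} is actually established: the paper does \emph{not} argue via exponential localization of the ground state plus a Poincar\'e-type inequality on its orthogonal complement. That mechanism is doubtful here, because the candidate second eigenfunction lives in the deep well along $\Gamma^R_\theta$ (at energy $\approx-\alpha^2/4$) far from the vertex, and orthogonality to the 2D ground state does not by itself yield the longitudinal $CR^{-2}$ gap. Instead, Lemma \ref{lem: existence of non resonant angles} treats $\theta=\frac{\pi}{4}$ by noting the kite is a square and bounding the form below by a separable operator $I\otimes T^N_{R,1}+T^N_{R,1}\otimes I$, and then handles $\theta\in(\frac{\pi}{4},\frac{\pi}{2})$ by splitting into symmetric/antisymmetric parts, Dirichlet--Neumann bracketing, and a scaling/monotonicity reduction to the $\frac{\pi}{4}$ case, with the $R^{-2}$ gap coming from the explicit one-dimensional spectral gaps of Proposition \ref{lem: ev of t,N,L,alpha}. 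Since you only cite the corollary rather than reprove it, this does not affect the validity of your argument for the statement at hand.
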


\begin{remark}
It would be interesting to understand if there are half-angles violating the non-resonance condition (i.e., if there are ``resonant'' half-angles): our analysis does not give any indication in this direction. For a similar problem involving Robin Laplacians, the existence of resonant angles was shown in \cite[Sec.~6.1]{KOBP20}. Another (positive) difference between the present work and \cite{KOBP20} is that our study does not require additional conditions on the curvatures (due to some specific features of the $\delta$-potentials), while the analysis~\cite{KOBP20} for the Robin case required the constancy of the curvatures.
\end{remark}

The above main results have a direct application to the analysis of recently introduced Schr\"odinger operators with oblique transmission conditions~\cite{bhs, BCP}. Namely, let $(\nu_1,\nu_2)$ be the outer unit normal on $\Gamma$ and consider the complex valued function $n:=\nu_1+i\nu_2$ defined on $\Gamma$.
For $\beta\in\R$ denote by $Q^\Gamma_\beta$ the operator in $L^2(\R^2)$ acting as
\[
Q^\Gamma_\beta f:= -\Delta f \text{ in }\mathcal{D}'(\R^2\setminus\Gamma)
\]
on the domain 
\begin{equation}
	\label{def:schrodinger}
	\begin{aligned}
		D(Q^\Gamma_\beta):=\Big\{f\simeq &(f_+,f_-)\in H^{\frac{1}{2}}(\Omega_+)\oplus H^{\frac{1}{2}}(\Omega_-):\\
		&\ \Delta f_\pm\in L^2(\Omega_\pm),\ (\pz f_+,\pz f_-)\in H^1(\R^2),\\
		&\  n(f_+- f_- ) +\beta(\pz f_+ +\pz f_-)=0 \text{ on }\Gamma
		\Big\}.
	\end{aligned}	
\end{equation}
where $\Omega_+$, respectively $\Omega_-$, stands for the interior of $\Gamma$, respectively the exterior of $\Gamma$, $f_\pm$ denotes the restriction of $f$ on $\Omega_\pm$,
and the expression
\[
\pz:=\frac{1}{2}(\partial_1+i\partial_2)
\]
is known as the Wirtinger derivative. As discussed in \cite{bhs,BCP}, the operator $Q^\Gamma_\beta$
is self-adjoint and appears as the non-relativistic limit of Dirac operators with $\delta$-type potentials supported on $\Gamma$. It holds $\spece Q^\Gamma_\beta=[0,\infty)$,
and for $\beta<0$ one has an infinite discrete spectrum accumulating at $-\infty$ only, so we can
enumerate all eigenvalues $\Tilde E_n(Q^\Gamma_\beta)<0$ in the \emph{non-increasing} order with multiplicities
taken into account. The asymptotic behavior of $\Tilde E_n(Q^\Gamma_\beta)$ for $\beta\to 0^-$ turns out to be closely related to the analysis of $H^\Gamma_\alpha$ with $\alpha\to\infty$, which was first observed in \cite{bhs} and then formalized in \cite[Cor.~8]{BCP} as follows:
\begin{proposition}\label{prop-transm}
	If for some $b>0$ and $j\in\N$ one has
	$E_j(H^\Gamma_\alpha)=-b\alpha^2+o(\alpha^2) \text{ for }\alpha\to\infty$,
	then
	\[
	\Tilde E_j(Q^\Gamma_\beta)=-\frac{1}{b\beta^2}+o\Big(\dfrac{1}{\beta^2}\Big) \text{ for }\beta\to 0^-.
	\]
\end{proposition}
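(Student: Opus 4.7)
The plan is to deduce the asymptotics of $\Tilde E_j(Q^\Gamma_\beta)$ from that of $E_j(H^\Gamma_\alpha)$ by expressing the negative discrete spectra of both operators via a common Birman--Schwinger reduction to a boundary integral operator on $\Gamma$. First, for $z=-k^2<0$ the standard argument (writing an eigenfunction as $u=\alpha(-\Delta+k^2)^{-1}(\delta_\Gamma u|_\Gamma)$ and taking traces on $\Gamma$) shows that $-k^2$ is a discrete eigenvalue of $H^\Gamma_\alpha$ of multiplicity $m$ if and only if $1/\alpha$ is an eigenvalue of the same multiplicity of the single-layer boundary operator $\mathcal S_k$ with integral kernel $(2\pi)^{-1}K_0(k|x-y|)$. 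Combined with the min-max principle, the hypothesis $E_j(H^\Gamma_\alpha)=-b\alpha^2+o(\alpha^2)$ can then be recast as an asymptotic location of the $j$-th eigenvalue of $\mathcal S_k$: along the curve $k\sim\sqrt{b}\,\alpha$ one has $\lambda_j(\mathcal S_k)=1/\alpha\sim\sqrt{b}/k$ as $k\to\infty$.

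Next I would carry out the analogous reduction for $Q^\Gamma_\beta$. Eigenfunctions $f_\pm$ attached to $\mu=-\kappa^2<0$ are $(-\Delta+\kappa^2)$-harmonic in $\Omega_\pm$ and therefore representable from their Dirichlet and Neumann traces via single- and double-layer potentials. Substituting these representations into the oblique transmission condition $n(f_+-f_-)+\beta(\pz f_+ +\pz f_-)=0$, and using $n=\nu_1+i\nu_2$ together with $2\pz=\partial_1+i\partial_2$ to regroup tangential and normal contributions, yields a boundary integral equation on $\Gamma$. Exploiting the identity $\pz K_0(\kappa|z|)=-\kappa K_1(\kappa|z|)\bar z/(2|z|)$ one should be able to rewrite this equation so that, modulo lower-order terms, $-\kappa^2\in\spec Q^\Gamma_\beta$ if and only if $\mathcal S_\kappa$ has an eigenvalue equal to $-1/(\beta\kappa^2)$, with matched multiplicities.

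Combining the two dictionaries with the asymptotic $\lambda_j(\mathcal S_k)\sim\sqrt{b}/k$ from the first step, the condition $\lambda_j(\mathcal S_{\kappa_j})=-1/(\beta\kappa_j^2)$ leads to $\sqrt{b}/\kappa_j\sim -1/(\beta\kappa_j^2)$, i.e.\ $\kappa_j = -1/(\sqrt{b}\,\beta)(1+o(1))$ as $\beta\to 0^-$, and therefore $\Tilde E_j(Q^\Gamma_\beta)=-\kappa_j^2=-1/(b\beta^2)+o(1/\beta^2)$, as claimed.

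The main obstacle is the rigorous justification of the second step on a piecewise smooth curve with corners. The boundary traces must be handled in the appropriate Sobolev scale, taking into account the nonstandard joint regularity $(\pz f_+,\pz f_-)\in H^1(\R^2)$ built into $D(Q^\Gamma_\beta)$; the layer potentials and their Wirtinger derivatives must be controlled uniformly as $\kappa\to\infty$ despite the presence of the corners $A_j$; and the sign conventions of the Cauchy--Pompeiu representation on $\Omega_+$ and $\Omega_-$ must be tracked carefully so that the coupling in the resulting Birman--Schwinger equation is indeed $-1/(\beta\kappa^2)$ and not some other algebraic combination. Once these analytic points are in place, the matching of eigenvalues with multiplicities follows from analytic Fredholm theory together with min-max, and the claimed asymptotics is obtained.
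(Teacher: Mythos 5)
The paper does not actually prove Proposition~\ref{prop-transm}: it is quoted verbatim from \cite[Cor.~8]{BCP}, so there is no in-paper argument to compare yours against. Your outline is in the same spirit as the cited source (Birman--Schwinger reductions on both sides, plus an asymptotic comparison of the two boundary operators), but as written it has a genuine gap rather than a complete argument.

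The decisive step is the one you yourself flag as ``the main obstacle'': the claim that the oblique transmission condition $n(f_+-f_-)+\beta(\pz f_++\pz f_-)=0$ reduces, modulo errors that do not affect the leading asymptotics and uniformly as $\kappa\to\infty$ on a curve with corners, to the eigenvalue equation $\lambda(\mathcal S_\kappa)=-1/(\beta\kappa^2)$. This is not a technical detail to be filled in later --- it is essentially the entire content of the proposition. The boundary operator that actually arises from the layer-potential representation of $D(Q^\Gamma_\beta)$ involves Wirtinger derivatives of the fundamental solution and is not the single-layer operator $\mathcal S_\kappa$; identifying it with $\kappa^2\mathcal S_\kappa$ up to a remainder that is negligible relative to $\kappa^2\lambda_j(\mathcal S_\kappa)\sim\sqrt b\,\kappa$ requires uniform operator bounds in the correct Sobolev scale on a Lipschitz curve with corners, and none of that is supplied. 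Until it is, the proposal assumes what it must prove.

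Two secondary points also need attention. First, passing from the hypothesis $E_j(H^\Gamma_\alpha)=-b\alpha^2+o(\alpha^2)$ to the statement $\lambda_j(\mathcal S_\kappa)\sim\sqrt b/\kappa$ for \emph{all} large $\kappa$ (not merely along the implicitly defined curve $\kappa=k_j(\alpha)$) requires strict monotonicity of $\alpha\mapsto E_j(H^\Gamma_\alpha)$ and of $\kappa\mapsto\lambda_j(\mathcal S_\kappa)$, together with surjectivity of $\alpha\mapsto k_j(\alpha)$ onto a neighbourhood of infinity; this inversion is used silently. Second, the index matching is not automatic: $E_j(H^\Gamma_\alpha)$ are enumerated in non-decreasing order and there are only finitely many of them for each $\alpha$, whereas $\Tilde E_j(Q^\Gamma_\beta)$ are enumerated in non-increasing order and accumulate at $-\infty$, so the two Birman--Schwinger dictionaries do not pair off the $j$-th eigenvalues for free; one must argue via the min-max characterization of the eigenvalues of the boundary operators that the same index $j$ is selected on both sides.
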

A direct applications of Theorem \ref{theo: corner induced} and Corollary~\ref{corol3} leads to the following observation extending the earlier analysis of $\Tilde E_1$ in \cite{BCP}:
	\begin{corollary}
		As $\beta\to0^-$ one has
		\[
		 \Tilde E_j(Q^\Gamma_\beta)=\dfrac{1}{\mathcal{E}_j\beta^2}+o\Big(\dfrac{1}{\beta^2}\Big) \text{ for } j\in\{1,\dots,\mathcal{K}\},\qquad
		 	\Tilde E_{j}(Q^\Gamma_\beta)=-\dfrac{4}{\beta^2}+o\Big(\dfrac{1}{\beta^2}\Big)
		 	\text{ for } j\ge\mathcal{K}+1,
		\]
		while $\mathcal{K}=M$ if all corners of $\Gamma$ are right or obtuse.
	\end{corollary}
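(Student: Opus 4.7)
The plan is a direct application of Proposition~\ref{prop-transm} to the eigenvalue asymptotics just established in Theorem~\ref{theo: corner induced}, matching the leading-order coefficient in each of the two regimes, combined with Corollary~\ref{corol3} for the final clause.

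First I would handle the ``corner-induced'' eigenvalues, i.e.\ $j\in\{1,\dots,\mathcal{K}\}$. By Theorem~\ref{theo: corner induced} one has $E_j(H^\Gamma_\alpha)=\mathcal{E}_j\alpha^2+\mathcal{O}(\alpha^{4/3})$, which is in particular of the form $-b_j\alpha^2+o(\alpha^2)$ with $b_j:=-\mathcal{E}_j$. The numbers $\mathcal{E}_j$ are discrete eigenvalues of some $H^1_{\theta_i}$, whose essential spectrum begins at $-\tfrac14$, so $\mathcal{E}_j<-\tfrac14<0$ and therefore $b_j>0$, which is the admissible regime of Proposition~\ref{prop-transm}. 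Applying the proposition gives
\[
\Tilde E_j(Q^\Gamma_\beta)=-\frac{1}{b_j\beta^2}+o\Big(\frac{1}{\beta^2}\Big)=\frac{1}{\mathcal{E}_j\beta^2}+o\Big(\frac{1}{\beta^2}\Big),\qquad \beta\to 0^-.
\]

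For the ``side-induced'' eigenvalues $j\ge\mathcal{K}+1$, Theorem~\ref{theo: corner induced} yields $E_j(H^\Gamma_\alpha)=-\tfrac{\alpha^2}{4}+o(\alpha^2)$, so Proposition~\ref{prop-transm} with $b=\tfrac14$ gives $\Tilde E_j(Q^\Gamma_\beta)=-\tfrac{4}{\beta^2}+o(1/\beta^2)$. The last clause, that $\mathcal{K}=M$ whenever all corners of $\Gamma$ are right or obtuse, is exactly the content of Corollary~\ref{corol3}, and so the two asymptotic formulas specialize to the claimed form in that case.

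There is essentially no technical obstacle: the work has already been done in Theorem~\ref{theo: corner induced} and in the transmission-to-$\delta$ dictionary provided by Proposition~\ref{prop-transm}. The only point requiring a brief sanity check is consistency of the two ordering conventions (the $E_j$'s are enumerated non-decreasingly while the $\Tilde E_n$'s are enumerated non-increasingly), but since $b\mapsto -1/(b\beta^2)$ is order-reversing on $(0,\infty)$, the non-decreasing sequence $(-b_j)_j$ of leading coefficients of $E_j(H^\Gamma_\alpha)/\alpha^2$ corresponds to a non-increasing sequence of leading coefficients of $\Tilde E_j(Q^\Gamma_\beta)\beta^2$, which is precisely the matching built into Proposition~\ref{prop-transm}.
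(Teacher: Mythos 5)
Your proposal is correct and is exactly the argument the paper intends: the corollary is stated as a direct application of Theorem~\ref{theo: corner induced} together with Proposition~\ref{prop-transm} (and Corollary~\ref{corol3} for the clause $\mathcal{K}=M$), and the paper gives no further written proof. Your explicit verification that $b_j=-\mathcal{E}_j>\tfrac14>0$ (so the hypothesis $b>0$ of Proposition~\ref{prop-transm} holds) and your remark on the compatibility of the two enumeration conventions are the only points that need checking, and both are handled correctly.
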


\subsection{Structure of the paper}

In Section \ref{sec-prelim} we set some notation and recall the main tools used throughout the text, which includes the min-max principle (with several technical reformulations), distances between subspaces,  the IMS localization formula for $\delta$-interactions, and the spectral analysis of several one-dimensional operators ($\delta$-interactions on bounded intervals). Section \ref{sec: angle} summarizes known facts about the above operator $H^\alpha_\theta$, i.e. of $\delta$-potentials supported on broken lines. In particular, we prove an Agmon-type decay estimate for the eigenfunctions corresponding to discrete eigenvalues. In Section~\ref{sec: Karl}, we consider the truncations of $H^\alpha_\theta$ on the kites $K^R_\theta$ corresponding to Dirichlet/Neumann boundary conditions on $\partial K^R_\theta$, and we establish several estimates for the eigenvalues and the eigenfunctions in terms of $\alpha, R$, and $\theta$.
In particular, we show that the non-resonance condition is satisfied if $\Gamma_\theta$ forms a right or obtuse corner. The analysis is mostly by applying suitable truncations and decompositions, combined
with the min-max principle and the analysis of one-dimensional operators from Section \ref{sec-prelim}.
A curvilinear version of $K^R_\theta$ and associated operators are considered in Section \ref{sec: V delta}. Using a suitable deformation map (i.e., by mapping the curvilinear version onto the straight one), we show that most eigenvalue results of Section \ref{sec: Karl} can be transferred to the curved kites 
in suitable asymptotic regimes. Section \ref{sec: W delta} is devoted to the spectral analysis of $\delta$-interactions in thin tubes constructed around curved open arcs. This is mainly achieved using
the passage to tubular coordinates and asymptotic estimates, reducing the analysis to operators with separated variables. In Section \ref{sec: Endzeit}, we combine all the findings of the previous sections to prove the two main results. We first introduce a special decomposition of $\R^2$ into small neighborhoods of corners $A_j$ and ``edges'' $\Gamma_j$, so that the restriction of $H^\Gamma_\alpha$ on each piece is covered by the analysis of the preceding sections. Using the standard Dirichlet-Neumann bracketing, we then complete the last steps of the proof for Theorem \ref{theo: corner induced}
in Proposition \ref{lem: corner induced EV}. The last subsection \ref{ss73} is devoted to the proof 
of Theorem \ref{theo: side induced}, and it is explicitly based on spectral estimates requiring the non-resonance condition. While the upper bound is again deduced by rather elementary Dirichlet-Neumann bracketing-type arguments, the lower bound represents the most demanding part of the work, and it combines
the min-max principle with suitable distance estimates for spectral subspaces and subspaces spanned by truncated eigenfunctions.

The overall proof structure is a quite straightforward adaptation of the scheme proposed in \cite{KOBP20} for the analysis of Robin Laplacians in curvilinear polygons. However, the implementation of each proof step required significant technical efforts, as one needed to recognize, rigorously define, and then analyze in detail various analogs of the intermediate objects arising in the Robin case, and none of these objects have appeared previously in the literature on $\delta$-potentials, and a large portion of the basic theory had to be thoroughly reworked.


\section{Preliminaries}\label{sec-prelim}
\subsection{Notation}

Let $\mathcal{H}$ be an infinite-dimensional Hilbert space. A sesquilinear form $a: D(a) \times D(a) \to \mathbb{C}$ will always be referred to by a lowercase letter. For brevity, we write $a(u):= a(u,u)$ for any $u \in D(a) \subseteq \mathcal{H}$.
The linear operator generated by a closed, symmetric sesquilinear form $a$ is denoted by the corresponding uppercase letter $A$. Explicitly, one has
\[
a(u,v)  = \langle u,Av \rangle_{\mathcal{H}} \quad \text{ for all }\, u \in D(a)\,  \text{ and }\, v \in D(A).
\]
Using the Min-Max principle, we define the $n$-th Rayleigh quotient $\Lambda_n(A)$ by
\[
\Lambda_n(A) = \inf_{ \underset{\dim V = n}{V \subset D(a)}} \sup_{\underset{u \neq 0}{u \in V}} \frac{a(u,u)}{\| u \|^2}.
\]
 Moreover, we set 
 \[
\Sigma(A) := 
\begin{cases}
\inf \spece(A), & \text{if } \spece(A) \neq \emptyset, \\
+\infty, & \text{otherwise},
\end{cases}
\]
where $\spec$, $\spece$, and $\specd$ denote, respectively, the spectrum, the essential spectrum, and the discrete spectrum of the operator $A$. We also denote by $E_n(A)$ the $n$th eigenvalue of $A$, whenever it exists. Finally, if $A$ and $B$ are unitarily equivalent to each other, then we write $A \cong B$.

\subsection{Comparing operators and distance between closed subspaces}

Let us first recall some useful lemmas for comparing operators. The first one is very standard and follows directly from the Min-Max principle.
\begin{lemma}\label{lem: comparing operators}
    Given two lower semibounded operators $A_1$ and $A_2$ in infinite-dimensional Hilbert spaces 
    $\mathcal{H}_1$ and $\mathcal{H}_1$, respectively. Suppose that there exists a linear map $J: D(a_1) \to D(a_2)$ such that   $ \|Ju\|_{\mathcal{H}_2} = \|u\|_{\mathcal{H}_1} $ and $a_2(Ju) \leq a_1(u)$ hold for all $u \in D(a_1)$.  Then, $ \Lambda_n(A_2) \leq \Lambda_n(A_1) $ for any $n \in \nn$.
\end{lemma}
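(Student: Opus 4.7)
The plan is a direct application of the Min-Max characterization of $\Lambda_n$ given just above the statement: to push any test subspace for $A_1$ forward through $J$ to obtain a test subspace for $A_2$ with a smaller (or equal) supremal Rayleigh quotient.

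Concretely, fix an arbitrary $n$-dimensional subspace $V\subset D(a_1)$, and consider $W:=J(V)\subset D(a_2)$. First I would observe that the isometry assumption $\|Ju\|_{\mathcal{H}_2}=\|u\|_{\mathcal{H}_1}$ forces $J$ to be injective, so its restriction to the finite-dimensional space $V$ is a bijection onto $W$, giving $\dim W = n$. Hence $W$ is an admissible competitor in the Min-Max definition of $\Lambda_n(A_2)$.

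Next, I would compare the Rayleigh quotients pointwise. For any nonzero $v\in V$, setting $w:=Jv\in W\setminus\{0\}$ and using the two hypotheses on $J$, one has
\[
\frac{a_2(w)}{\|w\|_{\mathcal{H}_2}^2}=\frac{a_2(Jv)}{\|Jv\|_{\mathcal{H}_2}^2}\leq\frac{a_1(v)}{\|v\|_{\mathcal{H}_1}^2}.
\]
Since every nonzero element of $W$ arises in this way, taking the supremum over $w\in W\setminus\{0\}$ on the left and over $v\in V\setminus\{0\}$ on the right, and then invoking the definition of $\Lambda_n(A_2)$ as the infimum over admissible $n$-dimensional subspaces, yields
\[
\Lambda_n(A_2)\leq\sup_{w\in W\setminus\{0\}}\frac{a_2(w)}{\|w\|_{\mathcal{H}_2}^2}\leq\sup_{v\in V\setminus\{0\}}\frac{a_1(v)}{\|v\|_{\mathcal{H}_1}^2}.
\]
Taking the infimum over all $n$-dimensional $V\subset D(a_1)$ on the right then gives $\Lambda_n(A_2)\leq\Lambda_n(A_1)$.

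There is no genuine obstacle in this argument; the only subtlety worth flagging is that one must use the isometry property (and not merely a bounded $J$) to guarantee that the dimension is preserved and that the quotient inequality holds after normalization, so both assumptions on $J$ are genuinely needed.
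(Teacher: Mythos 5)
Your proof is correct and follows essentially the same route as the paper's: push an $n$-dimensional test subspace forward through $J$ (using the isometry to preserve dimension), compare Rayleigh quotients pointwise via the two hypotheses, and conclude by the Min-Max principle. No issues.
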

\begin{proof}Since $\dim J(L) = \dim L$ holds for any finite-dimensional subspace $L \subset D(a_1)$, it follows that
    \begin{align*}
        \Lambda_n(A_2) &= \inf_{\overset{L_2 \subset D(a_2)}{\dim L_2 = n}} \sup_{ \overset{u \in L_2}{u \neq 0}} \frac{a_2(u)}{\| u\|^2_{\mathcal{H}_2}} \leq 
        \inf_{\overset{L_1 \subset D(a_1)}{\dim L_1 = n}} \sup_{ \overset{u \in J(L_1)}{u \neq 0}} \frac{a_2(u)}{\|u\| ^2_{\mathcal{H}_2}} \\
        &= \inf_{\overset{L_1 \subset D(a_1)}{\dim L_1 = n}} \sup_{ \overset{v \in L_1}{v \neq 0}} \frac{a_2(Jv)}{\|Jv\|^2_{\mathcal{H}_2}} 
        \leq \inf_{\overset{L_1 \subset D(a_1)}{\dim L_1 = n}} \sup_{ \overset{v \in L_1}{v \neq 0}} \frac{a_1(v)}{\| v \|^2_{\mathcal{H}_1}} = \Lambda_n(A_1). \qedhere
    \end{align*}
\end{proof}
 We will need a less standard approach to comparing eigenvalues of operators acting on different spaces
 motivated by estimates in~\cite{EP05,Post05}. The following lemma is a slightly adapted version of the approach as presented in~\cite[Prop.~6]{FP20}.
\begin{proposition}\label{lem: comparing operators but fancy}
Let $ \mathcal{H}$ and $ \mathcal{H}'$ be infinite-dimensional Hilbert spaces, and let $B$ and $B'$ be self-adjoint and semibounded from below operators on $\mathcal{H}$ and $\mathcal{H}'$, respectively.  Assume there exists a linear map $ J: D(B)\to D(B') $ and $\varepsilon_1,\varepsilon_2 >0$ such that for a fixed $n \in \mathbb{N}$ the following hold:
\begin{align*}
 \varepsilon_1  &< \frac{1}{\Lambda_n(B) +1 + c_0},\\
\|u\|^2_\mathcal{H} - \|Ju\|^2_{\mathcal{H}'} &\leq \varepsilon_1 (b(u) + \| u \|^2_\mathcal{H}(1 + c_0)), \\
 b'(Ju) - b(u) &\leq \varepsilon_2 (b(u) + \| u \|^2_\mathcal{H}(1+c_0)).
 \end{align*}
Then, we have
\[
\Lambda_n(B') \leq \Lambda_n(B) + \frac{(\Lambda_n(B)\varepsilon_1 + \varepsilon_2)(\Lambda_n(B) +1 +c_0)}{1-\varepsilon_1(\Lambda_n(B) + 1+c_0)}.
\]
\end{proposition}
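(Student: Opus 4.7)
The plan is a standard application of the min-max principle in the spirit of \cite{EP05,Post05}: one transports a near-optimal $n$-dimensional test subspace from $B$ to $B'$ via the map $J$. Set $\lambda := \Lambda_n(B)$ and $X := \lambda + 1 + c_0$, so that hypothesis (1) reads $\varepsilon_1 X < 1$. For any small $\eta > 0$, the definition of $\Lambda_n$ furnishes an $n$-dimensional subspace $L \subset D(B)$ with $b(u) \leq (\lambda + \eta)\|u\|^2_{\mathcal{H}}$ for every $u \in L$; such an $L$ may be chosen inside the operator domain because $D(B)$ is a form core for $b$.

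I would then carry out two estimates on $L$. First, inserting $b(u) \leq (\lambda + \eta)\|u\|^2_{\mathcal{H}}$ into hypothesis (2) produces the lower bound
\[
\|Ju\|^2_{\mathcal{H}'} \geq \bigl(1 - \varepsilon_1(X + \eta)\bigr)\|u\|^2_{\mathcal{H}},
\]
whose right-hand side is strictly positive once $\eta$ is small enough; this is precisely what the strict inequality $\varepsilon_1 X < 1$ is for, and it secures the injectivity of $J|_L$, so $J(L) \subset D(B')$ is a genuine $n$-dimensional test space for $\Lambda_n(B')$. Second, inserting the same bound into hypothesis (3) gives
\[
b'(Ju) \leq b(u) + \varepsilon_2\bigl(b(u) + \|u\|^2_{\mathcal{H}}(1 + c_0)\bigr) \leq \bigl((\lambda + \eta) + \varepsilon_2(X + \eta)\bigr)\|u\|^2_{\mathcal{H}}.
\]
Dividing these two estimates and invoking the min-max principle for $\Lambda_n(B')$ applied to $J(L)$ yields
\[
\Lambda_n(B') \leq \sup_{u \in L \setminus \{0\}} \frac{b'(Ju)}{\|Ju\|^2_{\mathcal{H}'}} \leq \frac{(\lambda + \eta) + \varepsilon_2(X + \eta)}{1 - \varepsilon_1(X + \eta)}.
\]
Letting $\eta \to 0^+$ and performing the elementary rearrangement
\[
\frac{\lambda + \varepsilon_2 X}{1 - \varepsilon_1 X} = \lambda + \frac{(\lambda \varepsilon_1 + \varepsilon_2)X}{1 - \varepsilon_1 X}
\]
gives exactly the asserted inequality.

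I do not anticipate any genuine obstacle: once the two comparison inequalities are available and the strict inequality in hypothesis (1) is used to rule out collapse of $J(L)$, the rest is algebraic. The only subtle point worth flagging is the choice of $L$ inside $D(B)$ rather than merely inside the form domain $D(b)$, which is justified by $D(B)$ being a form core for $b$ (and in the applications later in the paper, $J$ will typically be a multiplication by a cutoff, so this point is painless).
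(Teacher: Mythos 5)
Your strategy---transporting a near-optimal $n$-dimensional test subspace for $B$ through $J$ and comparing Rayleigh quotients---is exactly the intended argument: the paper itself gives no proof of this proposition and simply defers to \cite[Prop.~6]{FP20}, whose proof runs along the same lines. Your choice of $L$ inside $D(B)$ (justified via the spectral theorem or the form-core property), the injectivity of $J|_L$ obtained from $\|Ju\|^2\geq(1-\varepsilon_1(X+\eta))\|u\|^2>0$, the two insertions of $b(u)\leq(\lambda+\eta)\|u\|^2$, and the final algebraic rearrangement are all correct.

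The gap is in the step ``dividing these two estimates.'' You have an upper bound $b'(Ju)\leq N_u$ with $N_u:=\bigl((\lambda+\eta)+\varepsilon_2(X+\eta)\bigr)\|u\|^2$ and a lower bound $\|Ju\|^2\geq m\|u\|^2$ with $m:=1-\varepsilon_1(X+\eta)>0$; concluding $b'(Ju)/\|Ju\|^2\leq N_u/(m\|u\|^2)$ is legitimate only when $N_u\geq0$, i.e.\ when $\lambda+\varepsilon_2X\geq0$. If $\lambda+\varepsilon_2X<0$ (which the hypotheses allow, since $\lambda=\Lambda_n(B)$ may be negative while $\varepsilon_2$ is small), you are dividing a \emph{negative} upper bound on the numerator by a quantity for which you only control a \emph{lower} bound, and the inequality goes the wrong way. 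This is not a repairable technicality, because the statement itself fails in that regime: take $B'=B$ self-adjoint with simple lowest eigenvalue $-10$ (normalized ground state $u_0$) and $\spec(B)\setminus\{-10\}\subset[0,\infty)$, set $c_0=10$, $n=1$, $Ju=u+\langle u_0,u\rangle u_0$, $\varepsilon_1=0.9<1=1/(\Lambda_1(B)+1+c_0)$, $\varepsilon_2=0.01$. All three hypotheses hold (both left-hand sides are $\leq0$ while both right-hand sides are $\geq0$ since $b(u)+(1+c_0)\|u\|^2\geq\|u\|^2$), yet the asserted conclusion reads $-10=\Lambda_1(B')\leq-99.9$. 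So your argument (and the proposition) is valid only under an additional sign condition such as $\Lambda_n(B)+\varepsilon_2(\Lambda_n(B)+1+c_0)\geq0$---in particular $\Lambda_n(B)\geq0$ suffices, which is the setting of the non-negative forms in the references behind this lemma. You should either impose such a hypothesis explicitly or handle the negative-numerator case separately (where only the weaker bound $\Lambda_n(B')\leq0$ can be extracted); as written, the last inequality in your chain is unjustified.
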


Finally, we recall the notion of the distance between two closed subspaces following \cite{HS84}.
\begin{definition}\label{def: distance between subspaces}
Let $L_1$ and $L_2$ be closed subspaces of a Hilbert space $\mathcal{H}$, and let $P_1$ and $P_2$ be the orthogonal projectors in $\mathcal{H}$ onto $L_1$ and $L_2$ respectively. The {\bf distance} $d(L_1,L_2)$ between $L_1$ and $L_2$ is defined by
\[ d(L_1,L_2) := \sup_{0 \neq x \in L_1} \frac{\| x-P_1x\|}{ \|x\|} \equiv \| P_1 - P_2P_1\| \equiv \| P_1-P_1P_2\|.
\]
\end{definition}
While the distance is not symmetric, it satisfies the triangular inequality: for any closed subspaces $L_1$, $L_2$, and $L_3$, we have
\begin{align}\label{lem: triangular inequality distance subspaces}
 d(L_1,L_3) \leq d(L_1,L_2) + d(L_2,L_3)
\end{align}
The following estimate for the distance between subspaces will be used, see \cite[Prop.~2.5]{HS84}.
\begin{proposition}
\label{lem: distance between subspaces}
Let $A$ be a self-adjoint operator in a Hilbert space $\mathcal{H}$. Given $n\in\nn$, let  $\mu_1, \dots,  \mu_n \in \rr$ be contained in a compact interval $I$ and $\psi_1, \dots,\psi_n \in D(A)$ be linearly independent vectors. Set
\begin{align*}
    \varepsilon &:= \max_{j=1, \dots,n} \|(A-\mu_j) \psi_j\| , \quad \eta := \frac{1}{2} \text{dist}(I, (\spec A) \setminus I),\\
    \lambda& := \text{the smallest eigenvalue of the Gram matrix } (\langle \psi_i, \psi_j \rangle)_{i,j=1,\dots,n}.
\end{align*} 
Consider the subspaces 
\begin{align*}
L_1 := \text{span} \{ \psi_1, \dots,\psi_n \} \quad\text{and}\quad L_2 := \text{the spectral subspace associated with A and I}.
\end{align*} 
If $\eta >0$ holds, then one has
 \[
 d(L_1,L_2) \leq \frac{\varepsilon}{\eta} \sqrt{\frac{n}{\lambda}}.
 \]
 \end{proposition}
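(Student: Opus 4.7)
\medskip
\noindent
\textbf{Proof proposal.}
By the equivalent reformulation $d(L_1,L_2)=\|(I-P_2)P_1\|$, and since $P_1x=x$ for $x\in L_1$, it suffices to prove
\[
\|P_2^\perp x\|\le \frac{\varepsilon}{\eta}\sqrt{\frac{n}{\lambda}}\,\|x\|
\qquad\text{for every } x\in L_1,
\]
where $P_2^\perp=I-P_2=\mathbf 1_{\rr\setminus I}(A)$. The plan has two ingredients, a spectral estimate on each generator $\psi_j$ and a Gram-matrix estimate to convert coefficient bounds into norm bounds.

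First I would control $\|P_2^\perp\psi_j\|$ by functional calculus. Since $P_2^\perp$ is a spectral projector of $A$, for $\psi_j\in D(A)$ we have $P_2^\perp\psi_j\in D(A)$ and $AP_2^\perp\psi_j=P_2^\perp A\psi_j$, so
\[
(A-\mu_j)P_2^\perp\psi_j=P_2^\perp(A-\mu_j)\psi_j,
\quad\text{hence}\quad
\bigl\|(A-\mu_j)P_2^\perp\psi_j\bigr\|\le \bigl\|(A-\mu_j)\psi_j\bigr\|\le\varepsilon.
\]
On the other hand, $\mathrm{Ran}(P_2^\perp)$ is the spectral subspace of $A$ for $\rr\setminus I$, and as $\mu_j\in I$ and $\mathrm{dist}(\mu_j,\spec(A)\setminus I)\ge 2\eta$, the spectral theorem yields $\|(A-\mu_j)v\|\ge 2\eta\|v\|$ for every $v\in\mathrm{Ran}(P_2^\perp)$. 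Applying this to $v=P_2^\perp\psi_j$ gives $\|P_2^\perp\psi_j\|\le \varepsilon/(2\eta)$ for every $j$.

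Next I would combine these estimates with the Gram-matrix bound. Write $x=\sum_{j=1}^n c_j\psi_j\in L_1$. The triangle and Cauchy--Schwarz inequalities give
\[
\|P_2^\perp x\|\le \sum_{j=1}^n |c_j|\,\|P_2^\perp\psi_j\|
\le \frac{\varepsilon}{2\eta}\sum_{j=1}^n|c_j|
\le \frac{\varepsilon}{2\eta}\sqrt{n}\,\|c\|_{\cc^n}.
\]
By definition of the Gram matrix $G=(\langle\psi_i,\psi_j\rangle)_{i,j}$ one has $\|x\|^2=\langle c,Gc\rangle_{\cc^n}\ge\lambda\|c\|_{\cc^n}^2$, hence $\|c\|_{\cc^n}\le\|x\|/\sqrt\lambda$. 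Substituting yields
\[
\|P_2^\perp x\|\le \frac{\varepsilon}{2\eta}\sqrt{\frac{n}{\lambda}}\,\|x\|
\le \frac{\varepsilon}{\eta}\sqrt{\frac{n}{\lambda}}\,\|x\|,
\]
which is the claim. (If $\spec(A)\setminus I=\emptyset$, then $\eta=+\infty$ and the bound is trivial; the argument above implicitly assumes $\eta<\infty$.)

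I do not anticipate a real obstacle: the only point requiring minor care is the commutation of the bounded spectral projector $P_2^\perp$ with $A$ on elements of $D(A)$, which is needed to transfer the estimate $\|(A-\mu_j)\psi_j\|\le\varepsilon$ into a bound on $\|P_2^\perp\psi_j\|$ via the resolvent-type inequality $\|(A-\mu_j)v\|\ge 2\eta\|v\|$ on $\mathrm{Ran}(P_2^\perp)$. The Gram-matrix step is standard and provides the quantitative link between coefficient-wise information on the $\psi_j$ and geometric information on the span $L_1$; this is what produces the $\sqrt{n/\lambda}$ factor in the final estimate.
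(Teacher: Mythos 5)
Your proof is correct. The paper does not prove this proposition at all — it is quoted from Helffer–Sj\"ostrand \cite[Prop.~2.5]{HS84} — and your argument is precisely the standard one behind that reference: commute the spectral projector $P_2^\perp=\mathbf 1_{\rr\setminus I}(A)$ with $A-\mu_j$ to get $\|P_2^\perp\psi_j\|\le\varepsilon/(2\eta)$ from the lower bound $\|(A-\mu_j)v\|\ge 2\eta\|v\|$ on $\ran P_2^\perp$, then pass from the generators to the span via Cauchy–Schwarz and the smallest eigenvalue of the Gram matrix. Note that you actually obtain the sharper constant $\varepsilon/(2\eta)$, which of course implies the stated bound $\varepsilon/\eta$; the only cosmetic point worth flagging is that the paper's displayed definition of $d(L_1,L_2)$ contains a typo ($\|x-P_1x\|$ should read $\|x-P_2x\|$), and your reformulation $d(L_1,L_2)=\|(I-P_2)P_1\|$ is the intended one.
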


\subsection{IMS localization formula and scaling for $\delta$-interactions}

While analyzing spectral properties of a Laplacian on a subset of $\rr$ or $\rr^2$ with a $\delta$-interaction supported at a point or a curve, it is convenient to rescale the interaction parameter and consider the unitary equivalent operator with a fixed parameter that naturally arises from this scaling. We summarize these in the following two lemmas. For the purposes of the present work, a Lipschitz hypersurface $\Sigma\subset\R^n$ is called regular if it can be extended to a Lipschitz hypersurface $\Sigma'$ with $\overline{\Sigma}\subset\Sigma'$.

\begin{lemma}\label{lem: scaling anything}
Let $\Omega \subseteq \rr^n$ be a domain with Lipschitz boundary $\partial \Omega$. For a hypersurface $\Gamma_D \subseteq \partial \Omega$  and a regular hypersurface $\Gamma \subset \overline{\Omega}$, we define for $\alpha\geq 0$ the operator $Q^{\Omega,\Gamma}_\alpha$ generated by
\[ q^{\Omega,\Gamma}_\alpha (u) =\int_{\Omega} |\nabla u|^2 \mathrm{d}x - \alpha \int_{\Gamma} |u|^2 \dS, \quad D(q^{\Omega, \Gamma}_\alpha) = \{u \in H^1(\Omega) :  u|_{\Gamma_D} = 0 \}.
\]
Then $Q^{\Omega,\Gamma}_\alpha$  is  unitarily equivalent to $\alpha^2 Q^{\alpha \Omega, \alpha \Gamma}_1$.
\end{lemma}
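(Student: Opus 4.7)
The plan is to construct the unitary equivalence explicitly via the natural dilation map. Define $U: L^2(\Omega) \to L^2(\alpha\Omega)$ by $(Uu)(y) := \alpha^{-n/2} u(y/\alpha)$. A change of variables $x = y/\alpha$ (with $dy = \alpha^n \, dx$) shows at once that $U$ is unitary, and that $U$ maps $H^1(\Omega)$ onto $H^1(\alpha\Omega)$ bijectively, preserving the Dirichlet trace condition on $\Gamma_D$ (which becomes a Dirichlet trace condition on $\alpha\Gamma_D\subset\partial(\alpha\Omega)$). Thus $U$ maps $D(q^{\Omega,\Gamma}_\alpha)$ onto $D(q^{\alpha\Omega,\alpha\Gamma}_1)$.

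Next I would compute how the two pieces of the form rescale. For $v = Uu$ one has $\nabla_y v(y) = \alpha^{-n/2-1}(\nabla u)(y/\alpha)$, so the same change of variables yields
\[
\int_{\alpha\Omega}|\nabla_y v|^2\,dy \;=\; \alpha^{-2}\int_\Omega|\nabla u|^2\,dx.
\]
For the boundary term, the surface measure on the hypersurface $\alpha\Gamma$ satisfies $dS_{\alpha\Gamma}(\alpha x) = \alpha^{n-1}\, dS_\Gamma(x)$, so
\[
\int_{\alpha\Gamma}|v|^2\,dS \;=\; \alpha^{-n}\!\!\int_{\alpha\Gamma}|u(y/\alpha)|^2\,dS_y \;=\; \alpha^{-1}\int_\Gamma|u|^2\,dS.
\]
Combining these two identities gives
\[
q^{\alpha\Omega,\alpha\Gamma}_1(Uu) \;=\; \alpha^{-2}\int_\Omega|\nabla u|^2\,dx - \alpha^{-1}\int_\Gamma|u|^2\,dS \;=\; \alpha^{-2}\, q^{\Omega,\Gamma}_\alpha(u).
\]

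Finally, since both sesquilinear forms are closed, densely defined and lower semibounded, the identity $q^{\alpha\Omega,\alpha\Gamma}_1\circ U = \alpha^{-2} q^{\Omega,\Gamma}_\alpha$ on $D(q^{\Omega,\Gamma}_\alpha)$ together with the fact that $U$ maps the respective form domains onto each other implies at the level of the generating operators that $U Q^{\Omega,\Gamma}_\alpha U^{-1} = \alpha^2 Q^{\alpha\Omega,\alpha\Gamma}_1$. This is exactly the claimed unitary equivalence. There is no conceptual obstacle: the only thing to be careful about is the bookkeeping of powers of $\alpha$ (the $L^2$-normalization prefactor $\alpha^{-n/2}$, the Jacobian $\alpha^n$ of the volume change, and the $\alpha^{n-1}$ factor of the surface measure), which conspire to produce a factor $\alpha^{-2}$ in the gradient term and $\alpha^{-1}$ in the boundary term, so that factoring out $\alpha^{-2}$ turns the coupling $\alpha$ into $1$.
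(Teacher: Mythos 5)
Your proof is correct and follows essentially the same route as the paper: the paper uses the dilation map in the opposite direction, $\Phi = U^{-1}: L^2(\alpha\Omega)\to L^2(\Omega)$, and verifies the identity $q^{\Omega,\Gamma}_\alpha(\Phi u)=\alpha^2 q^{\alpha\Omega,\alpha\Gamma}_1(u)$, which is equivalent to your computation. The bookkeeping of the powers of $\alpha$ in the volume, gradient, and surface terms matches the paper's.
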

\begin{proof}Consider the map
     \[
     \Phi: L^2( \alpha\Omega) \rightarrow L^2( \Omega), \quad u(x_1, \dots,x_n) \mapsto  \alpha^{\frac{n}{2}} \cdot u(\alpha x_1, \dots, \alpha x_n).
     \]
    One easily checks that $\Phi$ is unitary, its inverse is given by $ \Phi^{-1}(u(x)) = \alpha^{-\frac{n}{2}} u(\alpha^{-1}x)$, and it holds that
    \[
    \|\Phi u\|^2_{L^2(\Omega)} = \int_{ \Omega} \alpha^n |u(\alpha x)|^2 \mathrm{d}x = \int_{\alpha\Omega}\alpha^{n-n}  |u(y)|^2 \mathrm{d}y = \| u \|^2_{L^2(\alpha \Omega)}.
    \]
    We can then write
    \begin{align*}
        q^{\Omega,\Gamma}_\alpha (\Phi u)& = \alpha^{n+2} \int_{\Omega} |\nabla u(\alpha x) |^2 \mathrm{d}x - \alpha^{n+1} \int_\Gamma |u(\alpha x)|^2 \dS \\
        &= \alpha^{n+2-n} \int_{\alpha \Omega} |u|^2 \mathrm{d}x - \alpha^{n+1-(n-1)} \int_{\alpha \Gamma} |u|^2 \dS = \alpha^2 q^{\alpha\Omega, \alpha\Gamma}_1 (u).
    \end{align*}
   Since $\Phi(H^1(\alpha \Omega)) = H^1(\Omega)$, we conclude that $\Phi(D(q^{\alpha \Omega, \alpha \Gamma}_1)) = D(q_\alpha^{\Omega,\Gamma})$. Therefore,  $Q^{\Omega,\Gamma}_\alpha \cong \alpha^2 Q^{\alpha \Omega, \alpha \Gamma}_1$ and the lemma is proved.
\end{proof}
\begin{remark}\label{Transmission condition} The operator $Q^{\Omega,\Gamma}_\alpha$ of Lemma \ref{lem: scaling anything} is often formally written as  $Q^{\Omega,\Gamma}_\alpha= -\Delta -\alpha\delta_\Gamma$. If one denotes by $\Omega_+$ and $\Omega_-$ the interior and exterior domains with respect to $\Gamma$ in $\Omega$, i.e., $\partial\Omega_+=\Gamma$, let $u_\pm$ be the restriction of  functions $u\in L^2(\Omega)$ in $\Omega_\pm$. Then, for $u=(u_+,u_-)\in D(Q^{\Omega,\Gamma}_\alpha)$ the Dirac distribution $\delta_\Gamma$ gives rise to the transmission condition on $\Gamma$:
\[
\alpha u_\pm= \partial_{\nu}u_+ -\partial_{\nu}u_-  \quad\text{with }\, u_+=u_-,
\]
where $\partial_{\nu}u_+ -\partial_{\nu}u_-$ is the jump of the normal derivative with respect to the normal vector $\nu$ pointing outward of $\Omega_+$.     
\end{remark}

Throughout this paper, we frequently employ the IMS localization formula. To avoid redundancy, we present the details of its application in our specific setting only once.
\begin{lemma}
\label{lem: IMS partition}
Let $\Omega \subseteq \rr^n$ be a domain with Lipschitz boundary $\partial\Omega$. Let $\Gamma_D \subseteq \partial \Omega$ be a hypersurface and $\Gamma \subset \overline{\Omega}$ a regular hypersusrface.
Consider the sesquilinear form
 \[
 q(u) = \int_\Omega |\nabla u|^2 \mathrm{d}x - \alpha \int_{\Gamma} |u|^2 \dS, \quad D(q) = \{u \in H^1(\Omega) :  u|_{\Gamma_D} = 0 \}.
 \]
Let $0<a<b$ and $R>0$, and let $\chi_0, \chi_1\in C^\infty(\rr_+)$ be such that 
    \begin{align*}  \chi_0(t) = \begin{cases}
        1, & t\in[0,a], \\
        0, & t \in [b,\infty),
    \end{cases} \quad\text{and}\quad \chi_0^2+\chi_1^2 \equiv 1, 
    \end{align*}
    and set $\chi_j^R(x) := \chi_j(|x|/R)$ for $j\in\{0,1\}$. Then for any $u,v\in D(q)$ we have
\[
q(u,v) = q(\chi_0^R u,\chi_0^R v) + q(\chi^R_1 u, \chi^R_1 v) - \int_{\Omega} \overline{u}v( |\nabla \chi_0^R|^2 + |\nabla \chi_1^R|^2)\mathrm{d}x.
\]
\end{lemma}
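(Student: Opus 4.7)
The plan is a direct polarization of the quadratic form identity, after a preliminary check that multiplication by the cutoffs preserves the form domain.

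First, I would verify that $u\in D(q)$ implies $\chi_j^R u\in D(q)$ for $j=0,1$. Since $\chi_0$ is smooth and compactly supported on $[0,b]$, the function $\chi_0^R$ lies in $C_c^\infty(\R^n)$. For $\chi_1$, one observes that the relation $\chi_0^2+\chi_1^2\equiv 1$ forces $|\chi_1|\le 1$ everywhere, and the support condition on $\chi_0$ implies $\chi_1\equiv\pm 1$ on $[b,\infty)$; therefore $\chi_1^R\in W^{1,\infty}(\R^n)$ with $\nabla\chi_1^R$ supported in the annulus $aR\le |x|\le bR$. In both cases $\chi_j^R u\in H^1(\Omega)$ by the usual product rule for $H^1$-functions with Lipschitz multipliers, and the Dirichlet condition on $\Gamma_D$ is preserved since $u|_{\Gamma_D}=0$ implies $(\chi_j^R u)|_{\Gamma_D}=0$.

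Next, by polarization it is enough to prove the quadratic version
\[
q(u)=q(\chi_0^R u)+q(\chi_1^R u)-\int_\Omega |u|^2\bigl(|\nabla\chi_0^R|^2+|\nabla\chi_1^R|^2\bigr)\ddd x
\]
and then pass to the sesquilinear identity. I would start from the Leibniz expansion
\[
|\nabla(\chi_j^R u)|^2=(\chi_j^R)^2|\nabla u|^2+2\chi_j^R\,\mathrm{Re}\bigl(\bar u\,\nabla u\bigr)\cdot\nabla\chi_j^R+|u|^2|\nabla\chi_j^R|^2,
\]
and sum over $j=0,1$. The crucial algebraic step is to differentiate the identity $\chi_0^2+\chi_1^2\equiv 1$, which yields the pointwise cancellation
\[
\chi_0^R\nabla\chi_0^R+\chi_1^R\nabla\chi_1^R\equiv 0,
\]
so the cross term vanishes and one is left with
\[
\sum_{j=0,1}|\nabla(\chi_j^R u)|^2=|\nabla u|^2+|u|^2\bigl(|\nabla\chi_0^R|^2+|\nabla\chi_1^R|^2\bigr).
\]
For the boundary contribution, the identity $(\chi_0^R)^2+(\chi_1^R)^2\equiv 1$ gives directly $\sum_j |\chi_j^R u|^2=|u|^2$ on $\Gamma$, so the $\alpha$-terms reassemble cleanly. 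Integrating over $\Omega$ and $\Gamma$ yields the desired quadratic identity.

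Finally, polarization gives the stated sesquilinear formula: since both sides are sesquilinear in $(u,v)\in D(q)\times D(q)$ and agree on the diagonal $u=v$, they coincide everywhere. There is no real obstacle here; the only subtle point is to make sure $\chi_j^R u\in D(q)$ with $\chi_j^R\in W^{1,\infty}(\Omega)$ so that the product rule and the trace on $\Gamma$ are justified, which was addressed in the first step.
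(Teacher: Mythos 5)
Your proof is correct and follows essentially the same route as the paper: the Leibniz expansion of $|\nabla(\chi_j^R u)|^2$ combined with the cancellation coming from $\nabla\bigl((\chi_0^R)^2+(\chi_1^R)^2\bigr)\equiv 0$; the paper simply carries out this computation directly on the sesquilinear form $q(u,v)$ rather than proving the diagonal case and polarizing. Your preliminary check that $\chi_j^R u\in D(q)$ is a welcome addition that the paper leaves implicit.
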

\begin{proof} For $j \in \{0,1 \}$ a simple computation gives
    \begin{align*} 
    \langle \nabla(\chi_j^R u), \nabla(\chi_j^R v) \rangle =& (\chi_j^R)^2 \langle \nabla u, \nabla v \rangle + \overline{u}v |\nabla \chi_j^R|^2 + \frac{\overline{u}}{2} \langle \nabla((\chi_j^R)^2), \nabla v \rangle +\frac{{v}}{2} \langle \nabla u  , \nabla((\chi_j^R)^2) \rangle .
    \end{align*}
   Observe that $\nabla((\chi_0^R)^2 + (\chi_1^R)^2 ) \equiv 0$ because $(\chi_0^R)^2 + (\chi_1^R)^2 \equiv1 $. Using this, it follows that
    \begin{align*}
        q(\chi_0^R u,\chi_0^R v) + &q(\chi^R_1 u, \chi^R_1 v) = \int_{\Omega}\langle \nabla  u, \nabla v \rangle \mathrm{d}x - \alpha \int_\Gamma ((\chi_0^R)^2 + (\chi_1^R)^2) \overline{u}v \dS\\
        &+  \int_{\Omega} \overline{u}v( |\nabla \chi_0^R|^2 + |\nabla\chi_1^R|^2) \mathrm{d}x 
        + \frac{1}{2} \int_{\Omega}\Big( \overline{u} \langle \nabla((\chi_0^R)^2 + (\chi_0^R)^2 ), \nabla v \rangle \\
        &+ v \langle \nabla u, \nabla((\chi_0^R)^2 + (\chi_0^R)^2 )  \rangle \Big)\mathrm{d}x \\
        =&\, q(u,v)+ \int_{\Omega} \overline{u}v( |\nabla \chi_0^R |^2 + |\nabla\chi_1^R|^2) \mathrm{d}x.\qedhere
    \end{align*}
\end{proof}

\subsection{Laplacian with a point interaction}
The model operator of a Laplacian with a point interaction defined in an interval with various boundary conditions will frequently appear in the analysis of spectral properties of the operators discussed in Sections \ref{sec: angle}–\ref{sec: W delta}. In particular, the asymptotic behavior of their first and second eigenvalues will be needed. In this subsection. We gather the necessary results. To begin, let us fix some definitions and notations that will be used throughout.
\begin{definition}\label{def t X}Given $\alpha\geq 0$, we define the sesquilinear form of the Laplacian on an interval of length $2L$ by
     \[
     t^X_{L,\alpha} (u) = \int_{-L}^L |\nabla u|^2 \mathrm{d}x - \alpha |u(0)|^2 ,
     \]
    where $X \in \{D, N ,ND\}$ corresponds to the Neumann/Dirichlet/Neumann-Dirichlet boundary conditions at $-L$ and $L$ with domains
    \[
    D(t^D_{L,\alpha}) = H^1_0(-L,L), \,\,\, D(t^N_{L,\alpha}) =H^1(-L,L), \,\,\, 
    D(t^{ND}_{L,\alpha}) = \{ u \in H^1(-L,L): u(L) = 0 \}.
    \]
    Additionally, given $\beta\geq0$,  we let $t^\beta_{L,\alpha}$ be the sesquilinear form with mixed boundary conditions at $-L$ and $L$ defined by
     \[
     t^\beta_{L,\alpha} (u) = \int_{-L}^L |\nabla u|^2 \mathrm{d}x - \alpha |u(0)|^2 - \beta (|u(-L)|^2 + |u(L)|^2), \quad  D(t^\beta_{L,\alpha}) = H^1(-L,L).
     \]
    We abbreviate
    \[
    t^X_{L}:=t^X_{L,\alpha},
    \]
    and from now on, the use of the expression $t^X_{L,\alpha}$ implies the assumption $\alpha > 0$.
\end{definition}

We start with the first two eigenvalues of $T^N_{L,\alpha}$ and $T^\beta_{L,\alpha}$.
\begin{proposition}\label{lem: ev of t,N,L,alpha}
There exists $c>0$ such that as $L\alpha \to \infty$, $\alpha \to \infty$, and $L \to 0^+ $, the following hold:
\begin{itemize}
\item[(i)] $-\frac{1}{4} \alpha^2 < E_1(T^D_{L,\alpha}) <  -\frac{\alpha^2}{4} + \mathcal{O}(\alpha^2 e^{-\frac{1}{2} L \alpha } )$. 
    \item[(ii)] $ - \frac{\alpha^2}{4} + \mathcal{O}(\alpha^2 e^{-\frac{1}{2}L \alpha}) <   E_1(T^N_{L,\alpha}) < -\frac{\alpha^2}{4}  $.
    \item[(iii)] $ - \frac{\alpha^2}{4} + \mathcal{O}(\alpha^2 e^{-\frac{1}{2}L \alpha}) <   E_1(T^\beta_{L,\alpha}) < -\frac{\alpha^2}{4}  $.
    \item[(iv)] $E_2(T^\beta_{L,\alpha}) > \frac{c}{L^2} $.
    \item[(v)] $E_2(T^N_{L,\alpha}) > \left(\frac{\pi}{2L}\right)^2 $.
    \item[(vi)] $E_1(T^{ND}_{L,\alpha}) > -\frac{\alpha^2}{4} + \mathcal{O} (\alpha^2e^{-\frac{1}{2}L \alpha})$.
\end{itemize}
\end{proposition}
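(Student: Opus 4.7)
The plan is to split the six estimates into three groups, all handled by combining a parity decomposition about $x=0$ with explicit spectral computations for the ground state of a Laplacian on $(-L,L)$ with an attractive point interaction at $0$. For (i), (ii), and (vi), I would use the classical closed-form construction: writing $E_1=-\mu^2$, the eigenfunction is necessarily of the form $A_-\phi_-(\mu(x+L))$ on $(-L,0)$ and $A_+\phi_+(\mu(L-x))$ on $(0,L)$, with $\phi_\pm\in\{\sinh,\cosh\}$ dictated by the boundary condition at $\pm L$. Continuity at $0$ and the jump $u'(0^+)-u'(0^-)=-\alpha u(0)$ then reduce the problem, respectively, to the transcendental equations
\[
\tanh(\mu L)=\frac{2\mu}{\alpha},\qquad \coth(\mu L)=\frac{2\mu}{\alpha},\qquad \tanh(2\mu L)=\frac{2\mu}{\alpha}.
\]
Since $\tanh<1<\coth$ on $(0,\infty)$, the strict one-sided comparison between $\mu$ and $\alpha/2$ (and hence between $E_1$ and $-\alpha^2/4$) is immediate. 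To extract the exponentially small correction, I substitute $\mu=\tfrac{\alpha}{2}\pm\eta$ and apply $\tanh(z)=1-2e^{-2z}+o(e^{-2z})$ as $z\to\infty$, obtaining $\eta=\mathcal{O}(\alpha e^{-\alpha L})$ and hence a contribution $\mathcal{O}(\alpha^2 e^{-\alpha L})$ in $E_1=-\mu^2$, well within the stated $\mathcal{O}(\alpha^2 e^{-L\alpha/2})$.

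For (iii), the invariance of $t^\beta_{L,\alpha}$ under $x\mapsto -x$ decomposes $T^\beta_{L,\alpha}$ into even and odd sectors. On the even sector the $\delta$-jump becomes the Robin condition $u'(0)=-\tfrac{\alpha}{2}u(0)$, so this sector is unitarily equivalent to the Robin Laplacian on $(0,L)$ with $u'(0)=-\tfrac{\alpha}{2}u(0)$ and $u'(L)=\beta u(L)$. Its transcendental ground-state equation differs from the Neumann--Neumann case of (ii) only by an additive $\mathcal{O}(\beta e^{-\mu L})$ term, which suffices to give the lower bound. For the upper bound $E_1<-\tfrac{\alpha^2}{4}$, I would plug the trial function $\psi(x)=e^{-\alpha|x|/2}$ into $t^\beta_{L,\alpha}$; a direct evaluation yields the Rayleigh quotient
\[
-\frac{\alpha^2}{4}-\Big(\frac{\alpha^2}{2}+\alpha\beta\Big)e^{-\alpha L}+\mathcal{O}(e^{-2\alpha L})<-\frac{\alpha^2}{4},
\]
using $\beta\geq 0$.

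For (iv) and (v), the same parity decomposition reduces everything to the half-interval $(0,L)$. The odd sector does not see the delta and acts as $-\partial_x^2$ on $(0,L)$ with Dirichlet at $0$ and Neumann (for (v)) or Robin-$\beta$ (for (iv)) at $L$: for (v) the first eigenvalue is exactly $(\pi/(2L))^2$; for (iv), since $\beta L\to 0$ as $L\to 0^+$ the Robin endpoint is a small perturbation of Neumann, and the Dirichlet--Robin first eigenvalue is bounded below by $c/L^2$ uniformly in the regime. The excited eigenvalues of the even sector correspond to the real-$k$ branch of the transcendental equation $k\tan(kL)=-\alpha/2$ (or its Robin perturbation at $L$), whose first positive root always satisfies $kL>\pi/2$, so $E_2^{\mathrm{even}}>(\pi/(2L))^2$. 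Taking the minimum across the two parity sectors yields (iv) and (v). The main technical challenge is making the constant $c$ in (iv) uniform across the three simultaneous limits $L\alpha\to\infty$, $\alpha\to\infty$, $L\to 0^+$: because $\mu L\to\infty$ the exponential tails $e^{-2\mu L}$ from the transcendental equations are harmlessly dominated by $e^{-L\alpha/2}$, but quantifying the Dirichlet--Robin versus Dirichlet--Neumann comparison on short intervals still requires care. Everything else follows from the explicit transcendental-equation estimates once the parity decomposition is in place.
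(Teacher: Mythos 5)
Your route is genuinely different from the paper's and, with one caveat below, it works. The paper simply cites \cite{EY02} for (i)--(iii), obtains (vi) from (ii) by Dirichlet--Neumann bracketing, and proves (iv)--(v) by writing down an ansatz for the second eigenfunction and computing a $2\times2$ determinant. Your explicit transcendental equations $\tanh(\mu L)=2\mu/\alpha$, $\coth(\mu L)=2\mu/\alpha$, $\tanh(2\mu L)=2\mu/\alpha$ are all correct (I checked the $ND$ case: $\mu(\sinh^2+\cosh^2)=\alpha\sinh\cosh$ does reduce to $\tanh(2\mu L)=2\mu/\alpha$), the one-sided comparisons with $\alpha/2$ follow from $\tanh<1<\coth$, and the trial function $e^{-\alpha|x|/2}$ does give a Rayleigh quotient strictly below $-\alpha^2/4$ for $\beta\ge0$. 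What your approach buys is self-containedness (no external reference for (i)--(iii)) and, for (iv)--(v), a cleaner logical structure: the parity decomposition makes it transparent that one must examine \emph{both} the even and the odd sector, whereas the paper's proof asserts $u_-(x)=u_+(-x)$ for the second eigenfunction, i.e.\ silently assumes it is even, and never rules out odd eigenfunctions. For (iii) you should also say a word about the odd sector (Dirichlet at $0$, Robin $\beta$ at $L$), which is bounded below by $-\beta^2=\mathcal{O}(1)\gg-\alpha^2/4$, so it cannot spoil the lower bound; and the claim that the even-sector secular equation differs from the Neumann case by $\mathcal{O}(\beta e^{-\mu L})$ deserves an actual computation, though it is correct.

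The caveat concerns (v). Your own decomposition shows that the odd sector of $T^N_{L,\alpha}$ is the Dirichlet--Neumann Laplacian on $(0,L)$, whose first eigenvalue is \emph{exactly} $(\pi/(2L))^2$, and the even sector's second eigenvalue is $k^2$ with $kL\in(\pi/2,\pi)$, hence strictly larger. Taking the minimum over the two sectors therefore gives $E_2(T^N_{L,\alpha})=(\pi/(2L))^2$, an equality --- the strict inequality claimed in (v) is false as stated (indeed $\sin(\pi x/(2L))$ is an eigenfunction of $T^N_{L,\alpha}$: it vanishes at $0$, so it satisfies the transmission condition, and it satisfies Neumann conditions at $\pm L$). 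So you cannot write that the minimum over sectors ``yields (v)''; it yields the non-strict bound $E_2\ge(\pi/(2L))^2$, which is all that is actually used later (in the proof of Lemma 4.8 the bound enters only through $E_2(T^N_{R,1})\ge\pi^2/(4R^2)$ up to an additive constant). This is a defect of the statement and of the paper's proof rather than of your argument, but a complete write-up should state the conclusion as $\ge$ and note explicitly that the odd sector saturates it.
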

\begin{proof} Assertion (i) is proved in \cite[Proposition 2.4]{EY02}.   The assertions (ii) and (iii) have been proved in \cite[Proposition 2.5]{EY02}. In there, it was also shown that $E_1(T^X_{L,\alpha})$ is the unique negative eigenvalue of $T^X_{L,\alpha}$, $X=N, \beta$. From this and Remark \ref{Transmission condition} it follows that the second eigenvalue can be written as $E_2(T^X_{L,\alpha}):= \lambda^2$ with $\lambda \geq 0$, and its associated eigenfunction $u = (u_+,u_-)$ satisfies the equation $ -u^{\prime\prime}(x) = \lambda^2u(x)$ for $ x \in (-L,L)\setminus\{0\}$, and fulfills the transmission and boundary conditions
\begin{align}\label{eq: T beta L alpha}
 u_-'(0^-) - u_+'(0^+) = \alpha u(0),\quad    u_+(L) = \beta u_+(L) \quad    u_-'(-L) = - \beta u_-(-L),
\end{align}
where the case $\beta = 0$ corresponds to the Neumann boundary condition. Since $u_+(0^+)=u_-(0^-)$ it follows that $u_\pm = a \cos(\lambda x) + b_\pm \sin(\lambda x)$ for some $a,b_\pm \in \mathbb{C}$. Note that $\lambda \neq 0$ because $u$ cannot vanish identically on $(-L,L)$ due to the transmission condition.

Observe that $T^X_{L,\alpha}$ commutes with the parity operator $v(x)\mapsto v(-x)$. Thus, $u_-(x)=u_+(-x)$ for $x\in(-L,0]$, and therefore $u_-(x)=u_+(-x)  = a \cos(\lambda x) - b_+ \sin(\lambda x)$, which implies that $b_-=-b_+:=b$, and thus $u_\pm = a \cos(\lambda x) \pm b \sin(\lambda x)$.
On one hand, the transmission condition at $0$ gives $\alpha a \equiv\alpha u(0)= u_-'(0) - u_+(0)'  = -2b\lambda$.
On the other hand, the Neumann boundary conditions yield
\begin{align*}
u_+'(L) &= (-a\lambda \sin(\lambda L) +  b \lambda \cos(\lambda L)) = \beta (a \cos(\lambda L) + b (\sin(\lambda L))) = \beta u_+(L),
\end{align*}
and the Robin boundary condition gives 
\begin{align*}
u_-'(-L) &= (-a\lambda \sin(-\lambda L) -  b \lambda \cos(-\lambda L)) \\
&= -\beta (a \cos(-\lambda L) - b (\sin(-\lambda L))) = -\beta u_-(-L).
\end{align*}
The above conditions lead to the system 
\[
A(\lambda)\begin{pmatrix}
    a\\b 
\end{pmatrix}:=\begin{pmatrix}
    \alpha & 2\lambda\\ 
    \beta \cos(\lambda L) + \lambda \sin(\lambda L) 
    & \beta \sin(\lambda L) - \lambda \cos(\lambda L)
\end{pmatrix}  \cdot \begin{pmatrix}
    a\\b 
\end{pmatrix} = \begin{pmatrix}
    0\\0
\end{pmatrix},
\]
and we compute
\begin{align*}
 \det(A(\lambda)) &= \sin(\lambda L ) (\alpha \beta - 2 \lambda^2) + \cos(\lambda L ) (- \lambda \alpha - 2 \lambda \beta)\\
&= (\alpha \beta - 2 \lambda^2)(\lambda L + \mathcal{O}((\lambda L)^3)) + (- \lambda \alpha - 2 \lambda \beta) (1 + \mathcal{O}((\lambda L)^2))\\
&= \lambda L (\alpha(\beta - \frac{1}{L}) - 2 \lambda^2 - \beta) + \mathcal{O}((\lambda L)^2).
\end{align*}
Let us first assume that $\beta\neq0$. Then, for $L < \beta^{-1}$ there exists $c_0>0$ such that for all $\lambda L < c_0$ we have $\det(A(\lambda))< 0$. From this, we see that if $\lambda \in [0, \frac{c_0}{L})$ then  $\det(A(\lambda))< 0$, which implies that there are no solutions to the matrix equation above, and this would imply that $u=0$. Therefore, we get 
\[
E_2(T^\beta_{L,\alpha})= \lambda^2 \geq \left(\frac{c_0}{L}\right)^2,
\]
and this proves (iv). On the contrary, if $\beta = 0$ then
\[
\det(A(\lambda)) = -2\lambda^2 \sin(\lambda L) - \lambda \alpha \cos(\lambda L) = -\lambda( 2\lambda \sin(\lambda L) + \alpha \cos(\lambda L) ),
\]
and we easily see that $\det(A(\lambda))<0$ for $\lambda \in [0,\frac{\pi}{2L}]$. Then, the same arguments as before give the desired lower bound for $E_2(T^N_{L,\alpha})$ and complete the proof of (v). 

Finally, using (v) and the Dirichlet-Neumann bracketing, we obtain
\[
 E_1(T^{ND}_{L,\alpha}) \geq E_1(T^{N}_{L,\alpha})   > -\frac{\alpha^2}{4} +\mathcal{O}( \alpha^2e^{-\frac{1}{2}L \alpha}).
    \]
    which yields (vi), and the proposition is proved.
\end{proof}
The following assertion is elementary:
\begin{lemma}\label{lem: ev of t,ND,L}  $E_1(T^{ND}_{L}) = \frac{\pi^2}{16 L^2}$.
\end{lemma}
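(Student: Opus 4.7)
The operator $T^{ND}_L$ corresponds to the form $t^{ND}_L$, i.e.\ $t^{ND}_{L,\alpha}$ with $\alpha=0$, so no point interaction remains at the origin. Thus $T^{ND}_L$ is simply the one-dimensional Laplacian on $(-L,L)$ with Neumann boundary condition at $-L$ and Dirichlet boundary condition at $L$, and its spectrum can be computed explicitly. My plan is a direct diagonalization.

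First I would perform the shift $y=x+L\in(0,2L)$, transforming the problem into $-u''(y)=\lambda u(y)$ with $u'(0)=0$ and $u(2L)=0$. For $\lambda>0$, the general Neumann-compatible solution is $u(y)=A\cos(\sqrt{\lambda}\,y)$, and the Dirichlet condition at $y=2L$ forces $\cos(2L\sqrt{\lambda})=0$, i.e.
\[
2L\sqrt{\lambda}=\frac{(2k-1)\pi}{2},\qquad k=1,2,\dots,
\]
yielding the eigenvalues $\lambda_k=\bigl((2k-1)\pi/(4L)\bigr)^2$. The smallest is $\lambda_1=\pi^2/(16L^2)$, and one checks separately that $\lambda=0$ is not an eigenvalue (constants fail the Dirichlet condition at $L$), so this is indeed $E_1(T^{ND}_L)$.

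The argument is entirely elementary and poses no obstacle: the only point worth flagging is the conventional reading of the abbreviation $t^X_L=t^X_{L,0}$, which removes the $\delta$-term so that the eigenvalue problem decouples into a standard mixed Sturm--Liouville problem on $(-L,L)$.
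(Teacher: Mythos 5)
Your proof is correct: the paper offers no argument for this lemma (it is simply labelled ``elementary''), and your direct diagonalization of the mixed Neumann--Dirichlet Laplacian on an interval of length $2L$ is exactly the intended computation, including the correct reading of $t^{ND}_L=t^{ND}_{L,0}$ so that the $\delta$-term disappears. The only point you leave implicit is that there are no negative eigenvalues, but this is immediate since the form $\int_{-L}^L|u'|^2\,\mathrm{d}x$ is non-negative.
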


\section{Schr\"odinger operator with a strong $\delta$-interaction supported on a broken line}\label{sec: angle}
In this section, we study the properties of the Schr\"{o}dinger operator with a strong $\delta$-interaction supported on the boundary of an infinite sector. In particular, we analyze the asymptotics of the eigenvalues (when they exist), which, after localization arguments,  will lead to the asymptotic results stated in Theorem \ref{theo: corner induced}.

To begin with, let us fix the notations used in this section.
\begin{notation} From now on, for $\theta\in(0,\pi)$ we let 
	\begin{align}\label{def Gamma_theta}
		\Gamma_\theta:=\big\{(r\cos\omega,r\sin\omega)\in\rr^2:\, r\ge 0,\  |\omega|=\theta \big\},
	\end{align}
which is the union of two half-lines meeting at the origin with the angle $2\theta$ between them.
Then, we have the decomposition $\rr^2= \Omega_\theta^+\cup \Gamma_\theta \cup \Omega_\theta^-$ with the convention that $ \Omega_\theta^+ $ is the wedge with the angle $2\theta$ and the normal $\nu$ pointing outwards of $\Omega_+$ as can be seen on Figure \ref{fig: H^alpha_theta}. We also use the notation $\rr^2_\pm = \{(x_1,x_2) \in \rr^2 :\, \pm x_2 > 0 \}$ for the upper (respectively the lower) half-plane.
\begin{figure}
    \centering
\begin{tikzpicture}[x=0.75pt,y=0.75pt,yscale=-1,xscale=1]

\draw [line width=1.25]   (220,60) -- (100,120) -- (220,180) ;
\draw    (100,190) -- (100,53) ;
\draw [shift={(100,50)}, rotate = 90] [fill={rgb, 255:red, 0; green, 0; blue, 0 }  ][line width=0.08]  [draw opacity=0] (3.57,-1.72) -- (0,0) -- (3.57,1.72) -- cycle    ;
\draw    (10,120) -- (217,120) ;
\draw [shift={(220,120)}, rotate = 180] [fill={rgb, 255:red, 0; green, 0; blue, 0 }  ][line width=0.08]  [draw opacity=0] (3.57,-1.72) -- (0,0) -- (3.57,1.72) -- cycle    ;
\draw    (120,110) .. controls (125,118.17) and (124,123.17) .. (120,130) ;

\draw (114,121.33) node [anchor=north west][inner sep=0.75pt]  [font=\tiny] [align=left] {$\displaystyle \theta $};
\draw (113.67,112.33) node [anchor=north west][inner sep=0.75pt]  [font=\tiny] [align=left] {$\displaystyle \theta $};
\draw (210.33,121.67) node [anchor=north west][inner sep=0.75pt]  [font=\tiny] [align=left] {$\displaystyle x_{1}$};
\draw (88,51) node [anchor=north west][inner sep=0.75pt]  [font=\tiny] [align=left] {$\displaystyle x_{2}$};
\draw (178.67,59.67) node [anchor=north west][inner sep=0.75pt]  [font=\scriptsize,color={black}  ,opacity=1 ] [align=left] {$\displaystyle \Gamma_{\theta }$};
\draw (176.67,100.33) node [anchor=north west][inner sep=0.75pt]  [font=\scriptsize] [align=left] {$\displaystyle \Omega_{\theta }^{+}$};
\draw (44.67,82.33) node [anchor=north west][inner sep=0.75pt]  [font=\scriptsize] [align=left] {$\displaystyle \Omega_{\theta }^{-}$};

\end{tikzpicture}
    \caption{Broken line $\Gamma_\theta$ with angle $2\theta$ splitting $\rr^2$ into $\Omega_\theta^+$ and $\Omega_\theta^-$.}
    \label{fig: H^alpha_theta}
\end{figure}
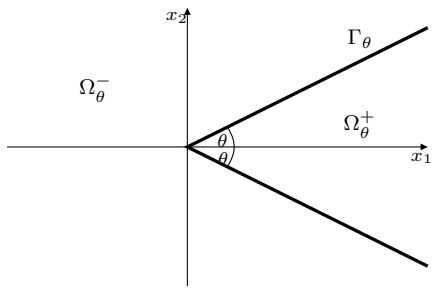

    Throughout this section, for $\alpha > 0$ we define  the sesquilinear form
    \begin{align}\label{h_alpha}
    h^\alpha_\theta (u,v) = \langle \nabla u, \nabla v \rangle_{L^2(\rr^2)} - \alpha \int_{\Gamma_\theta} \overline{u} v \dS, \quad D(h^\alpha_\theta) = H^1(\rr^2).
    \end{align}
    It is well known that $h^\alpha_\theta$ is closed and semi-bounded from below (see \cite{EN03}), and therefore generates a self-adjoint operator $H^\alpha_\theta$.
    In the following, we set
    \begin{align}\label{eigenvaluenumber}
    \begin{split}
        \kappa(\theta) &:= \text{the number of discrete eigenvalues of } H^\alpha_\theta,
    \end{split}
    \end{align}
    and for the special case $\alpha=1$ we use the special notation
    \begin{align}\label{eigenvaluenumber1}
    \begin{split}
        \mathcal{E}_n(\theta) &= E_n(H^1_\theta), \quad n \in \{1,\dots,\kappa(\theta) \},
    \end{split}
    \end{align}
    where we recall that $E_n(H^1_\theta)$ denotes the $n$-th eigenvalue of $H^1_\theta$.
    \end{notation}

  As mentioned in Remark \ref{Transmission condition}, one easily checks with the help of representation theorems that $H^\alpha_\theta$ is the operator defined on the domain 
  \begin{align*} 
		\begin{split}
        D(H^\alpha_\theta) = \big\{u = (u_+,u_-) \in H^1(\Omega_\theta^+)&\oplus H^1(\Omega_\theta^-): \Delta u_\pm \in L^2(\Omega_\theta^\pm),\\
   & u_+=u_- \text{ on }\Gamma_\theta \,\text{ and }\, \alpha u = \partial_{\nu}u_+ -\partial_{\nu}u_-\text{ on } \Gamma_\theta \big\},
    \end{split}
    \end{align*}
    and acts  in the sense of distributions as $H^\alpha_\theta u= (-\Delta u_+, -\Delta u_- )$.

We next introduce the Laplacian on $\Omega_\theta^+$ with an $\alpha-$Robin boundary condition on $\partial \Omega_\theta^+ = \Gamma_\theta$.
\begin{definition}\label{def_s_theta} Given $\alpha > 0$ and $\theta \in (0,\pi)$, we denote by $s^\alpha_\theta$ the sesquilinear form defind by
    \[
    s^\alpha_\theta(u) = \int_{\Omega_\theta^+} |\nabla u |^2 \mathrm{d}x - \int_{\Gamma_\theta} | u|^2 \dS, \quad D(s^{\alpha}_\theta) = H^1(\Omega_\theta^+)
    \]
   and let $S^\alpha_\theta$ be the operator generated by $s^\alpha_\theta$.
\end{definition}
The following proposition gathers the main properties of the operator $S^\alpha_\theta$ that were proved in \cite{KP18}. 
\begin{proposition}\label{lem: proposition 20} For any $\theta \in (0,\pi)$ and $\alpha > 0 $ the following hold true:
    \begin{itemize}
        \item[(i)] $S^\alpha_\theta$ is well-defined and semi-bounded from below by $-\frac{\alpha^2}{\sin^2\theta}$. 
        \item[(ii)] $S^\alpha_\theta \cong \alpha^2 S^1_\theta $.
        \item[(iii)] $\spece(S^\alpha_\theta) = [-\alpha^2,\infty)$.
        \item[(iv)] $ \specd (S^\alpha_\theta) \neq \emptyset$ if and only if $\theta < \frac{\pi}{2}$.
        \item[(v)] $S^\alpha_\theta$ has finitely many discrete eigenvalues for any $\theta$.
        \item[(vi)] $S^\alpha_\theta$ has exactly one discrete eigenvalue for $\theta \in [\frac{\pi}{6}, \frac{\pi}{2})$.
    \end{itemize}
\end{proposition}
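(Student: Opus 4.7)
The plan is to adapt the strategy developed in \cite{KP18} for Robin Laplacians on wedges, combining scaling, Persson's characterization of the essential spectrum, explicit trial functions, and a symmetry reduction via separation of variables in polar coordinates.

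I would first establish (ii) by the same dilation argument as in Lemma~\ref{lem: scaling anything}: the map $(\Phi u)(x) := \alpha \, u(\alpha x)$ is unitary on $L^2(\Omega_\theta^+)$, preserves $H^1$, and satisfies $s^\alpha_\theta(\Phi u) = \alpha^2 s^1_\theta(u)$, reducing (i) and (iii)--(vi) to the case $\alpha = 1$ up to the factor $\alpha^2$. For (i), closedness and semiboundedness follow from the standard trace inequality $\|u\|^2_{L^2(\Gamma_\theta)} \le \varepsilon \|\nabla u\|^2_{L^2(\Omega_\theta^+)} + C_\varepsilon \|u\|^2_{L^2(\Omega_\theta^+)}$, and the quantitative constant $-\alpha^2/\sin^2\theta$ is obtained by the linear change of coordinates that flattens $\Omega_\theta^+$ onto the half-plane $\R^2_+$, whose Jacobian produces the factor $\sin^{-2}\theta$, together with the half-plane estimate $S^\alpha_{\pi/2} \ge -\alpha^2$ (which itself follows by separation of variables whose transverse ground mode is $y \mapsto \sqrt{\alpha}\, e^{-\alpha y}$).

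For (iii) I would apply Persson's principle: the upper bound $\inf \spece S^1_\theta \le -1$ comes from a Weyl sequence of translates of the half-plane ground mode multiplied by a cutoff, with supports sliding off to infinity along one side where the corner becomes invisible, while the matching lower bound is obtained by covering $\Omega_\theta^+ \setminus B_R$ by two half-strips along the two sides via a partition of unity and applying the half-plane bound on each, with an IMS remainder of order $R^{-2}$. Assertion (iv) splits into two parts: for $\theta < \pi/2$, one constructs a trial function of the form $\psi(x) = e^{-d(x,\Gamma_\theta)} \chi(|x|/R)$ whose Rayleigh quotient drops strictly below $-1$ thanks to a negative corner contribution that survives the cutoff for $R$ large; for $\theta \ge \pi/2$, one derives a sharp trace identity of the type
\[
\int_{\Gamma_\theta} |u|^2 \dS \le \int_{\Omega_\theta^+} |\nabla u|^2 \ddd x + \int_{\Omega_\theta^+} |u|^2 \ddd x, \quad u \in H^1(\Omega_\theta^+),
\]
by integration by parts against a suitable radial vector field, which yields $S^1_\theta \ge -1$ and hence rules out discrete eigenvalues.

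Finally, (v) and (vi) both rely on the reflection symmetry across the bisector of $\Omega_\theta^+$, which splits $S^1_\theta$ into its even and odd parts, each equivalent to a mixed Robin/Neumann (resp.\ Robin/Dirichlet) problem on the half-sector $\{0 < \varphi < \theta\}$. Passing to polar coordinates and applying the Mellin transform in the radial variable realizes each summand as a direct integral of one-dimensional boundary-value problems on $(0,\theta)$; from this representation one reads off that only a bounded range of the Mellin parameter can support eigenvalues below $-1$, yielding (v). For (vi), the even part contributes exactly one eigenvalue for every $\theta < \pi/2$, while a monotonicity analysis of the Mellin symbol for the odd part shows that its lowest eigenvalue stays above $-1$ precisely when $\theta \ge \pi/6$. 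I expect this last threshold analysis to be the main technical obstacle, as it is the only step that requires quantitative control beyond a routine application of the min--max principle.
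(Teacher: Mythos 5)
The paper offers no proof of this proposition: it is presented as a list of facts imported from \cite{KP18}, so your sketch has to be judged against the arguments in that reference. Several of your steps are sound and do match it: the dilation argument for (ii), Persson's principle with sliding Weyl sequences plus an IMS partition for (iii), a truncated trial function for the ``if'' direction of (iv), and the reflection across the bisector into even/odd parts as the starting point for (v)--(vi). One smaller flaw occurs already in (i): no nondegenerate linear map flattens $\Omega_\theta^+$ onto a half-plane, because the two boundary rays of the wedge are linearly independent vectors whereas the two halves of $\partial\R^2_+$ are antiparallel, so ``the Jacobian produces the factor $\sin^{-2}\theta$'' does not correspond to an actual change of variables. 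The constant $-\alpha^2/\sin^2\theta$ is instead obtained by fibering $\Omega_\theta^+$ by lines parallel to the bisector: each such line meets exactly one boundary ray, the one-dimensional bound $\alpha|v(0)|^2\le \alpha\beta^{-1}\|v'\|^2+\alpha\beta\|v\|^2$ applies on each fiber, the factor $1/\sin\theta$ enters through $\dS=\mathrm{d}x_2/\sin\theta$, and optimizing in $\beta$ gives the claim.

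The serious gap is the Mellin-transform strategy for (v) and (vi). The Mellin transform turns the Laplacian on a cone into a direct integral only for scale-invariant problems; here the boundary term $-\alpha\int_{\Gamma_\theta}|u|^2\dS$ (equivalently the Robin condition $\partial_\nu u=\alpha u$) carries the length scale $1/\alpha$ and is not dilation-homogeneous of the same degree as the Laplacian --- this is exactly why (ii) produces the factor $\alpha^2$ instead of leaving the operator invariant. Hence $S^1_\theta$ does not decompose as a direct integral of boundary-value problems on $(0,\theta)$, and there is no ``Mellin symbol'' to analyze. Even granting such a decomposition, your inference would be wrong in spirit: a fiber contributing spectrum below $-1$ over a set of Mellin parameters of positive measure would produce essential, not discrete, spectrum, so ``a bounded range of the parameter'' would not yield finiteness of the point spectrum. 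In \cite{KP18} the finiteness (v) is proved by bracketing the half-sector problems away from the vertex against model operators whose spectrum below $-1$ is controlled by one-dimensional Schr\"odinger operators with decaying potentials, and the count $\kappa=1$ on $[\frac{\pi}{6},\frac{\pi}{2})$ in (vi) comes from operator comparisons between sectors together with the strict monotonicity of the eigenvalues in $\theta$ --- not from any separation of variables. As written, (v)--(vi) cannot be repaired without replacing their central tool.
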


\begin{remark}
The lower bound in (i) is not optimal, but sufficient for our purposes. In fact, it is known that the function
\[
(0,\dfrac{\pi}{2})\ni\theta\mapsto E_1 (S^1_\theta)\in(-1,-\frac{1}{4})
\]
is strictly increasing, continuous and surjective, and its asymptotic behavior for $\theta$ close to $0$
and $\frac{\pi}{2}$ can described very precisely, see e.g.~\cite{dr,EK15}.
\end{remark}

There are some relations between $H^\alpha_\theta$ and $S^\alpha_\theta$. Indeed, let $u \in H^1(\rr^2)$ then
    \begin{align}\label{H and S}
    \begin{split}
        h^\alpha_\theta (u) &:= 
    {\int_{\Omega_\theta^+} |\nabla u |^2 \mathrm{d}x - \frac{\alpha}{2} \int_{\Gamma_\theta} | u |^2 \dS} +
    {\int_{\Omega_\theta^-} |\nabla u |^2 \mathrm{d}x - \frac{\alpha}{2} \int_{\Gamma_\theta} |u|^2 \dS} 
    =s^\alpha_\theta(u_+)  +  s^\alpha_{\pi-\theta}(u_-).
    \end{split}
    \end{align}
    
Hence, several spectral properties of $S^\alpha_\theta$ can be transferred to $H^\alpha_\theta$.

\begin{lemma} \label{lem: H1theta in [-1/4)} For any $\theta \in (0, \frac{\pi}{2})$ there holds $\spece H ^1_\theta = [-\frac{1}{4}, \infty)$. Moreover, $H^1_\theta$ has at least one and at most finitely many discrete eigenvalues. 
\end{lemma}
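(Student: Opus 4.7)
My plan is to prove the three assertions of the lemma---essential spectrum, finiteness of the discrete spectrum, and existence of at least one discrete eigenvalue---each by a different technique.

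For $\spece H^1_\theta = [-\tfrac14,\infty)$ I would argue both inclusions separately. The inclusion $[-\tfrac14,\infty)\subseteq \spece H^1_\theta$ follows from a Weyl-sequence argument: the straight-line model $H_{\mathrm{st}}:=-\Delta-\delta_{\rr\times\{0\}}$ on $\rr^2$ has spectrum $[-\tfrac14,\infty)$ by separation of variables, and after rotating coordinates so that one arm of $\Gamma_\theta$ lies on the positive $x_1$-axis, translating any given Weyl sequence $(\psi_n)$ for $H_{\mathrm{st}}$ at $\lambda$ far along that arm places its support in a region where $H^1_\theta$ and $H_{\mathrm{st}}$ act identically, so the translated sequence is a Weyl sequence for $H^1_\theta$ at the same $\lambda$. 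For the reverse inclusion $\spece H^1_\theta \subseteq [-\tfrac14,\infty)$ I would apply Persson's formula: for $u\in D(h^1_\theta)$ supported outside a large ball $B_R$ the two arms of $\Gamma_\theta$ are separated by distance $\geq 2R\sin\theta$, so a smooth partition of unity localized in thin disjoint tubes around each arm, combined with the IMS formula (Lemma \ref{lem: IMS partition}) and the straight-line bound $\int|\nabla v|^2 - \int_L|v|^2 \geq -\tfrac14\|v\|^2$ valid for $v$ supported near a straight line $L$, yields $h^1_\theta(u)\geq (-\tfrac14 - O(R^{-2}))\|u\|^2$, hence $\inf\spece H^1_\theta \geq -\tfrac14$.

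For the finiteness of the discrete spectrum I would combine the decomposition \eqref{H and S} with Lemma \ref{lem: comparing operators} applied to the isometric embedding $J\colon H^1(\rr^2)\hookrightarrow H^1(\Omega_\theta^+)\oplus H^1(\Omega_\theta^-)$, $u\mapsto(u_+,u_-)$. This gives $\Lambda_n(H^1_\theta) \geq \Lambda_n(S^{1/2}_\theta \oplus S^{1/2}_{\pi-\theta})$ for every $n$, and by Proposition \ref{lem: proposition 20}(v) each summand $S^{1/2}_*$ has only finitely many eigenvalues below its essential spectrum threshold $-\tfrac14$, so the inequality transfers the same finiteness to $H^1_\theta$.

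For the existence of at least one discrete eigenvalue, Proposition \ref{lem: proposition 20}(iv) (valid since $\theta<\tfrac\pi2$) provides an eigenfunction $v_+\in H^1(\Omega^+_\theta)$ of $S^{1/2}_\theta$ with eigenvalue $\mu_+<-\tfrac14$, and the task reduces to constructing a trial function $u\in H^1(\rr^2)$ with $h^1_\theta(u)<-\tfrac14\|u\|^2$. A natural choice is $u|_{\Omega^+_\theta}=v_+$ and $u|_{\Omega^-_\theta}=:\phi$ a concentrated extension matching $v_+$ on $\Gamma_\theta$, e.g.\ $\phi(s,t):=\chi(s,t)\,v_+(s,0)\,e^{-\beta t}$ in local tubular coordinates $(s,t)$ along each arm (with $t\geq 0$ measuring distance into $\Omega^-_\theta$) for some parameter $\beta>0$ and a smooth cutoff $\chi$ supported near $\Gamma_\theta$ but away from the corner. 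Substituting into \eqref{H and S} and optimizing in $\beta$ reduces the required bound to an explicit comparison between $\int_{\Gamma_\theta}|v_+'|^2\,ds$ and $|\mu_+ + \tfrac14|\,\|v_+\|_{L^2(\Omega^+_\theta)}^2$, which I would verify using the Agmon-type exponential decay of $v_+$ away from the corner established later in Section \ref{sec: angle}. \emph{The main obstacle is precisely this last comparison}, particularly in the regime $\theta\to\tfrac\pi2^-$ where $|\mu_+ + \tfrac14|\to 0$: the naive tubular ansatz becomes borderline there, and a fully uniform argument may require either sharper decay estimates for $v_+$ near the vertex or a reformulation exploiting the reflection symmetry of $\Gamma_\theta$ about its bisector to reduce to a standard bound-state argument on a half-plane with mixed Neumann/Dirichlet boundary conditions.
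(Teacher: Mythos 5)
Your treatment of the finiteness of the discrete spectrum is exactly the paper's argument: the identity \eqref{H and S} yields $H^1_\theta\ge S^{1/2}_\theta\oplus S^{1/2}_{\pi-\theta}$ via the restriction map $J$, and Proposition \ref{lem: proposition 20} plus the min-max principle finishes. For $\spece H^1_\theta=[-\frac{1}{4},\infty)$ the paper does not argue at all but cites \cite[Prop.~5.4]{EN03}; your Weyl-sequence plus Persson/IMS sketch is a plausible from-scratch substitute. The lower bound you need for a function supported in a thin tube around a single arm follows from monotonicity in the interaction support (since $-\Delta-\delta_L\ge-\frac{1}{4}$ for a full line $L$), and the decomposition of $\{|x|>R\}$ you describe is essentially the one the paper itself uses later in the proof of Lemma \ref{lem: agmon estimate H1Theta}.

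The genuine gap is the existence of at least one discrete eigenvalue. The paper delegates this to \cite[Prop.~5.6]{EN03}; you attempt a direct construction and, as you concede, it does not close. Two concrete problems. First, your reduction lands on the inequality $\int_{\Gamma_\theta}|v_+'|^2\,\mathrm{d}s<|\mu_++\frac{1}{4}|\,\|v_+\|^2$, and nothing in Proposition \ref{lem: proposition 20} or in any available decay estimate controls the ratio of these two quantities; the right-hand side moreover degenerates as $\theta\to\frac{\pi}{2}^-$. Second, and more fundamentally, the mechanism producing a bound state for a broken line is not the sector eigenfunction of $S^{1/2}_\theta$ but the attractive interaction between the two arms: in \cite{EN03} the trial function is the transverse profile $e^{-|t|/2}$ glued along the \emph{whole} broken line with a slowly varying longitudinal envelope, and the strict inequality below $-\frac{1}{4}$ comes from the cross term in which the profile attached to one arm is evaluated on the other arm. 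Your ansatz, which is supported in $\Omega_\theta^-$ away from the corner, discards precisely this term, so even a sharp version of your comparison is aiming at the wrong quantity. (A minor additional point: the decay of $v_+$ you invoke is the Agmon estimate for eigenfunctions of $S^{1/2}_\theta$ from \cite{KP18}, not Lemma \ref{lem: agmon estimate H1Theta}, which concerns eigenfunctions of $H^1_\theta$ and logically presupposes the present lemma.) As written, the existence claim is unproved; either cite \cite{EN03} as the paper does or switch to the two-arm trial function.
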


\begin{proof}
The equality for the essential spectrum and the non-emptiness of the discrete spectrum
are shown in \cite[Props.~5.4 and~5.6]{EN03}.	In order to show the finiteness of the discrete spectrum,
we introduce $s^{\alpha}_{\pi -\theta}$ as in Definition \ref{def_s_theta} and consider the map
    \[
    J: H^1(\rr^2) \rightarrow H^1(\Omega_\theta^+) \times H^1(\Omega_\theta^-), \, u \mapsto  (u_+,u_-).
    \]
This allows us to write
     \begin{align*}
     h^1_\theta(u) &= \int_{\Omega_\theta^+} | \nabla u_+ |^2 \mathrm{d}x + \int_{\Omega_\theta^-} | \nabla u_-|^2 \mathrm{d}x - \frac{1}{2} \int_{\Gamma_\theta}  | u_+ |^2 \dS  
    - \frac{1}{2} \int_{\Gamma_\theta}  | u_- |^2 \dS  = (s^{\frac{1}{2}}_\theta \oplus s^{\frac{1}{2}}_{\pi -\theta}) (Ju),
    \end{align*}
   and thus $H^1_\theta \geq  S^{\frac{1}{2}}_\theta \oplus  S^{\frac{1}{2}}_{\pi-\theta}$.
   By Proposition~\ref{lem: proposition 20}, both operators on the right-hand side have a finite discrete spectrum in $(-\infty,-\frac{1}{4})$, which gives the sought conclusion by the min-max principle.
\end{proof}
Next, we summarize further properties of $H^\alpha_\theta$.

\begin{proposition}\label{lem: big prop} Let $\theta \in (0,\pi)$ and $\alpha > 0 $, and let  $\kappa(\theta)$ be as in \eqref{eigenvaluenumber}. Then, the following hold:
    \begin{itemize}
        \item[(i)] $H^\alpha_\theta$ is semibounded from below by $ -\frac{\alpha^2}{4\sin^2\theta} $.
        \item[(ii)] $H^\alpha_\theta$ is unitarily equivalent to $H^\alpha_{\pi -\theta}$.
        \item[(iii)] $H^\alpha_\theta$ is unitarily equivalent to $\alpha^2 H^1_\theta$.
        \item[(iv)] $\spece(H^\alpha_\theta) = [-\frac{\alpha^2}{4}, \infty)$.
        \item[(v)] $\kappa(\theta)$ is independent of $\alpha$ and it holds that $\kappa(\theta) < \infty$.
\end{itemize}
\end{proposition}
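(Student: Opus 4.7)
The plan is to prove the five assertions in the order (iii), (ii), (i), and then (iv)–(v), exploiting the natural dependencies between them and reducing each step to already-established results in the section. Item (iii) follows from Lemma \ref{lem: scaling anything} applied to $\Omega = \rr^2$, $\Gamma_D = \emptyset$, $\Gamma = \Gamma_\theta$: since both $\rr^2$ and the cone $\Gamma_\theta$ are invariant under dilations, one has $\alpha \rr^2 = \rr^2$ and $\alpha \Gamma_\theta = \Gamma_\theta$, and the lemma directly yields $H^\alpha_\theta \cong \alpha^2 H^1_\theta$. For (ii), I would exhibit the explicit unitary $U: u(x_1,x_2)\mapsto u(-x_1,x_2)$ on $L^2(\rr^2)$: this reflection sends the ray of angle $\omega$ to the ray of angle $\pi-\omega$ and hence maps $\Gamma_\theta$ bijectively onto $\Gamma_{\pi-\theta}$; since $U$ preserves $H^1(\rr^2)$ and $|\nabla Uu| = |(\nabla u)\circ U|$, direct substitution in \eqref{h_alpha} gives $h^\alpha_\theta(Uu) = h^\alpha_{\pi-\theta}(u)$, which yields the unitary equivalence at the operator level.

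For (i), the key tool is the form identity \eqref{H and S}, which (matching coupling constants on each half-space) reads $h^\alpha_\theta(u) = s^{\alpha/2}_\theta(u_+) + s^{\alpha/2}_{\pi-\theta}(u_-)$. Proposition \ref{lem: proposition 20}(i) bounds each summand from below by $-\tfrac{(\alpha/2)^2}{\sin^2\theta}$ and $-\tfrac{(\alpha/2)^2}{\sin^2(\pi-\theta)}$ respectively; since $\sin(\pi-\theta) = \sin\theta$, both bounds coincide, delivering $H^\alpha_\theta \ge -\tfrac{\alpha^2}{4\sin^2\theta}$.

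For (iv) and (v), item (ii) reduces the analysis to $\theta \in (0, \pi/2]$. When $\theta \in (0, \pi/2)$, Lemma \ref{lem: H1theta in [-1/4)} provides $\spece H^1_\theta = [-\tfrac{1}{4}, \infty)$ together with finiteness of $\specd H^1_\theta$; applying (iii) rescales these to $\spece H^\alpha_\theta = [-\tfrac{\alpha^2}{4}, \infty)$ and makes $\kappa(\theta)$ both $\alpha$-independent and finite. For the borderline value $\theta = \pi/2$, $\Gamma_{\pi/2} = \{0\}\times\rr$ is a straight line and $H^\alpha_{\pi/2}$ decomposes via separation of variables as the tensor sum $(-\partial^2 - \alpha\delta_0)\otimes I + I \otimes (-\partial^2)$ on $L^2(\rr)\otimes L^2(\rr)$. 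The spectrum-sum rule for tensor sums, together with the classical facts $\spec(-\partial^2 - \alpha\delta_0) = \{-\tfrac{\alpha^2}{4}\} \cup [0,\infty)$ and $\spec(-\partial^2) = [0,\infty)$ (the latter purely essential), yields $\spec H^\alpha_{\pi/2} = \spece H^\alpha_{\pi/2} = [-\tfrac{\alpha^2}{4}, \infty)$ and $\kappa(\pi/2) = 0$.

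The mathematical content is light; the only real subtlety is that $\theta = \pi/2$ is excluded from the hypothesis of Lemma \ref{lem: H1theta in [-1/4)} and must be handled separately by the tensor-sum argument above. Otherwise the proof is an orderly assembly of the scaling lemma, the reflection symmetry, the form identity \eqref{H and S}, and the properties of $S^\alpha_\theta$ collected in Proposition \ref{lem: proposition 20}.
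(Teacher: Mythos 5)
Your proposal is correct and follows essentially the same route as the paper: the scaling lemma for (iii), a planar symmetry for (ii) (you use the reflection $x_1\mapsto -x_1$ where the paper uses rotation by $\pi$ — both map $\Gamma_\theta$ onto $\Gamma_{\pi-\theta}$), the decomposition \eqref{H and S} combined with Proposition \ref{lem: proposition 20}(i) for (i), and Lemma \ref{lem: H1theta in [-1/4)} together with separation of variables at $\theta=\pi/2$ for (iv)–(v). Your explicit treatment of $\kappa(\pi/2)=0$ and the tensor-sum spectrum is, if anything, slightly more careful than the paper's.
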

\begin{proof}  We first prove (i). Let $u \in H^1(\rr^2)$, then Proposition \ref{lem: big prop} together with \eqref{H and S} yield  
    \begin{align*}
        h^\alpha_\theta (u)  &\geq - \frac{\alpha^2}{4 \sin^2 \theta} \| u|_{\Omega_\theta^+}\|^2_{L^2(\Omega_\theta^+)}
     - \frac{\alpha^2}{4 \sin^2 (\pi-\theta)} \| u|_{\Omega_\theta^-} \|^2_{L^2(\Omega_\theta^-)} 
    = -\frac{\alpha^2}{4 \sin^2 \theta} \| u \|^2_{L^2(\rr^2)},
    \end{align*}
 which gives the desired lower bound.

    That $H^\alpha_\theta$ and $H^\alpha_{\pi -\theta}$ are unitarily equivalent to each other easily follows by doing a rotation of angle $\pi$. Assertion (iii) is a direct consequence of Lemma \ref{lem: scaling anything}, noting that $\alpha \rr = \rr $ and $\alpha \Gamma_\theta = \Gamma_\theta$ hold for any $\alpha>0$. 
    
    Concerning (iv), for $\theta \in (0,\frac{\pi}{2}) \cup (\frac{\pi}{2}, \pi)$ the result follows from (ii) and Lemma \ref{lem: H1theta in [-1/4)}. In the case $\theta = \frac{\pi}{2}$,  by a separation of variables, it follows that 
    $H^1_\frac{\pi}{2} \cong T_1 \otimes I + I \otimes T$,
    with $T_1$ and $T$ as in Definition \ref{def t X}. Since $ \inf \spec T_1 = -\frac{1}{4}$ and $\spec T = [0, \infty)$ we deduce that $\spece(H^\alpha_{\frac{\pi}{2}}) = [-\frac{\alpha^2}{4}, \infty)$.
    
     Finally, assertion (v) follows directly from assertion (iii) and Lemma \ref{lem: H1theta in [-1/4)}.
\end{proof}



The following assertions are proved in \cite[Prop.~5.12 and Thm.~5.8]{EN03}:
\begin{lemma} \label{lem: kappa non increasing}\label{lem: Lambda_n strictly increasing}
Each individual eigenvalue of $H^1_\theta$ is strictly increasing with respect to $\theta\in(0,\frac{\pi}{2})$.
Hence, the counting function  $(0, \frac{\pi}{2})\ni \theta \mapsto\kappa(\theta)$ is non-increasing. It also holds $ \kappa(\theta) \to \infty $ as $\theta$ tends to $0$.
\end{lemma}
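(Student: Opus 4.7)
By Proposition~\ref{lem: big prop}(ii), $H^1_\theta\cong H^1_{\pi-\theta}$, so it suffices to work on $\theta\in(0,\tfrac{\pi}{2})$. The strategy is to prove strict monotonicity of each $\mathcal{E}_n(\theta)$ by a pullback argument that transports $H^1_\theta$ onto a fixed Hilbert space, deduce the non-increase of $\kappa(\theta)$ as an immediate consequence, and obtain the divergence $\kappa(\theta)\to\infty$ at $\theta\to 0^+$ via an explicit quasi-mode construction.

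For the monotonicity, fix $0<\theta_1<\theta_2<\tfrac{\pi}{2}$ and introduce the bi-Lipschitz homeomorphism $\Phi:\rr^2\to\rr^2$ given in polar coordinates by $\Phi(r,\omega)=(r,g(\omega))$, where $g:[-\pi,\pi]\to[-\pi,\pi]$ is continuous, piecewise linear, and satisfies $g(0)=0$, $g(\pm\theta_1)=\pm\theta_2$, $g(\pm\pi)=\pm\pi$. The linear map $Jv:=v\circ\Phi^{-1}$ sends $H^1(\rr^2)$ into itself and a direct change of variables gives
\[
\int_{\Gamma_{\theta_2}}|Jv|^2\,\dS=\int_{\Gamma_{\theta_1}}|v|^2\,\dS,\qquad \|Jv\|^2-\|v\|^2=\int_{\rr^2}|v|^2\bigl(g'(\omega)-1\bigr)\mathrm{d}x=\mathcal{O}(\theta_2-\theta_1)\|v\|^2,
\]
together with an analogous bound for $h^1_{\theta_2}(Jv)-h^1_{\theta_1}(v)$ in terms of $g'-1$ and $(g')^{-1}-1$. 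Feeding these into Proposition~\ref{lem: comparing operators but fancy} yields $\mathcal{E}_n(\theta_2)\le\mathcal{E}_n(\theta_1)+\mathcal{O}(\theta_2-\theta_1)$, and reversing the roles of the two angles produces the matching estimate, so $\theta\mapsto\mathcal{E}_n(\theta)$ is continuous and non-decreasing. To upgrade this to strict monotonicity I compute $\partial_\theta\mathcal{E}_n$ at points of differentiability (dense by standard perturbation theory) via the Hellmann–Feynman formula in the pulled-back picture: the derivative is an explicit bulk integral of $|\psi|^2$ against a coefficient coming from $\partial_\theta g'$ which has a definite sign provided the normalized eigenfunction $\psi$ is not angularly constant in either of the two open sectors; the latter is forced by the Helmholtz equation $-\Delta\psi=\mathcal{E}_n(\theta)\psi$ in $\rr^2\setminus\Gamma_\theta$ combined with real-analytic continuation and the transmission conditions on $\Gamma_\theta$. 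The non-increase of $\kappa(\theta)$ then follows automatically, since $\inf\spece H^1_\theta=-\tfrac{1}{4}$ is $\theta$-independent by Proposition~\ref{lem: big prop}(iv): as $\theta$ grows, each $\mathcal{E}_n(\theta)$ migrates upward and, once it reaches $-\tfrac{1}{4}$, leaves the discrete spectrum.

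For $\kappa(\theta)\to\infty$ as $\theta\to 0^+$ I construct quasi-modes. Fix $\chi\in C_c^\infty\bigl((1,2)\bigr)$, set $\rho_k:=2^k$ for $k\in\nn$, and consider
\[
u_k(x_1,x_2):=\chi(x_1/\rho_k)\,e^{-|x_2|/2}\in H^1(\rr^2).
\]
A direct calculation gives $\|u_k\|^2=2\rho_k\|\chi\|^2$ and $\|\nabla u_k\|^2=2\rho_k^{-1}\|\chi'\|^2+\tfrac{1}{2}\rho_k\|\chi\|^2$; moreover, since the arc-length element on each ray of $\Gamma_\theta$ is $\sec\theta\,\mathrm{d}x_1$ and $e^{-x_1\tan\theta}=1+\mathcal{O}(\rho_k\tan\theta)$ on the support of $\chi(\cdot/\rho_k)$,
\[
\int_{\Gamma_\theta}|u_k|^2\,\dS=2\sec\theta\,\rho_k\|\chi\|^2\bigl(1+\mathcal{O}(\rho_k\tan\theta)\bigr),\qquad \frac{h^1_\theta(u_k)}{\|u_k\|^2}=-\frac{3}{4}+\mathcal{O}(\rho_k^{-2})+\mathcal{O}(\rho_k\tan\theta).
\]
This Rayleigh quotient is strictly below $-\tfrac{1}{4}=\inf\spece H^1_\theta$ whenever $\rho_k$ is large and $\rho_k\tan\theta$ is small; both constraints are simultaneously satisfied for $\sim\log(1/\tan\theta)$ dyadic scales $\rho_k$, and the corresponding $u_k$ have pairwise disjoint radial supports, hence are linearly independent. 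The Min-Max principle therefore gives $\kappa(\theta)\ge\log_2(1/\tan\theta)-\mathcal{O}(1)\to\infty$.

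The principal obstacle is the strict inequality $\mathcal{E}_n(\theta_1)<\mathcal{E}_n(\theta_2)$: in the pulled-back picture the angular coefficient of the Dirichlet form increases in the inner sector $\{|\omega|<\theta_1\}$ but decreases in its complement (with the radial coefficient behaving oppositely), so a naive pointwise comparison yields only non-strict monotonicity, and one must rule out by unique continuation and analyticity arguments the degenerate scenario in which the eigenfunction concentrates its angular variation exactly where the pulled-back form goes the wrong way.
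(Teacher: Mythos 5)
The paper does not actually prove this lemma: it is quoted verbatim from \cite{EN03} (Props.~5.12 and Thm.~5.8 there), so your proposal must stand on its own. Its weak point is exactly the one you flag at the end, and it is a genuine gap, not a technicality. First, the two-sided estimates $\mathcal{E}_n(\theta_2)\le\mathcal{E}_n(\theta_1)+\mathcal{O}(\theta_2-\theta_1)$ and its reverse only give local Lipschitz continuity of $\theta\mapsto\mathcal{E}_n(\theta)$; the assertion ``continuous \emph{and non-decreasing}'' does not follow from them. Second, the Hellmann--Feynman step is misdescribed and does not close the gap: writing the pulled-back form in polar coordinates, the radial term carries the coefficient $g'(\omega)$ and the angular term the coefficient $g'(\omega)^{-1}$, so the formal derivative of the form at $\theta_2=\theta_1$ is an integral of $\bigl(|\partial_r\psi|^2-r^{-2}|\partial_\omega\psi|^2\bigr)\,\partial_\theta g'(\omega)$ --- not of $|\psi|^2$ --- and $\partial_\theta g'$ itself changes sign between the inner sector $\{|\omega|<\theta_1\}$ and its complement. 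No unique-continuation statement about $\psi$ being ``not angularly constant'' forces a sign on this quantity; one would need quantitative control of how the eigenfunction distributes its radial versus angular energy across the two sectors, which is precisely the hard part. So neither non-strict nor strict monotonicity is established, and with it the non-increase of $\kappa$ collapses.

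The route actually used in \cite{EN03} avoids transplantation altogether: by the Birman--Schwinger principle the eigenvalues below $-\tfrac14$ correspond to eigenvalue~$1$ of the integral operator on $L^2(\rr)$ with kernel $\tfrac{1}{2\pi}K_0(\kappa\,|\gamma_\theta(s)-\gamma_\theta(s')|)$; for $s,s'$ on opposite arms the distance $\sqrt{s^2+s'^2+2|s||s'|\cos 2\theta}$ is strictly increasing in $\theta\in(0,\tfrac{\pi}{2})$, $K_0$ is strictly decreasing, and positivity of the kernel then yields strict monotonicity of the Birman--Schwinger eigenvalues and hence of the $\mathcal{E}_n(\theta)$. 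By contrast, the last part of your argument is correct and genuinely self-contained: the quasi-modes $u_k=\chi(x_1/\rho_k)e^{-|x_2|/2}$ have pairwise disjoint supports, so the form decouples on their span, the Rayleigh quotients equal $-\tfrac34+\mathcal{O}(\rho_k^{-2})+\mathcal{O}(\rho_k\tan\theta)$, which lies below $\inf\spece H^1_\theta=-\tfrac14$ for $\sim\log_2(1/\tan\theta)$ dyadic scales, and the min-max principle gives $\kappa(\theta)\to\infty$. That portion could be kept as an elementary replacement for the corresponding citation; the monotonicity portion cannot.
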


Another consequence of Lemma \ref{lem: kappa non increasing} is that, for certain angles, $H^\alpha_\theta$ has exactly one eigenvalue.
\begin{lemma}\label{lem: kappa theta = 1} For any $\theta \in [\frac{\pi}{6}, \frac{\pi}{2} ) \cup (\frac{\pi}{2}, \frac{5 \pi }{6}]$ there holds $\kappa(\theta) = 1$.
\end{lemma}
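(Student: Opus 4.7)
By Proposition~\ref{lem: big prop}(ii) and (v) it suffices to treat $\alpha=1$ and $\theta\in[\frac{\pi}{6},\frac{\pi}{2})$: the case $\theta\in(\frac{\pi}{2},\frac{5\pi}{6}]$ reduces to the first one via the unitary equivalence $H^1_\theta\cong H^1_{\pi-\theta}$, and $\kappa$ is independent of $\alpha$. Lemma~\ref{lem: H1theta in [-1/4)} already yields $\kappa(\theta)\ge 1$, so only the upper bound $\kappa(\theta)\le 1$ remains to be established.

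The strategy I would adopt is to bound $H^1_\theta$ from below by a direct sum of two Robin Laplacians on the two half-wedges separated by $\Gamma_\theta$. Just as in the proof of Lemma~\ref{lem: H1theta in [-1/4)}, the restriction map
\[
J:H^1(\R^2)\to H^1(\Omega_\theta^+)\oplus H^1(\Omega_\theta^-),\qquad u\mapsto(u_+,u_-),
\]
combined with the form identity leading to~\eqref{H and S} and with Lemma~\ref{lem: comparing operators}, gives
\[
\Lambda_n(H^1_\theta)\ \ge\ \Lambda_n\bigl(S^{1/2}_\theta\oplus S^{1/2}_{\pi-\theta}\bigr)\quad\text{for every }n\in\N.
\]

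The next step is to locate the low-lying spectrum of the direct sum using Proposition~\ref{lem: proposition 20}. For $\theta\in[\frac{\pi}{6},\frac{\pi}{2})$, part~(vi) asserts that $S^{1/2}_\theta$ has exactly one discrete eigenvalue, and by~(iii) this eigenvalue lies strictly below $\inf\spece(S^{1/2}_\theta)=-\tfrac{1}{4}$. On the other hand, since the complementary angle satisfies $\pi-\theta\in(\frac{\pi}{2},\frac{5\pi}{6}]$, part~(iv) gives $\specd(S^{1/2}_{\pi-\theta})=\emptyset$, so $\spec(S^{1/2}_{\pi-\theta})=[-\tfrac{1}{4},\infty)$. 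Consequently the direct sum has precisely one eigenvalue below $-\tfrac{1}{4}$, and
\[
\Lambda_2\bigl(S^{1/2}_\theta\oplus S^{1/2}_{\pi-\theta}\bigr)=-\tfrac{1}{4}.
\]

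Combining the two displays yields $\Lambda_2(H^1_\theta)\ge -\tfrac{1}{4}=\inf\spece(H^1_\theta)$, which forces $H^1_\theta$ to have at most one eigenvalue in $(-\infty,-\tfrac{1}{4})$, i.e.\ $\kappa(\theta)\le 1$, as required. The argument is conceptually straightforward; the only point that requires care is the asymmetric behaviour of $S^{1/2}_\theta$ and $S^{1/2}_{\pi-\theta}$ on either side of $\frac{\pi}{2}$, so that Proposition~\ref{lem: proposition 20}(iv) and~(vi) can be invoked simultaneously to pin down $\Lambda_2$ of the direct sum at the threshold of its essential spectrum.
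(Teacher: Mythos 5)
Your proof is correct and rests on the same key ingredient as the paper's: the lower bound $H^1_\theta \geq S^{1/2}_\theta \oplus S^{1/2}_{\pi-\theta}$ coming from \eqref{H and S}, combined with Proposition~\ref{lem: proposition 20}(iii), (iv) and (vi) to see that the direct sum has only one eigenvalue below $-\frac{1}{4}$. The only inessential difference is that you apply this bound at every $\theta\in[\frac{\pi}{6},\frac{\pi}{2})$ directly, whereas the paper first reduces to the single angle $\theta=\frac{\pi}{6}$ via the monotonicity of Lemma~\ref{lem: kappa non increasing} and then argues by contradiction.
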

\begin{proof}
Thanks to Proposition \ref{lem: big prop}(ii) and Lemma \ref{lem: kappa non increasing}, it suffices to prove that $\kappa(\frac{\pi}{6})<2$. We proceed by contradiction. Suppose, to the contrary, $\kappa(\frac{\pi}{6}) \geq 2$, which is equivalent to $\Lambda_2(H^1_{\frac{\pi}{6}}) < -\frac{1}{4}$. From  \eqref{H and S} we know that $ H^1_{\theta} \geq S^{\frac{1}{2}}_\theta \oplus S^\frac{1}{2}_{\pi-\theta}$, and since $S^\alpha_{\pi-\theta}$ has no eigenvalues below $-\frac{1}{4}$  for $\theta = \frac{\pi}{6}$, using Proposition \ref{lem: proposition 20}(iii)-(vi), it follows that 
\[
\Lambda_2(H^1_\frac{\pi}{6})\geq \Lambda_2(S^\frac{1}{2}_\frac{\pi}{6})\geq -\frac{1}{4},
\]
which contradicts our assumption.
\end{proof}

The existence of an Agmon-type decay estimate guarantees that the eigenfunctions of the operator $H^\alpha_\theta$ exhibit a form of spatial concentration near the origin. This property plays a crucial role in the analysis presented in Section \ref{sec: Karl}. We begin by establishing this estimate in the special case $\alpha = 1$.

\begin{lemma}\label{lem: agmon estimate H1Theta} Given $\theta \in (0, \frac{\pi}{2})$, let $\mathcal{E}$ be a discrete eigenvalue of $H^1_\theta$ and $\psi$ be an associated eigenfunction. Then, for any $\varepsilon \in (0,1)$ we have
    \[
    \int_{\rr^2} (|\nabla \psi|^2 + |\psi|^2) e^{2(1-\varepsilon)\sqrt{-\frac{1}{4} - \mathcal{E} } |x|} \mathrm{d} x < \infty.
    \]
\end{lemma}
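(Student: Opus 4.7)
The approach is the standard Agmon weighted $L^2$ method, applied directly to the form $h^1_\theta$. Since $\mathcal{E}<\inf \spece H^1_\theta=-\frac14$ by Proposition~\ref{lem: big prop}(iv), one has $\tau:=\sqrt{-\frac14-\mathcal{E}}>0$. For each $n\in\nn$ I would introduce the bounded Lipschitz weight
\[
\varphi_n(x):=(1-\varepsilon)\tau\min(|x|,n),
\]
which satisfies $|\nabla\varphi_n|\le (1-\varepsilon)\tau$ a.e.\ and $\varphi_n\nearrow (1-\varepsilon)\tau|x|$ as $n\to\infty$. Since $\varphi_n\in W^{1,\infty}(\rr^2)$, the function $v_n:=e^{2\varphi_n}\psi$ belongs to $H^1(\rr^2)=D(h^1_\theta)$ and may be used as a test function in the weak eigenvalue equation $h^1_\theta(\psi,v)=\mathcal{E}\langle\psi,v\rangle_{L^2}$. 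A direct Leibniz-rule computation in terms of $w_n:=e^{\varphi_n}\psi$, after taking real parts, then yields the standard Agmon identity
\[
h^1_\theta(w_n)-\mathcal{E}\|w_n\|^2=\int_{\rr^2}|\nabla\varphi_n|^2\,|w_n|^2\,\mathrm{d}x \le (1-\varepsilon)^2\tau^2\|w_n\|^2.
\]

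Next I would localize $w_n$ via the IMS formula. Fix $\chi_0,\chi_1\in C^\infty(\rr_+)$ with $\chi_0^2+\chi_1^2\equiv 1$, $\chi_0=1$ on $[0,1]$ and $\supp\chi_0\subset[0,2]$, and set $\chi_j^R(x):=\chi_j(|x|/R)$, so $|\nabla\chi_j^R|\le c/R$. Lemma~\ref{lem: IMS partition} gives
\[
h^1_\theta(w_n)=h^1_\theta(\chi_0^R w_n)+h^1_\theta(\chi_1^R w_n)+\mathcal{O}(R^{-2})\|w_n\|^2.
\]
For the compactly supported piece one uses the trivial Rayleigh bound $h^1_\theta(\chi_0^R w_n)\ge C_0\|\chi_0^R w_n\|^2$ with $C_0:=\inf\spec H^1_\theta<-\tfrac14$. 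For the exterior piece one invokes the Persson-type characterization of the essential spectrum: given $\eta>0$ there exists $R_\eta$ such that $h^1_\theta(u)\ge (-\tfrac14-\eta)\|u\|^2$ for every $u\in H^1(\rr^2)$ supported outside $B_{R_\eta}$. Combining these inequalities with $\|\chi_0^R w_n\|^2+\|\chi_1^R w_n\|^2=\|w_n\|^2$ and the Agmon identity, and using $\mathcal{E}+\tfrac14=-\tau^2$, produces
\[
\Big(\tau^2\big(1-(1-\varepsilon)^2\big)-\eta-\tfrac{C}{R^2}\Big)\|w_n\|^2\le \Big(-C_0-\tfrac14-\eta\Big)\|\chi_0^R w_n\|^2.
\]
Choosing $\eta$ small enough that $\tau^2(1-(1-\varepsilon)^2)>2\eta$, and then $R$ large enough that $C/R^2<\eta$, both bracketed constants become strictly positive; since $\|\chi_0^R w_n\|^2\le e^{4(1-\varepsilon)\tau R}\|\psi\|^2_{L^2(B_{2R})}$ with $R$ now fixed, the resulting bound on $\|w_n\|^2$ is uniform in $n$. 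Monotone convergence then gives $\int_{\rr^2}e^{2(1-\varepsilon)\tau|x|}|\psi|^2\,\mathrm{d}x<\infty$.

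The gradient part follows by the same approximation. The standard trace inequality $\int_{\Gamma_\theta}|w_n|^2\,\mathrm{d}S\le \kappa\|\nabla w_n\|^2+\kappa^{-1}\|w_n\|^2$, valid for any $\kappa>0$ by a one-dimensional integration by parts on each half-line of $\Gamma_\theta$, combined with the upper bound on $h^1_\theta(w_n)$ already derived, yields $\|\nabla w_n\|^2\le C\|w_n\|^2$ with a constant independent of $n$. Since $e^{\varphi_n}\nabla\psi=\nabla w_n-\nabla\varphi_n\,w_n$ pointwise and $|\nabla\varphi_n|\le (1-\varepsilon)\tau$, Fatou's lemma gives the gradient part of the claim.

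The main obstacle is the Persson-type localization step, since the paper does not record it explicitly for the singular form $h^1_\theta$. It can be justified by comparing, outside a large ball $B_R$, the form $h^1_\theta$ with that of a $\delta$-interaction supported on two disjoint half-lines, whose bottom is exactly $-\tfrac14$; the two forms coincide away from the corner, and the discrepancy is absorbed in the $\mathcal{O}(R^{-2})$ error already present in the IMS decomposition.
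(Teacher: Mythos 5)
Your proposal follows essentially the same strategy as the paper's proof: the truncated Agmon weight $\min(|x|,n)$, the weighted eigenvalue identity, an IMS split into a ball and its exterior, a strictly negative coefficient in front of the interior term obtained from $\mathcal{E}<-\tfrac14$, and a uniform-in-$n$ bound followed by monotone convergence. Your derivation of the Agmon identity in weak form (testing against $e^{2\varphi_n}\psi$) is cleaner than the paper's, which integrates by parts using the strong equation and the transmission condition; and your recovery of the gradient term from a trace inequality of the type $\int_{\Gamma_\theta}|w|^2\,\dS\le\kappa\|\nabla w\|^2+C_\kappa\|w\|^2$ is a legitimate alternative to the paper's device of reserving a $\delta\|\nabla u\|^2$ portion of the form before localizing.

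The one place where you defer to a black box is exactly where the paper does its real work. The Persson-type bound --- $h^1_\theta(u)\ge(-\tfrac14-\eta)\|u\|^2$ for $u$ supported outside $B_{R_\eta}$ --- is true, but it is not "known" in the paper's framework; it is proved there by partitioning $\{|x|>R\}$ into the strips $D_\pm$ around the two half-lines and the interaction-free regions $D_{in},D_{out}$, extending each strip to an infinite strip $U_\theta$, and separating variables to reduce to $T^N_{R,\alpha}$, whose ground state is $-\tfrac14+\mathcal{O}(e^{-cR})$ by Proposition~\ref{lem: ev of t,N,L,alpha}. Your sketched justification is not correct as stated: the two components of $\Gamma_\theta\setminus B_R$ are contained in two lines that intersect at the origin, not in two disjoint lines, so monotonicity in the interaction support leads to the cross-shaped configuration (which has spectrum strictly below $-\tfrac14$), not to a decoupled pair; moreover the bottom of the spectrum of a $\delta$-interaction on two half-lines at finite separation need not be exactly $-\tfrac14$, only $-\tfrac14+o(1)$ as the separation grows --- which is precisely the $\eta$ you need and precisely what requires the strip/bracketing argument. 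The remark that the discrepancy "is absorbed in the $\mathcal{O}(R^{-2})$ error already present in the IMS decomposition" is also not meaningful: the IMS error is an additive term coming from $|\nabla\chi_j^R|^2$ and has nothing to do with the exterior form's lower bound. So the architecture of your proof is sound and matches the paper's, but to be complete you must supply a proof of the exterior estimate, e.g.\ by reproducing the paper's decomposition into $D_\pm$, $D_{in}$, $D_{out}$ with Dirichlet conditions on $\{|x|=R\}$ and Neumann separation of variables on the strips.
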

\begin{proof}
    Let $\varepsilon \in (0,1)$ and $L>0$, and set $f_{L,\varepsilon}(x) = (1-\varepsilon) \sqrt{-\frac{1}{4} - \mathcal{E}} \min(|x|,L)$. Observe that 
    \begin{align}\label{compuet grad}
    \begin{split}
    | \nabla(e^{f_{L,\varepsilon}})|^2 &=|e^{f_{L,\varepsilon}} \nabla \psi |^2 + | e^{f_{L,\varepsilon}} \psi \nabla f_{L,\varepsilon} |^2 + \Re (2e^{2 f_{L,\varepsilon}} \psi \langle \nabla f_{L,\varepsilon}, \nabla \psi \rangle) \\
    &= |\nabla f_{L,\varepsilon}|^2 e^{2 f_{L,\varepsilon}} | \psi |^2+  \Re(\langle \nabla (e^{2f_{L,\varepsilon}} \psi ),\nabla \psi \rangle)
    \end{split}.
    \end{align}
    Hence
    \begin{align*}
    \int_{\rr^2} |\nabla(e^{ f_{L,\varepsilon} } \psi)|^2 \mathrm{d}x \geq& \int_{\rr^2} |e^{ f_{L,\varepsilon}} \nabla \psi|^2
+ | \nabla f_{L,\varepsilon} |^2  |e^{ f_{L,\varepsilon}} \psi|^2
- 2  | e^{ f_{L,\varepsilon}} \nabla \psi|  | \nabla f_{L,\varepsilon} e^{ f_{L,\varepsilon}} \psi| \text{ d}x \\
&\geq \int_{\rr^2} \big( | e^{ f_{L,\varepsilon}} \nabla \psi |^2 + | \nabla f_{L,\varepsilon} |^2 | e^{ f_{L,\varepsilon}} \psi |^2 \\
&- 2 ( \frac{1}{4} | e^{ f_{L,\varepsilon}} \nabla \psi |^2  + | \nabla f_{L,\varepsilon} |^2 | e^{f_{L,\varepsilon}} \psi|^2)\big) \mathrm{d} x\\
&\geq \int \frac{1}{2} | e^{ f_{L,\varepsilon}} \nabla \psi|^2 - (1-\varepsilon)^2 (-\frac{1}{4} - \mathcal{E}) | e^{ f_{L,\varepsilon}} \psi |^2 \mathrm{d}x,
\end{align*}
and thus
 \[
 \| \psi e^{f_{L,\varepsilon}}\|^2_{H^1(\rr^2)} \geq \int_{\rr^2} (1- (1-\varepsilon)^2(-\frac{1}{4} - \mathcal{E}) ) | \psi |^2 e^{2 f_{L,\varepsilon}}  \mathrm{d} x 
+ \frac{1}{2}  \int_{\rr^2} | \nabla \psi |^2 e^{2f_{L,\varepsilon}} \mathrm{d} x.
\]
Now, we claim that there exists a constant $K_\varepsilon>0$, depending only on $\varepsilon$, such that
    \begin{equation}  \label{eq: Agmon estimate h1theta norm bla bla leq K_epsilon} 
    \| \psi e^{f_{L,\varepsilon}} \|^2_{H^1(\rr^2)} \leq K_{\varepsilon}.
    \end{equation}
    Assuming that this bound holds, we obtain\[
\int_{\rr^2}(| \nabla \psi |^2 +| \psi |^2 ) e^{2f_{L,\varepsilon}} \mathrm{d}x \leq 2K_\varepsilon \Big(1 + (1-\varepsilon)^2(-\frac{1}{4}-\mathcal{E})\Big).
\]
Since the right-hand side is independent of $L$, the desired inequality follows by letting $L\longrightarrow\infty$.
    
 The proof of the bound \eqref{eq: Agmon estimate h1theta norm bla bla leq K_epsilon} follows similar arguments as the ones in \cite[Theorem 5.1]{KP18}. We first perform an IMS localization to obtain a suitable lower bound for $h^\alpha_\theta$. Let $R>0$ and let $\chi_0, \chi_1 \in C^{\infty}(\rr_+)$ be such that
    \[\chi_0^2 + \chi_1^2 =1, \quad 
    \chi_0(t) = 1 \,\text{ if } t\leq 1, \quad \chi_0(t) = 0 \,\text{ if } t\geq 2,
    \]
    and set $\chi_{j,R}(|x|/R)$ for $j = 0,1$. Then, Lemma \ref{lem: IMS partition} yields  
   \[
   h^\alpha(u) = h^\alpha_\theta(u\chi_{0,R}) + h^\alpha_\theta(u\chi_{1,R}) - \sum_{j= 1,2,} \| u \nabla \chi_{j,R}\|^2_{L^2(\rr^2)}.
   \]
    Thus, there exists $C_1>0$ such that
    \[
    h^\alpha(u) \geq h^\alpha_\theta(u\chi_{0,R}) + h^\alpha_\theta(u\chi_{1,R}) - \frac{C_1}{R^2} \| u \|^2_{L^2(\rr^2)}.
    \]
    Using this, together with the identity $h^1_\theta(u) = \delta \| \nabla u \|^2_{L^2(\rr^2)} + (1-\delta )h^{\frac{1}{1-\delta}}_\theta (u)$,  where $\delta \in (0,1)$ will later be chosen sufficiently small, we get
    \begin{equation}
        \label{eq: Agmon geq j = 0 j = 1}
        h^1_\theta(u) \geq \delta \| \nabla u  \|^2_{L^2(\rr^2)} + (1-\delta) \left(h^{\frac{1}{1-\delta}}_\theta (\chi_{0,R}u) + h^{\frac{1}{1-\delta}}_\theta (\chi_{1,R}u) - \frac{C_1}{R} \| u \|^2_{L^2(\rr^2)} \right).
    \end{equation}
    Next, for $j = 0,1$, we estimate from below $ h^{\frac{1}{1-\delta}}_\theta (\chi_{j,R}u)$. Notice that for $j = 0$, we have
    \begin{equation}\label{eq: Agmon j = 0}
        h_\theta^{\frac{1}{1-\delta}} (\chi_{0,R}u) \geq E_1(H^{\frac{1}{1-\delta}}_\theta) \| u \chi_{0,R} \|^2_{L^2(\rr^2)} \geq - \frac{1}{4(1-\delta)^2 \sin^2 \theta} \| u \chi_{0,R}\|^2_{L^2(\rr^2)}.
    \end{equation}
     Now, in the case $j = 1$, we partition the region $\{|x|> R \}$, which contains the support of $\chi_{1,R}$ (see Figure \ref{fig: agmon estimate H1theta}) as follows:
    
    \begin{align*}
        D_+ &= \{ (x_1,x_2) \in \rr_+ \times \rr_+:\, x_1- R \leq \frac{x_2}{\tan \theta} \leq x_1+R \} \cap \{ | x |>R \};\\
    D_- &= \{ (x_1,x_2) \in \rr_+ \times (-\rr_+):\, x_1- R \leq \frac{-x_2}{\tan \theta} \leq x_1+R \} \cap \{ | x |>R \};\\
     D_{in} &= (R,0 )+ \Omega_\theta^+,\quad\text{and } D_{out} = \rr^2 \setminus (D^+ \cup D^- \cup D_{in} \cup \{ | x | \leq R \}).
     \end{align*}
     \begin{figure}
        \centering
        \scalebox{0.9}{\begin{tikzpicture}[x=0.85pt,y=0.85pt,yscale=-1,xscale=1]

\draw [line width=1.25]    (170,110) -- (70,160) -- (170,210) ;
\draw    (100,160) -- (180,120) ;
\draw    (88.14,135.86) -- (160,100) ;
\draw    (100,160) -- (180,200) ;
\draw    (88.14,183.86) -- (160,220) ;
\draw  [draw opacity=0][fill={rgb, 255:red, 155; green, 155; blue, 155 }  ,fill opacity=0.3 ] (160,100) -- (180,120) -- (100,160) -- (88.14,135.86) -- cycle ;
\draw  [draw opacity=0][fill={rgb, 255:red, 155; green, 155; blue, 155 }  ,fill opacity=0.3 ] (100,160) -- (180,200) -- (160,220) -- (88.14,183.86) -- cycle ;
\draw  [fill={rgb, 255:red, 255; green, 255; blue, 255 }  ,fill opacity=1 ] (40,160) .. controls (40,143.43) and (53.43,130) .. (70,130) .. controls (86.57,130) and (100,143.43) .. (100,160) .. controls (100,176.57) and (86.57,190) .. (70,190) .. controls (53.43,190) and (40,176.57) .. (40,160) -- cycle ;
\draw    (70,230) -- (70,93) ;
\draw [shift={(70,90)}, rotate = 90] [fill={rgb, 255:red, 0; green, 0; blue, 0 }  ][line width=0.08]  [draw opacity=0] (3.57,-1.72) -- (0,0) -- (3.57,1.72) -- cycle    ;
\draw    (20,160) -- (217,160) ;
\draw [shift={(220,160)}, rotate = 180] [fill={rgb, 255:red, 0; green, 0; blue, 0 }  ][line width=0.08]  [draw opacity=0] (3.57,-1.72) -- (0,0) -- (3.57,1.72) -- cycle    ;

\draw (171,159) node [anchor=north west][inner sep=0.75pt]  [font=\scriptsize] [align=left] {$\displaystyle D_{in}$};
\draw (31,112) node [anchor=north west][inner sep=0.75pt]  [font=\scriptsize] [align=left] {$\displaystyle D_{out}$};
\draw (140.43,106.86) node [anchor=north west][inner sep=0.75pt]  [font=\scriptsize] [align=left] {$\displaystyle D_{+}$};
\draw (135.29,197.43) node [anchor=north west][inner sep=0.75pt]  [font=\scriptsize] [align=left] {$\displaystyle D_{-}$};
\draw (99.2,162.6) node [anchor=north west][inner sep=0.75pt]  [font=\tiny] [align=left] {$\displaystyle R$};
\draw (213.6,161) node [anchor=north west][inner sep=0.75pt]  [font=\tiny] [align=left] {$\displaystyle x_{1}$};
\draw (55.6,89.6) node [anchor=north west][inner sep=0.75pt]  [font=\tiny] [align=left] {$\displaystyle x_{2}$};
\end{tikzpicture}}
        \caption{Partition of the region $\{ | x | >R \}$.}
        \label{fig: agmon estimate H1theta}
    \end{figure}
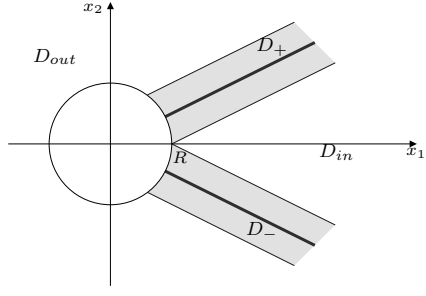
    
We introduce the sesquilinear forms
    \begin{align*}
     q^\pm (v) &= \int_{D_\pm} |\nabla u|^2 \mathrm{d}x - \frac{1}{1-\delta} \int_{D_\pm \cap \Gamma_\theta} |u|^2 \dS, \,\, D(q^\pm) = \{ v \in H^1(D_\pm): v(x) = 0 \text{ if }  x | = R \},\\
      q^{in/out}  (v) &= \int_{D_{in/out}} |\nabla v|^2 \mathrm{d} x, \quad D(q^{in/out}) = \{ v \in H^1(D_{in/out}):  v(x) = 0\text{ if } | x | = R \}.
      \end{align*}
   Note that $Q^+$ and $Q^-$ are unitarily equivalent. Using this and taking the restriction of $u\chi_{1,R}$ onto  $D_+$, $ D_-$, $ D_{in}$, and $D_{out}$, we arrive at
    \begin{align*}
        h^{\frac{1}{1-\delta}}_\theta (u\chi_{1,R}) = q^+(u\chi_{1,R}) + q^-(u\chi_{1,R}) + q^{in}(u\chi_{1,R}) + q^{out}(u\chi_{1,R}) \\ 
        \geq \Lambda_1(Q^+) \left(\| u \chi_{1,R} \|^2_{L^2(D^+)} + \| u \chi_{1,R} \|^2_{L^2(D^+)}\right),
    \end{align*}
 where we used the non-negativity of $Q^{in/out}$.  To estimate $\Lambda_1(Q^+)$, set
    \[
    U_\theta = \{ (x_1,x_2) \in \rr^2 \mid x_1 -R \leq \frac{x_2}{\tan \theta} \leq x_1 + R \}.
    \]
    Observe that for $v\in D(q^+)$, if we denote by $\tilde{v}$ its zero extension to $U_\theta$, then $q^+(v) = q(Jv) $, where $q$ is defined by
    \[
    q(v) = \int_{U_\theta} |\nabla v|^2 \mathrm{d}x - \frac{1}{1-\delta} \int_{\rr} |v(\frac{x_2}{\theta}, x_2)|^2 \frac{\mathrm{d} x_2}{\sin \theta}, \quad D(q) = H^1(U_\theta).
    \]
    Applying a clockwise rotation of angle $ \theta$, we easily see that $Q\cong I \otimes(- \Delta) + T^N_{R,\frac{1}{1-\delta}} \otimes I$, with $T^N_{R,\frac{1}{1-\delta}}$ as in Definition \ref{def t X} and $-\Delta$ as the free Laplacian in $\R$. From this and Proposition \ref{lem: ev of t,N,L,alpha} it follows that
    \[
    \Lambda_1(Q^+) \geq \Lambda_1(Q) \geq -\frac{1}{4(1-\delta)^2} - \frac{C_2}{(1-\delta)^2} e^{-\frac{1}{2} \frac{R \sin \theta}{1-\delta}}
    \]
    holds some $C_2 > 0$, and thus for $j = 1$, we obtain the estimate 
    \[
     h^{\frac{1}{1-\delta}}(u \chi_{1,R}) \geq \left(-\frac{1}{4(1-\delta)^2} - \frac{C_2}{(1-\delta)^2} e^{-\frac{1}{2} \frac{R \sin \theta}{1-\delta}} \right) \| u \chi_{1,R} \|,
    \]
    which, combined with \eqref{eq: Agmon j = 0} and \eqref{eq: Agmon geq j = 0 j = 1}, yields
    \begin{align} \label{eq: Agmon H1theta geq}
    \begin{split}
       h^1_\theta (u) \geq& \delta \| \nabla u  \|^2_{L^2(\rr^2)} - \frac{1}{4(1-\delta) \sin^2 \theta} \| u \chi_{0,R} \|^2\\
    &-\left(\frac{1}{4(1-\delta)^2} + \frac{C_2}{(1-\delta)^2} e^{-\frac{1}{2} \frac{R \sin \theta}{1-\delta}}\right) \| u \chi_{1,R} \|^2 - \frac{C_1(1-\delta)}{R^2} \| u \|^2_{L^2(\rr^2)}.
     \end{split}
    \end{align}
    The next step is to apply this estimate to $\psi e^{f_{L,\varepsilon}}$. For this, note that
    \begin{equation}\label{eq: Agmon H1theta nabla fLepsilon leq}
    \| \nabla f_{L,\varepsilon}\|^2 \leq (1-\varepsilon)^2(-\frac{1}{4} - \mathcal{E}).
    \end{equation}
    Using \eqref{compuet grad} and integrating by parts, we rewrite $h^1_\theta(\psi e^{f_{L,\varepsilon}}) $ as
    \begin{align*}
        h^1_\theta(\psi  e^{f_{L,\varepsilon}}) =&  \int_{\rr^2} |\nabla (\psi e^{f_{L,\varepsilon} })|^2 \mathrm{d}x - \int_{\Gamma_\theta} |\psi e^{f_{L,\varepsilon} } |^2 \dS \\ 
        =& \int_{\rr^2} |\nabla f_{L,\varepsilon}|^2 e^{2 f_{L,\varepsilon}} | \psi |^2 \mathrm{d}x  + \int_{\Omega_\theta^+} \Re(\langle \nabla (e^{2f_{L,\varepsilon}} \psi), \nabla\psi  \rangle) \mathrm{d}x  +\int_{\Omega_\theta^-} \Re (\langle \nabla (e^{2f_{L,\varepsilon}} \psi), \nabla\psi  \rangle) \mathrm{d}x \\
        &- \int_{\Gamma_\theta} |\psi e^{f_{L,\varepsilon} }|^2 \dS \\
        =& \int_{\rr^2}| \nabla f_{L,\varepsilon} |^2 e^{2 f_{L,\varepsilon}} | \psi |^2 + e^{2f_{L,\varepsilon}} \overline{\psi}(- \Delta \psi) \mathrm{d}x
        + \int_{\Gamma_\theta} \Re (e^{2 f_{L,\varepsilon}} \overline{\psi} (\partial_\nu \psi - \partial_\nu\psi - \psi ))\dS\\
        =& \int_{\rr^2} | \psi |^2 e^{2 f_{L,\varepsilon}} (\mathcal{E} + | \nabla f_{L,\varepsilon}|) \text{ d}x.
    \end{align*}
    Combining this with (\ref{eq: Agmon H1theta geq}) and (\ref{eq: Agmon H1theta nabla fLepsilon leq}), we obtain    \begin{align*}
         (\mathcal{E}&- \mathcal{E} -\frac{1}{4} + (\varepsilon^2 - 2 \varepsilon)(-\frac{1}{4} - \mathcal{E})) \| \psi e^{f_{L,\varepsilon}} \|^2_{L^2(\rr^2)} \geq \int_{\rr^2} | \psi |^2 e^{2f_{L,\varepsilon}}(\mathcal{E} + |\nabla f_{L,\varepsilon} |) \mathrm{d} x \\
             =&  h^1_\theta(\psi e^{f_{L,\varepsilon}}) \geq \delta \| \nabla \psi e^{f_{L,\varepsilon}} \|^2_{L^2(\rr^2)} - \frac{1}{4(1-\delta) \sin^2 \theta} \|\psi e^{f_{L,\varepsilon}}  \chi_{0,R}\|_{L^2(\rr^2)}^2 \\
         &-(\frac{1}{4(1-\delta)^2} + \frac{C_2}{(1-\delta)^2} e^{-\frac{1}{2} \frac{R \sin \theta}{1-\delta}}) \| \psi e^{f_{L,\varepsilon}}  \chi_{1,R}\|_{L^2(\rr^2)}^2 - \frac{C_1(1-\delta)}{R^2} \| \psi e^{f_{L,\varepsilon}} \|^2_{L^2(\rr^2)},
    \end{align*}
    and since $\chi_{0,R}^2 + \chi_{1,R}^2 = 1$, it follows that
    \[
    A \| \psi e^{f_{L,\varepsilon}} \chi_{0,R} \|^2_{L^2(\rr^2)} \geq \delta \| \nabla (\psi e^{f_{L,\varepsilon}} )\|^2_{L^2(\rr^2)} + B \| \psi e^{f_{L,\varepsilon}} \chi_{1,R} \|^2_{L^2(\rr^2)},
    \]
where A and B are defined by
\begin{align*}
A &= {(\varepsilon^2 - 2 \varepsilon)} 
{(-\frac{1}{4} - \mathcal{E})} + {\frac{\cos^2\theta + \delta \sin^2\theta}{4(1-\delta)\sin^2\theta}} 
+ \frac{C_1(1-\delta)}{R^2},\\
B &= (2\varepsilon - \varepsilon^2) (-\frac{1}{4} - \mathcal{E}) - \frac{1}{4} \frac{\delta}{1-\delta} - \frac{C_2}{1-\delta} e^{-\frac{1}{2} \frac{R \sin \theta}{1-\delta}} - \frac{C_1(1-\delta)}{R^2} .
\end{align*}
By Proposition \ref{lem: big prop}(i), we have $-\frac{1}{4}- \mathcal{E} \leq -\frac{1}{4} - E_1(H^1_\theta) \leq -\frac{1}{4}\frac{\cos^2\theta}{\sin^2\theta}$. Moreover,  since $(\varepsilon^2-2\varepsilon) < 0$ for all $\varepsilon \in(0,1)$, it follows that we can choose $\delta_\varepsilon$ sufficiently small such that 
\begin{align*}
    (2\varepsilon - \varepsilon^2) (-\dfrac{1}{4} - \mathcal{E}) - \frac{1}{4} \frac{\delta_\varepsilon}{1-{\delta}_\varepsilon}  > 0\, \text{ and }\,
    \ (\varepsilon^2 - 2\varepsilon) (-\frac{1}{4} - \mathcal{E})  + {\frac{\cos^2\theta + \delta_\varepsilon \sin^2\theta}{4(1-\delta_\varepsilon)\sin^2\theta}} > 0 .
\end{align*}
 Since the residual terms involved in $A$ and $B$ vanish as $R\to \infty$, we can find some $R_\varepsilon>0 $ such that  for all $R\geq R_\varepsilon$ we have $A> 0$ and $B > 0$. Consequently, there exist $A_\varepsilon>0$ and $B_\varepsilon >0$ such that $ A_\varepsilon \geq A $ and $B_\varepsilon \leq B$  for all $R\geq R_\varepsilon$. This implies the inequality
\[
A_\varepsilon \| \psi e^{f_{L,\varepsilon}} \chi_{0,R} \|^2_{L^2(\rr^2)} \geq \delta_\varepsilon \| \nabla (\psi e^{f_{L,\varepsilon}} )\|^2_{L^2(\rr^2)} + B_\varepsilon \|\psi e^{f_{L,\varepsilon}} \chi_{1,R} \|^2_{L^2(\rr^2)}, \quad \forall R \geq R_\varepsilon.
\]
Thus, 
\[
C_\varepsilon \| \psi e^{f_{L,\varepsilon}} \chi_{0,R} \|^2_{L^2(\rr^2)} \geq \|\psi e^{f_{L,\varepsilon}} \|^2_{H^1(\rr^2)}
\quad
\text{for }
C_\varepsilon := \frac{(A_\varepsilon + B_\varepsilon)}{ B_\varepsilon}.
\]
Combining this with the straightforward bound
\[
\| \psi e^{f_{L,\varepsilon}} \chi_{0,R} \|^2_{L^2(\rr^2)} 
\leq e^{4(1-\varepsilon)\sqrt{-\frac{1}{4} - \mathcal{E} }R} \| \psi \chi_{0,R} \|^2_{L^2(\rr^2)}
\leq e^{4(1-\varepsilon)\sqrt{-\frac{1}{4} - \mathcal{E} }R} \| \psi \|^2_{L^2(\rr^2)},
\]
we then obtain the claimed inequality \eqref{eq: Agmon estimate h1theta norm bla bla leq K_epsilon}. This concludes the proof. 
\end{proof}
After establishing the Agmon-type estimate for $\alpha = 1$, extending it to any $\alpha >0$ by means of scaling becomes significantly simpler. 
\begin{corollary} \label{lem: agmon estimate HAlphaTheta}
    Let $\theta \in (0, \frac{\pi}{2})$ and let $\psi_\alpha$ be an eigenfunction of $H^\alpha_\theta$. Then, there exist $b,B >0$ such that
    \[
    \int_{\rr^2} e^{b \alpha | x |} \left(\frac{1}{\alpha^2} |\nabla \psi_\alpha| + | \psi_\alpha | \right) \mathrm{d}x \leq B \| \psi_\alpha \|^2_{L^2(\rr^2)}.
    \]
\end{corollary}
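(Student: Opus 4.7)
The proof plan is to leverage the scaling equivalence $H^\alpha_\theta \cong \alpha^2 H^1_\theta$ from Proposition~\ref{lem: big prop}(iii), which was established via Lemma~\ref{lem: scaling anything}, in order to pull the estimate of Lemma~\ref{lem: agmon estimate H1Theta} back to the $\alpha$-dependent eigenfunction. Specifically, since $\alpha\R^2=\R^2$ and $\alpha\Gamma_\theta=\Gamma_\theta$, the unitary map $\Phi:L^2(\R^2)\to L^2(\R^2)$, $(\Phi u)(x):=\alpha\,u(\alpha x)$, satisfies $\Phi^{-1}H^\alpha_\theta\Phi=\alpha^2 H^1_\theta$. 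Given any eigenfunction $\psi_\alpha$ of $H^\alpha_\theta$ with eigenvalue $E=\alpha^2\mathcal{E}$, the function $u:=\Phi^{-1}\psi_\alpha$, i.e.\ $u(y)=\alpha^{-1}\psi_\alpha(y/\alpha)$, is an eigenfunction of $H^1_\theta$ with eigenvalue $\mathcal{E}$, and $\|u\|_{L^2(\R^2)}=\|\psi_\alpha\|_{L^2(\R^2)}$.

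Now I would apply Lemma~\ref{lem: agmon estimate H1Theta} to $u$. A careful reading of that proof shows that the constant $K_\varepsilon$ in \eqref{eq: Agmon estimate h1theta norm bla bla leq K_epsilon} in fact scales linearly with $\|\psi\|^2_{L^2}$ (since it originates from the bound $\|\psi e^{f_{L,\varepsilon}}\chi_{0,R}\|^2\le e^{4(1-\varepsilon)\sqrt{-1/4-\mathcal{E}}\,R}\|\psi\|^2$), so for a fixed $\varepsilon\in(0,1)$ one obtains a constant $K>0$ (depending only on $\varepsilon$, $\theta$, and $\mathcal{E}$) with
\[
\int_{\R^2}\bigl(|\nabla u(y)|^2+|u(y)|^2\bigr)\,e^{2(1-\varepsilon)\sqrt{-\frac{1}{4}-\mathcal{E}}\,|y|}\,\mathrm{d}y\le K\|u\|^2_{L^2(\R^2)}.
\]

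The last step is the change of variables $y=\alpha x$. From $u(y)=\alpha^{-1}\psi_\alpha(y/\alpha)$ one gets $|u(\alpha x)|^2=\alpha^{-2}|\psi_\alpha(x)|^2$ and $|\nabla u(\alpha x)|^2=\alpha^{-4}|\nabla\psi_\alpha(x)|^2$, together with $\mathrm{d}y=\alpha^2\mathrm{d}x$. Substituting yields
\[
\int_{\R^2}\Bigl(\frac{1}{\alpha^2}|\nabla\psi_\alpha(x)|^2+|\psi_\alpha(x)|^2\Bigr)e^{2(1-\varepsilon)\alpha\sqrt{-\frac{1}{4}-\mathcal{E}}\,|x|}\,\mathrm{d}x\le K\|\psi_\alpha\|^2_{L^2(\R^2)},
\]
so the claim follows by setting $b:=2(1-\varepsilon)\sqrt{-\frac{1}{4}-\mathcal{E}}$ and $B:=K$; the positivity of $b$ is guaranteed by $\mathcal{E}<-\frac{1}{4}$, which holds since $\mathcal{E}$ is a \emph{discrete} eigenvalue of $H^1_\theta$ and $\spece H^1_\theta=[-\frac{1}{4},\infty)$ by Proposition~\ref{lem: big prop}(iv).

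The only delicate point is that Lemma~\ref{lem: agmon estimate H1Theta} is stated merely as ``$<\infty$'' rather than quantitatively, so one has to inspect its proof to extract the proportionality to $\|u\|^2$; I view this as the main (but very mild) technical obstacle. Everything else is a routine change of variables, and no further geometric input on $\theta$ is needed beyond what was already used in the $\alpha=1$ case.
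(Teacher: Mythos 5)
Your proposal is correct and follows essentially the same route as the paper: identify $\psi_1(y)=\alpha^{-1}\psi_\alpha(y/\alpha)$ as an eigenfunction of $H^1_\theta$ via the scaling of Lemma~\ref{lem: scaling anything}, apply the Agmon estimate of Lemma~\ref{lem: agmon estimate H1Theta}, and undo the change of variables. Your remark about extracting the quantitative bound $K\|u\|^2_{L^2}$ from the proof of Lemma~\ref{lem: agmon estimate H1Theta} is exactly the (implicit) step the paper also takes, so there is no discrepancy.
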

\begin{proof} From Lemma \ref{lem: scaling anything} it follows that $ \psi_\alpha $ is an eigenfunction of $H^\alpha_\theta$ if and only if $ \frac{1}{\alpha} \psi_\alpha(\frac{x_1}{\alpha} , \frac{x_2}{\alpha}) = \psi_1 $ is an eigenfunction of $H^1_\theta$. Let $E$ be the eigenvalue associated with  $\psi_1$, and set 
\[
b = 2(1-\varepsilon)\sqrt{-\tfrac{1}{4}- E}.
\]
 By Lemma \ref{lem: agmon estimate H1Theta} there exists $B>0$ such that
     \[
    \int_{\rr^2} (|\nabla \psi_1| + | \psi_1 |^2)e^{b | x |} \mathrm{d}x \leq B\| \psi_1 \|^2_{L^2(\rr^2)}.
    \]
    Applying the change of variables $(y_1,y_2) = (\alpha x_1, \alpha x_2)$, we get
    \[
    \int_{\rr^2} e^{b \alpha | x |}(\frac{1}{\alpha^2} |\nabla\psi_\alpha|^2 + | \psi_\alpha |^2) \mathrm{d}x = \int_{\rr^2}e^{b |y|}( | \nabla \psi_1 |^2 + | \psi_1 |) \mathrm{d}y \leq B \| \psi_1\|^2_{L^2(\rr^2)} = B \| \psi_\alpha \|^2_{L^2(\rr^2)}.
    \]
This gives the desired inequality.
\end{proof}

\section{Neighborhoods of straight corners}\label{sec: Karl}
While in the case of Robin Laplacians in curvilinear polygons (cf. \cite{KOBP20}), the behavior of eigenvalues induced by corners is analyzed via truncated curved sectors, the case of $\delta$-interactions is more subtle. In particular, one must carefully select an appropriate neighborhood near each corner to capture the spectral effects accurately. Before addressing this point in detail, we first focus on the analysis within a suitable neighborhood of $\delta$-interactions whose support locally coincides with the curve $\Gamma_\theta$ defined as in \eqref{def Gamma_theta}. In this context, we provide the definition of the non-resonance condition.  To this end, let us first fix the notation used throughout this section and precisely define the geometric setting as well as the operators of interest.
\begin{definition} \label{def: Karl}Let $\theta \in (0,\frac{\pi}{2})$ and let $\Gamma_\theta$ be as in \eqref{def Gamma_theta}. For $R>0$, we denote by $K^R_\theta$ the interior of a kite with angle $2\theta$ at the vertex $A=(-R/\sin\theta, 0)$ such that non-adjacent edges to $A$ are of length $2R$ (see Figure \ref{fig: Karl minus Karl}(a)). We further denote by $\partial_* K^R_\theta$ the part of $\partial K^R_\theta$ non-adjacent to $A$, and let 
    \[
    \Gamma_\theta^R := \Gamma_\theta \cap \left\{| x | < \frac{R}{\tan \theta}\right\}
    \]
    be the support of the $\delta$-interaction when restricted to the domain $K^R_\theta$. In addition, for $0<r<R$, we define the complement of two kites with the same angle as (see Figure \ref{fig: Karl minus Karl}(b)):
     \[
     \quad K^{R,r}_\theta := K^R_\theta \setminus \overline{K^r_\theta}, \quad  \Gamma^{R,r}_\theta := \Gamma_\theta \cap \left\{ \frac{r}{\tan \theta} < | x | < \frac{R}{\tan \theta} \right\}.
     \]
\end{definition}
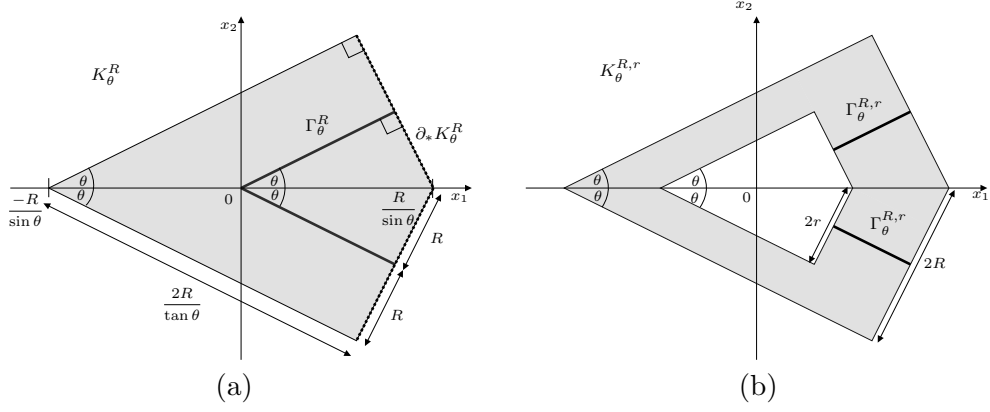
\begin{figure}
\centering
\begin{tabular}{ccc}
  \scalebox{0.8}{
  \begin{tikzpicture}[x=0.9pt,y=0.9pt,yscale=-1,xscale=1]

\draw [line width=1.25]   (490,150) -- (570,110) ;
\draw [line width=1.25]   (490,150) -- (570,190) ;
\draw  [fill={rgb, 255:red, 155; green, 155; blue, 155 }  ,fill opacity=0.3 ] (550,70) -- (590,150) -- (550,230) -- (390,150) -- cycle ;
\draw    (490,240) -- (490,63) ;
\draw [shift={(490,60)}, rotate = 90] [fill={rgb, 255:red, 0; green, 0; blue, 0 }  ][line width=0.08]  [draw opacity=0] (3.57,-1.72) -- (0,0) -- (3.57,1.72) -- cycle    ;
\draw    (370,150) -- (607,150) ;
\draw [shift={(610,150)}, rotate = 180] [fill={rgb, 255:red, 0; green, 0; blue, 0 }  ][line width=0.08]  [draw opacity=0] (3.57,-1.72) -- (0,0) -- (3.57,1.72) -- cycle    ;
\draw    (389.88,159.34) -- (544.52,236.66) ;
\draw [shift={(547.2,238)}, rotate = 206.57] [fill={rgb, 255:red, 0; green, 0; blue, 0 }  ][line width=0.08]  [draw opacity=0] (3.57,-1.72) -- (0,0) -- (3.57,1.72) -- cycle    ;
\draw [shift={(387.2,158)}, rotate = 26.57] [fill={rgb, 255:red, 0; green, 0; blue, 0 }  ][line width=0.08]  [draw opacity=0] (3.57,-1.72) -- (0,0) -- (3.57,1.72) -- cycle    ;
\draw   (553.77,77.98) -- (545.97,81.67) -- (542.2,73.69) ;
\draw   (573.77,117.98) -- (565.97,121.67) -- (562.2,113.69) ;
 \draw [densely dotted] [line width=1.25]   (550,70) -- (590,150) ;
\draw [densely dotted] [line width=1.25]   (590,150) -- (550,230) ;
\draw    (410.6,139.4) .. controls (415,145.8) and (413.8,154.6) .. (409.4,159) ;
\draw    (509.8,139.4) .. controls (513.8,145.4) and (513.8,153.4) .. (510.2,159.4) ;
\draw    (572.86,195.69) -- (555.74,230.11) ;
\draw [shift={(554.4,232.8)}, rotate = 296.45] [fill={rgb, 255:red, 0; green, 0; blue, 0 }  ][line width=0.08]  [draw opacity=0] (3.57,-1.72) -- (0,0) -- (3.57,1.72) -- cycle    ;
\draw [shift={(574.2,193)}, rotate = 116.45] [fill={rgb, 255:red, 0; green, 0; blue, 0 }  ][line width=0.08]  [draw opacity=0] (3.57,-1.72) -- (0,0) -- (3.57,1.72) -- cycle    ;
\draw    (592.66,155.89) -- (575.54,190.31) ;
\draw [shift={(574.2,193)}, rotate = 296.45] [fill={rgb, 255:red, 0; green, 0; blue, 0 }  ][line width=0.08]  [draw opacity=0] (3.57,-1.72) -- (0,0) -- (3.57,1.72) -- cycle    ;
\draw [shift={(594,153.2)}, rotate = 116.45] [fill={rgb, 255:red, 0; green, 0; blue, 0 }  ][line width=0.08]  [draw opacity=0] (3.57,-1.72) -- (0,0) -- (3.57,1.72) -- cycle    ;
\draw    (390,144.8) -- (390,154.8) ;
\draw    (589.6,145.2) -- (589.6,155.2) ;

\draw (522.4,110.6) node [anchor=north west][inner sep=0.75pt]  [font=\scriptsize,color={black}  ,opacity=1 ] [align=left] {$\displaystyle \Gamma_{\theta }^{R}$};
\draw (579.6,114.8) node [anchor=north west][inner sep=0.75pt]  [font=\scriptsize,color={black}  ,opacity=1 ] [align=left] {$\displaystyle \partial_{*} K_{\theta }^{R}$};
\draw (447.6,201) node [anchor=north west][inner sep=0.75pt]  [font=\tiny] [align=left] {$\displaystyle \frac{2R}{\tan \theta }$};
\draw (566.4,212.6) node [anchor=north west][inner sep=0.75pt]  [font=\tiny] [align=left] {$\displaystyle R$};
\draw (587.3,172.5) node [anchor=north west][inner sep=0.75pt]  [font=\tiny] [align=left] {$\displaystyle R$};
\draw (480.4,152.4) node [anchor=north west][inner sep=0.75pt]  [font=\tiny] [align=left] {$\displaystyle 0$};
\draw (367,152) node [anchor=north west][inner sep=0.75pt]  [font=\tiny] [align=left] {$\displaystyle \frac{-R}{\sin \theta }$};
\draw (561,151.2) node [anchor=north west][inner sep=0.75pt]  [font=\tiny] [align=left] {$\displaystyle \frac{R}{\sin \theta }$};
\draw (404.8,142.4) node [anchor=north west][inner sep=0.75pt]  [font=\tiny] [align=left] {$\displaystyle \theta $};
\draw (403.6,150) node [anchor=north west][inner sep=0.75pt]  [font=\tiny] [align=left] {$\displaystyle \theta $};
\draw (504.4,150) node [anchor=north west][inner sep=0.75pt]  [font=\tiny] [align=left] {$\displaystyle \theta $};
\draw (504,142.4) node [anchor=north west][inner sep=0.75pt]  [font=\tiny] [align=left] {$\displaystyle \theta $};
\draw (598,152) node [anchor=north west][inner sep=0.75pt]  [font=\tiny] [align=left] {$\displaystyle x_{1}$};
\draw (478,62) node [anchor=north west][inner sep=0.75pt]  [font=\tiny] [align=left] {$\displaystyle x_{2}$};
\draw (411,84) node [anchor=north west][inner sep=0.75pt]  [font=\scriptsize] [align=left] {$\displaystyle K_{\theta }^{R}$};
\end{tikzpicture}}
 &\qquad
& \scalebox{0.8}{\begin{tikzpicture}[x=0.9pt,y=0.9pt,yscale=-1,xscale=1]

\draw   (210,70) -- (250,150) -- (210,230) -- (50,150) -- cycle ;
\draw   (200,150) -- (180,190) -- (100,150) -- (180,110) -- cycle ;
\draw  [draw opacity=0][fill={rgb, 255:red, 155; green, 155; blue, 155 }  ,fill opacity=0.3 ] (210,70) -- (240,130) -- (250,150) -- (240,170) -- (200,150) -- (170,90) -- cycle ;
\draw  [draw opacity=0][fill={rgb, 255:red, 155; green, 155; blue, 155 }  ,fill opacity=0.3 ] (170,90) -- (180,110) -- (100,150) -- (180,190) -- (200,150) -- (240,170) -- (210,230) -- (188.23,219.11) -- (50,150) -- cycle ;
\draw    (195.99,152.02) -- (178.67,186.65) ;
\draw [shift={(177.33,189.33)}, rotate = 296.57] [fill={rgb, 255:red, 0; green, 0; blue, 0 }  ][line width=0.08]  [draw opacity=0] (3.57,-1.72) -- (0,0) -- (3.57,1.72) -- cycle    ;
\draw [shift={(197.33,149.33)}, rotate = 116.57] [fill={rgb, 255:red, 0; green, 0; blue, 0 }  ][line width=0.08]  [draw opacity=0] (3.57,-1.72) -- (0,0) -- (3.57,1.72) -- cycle    ;
\draw    (251.66,153.68) -- (214.34,228.32) ;
\draw [shift={(213,231)}, rotate = 296.57] [fill={rgb, 255:red, 0; green, 0; blue, 0 }  ][line width=0.08]  [draw opacity=0] (3.57,-1.72) -- (0,0) -- (3.57,1.72) -- cycle    ;
\draw [shift={(253,151)}, rotate = 116.57] [fill={rgb, 255:red, 0; green, 0; blue, 0 }  ][line width=0.08]  [draw opacity=0] (3.57,-1.72) -- (0,0) -- (3.57,1.72) -- cycle    ;
\draw    (150,240) -- (150,63) ;
\draw [shift={(150,60)}, rotate = 90] [fill={rgb, 255:red, 0; green, 0; blue, 0 }  ][line width=0.08]  [draw opacity=0] (3.57,-1.72) -- (0,0) -- (3.57,1.72) -- cycle    ;
\draw    (30,150) -- (267,150) ;
\draw [shift={(270,150)}, rotate = 180] [fill={rgb, 255:red, 0; green, 0; blue, 0 }  ][line width=0.08]  [draw opacity=0] (3.57,-1.72) -- (0,0) -- (3.57,1.72) -- cycle    ;
\draw    (69.67,140) .. controls (73.67,145.83) and (74.33,152.83) .. (69.67,160) ;
\draw    (120.67,140) .. controls (124.67,145.83) and (125.33,152.83) .. (120.67,160) ;
\draw [line width=1.25]   (190,130) -- (230,110) ;
\draw [line width=1.25]   (190,170) -- (204.69,177.34) -- (230,190) ;

\draw (67.29,81.43) node [anchor=north west][inner sep=0.75pt]  [font=\scriptsize] [align=left] {$\displaystyle K_{\theta }^{R,r}$};
\draw (173.9,163.81) node [anchor=north west][inner sep=0.75pt]  [font=\tiny] [align=left] {$\displaystyle 2r$};
\draw (141.67,151.33) node [anchor=north west][inner sep=0.75pt]  [font=\tiny] [align=left] {$\displaystyle 0$};
\draw (236,185.33) node [anchor=north west][inner sep=0.75pt]  [font=\tiny] [align=left] {$\displaystyle 2R$};
\draw (115.14,142.29) node [anchor=north west][inner sep=0.75pt]  [font=\tiny] [align=left] {$\displaystyle \theta $};
\draw (114.57,151.43) node [anchor=north west][inner sep=0.75pt]  [font=\tiny] [align=left] {$\displaystyle \theta $};
\draw (64,142) node [anchor=north west][inner sep=0.75pt]  [font=\tiny] [align=left] {$\displaystyle \theta $};
\draw (64.29,150.57) node [anchor=north west][inner sep=0.75pt]  [font=\tiny] [align=left] {$\displaystyle \theta $};
\draw (195.43,101.86) node [anchor=north west][inner sep=0.75pt]  [font=\scriptsize,color={black}  ,opacity=1 ] [align=left] {$\displaystyle \Gamma_{\mathtt{\theta }}^{R,r}$};
\draw (208,161.29) node [anchor=north west][inner sep=0.75pt]  [font=\scriptsize,color={black}  ,opacity=1 ] [align=left] {$\displaystyle \Gamma_{\mathtt{\theta }}^{R,r}$};
\draw (261,151) node [anchor=north west][inner sep=0.75pt]  [font=\tiny] [align=left] {$\displaystyle x_{1}$};
\draw (138,51) node [anchor=north west][inner sep=0.75pt]  [font=\tiny] [align=left] {$\displaystyle x_{2}$};
\end{tikzpicture}}\\
(a) && (b)
    \end{tabular}
\caption{(a) Kite $K^R_\theta$ and its boundary part $\partial_* K^R_\theta$ in dashed lines . (b) The complement of two kites $K_\theta^{R,r}$. }\label{fig: Karl minus Karl}
\end{figure}
Next, we introduce the sesquilinear forms 
    \begin{align}\label{ope straight}
    \begin{split}
         d^R_{\theta,\alpha} (u) &= \int_{K^R_\theta} |\nabla u|^2\mathrm{d}x - \alpha \int_{\Gamma_\theta^{R} } |u|^2 \dS, \quad D(d^R_\theta) = H^1_0(K^R_\theta),\\
      n^R_{\theta,\alpha} (u) &= \int_{K^R_\theta} |\nabla u|^2\mathrm{d}x - \alpha \int_{\Gamma_\theta^{R} } |u|^2 \dS, \quad D(n^R_\theta) = H^1(K^R_\theta),\\
      p^{R,r}_{\theta,\alpha}(u) &=  \int_{K^{R,r}_\theta} |\nabla u|^ 2 \mathrm{d}x - \alpha \int_{\Gamma^{R,r}_\theta} |u|^2 \dS, \quad D(p^{R,r}_{\theta,\alpha}) = H^1(K^{R,r}_\theta).
      \end{split}
     \end{align}
In the following, we will focus on the asymptotic properties of the operators $D^R_{\theta,\alpha}$, $ N^R_{\theta,\alpha} $, and $P^{R,r}_{\theta,\alpha}$.

We begin by establishing a lower bound for the operator $P^{R,r}_{\theta,\alpha}$.
\begin{lemma} \label{lem: P R,r,theta}
    There exists $C>0$ such that, for $\alpha r $ sufficiently large, one has 
    \[
    P^{R,r}_{\theta, \alpha} \geq  \alpha^2(-\frac{1}{4} - C e^{-\frac{1}{2} r \alpha }).
    \]
\end{lemma}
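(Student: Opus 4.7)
The plan is to reduce the two-dimensional problem to a one-dimensional cross-sectional estimate by slicing perpendicularly to the support of the $\delta$-interaction. The key geometric point is that, in rotated coordinates aligned with one of the branches of $\Gamma_\theta$, the relevant half of $K^{R,r}_\theta$ becomes a polygon whose cross-sections contain the $\delta$-support at interior distance at least $r$ from the remaining parts of the boundary.

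First, exploit the reflection symmetry of $K^{R,r}_\theta$ and $\Gamma^{R,r}_\theta$ across the $x_1$-axis. Setting $\Omega_\pm = K^{R,r}_\theta \cap \{\pm x_2 > 0\}$, for any $u \in H^1(K^{R,r}_\theta)$ the restrictions $u|_{\Omega_\pm}$ lie in $H^1(\Omega_\pm)$ and
\[
p^{R,r}_{\theta,\alpha}(u) = p_+(u|_{\Omega_+}) + p_-(u|_{\Omega_-}),
\]
where $p_\pm$ is the analogous form on $\Omega_\pm$ with domain $H^1(\Omega_\pm)$ and the $\delta$-interaction supported on the single branch of $\Gamma^{R,r}_\theta$ inside $\overline{\Omega_\pm}$. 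By the symmetry $\Lambda_1(P_+) = \Lambda_1(P_-)$, so it suffices to bound $\Lambda_1(P_+)$ from below. Next, introduce the rotated Cartesian coordinates $(s, t)$ via the map $(s, t) \mapsto s(\cos\theta, \sin\theta) + t(-\sin\theta, \cos\theta)$: a direct computation shows that $K^R_\theta \cap \{x_2 > 0\}$ is the right triangle with vertices $(-R/\tan\theta, R)$, $(R/\tan\theta, R)$ and $(R/\tan\theta, -R)$, with the $x_1$-axis mapped to the hypotenuse $t = -s\tan\theta$; an analogous statement holds for $K^r_\theta$. Hence $\Omega_+$ is the big triangle with the small one removed, while the upper branch of $\Gamma^{R,r}_\theta$ is the horizontal segment $\{(s, 0) : s \in [r/\tan\theta, R/\tan\theta]\}$. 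For every such $s$, the $t$-cross-section of $\Omega_+$ equals the interval $(-s\tan\theta, R)$, so that both $|t_-(s)| = s\tan\theta \geq r$ and $t_+(s) = R \geq r$.

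Finally, for $u \in H^1(\Omega_+)$ one drops the non-negative term $\int |\partial_s u|^2\, ds\, dt$ and applies Fubini: $p_+(u)$ is bounded below by $\int F_s(u(s,\cdot))\, ds$, where for $s \in [r/\tan\theta, R/\tan\theta]$ the functional $F_s(v) = \int_{-s\tan\theta}^{R} |v'|^2\, dt - \alpha|v(0)|^2$ is the quadratic form of $-\partial_t^2 - \alpha \delta_0$ on $(-s\tan\theta, R)$ with Neumann endpoints, and for the other values of $s$ one simply has $F_s(v) = \int |v'|^2\, dt \geq 0$. An asymmetric variant of Proposition \ref{lem: ev of t,N,L,alpha}(ii), obtained by writing the ground state as $\cosh(\mu(t + L_-))$ for $t<0$ and $\cosh(\mu(t - L_+))$ for $t>0$, solving $\mu[\tanh(\mu L_-) + \tanh(\mu L_+)] = \alpha$, and applying $\tanh(x) = 1 - O(e^{-2x})$, yields $F_s(v) \geq (-\alpha^2/4 - C\alpha^2 e^{-\alpha r/2})\int|v|^2\, dt$ once $\alpha r$ is large enough. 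Integrating in $s$ gives $p_+(u) \geq \alpha^2(-1/4 - Ce^{-\alpha r/2})\|u\|^2_{L^2(\Omega_+)}$, and the claim follows.

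The principal technical step is the identification of $\Omega_+$ in rotated coordinates: once the cross-section is known to remain of width at least $r$ on each side of $\Gamma$ throughout its support, the one-dimensional estimate closes the argument with no further obstacle.
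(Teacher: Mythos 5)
Your argument is correct. Your geometric identification checks out: in the coordinates $(s,t)$ aligned with the upper branch, the upper half of $K^R_\theta$ is indeed the right triangle with vertices $(-R/\tan\theta,R)$, $(R/\tan\theta,R)$, $(R/\tan\theta,-R)$, the removed small kite only occupies $s\le r/\tan\theta$, and the branch of $\Gamma^{R,r}_\theta$ sits on $\{t=0\}$ with the cross-section $(-s\tan\theta,R)$ of width at least $r$ on each side; dropping $\int|\partial_s u|^2$ and slicing then closes the argument. The paper's proof follows the same basic strategy (Neumann bracketing plus reduction to a one-dimensional $\delta$-interaction on an interval with Neumann ends at distance at least $r$ from the interaction point), but implements it with a different decomposition: it cuts $K^{R,r}_\theta$ into four pieces — two rotated rectangles $D_\pm$ of transverse width exactly $2r$ centered on the branches, plus two $\delta$-free regions $D_{in},D_{out}$ — so that exact separation of variables applies on $D_\pm$ and the symmetric operator $T^N_{r,\alpha}$ of Proposition \ref{lem: ev of t,N,L,alpha}(ii) can be cited directly. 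Your two-piece cut avoids constructing these rectangles at the cost of handling $s$-dependent, asymmetric cross-sections, for which you need the asymmetric variant of Proposition \ref{lem: ev of t,N,L,alpha}(ii); your sketch of that variant (secular equation $\mu[\tanh(\mu L_-)+\tanh(\mu L_+)]=\alpha$, hence $\alpha/2\le\mu\le\frac{\alpha}{2}(1+Ce^{-\alpha r})$ uniformly in $L_\pm\ge r$) is correct, and in fact you could bypass it entirely by inserting extra Neumann cuts at $t=\pm r$ inside each slice and quoting the symmetric result, which would bring your proof even closer to the paper's.
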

\begin{proof}
\tikzset{every picture/.style={line width=0.75pt}} 
 We start by dividing $K^{R,r}_\theta$ as shown in Figure \ref{fig: Karl minus Karl divided}. That is, we set
    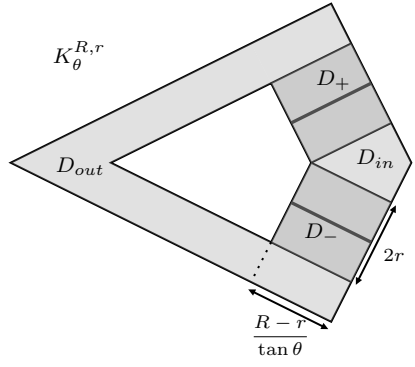
\begin{figure}
    \centering
    \begin{tikzpicture}[x=0.75pt,y=0.75pt,yscale=-1,xscale=1]

\draw [line width=1.25]   (170,110) -- (210,90) ;
\draw [line width=1.25]   (170,150) -- (210,170) ;
\draw   (190,50) -- (230,130) -- (190,210) -- (30,130) -- cycle ;
\draw   (180,130) -- (160,170) -- (80,130) -- (160,90) -- cycle ;
\draw    (160,170) -- (200,190) ;
\draw    (180,130) -- (220,150) ;
\draw    (160,90) -- (200,70) ;
\draw    (180,130) -- (220,110) ;
\draw    (151.25,195.06) -- (185.89,212.37) ;
\draw [shift={(188.57,213.71)}, rotate = 206.57] [fill={rgb, 255:red, 0; green, 0; blue, 0 }  ][line width=0.08]  [draw opacity=0] (3.57,-1.72) -- (0,0) -- (3.57,1.72) -- cycle    ;
\draw [shift={(148.57,193.71)}, rotate = 26.57] [fill={rgb, 255:red, 0; green, 0; blue, 0 }  ][line width=0.08]  [draw opacity=0] (3.57,-1.72) -- (0,0) -- (3.57,1.72) -- cycle    ;
\draw    (203.63,188.75) -- (220.94,154.11) ;
\draw [shift={(222.29,151.43)}, rotate = 116.57] [fill={rgb, 255:red, 0; green, 0; blue, 0 }  ][line width=0.08]  [draw opacity=0] (3.57,-1.72) -- (0,0) -- (3.57,1.72) -- cycle    ;
\draw [shift={(202.29,191.43)}, rotate = 296.57] [fill={rgb, 255:red, 0; green, 0; blue, 0 }  ][line width=0.08]  [draw opacity=0] (3.57,-1.72) -- (0,0) -- (3.57,1.72) -- cycle    ;
\draw  [draw opacity=0][fill={rgb, 255:red, 155; green, 155; blue, 155 }  ,fill opacity=0.3 ] (190,50) -- (230,130) -- (220,150) -- (180,130) -- (150,70) -- cycle ;
\draw  [draw opacity=0][fill={rgb, 255:red, 155; green, 155; blue, 155 }  ,fill opacity=0.3 ] (150,70) -- (160,90) -- (80,130) -- (160,170) -- (180,130) -- (220,150) -- (190,210) -- (30,130) -- cycle ;
\draw  [draw opacity=0][fill={rgb, 255:red, 155; green, 155; blue, 155 }  ,fill opacity=0.3 ] (180,130) -- (220,150) -- (200,190) -- (160,170) -- cycle ;
\draw  [draw opacity=0][fill={rgb, 255:red, 155; green, 155; blue, 155 }  ,fill opacity=0.3 ] (200,70) -- (220,110) -- (180,130) -- (160,90) -- cycle ;
\draw  [dash pattern={on 0.84pt off 2.51pt}]  (160,170) -- (150,190) ;

\draw (181,82) node [anchor=north west][inner sep=0.75pt]  [font=\scriptsize] [align=left] {$\displaystyle D_{+}$};
\draw (175,159) node [anchor=north west][inner sep=0.75pt]  [font=\scriptsize] [align=left] {$\displaystyle D_{-}$};
\draw (201,122) node [anchor=north west][inner sep=0.75pt]  [font=\scriptsize] [align=left] {$\displaystyle D_{in}$};
\draw (51,126) node [anchor=north west][inner sep=0.75pt]  [font=\scriptsize] [align=left] {$\displaystyle D_{out}$};
\draw (49.6,67.4) node [anchor=north west][inner sep=0.75pt]  [font=\scriptsize] [align=left] {$\displaystyle K_{\theta }^{R,r}$};
\draw (214.57,172.14) node [anchor=north west][inner sep=0.75pt]  [font=\tiny] [align=left] {$\displaystyle 2r$};
\draw (148.24,205.16) node [anchor=north west][inner sep=0.75pt]  [font=\tiny] [align=left] {$\displaystyle \frac{R-r}{\tan \theta }$};

\end{tikzpicture}

    \caption{$K^{R,r}_\theta$ divided into $D_+$, $D_-$, $D_{in}$, $D_{out}$ }
    \label{fig: Karl minus Karl divided}
\end{figure}
\begin{align*}
D_{in} := \left(\frac{R +r}{2\sin \theta},0\right) + K^{\frac{R-r}{2}}_\theta, \quad D_{out} := K^{R,r}_\theta \setminus( D_+ \cup D_- \cup D_{in} ),
\end{align*}
where for $M_\pm=\begin{pmatrix}
        \cos \theta & \mp \sin \theta \\ \pm\sin \theta & \cos \theta
    \end{pmatrix}$, $D_\pm$ are given by
    \[
    D_\pm := \left( \frac{r}{\sin \theta} ,0 \right) +  \left\{ M_\pm \cdot  \begin{pmatrix}
        x_1\\x_2
    \end{pmatrix} : \, (x_1,x_2) \in \left(0, \frac{R-r}{\tan \theta} \right) \times\left(\min\{ 0, \pm 2r \}, \max\{0, \pm2r \} \right) \right\},
    \]
and consider the sesquilinear forms
\begin{align*} 
q_{in/out} (v) &= \int_{D_{in/out}} |\nabla v|^2 \mathrm{d}x, \quad D(q_{in/out}) = H^1(D_{in/out}) ,\\
 q^\alpha_\pm(v) &= \int_{D_\pm} |\nabla v|^2 \mathrm{d}x - \alpha \int_{\frac{r}{\tan \theta} }^{\frac{R}{\tan\theta}} | v(x_2,\pm\tan\theta x_2)|^2 \frac{\mathrm{d}x_2}{\cos\theta}, \quad D(q_\pm^\alpha ) = H^1(D_\pm).
 \end{align*}
By restricting $u$ to $D_X$, $X \in \{ +, -,in,out \}$, we get
 $p^{R,r}_{\theta,\alpha}( u) = q_+^\alpha( u) + q_-^\alpha( u) + q_{in}( u) +q_{out}( u)$, and
\begin{equation} \label{eq: proof lower bound P^R,r_theta}
P^{R,r}_{\theta,1} \geq Q_+ \oplus Q_- \oplus Q_{in} \oplus Q_{out} .
\end{equation}
The operators $Q_+$ and $Q_-$ are unitarily equivalent to $ I \otimes T^N_{\frac{R-r}{2\tan \theta}} \oplus T^N_{r,\alpha} \otimes I $. As the Neumann Laplacian is non-negative, Proposition \ref{lem: ev of t,N,L,alpha} ensures that there exists a $C>0$ such that
\[
E_1(Q^\alpha_\pm) = E_1(I \otimes T^N_{\frac{R-r}{2\tan \theta}} \oplus T^N_{r,\alpha} \otimes I ) > \alpha^2(-\frac{1}{4} - Ce^{-\frac{1}{2}r \alpha } )
\]
holds for $\alpha r > 8$. This together with (\ref{eq: proof lower bound P^R,r_theta}) and
the non-negativity of $Q_{in/out}$ implies the desired lower bound.
\end{proof}
The following corollary is a direct consequence of Lemma \ref{lem: scaling anything}. 
\begin{corollary}\label{lem: scaling kite}
 For $X \in \{ D,N \}$ and any $\alpha>0$, one has $X^R_{\theta,\alpha} \cong \alpha^2 X^{\alpha R}_{\theta,1}$.
\end{corollary}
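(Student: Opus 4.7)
The corollary is a direct application of Lemma \ref{lem: scaling anything}, and the only thing to verify is that the geometric objects involved scale correctly under the dilation $x \mapsto \alpha x$.

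The plan is to identify each $X^R_{\theta,\alpha}$ as an instance of $Q^{\Omega,\Gamma}_\alpha$ as defined in Lemma \ref{lem: scaling anything}. For $X = D$, take $\Omega = K^R_\theta$ and $\Gamma_D = \partial K^R_\theta$, so that $D(q^{\Omega,\Gamma}_\alpha) = H^1_0(K^R_\theta) = D(d^R_{\theta,\alpha})$. For $X = N$, take the same $\Omega$ but with $\Gamma_D = \emptyset$, so that $D(q^{\Omega,\Gamma}_\alpha) = H^1(K^R_\theta) = D(n^R_{\theta,\alpha})$. In both cases, the interaction support is $\Gamma = \Gamma^R_\theta$, which is a regular hypersurface (one-dimensional curve) inside $\overline{K^R_\theta}$.

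Next, I would verify the scaling identities $\alpha K^R_\theta = K^{\alpha R}_\theta$ and $\alpha \Gamma^R_\theta = \Gamma^{\alpha R}_\theta$. Both follow immediately from Definition \ref{def: Karl}: the kite $K^R_\theta$ has vertex at $(-R/\sin\theta, 0)$ and non-adjacent edges of length $2R$, all of which scale linearly with $R$; similarly, $\Gamma^R_\theta = \Gamma_\theta \cap \{|x| < R/\tan\theta\}$ is homogeneous of degree $1$ in $R$ since $\Gamma_\theta$ is invariant under positive dilations. Moreover, the scaled Dirichlet part also transforms correctly: $\alpha \partial K^R_\theta = \partial K^{\alpha R}_\theta$.

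Applying Lemma \ref{lem: scaling anything} then yields
\[
X^R_{\theta,\alpha} = Q^{K^R_\theta, \Gamma^R_\theta}_\alpha \cong \alpha^2 Q^{K^{\alpha R}_\theta, \Gamma^{\alpha R}_\theta}_1 = \alpha^2 X^{\alpha R}_{\theta,1},
\]
which is the claimed unitary equivalence. There is no obstacle here; the proof is essentially a one-line verification once the scaling of the geometric objects is recorded.
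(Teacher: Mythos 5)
Your proposal is correct and matches the paper's approach exactly: the paper simply states that the corollary is a direct consequence of Lemma \ref{lem: scaling anything}, and your verification of the scaling identities $\alpha K^R_\theta = K^{\alpha R}_\theta$, $\alpha \Gamma^R_\theta = \Gamma^{\alpha R}_\theta$, and $\alpha\,\partial K^R_\theta = \partial K^{\alpha R}_\theta$ is precisely the routine check the paper leaves implicit.
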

The next lemma addresses the eigenvalue asymptotics of $D^R_{\theta,\alpha}$. Recall that $\kappa(\theta)$ is defined by \eqref{eigenvaluenumber}.
\begin{lemma} \label{lem: eigenvalues of dirichlet on karl} 
    There exists $c>0$ such that,  as $\alpha R \to \infty$, the following hold:
    \begin{itemize}
        \item[(i)] $ E_n(D^R_{\theta,\alpha})= \alpha^2(\mathcal{E}_n (\theta) + \mathcal{O} (e^{-c\alpha R}))$ for any $ n \in\{ 1, \dots,\kappa(\theta) \}$.
        \item[(ii)] $E_{\kappa(\theta) + 1} (D^R_{\theta,\alpha} ) \geq - \frac{\alpha^2}{4} $.
    \end{itemize}
\end{lemma}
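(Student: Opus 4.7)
By Corollary \ref{lem: scaling kite}, it suffices to work at $\alpha = 1$ and prove that $E_n(D^R_{\theta,1}) = \mathcal{E}_n(\theta) + \mathcal{O}(e^{-cR})$ for $n \leq \kappa(\theta)$ and $E_{\kappa(\theta)+1}(D^R_{\theta,1}) \geq -\tfrac14$; the claimed statements for $\alpha \neq 1$ then follow from $E_n(D^R_{\theta,\alpha}) = \alpha^2 E_n(D^{\alpha R}_{\theta,1})$. Note that $D^R_{\theta,1}$ lives on the bounded domain $K^R_\theta$, so its spectrum is purely discrete and $E_n(D^R_{\theta,1}) = \Lambda_n(D^R_{\theta,1})$ for all $n$.

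The lower bound for (i) and the estimate (ii) are obtained by comparison with $H^1_\theta$ via zero extension. For any $u \in H^1_0(K^R_\theta)$, its extension by zero $\Tilde u$ lies in $H^1(\R^2)$ with $\|\Tilde u\|_{L^2(\R^2)} = \|u\|_{L^2(K^R_\theta)}$, and since $\Tilde u$ vanishes outside $K^R_\theta$ one has $\int_{\Gamma_\theta}|\Tilde u|^2\dS = \int_{\Gamma^R_\theta}|u|^2\dS$, so $h^1_\theta(\Tilde u) = d^R_{\theta,1}(u)$. Lemma \ref{lem: comparing operators} then yields $\Lambda_n(H^1_\theta) \leq \Lambda_n(D^R_{\theta,1})$ for every $n$. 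By Lemma \ref{lem: H1theta in [-1/4)} and the definition of $\kappa(\theta)$, one has $\Lambda_n(H^1_\theta) = \mathcal{E}_n(\theta)$ for $n \leq \kappa(\theta)$ and $\Lambda_{\kappa(\theta)+1}(H^1_\theta) = \Sigma(H^1_\theta) = -\tfrac14$, which gives both desired lower bounds.

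For the upper bound in (i), I would construct trial functions from the eigenfunctions $\psi_1,\dots,\psi_{\kappa(\theta)}$ of $H^1_\theta$ (which may be taken orthonormal), multiplied by a radial cutoff. Pick $c_1 > 0$ small enough that $B(0, c_1 R) \subset K^R_\theta$ for all $R$, and let $\chi_R(x) = \chi(|x|/R)$ with $\chi \in C^\infty_c([0,\infty))$ equal to $1$ on $[0,c_1/2]$ and vanishing on $[c_1,\infty)$. Since $\chi_R$ is smooth and $\psi_j \in D(H^1_\theta)$, a direct check (the smooth factor preserves the transmission condition) shows $\chi_R \psi_j \in D(H^1_\theta) \cap H^1_0(K^R_\theta)$ and
\[
H^1_\theta(\chi_R \psi_j) = \mathcal{E}_j(\theta)\chi_R\psi_j - \psi_j \Delta\chi_R - 2\nabla\chi_R\cdot\nabla\psi_j.
\]
Taking inner product with $\chi_R \psi_k$ and using that $\nabla\chi_R$ and $\Delta\chi_R$ are supported in $\{|x| \geq c_1 R/2\}$ together with the Agmon estimate of Lemma \ref{lem: agmon estimate H1Theta}, one obtains
\[
h^1_\theta(\chi_R \psi_j, \chi_R \psi_k) = \mathcal{E}_j(\theta)\langle \chi_R\psi_j, \chi_R\psi_k\rangle + \mathcal{O}(e^{-cR}), \quad \langle\chi_R\psi_j,\chi_R\psi_k\rangle = \delta_{jk} + \mathcal{O}(e^{-cR}),
\]
for some $c > 0$ that can be taken uniform over $j,k \leq n$. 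Hence the matrix of $h^1_\theta$ on $V := \operatorname{span}(\chi_R\psi_1,\dots,\chi_R\psi_n)$ is $\operatorname{diag}(\mathcal{E}_1(\theta),\dots,\mathcal{E}_n(\theta))$ up to an $\mathcal{O}(e^{-cR})$ perturbation, with an $\mathcal{O}(e^{-cR})$-perturbed identity Gram matrix; consequently the maximal generalized Rayleigh quotient on $V$ is at most $\mathcal{E}_n(\theta) + \mathcal{O}(e^{-cR})$. The min-max principle, applied with the $n$-dimensional trial subspace $V \subset H^1_0(K^R_\theta)$, then gives $E_n(D^R_{\theta,1}) \leq \mathcal{E}_n(\theta) + \mathcal{O}(e^{-cR})$, completing the proof. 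The only delicate point is the bookkeeping of the error terms in the Gram and form matrices when passing from diagonal to off-diagonal quantities; the exponential decay from Agmon comfortably absorbs all such cross terms.
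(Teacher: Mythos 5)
Your proposal is correct and follows essentially the same route as the paper: reduction to $\alpha=1$ by the scaling corollary, the lower bound (and part (ii)) by zero extension and comparison with $H^1_\theta$, and the upper bound by cutting off the eigenfunctions of $H^1_\theta$ inside $K^R_\theta$, controlling the form and Gram matrices up to $\mathcal{O}(e^{-cR})$ via the Agmon estimate, and applying the min-max principle. The only cosmetic difference is that the paper organizes the error estimates through the IMS localization formula rather than the direct commutator identity you use, which amounts to the same bookkeeping.
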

\begin{proof}
    In view of Corollary \ref{lem: scaling kite}, it suffices to consider the case $\alpha = 1$ and $R \to \infty$. Using the monotonicity of $D^R_{\theta,1}$, it follows that $\Lambda_n(D^R_{\theta,1}) \geq \Lambda_n(H^1_{\theta})$ for any $n \in \nn $, and consequently, the following statements hold:
    \begin{enumerate}
        \item[(a)] $E_n(D^R_{\theta,1}) \geq \Lambda_n(H^1_{\theta}) = \mathcal{E}_n(\theta) $ for $n \in \{1,\dots, \kappa(\theta) \}$,
        \item[(b)] $E_{\kappa(\theta) +1 }(D^R_{\theta,1}) \geq \Lambda_{\kappa(\theta)+1} (H^1_\theta) = \inf \spec_{ess}(H^1_\theta) = -\frac{1}{4} $.
    \end{enumerate}
    Note that (a) gives a lower bound to $E_n(D^R_{\theta,1})$ for $n \in \{ 1,\dots,\kappa(\theta)\}$, while (b) establishes the second statement of the lemma. Hence, it remains to prove that there exist $c,C > 0$ such that, for any $n\in \{1,\dots,\kappa(\theta) \}$, there holds $ E_n(D^R_{\theta,1}) \leq \mathcal{E}_n(\theta) + C e^{-cR}$.  To this end, we apply an IMS partition and use the Agmon-type estimate from Proposition \ref{lem: big prop}. Let $\chi_0,\chi_1 \in C^\infty(\rr_+)$ be such that 
    \[
    \chi_0^2+\chi_1^2 \equiv 1,\quad \chi_0(t) =1 \text{ if } t\in[0,\frac{1}{2}], \quad \chi_0(t) =0 \text{ if } t \in [1,\infty).
    \]
    For a fixed $n \in \{1,\dots, \kappa (\theta) \}$, let $\psi_1,\dots,\psi_n$ be the orthonormal eigenfunctions corresponding to the first $n$ eigenvalues $\mathcal{E}_1, \dots,\mathcal{E}_n$ of $H^1_\theta$. We then define
    \[
    \chi_j^R(x) = \chi_j\left(\frac{| x |}{R}\right) \text{ for } j = 0,1, \quad \psi_i^R = \chi_0^R\psi_i, \,\,\,\text{and}\,\,\, \tilde{\psi}_i^R = \psi_i^R\downharpoonright_{K^R_\theta} \quad \text{for }i = 1, \dots, n.
    \]
    Obviously, for each $i\in\{ 1, \dots, n\}$, we have $\tilde{\psi}_i^R\in H^1_0(K^R_\theta) = D (d^R_{\theta,1})$, since the support of ${\psi}_i^R$ is contained in $\{ | x | \leq R\}$. We next show that $L_R := span (\tilde{\psi}_1^R,\dots , \tilde{\psi}^R_n)$ is an n-dimensional subspace of $H^1_0(K^R_\theta)$. By Proposition \ref{lem: big prop} there are $b,B > 0 $ such that
    \[
    \int_{\rr^2} e^{b| x |} ( |\nabla \psi_j|^2 +  |\psi_j |^2) \mathrm{d} x \leq B \|\psi_j\|^2_{L^2(\rr^2)} \quad \text{for all } j\in \{1,\dots,n \}.
    \]
   Set
     \[
     C^R_{j,k} := \int_{\rr^2} (\chi_1^R)^2 \psi_j \psi_k \mathrm{d} x,
     \]
    then $|C^R_{j,k}| \leq \frac{1}{2}(C^R_{j,j} + C^R_{k,k})$. This, together with the above Agmon estimate, yields 
    \[
    C^R_{k,k} = \int_{\rr^2}(\chi_1^R) \psi_j^2 \mathrm{d}x \leq \int_{| x | > \frac{r}{2}} \psi_j^2 \mathrm{d}x \leq e^{-\frac{bR}{2}} \int_{| x | > \frac{r}{2}} e^{b | x |} \psi_j^2 \mathrm{d}x \leq Be^{-\frac{bR}{2}}.
    \]
    From this, it follows that $C^R_{j,k} = \mathcal{O}(e^{-cr})$ for $c = \frac{b}{2}$, and that 
    \[
    \langle \tilde{\psi}_j^R,  \tilde{\psi}_k^R \rangle_{L^2(K^R_\theta)} = \langle \psi_j,\psi_k \rangle_{L^2(\rr^2)} - C^r_{j,k} = \delta_{j,k} + \mathcal{O}(e^{-cr}).
    \]
    Assume that $L_R$ is not an n-dimensional subspace of $H^1_0(K^R_\theta)$; that is, there exist $ \lambda_1,\dots ,\lambda_n \in \mathbb{C}$ with $\lambda_1 \neq 0$ such that $\lambda_1 \tilde{\psi}^R_1 + \dots \lambda_n \tilde{\psi}^R_n = 0$. Then, for $R$ sufficiently large, one has
    \begin{align*}
    0 &= \langle  \lambda_1 \tilde{\psi}^R_1 + \dots \lambda_n \tilde{\psi}^R_n , \lambda_1 \tilde{\psi}^R_1 + \dots \lambda_n \tilde{\psi}^R_n  \rangle 
    = \sum_{j,k = 1}^n \lambda_j \lambda_k \langle \tilde{\psi}^R_j, \tilde{\psi}^R_k \rangle \\
   & = (\sum_{j = 1}^n | \lambda_j|^2) (1+ \mathcal{O}(e^{-cR})) + (\sum_{j, k = 1, j \neq k }^n \overline{\lambda_j} \lambda_k ) \mathcal{O}(e^{-cR}) \neq 0 
    \end{align*}
    which leads to a contradiction and proves that $L_R$ has to be an n-dimensional subspace of $H^1_0(K^R_\theta)$. 
    
    Next, we claim that $ d^R_{\theta,1}(\tilde{\psi}^R_j, \tilde{\psi}^R_k) = h^1_{\theta}({\psi}^R_j, {\psi}^R_k) + \mathcal{O}(e^{-cR})$, so that we can use the Min-Max principle. Indeed, applying the IMS formula gives 
      \begin{align*}
      d^R_{\theta, 1} (\psi_j^R) &= h^1_\theta(\psi_j, \psi_k) - 
     \int_{\rr^2} \langle {\nabla(\chi_1^R \psi_j ), \nabla(\chi_1^R \psi_k)} \rangle  \mathrm{d}x
     + \int_{\Gamma_\theta} (\chi_1^R)^2 \psi_j\psi_k
     + \int_{\rr^2} (| \nabla \chi_0^R |^2 + | \nabla \chi_1^R|^2 ) \psi_j \psi_k \mathrm{d}x\\
     &=: h^1_\theta(\psi_j, \psi_k) - A_{j,k} + B_{j,k}+ D_{j,k}.
     \end{align*}
     Let us show that $A_{j,k},B_{j,k}, D_{j,k} = \mathcal{O}(e^{-cR})$.  Observe that
     \[
     |D_{j,k}| 
     \leq \int_{\rr^2} | |\nabla \chi_0^R|^2 + | \nabla \chi_1^R|^2 |  |\psi_j \psi_k| \mathrm{d}x
     \leq \frac{c_2}{R^2} \int_{| x |> \frac{R}{2}} | \psi_j |^2 + | \psi_k |^2 \mathrm{d}x \leq \frac{2c_2B}{R^2}e^{-cR} = \mathcal{O}(e^{-cR}).
     \]
    For $A_{j,k}$, we have $ |A_{j,k}| \leq \frac{1}{2}(A_{j,j} + A_{k,k}) $ and
     \begin{align*} 
     |A_{j,j}| 
     = \| \nabla\chi_1^R \psi_j + \chi_1^R \nabla \psi_j\|^2_{L^2(\rr^2)}
     \leq 2\| \nabla\chi_1^R \psi_j\|^2_{L^2(\rr^2)} + 2\| \chi_1^R \nabla \psi_j \|^2_{L^2(\rr^2)}
     \\ \leq 2(\frac{c_1}{R^2} + 1) \int_{| x | > \frac{R}{2}} | \psi_j|^2 + |\nabla \psi_j|^2 \mathrm{d}x \leq 2(\frac{c_1}{R^2} + 1)B e^{-cR} = \mathcal{O}(e^{-cR}).
     \end{align*}
     Similarly, we have $ | B_{j,k} | \leq \frac{1}{2}(B_{j,j} + B_{k,k}) $, and
    $$ B^R_{j,j} \leq \int_{\rr^2} | \nabla(\chi_1^R\psi_j) |^2 \mathrm{d}x + \frac{1}{4\sin^2\theta} \int_{\rr^2} |\chi_1^R \psi_j |^2 \mathrm{d}x = \mathcal{O}(e^{-cR}), $$
    and the claimed identity for $ d^R_{\theta,1}(\tilde{\psi}^R_j, \tilde{\psi}^R_k)$ follows. Using this, for  $\psi = \lambda_1 \tilde{\psi}_1^R + \dots \lambda_n \tilde{\psi}_n^R \in L_R$ with some $\lambda_j \in \mathbb{C} $,  we get
    \begin{align*}
    d^R_{\theta,1} (\psi) &= \sum_{j,k = 1}^n (H^1_\theta(\psi_j,\psi_k) + \mathcal{O}(e^{-cR})) \lambda_j\lambda_k = \sum_{j,k = 1}^n (\mathcal{E}_j(\theta) \delta_{j,k} + \mathcal{O}(e^{-cR}) \lambda_j\lambda_k )\leq (\mathcal{E}_n(\theta) + \mathcal{O}(e^{-cR})) |\lambda|^2 ,
    \end{align*}
    and since $\| \psi \| = | \lambda |^2(1+ \mathcal{O}(e^{-cR})) $, the Min-Max principle implies that
    \[
    E_n(D^R_{\theta,1}) \leq \sup_{0 \neq\psi \in L_R} \frac{d^R_{\theta,1}(\psi)}{\| \psi\|^2_{L^2(\rr^2)}} \leq \mathcal{E}_n(\theta) + \mathcal{O}(e^{-cR}),
    \]
    which finishes the proof
\end{proof}
We next state the analogous result for $N^R_{\theta,\alpha}$.
\begin{lemma} \label{lem: eigenvalues of neumann on karl}
    As $R \to 0$, $\alpha \to \infty$, and  $\alpha R \to \infty$, the following hold:
    \begin{itemize}
        \item[(i)] $E_n(N^R_{\theta,\alpha}) = \alpha^2(\mathcal{E}_n(\theta) + \mathcal{O}(\frac{1}{(\alpha R)^2}) )$ for $n \in \{1,\dots,\kappa(\theta) \}$.
        \item[(i)] $ E_{\kappa(\theta) +1} (N^R_{\theta,\alpha}) \geq -\frac{\alpha^2}{4} + o(\alpha^2) $.
    \end{itemize}
\end{lemma}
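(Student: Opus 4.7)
By Corollary~\ref{lem: scaling kite} it suffices to treat $\alpha=1$ and $R\to\infty$, so the goal becomes to prove $E_n(N^R_{\theta,1}) = \mathcal{E}_n(\theta) + \mathcal{O}(1/R^2)$ for $1\le n\le\kappa(\theta)$ and $E_{\kappa(\theta)+1}(N^R_{\theta,1}) \ge -\tfrac14 + o(1)$. The operator $N^R_{\theta,1}$ has compact resolvent (bounded domain, $\delta$-term form-subordinate to the Dirichlet form via the trace theorem), so $E_n=\Lambda_n$ for every $n$. The upper bound in~(i) is immediate: $H^1_0(K^R_\theta)\subset H^1(K^R_\theta)$ gives $\Lambda_n(N^R_{\theta,1})\le\Lambda_n(D^R_{\theta,1})$, and Lemma~\ref{lem: eigenvalues of dirichlet on karl}(i) then yields the stronger estimate $E_n(N^R_{\theta,1})\le\mathcal{E}_n(\theta)+\mathcal{O}(e^{-cR})$.

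For the lower bounds I combine IMS localization with a dimension-counting argument. Pick $\chi_0,\chi_1\in C^\infty(\mathbb{R}_+)$ with $\chi_0^2+\chi_1^2\equiv1$, $\chi_0\equiv1$ on $[0,a_0]$ and $\chi_0\equiv0$ on $[a_1,\infty)$, and set $\chi_j^R(x):=\chi_j(|x|/R)$. From the kite geometry of Definition~\ref{def: Karl} one may choose $0<a_0<a_1$ small enough that, for all $R$ large, (a)~$\supp(\chi_0^R)\cap K^R_\theta$ stays at distance bounded below by a positive multiple of $R$ from $\partial K^R_\theta$, so that the zero extension $Fu:=\widetilde{\chi_0^R u}$ of $\chi_0^R u$ to $\mathbb{R}^2$ belongs to $H^1(\mathbb{R}^2)=D(h^1_\theta)$ for each $u\in H^1(K^R_\theta)$, and (b)~$\supp(\chi_1^R)\cap K^R_\theta\subset K^{R,\rho R}_\theta$ for some fixed $\rho>0$ (taking $\rho$ of order $\sin\theta$ suffices). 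Under~(a), matching of the relevant integrals gives $n^R_{\theta,1}(\chi_0^R u)=h^1_\theta(Fu)$; under~(b), one has $n^R_{\theta,1}(\chi_1^R u)=p^{R,\rho R}_{\theta,1}(\chi_1^R u)\ge(-\tfrac14-Ce^{-\rho R/2})\|\chi_1^R u\|^2$ by Lemma~\ref{lem: P R,r,theta}. Since $|\nabla\chi_j^R|^2=\mathcal{O}(1/R^2)$, the IMS formula (Lemma~\ref{lem: IMS partition}) produces the central estimate
\[
n^R_{\theta,1}(u)\;\ge\;h^1_\theta(Fu)\;-\;\tfrac14\|\chi_1^R u\|^2\;-\;\tfrac{C_0}{R^2}\,\|u\|^2\qquad\text{for all }u\in H^1(K^R_\theta).
\]

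To bound $\Lambda_n(N^R_{\theta,1})$ from below for $n\in\{1,\dots,\kappa(\theta)+1\}$, let $V\subset H^1(K^R_\theta)$ be any $n$-dimensional subspace and consider the linear map $F|_V:V\to D(h^1_\theta)$. If $F|_V$ has a nontrivial kernel, pick a nonzero $u$ in it; then $\chi_0^R u\equiv0$ pointwise forces $|\chi_1^R u|\equiv|u|$, so the central estimate gives $n^R_{\theta,1}(u)/\|u\|^2\ge-\tfrac14-C_0/R^2$. Otherwise $F|_V$ is injective, $F(V)\subset D(h^1_\theta)$ has dimension $n$, and the min--max principle applied to $H^1_\theta$ provides $u\in V\setminus\{0\}$ with $h^1_\theta(Fu)\ge\Lambda_n(H^1_\theta)\|Fu\|^2$; inserting this together with $\|Fu\|^2+\|\chi_1^R u\|^2=\|u\|^2$ and using $\Lambda_n(H^1_\theta)\le-\tfrac14$ yields
\[
\frac{n^R_{\theta,1}(u)}{\|u\|^2}\;\ge\;\Lambda_n(H^1_\theta)+\bigl(-\tfrac14-\Lambda_n(H^1_\theta)\bigr)\frac{\|\chi_1^R u\|^2}{\|u\|^2}-\frac{C_0}{R^2}\;\ge\;\Lambda_n(H^1_\theta)-\frac{C_0}{R^2}.
\]
In either case one obtains $u\in V\setminus\{0\}$ with $n^R_{\theta,1}(u)/\|u\|^2\ge\Lambda_n(H^1_\theta)-\mathcal{O}(1/R^2)$, and the min--max principle for $N^R_{\theta,1}$ then gives $\Lambda_n(N^R_{\theta,1})\ge\Lambda_n(H^1_\theta)-\mathcal{O}(1/R^2)$. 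This is~(i) when $n\le\kappa(\theta)$ (since $\Lambda_n(H^1_\theta)=\mathcal{E}_n(\theta)$) and~(ii) when $n=\kappa(\theta)+1$ (since $\Lambda_{\kappa(\theta)+1}(H^1_\theta)=-\tfrac14$ by Lemma~\ref{lem: H1theta in [-1/4)}).

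The main technical obstacle lies in the verification of the geometric inclusions~(a) and~(b): one needs quantitative distance estimates inside the kite of Definition~\ref{def: Karl} ensuring simultaneously that the inner cutoff zone is separated from $\partial K^R_\theta$ and that the outer cutoff zone fits into some $K^{R,\rho R}_\theta$ with $\rho R\to\infty$. Both are elementary trigonometric computations on the explicit quadrilateral but are the only $\theta$-dependent ingredients of the proof.
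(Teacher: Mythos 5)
Your proof is correct, and your lower-bound argument takes a genuinely different route from the paper's. Both proofs reduce to $\alpha=1$ by scaling, get the upper bound from Dirichlet--Neumann monotonicity plus Lemma \ref{lem: eigenvalues of dirichlet on karl}, and use the same two-scale IMS cutoff with Lemma \ref{lem: P R,r,theta} controlling the outer annular piece. The difference is in the inner piece and in how the min--max is invoked: the paper keeps $\chi_0^R u$ inside the kite, writes $n^R_{\theta,1}(u)\ge (d^R_{\theta,1}\oplus p^{R,R/2}_{\theta,1})(Ju)-\tfrac{A}{R^2}\|u\|^2$ with the isometric map $Ju=(\chi_0^Ru,\chi_1^Ru)$, and then reads off $E_n$ of the direct sum using the already-established eigenvalue asymptotics of $D^R_{\theta,1}$ and the fact that $E_1(P^{R,R/2}_{\theta,1})$ sits above $\mathcal{E}_{\kappa(\theta)}(\theta)$. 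You instead extend $\chi_0^R u$ by zero to all of $\R^2$ and compare directly with $h^1_\theta$, which forces you to handle the non-injectivity of $u\mapsto \widetilde{\chi_0^Ru}$ via the kernel/injective dichotomy on each trial subspace. Your route buys a shortcut -- the lower bound no longer passes through Lemma \ref{lem: eigenvalues of dirichlet on karl} at all, and it treats $n\le\kappa(\theta)$ and $n=\kappa(\theta)+1$ uniformly since $\Lambda_n(H^1_\theta)\le-\tfrac14$ covers both cases -- at the cost of the slightly more delicate subspace argument; the paper's direct-sum bracketing is more modular but needs the interlacing observation about where $E_1(P^{R,R/2}_{\theta,1})$ lands. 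One small point in your favour: by insisting on $a_1<\min(1,\cotan\theta)$ so that $\supp\chi_0^R$ stays strictly inside $K^R_\theta$, you sidestep a sloppiness in the paper, whose choice $\chi_0(t)=0$ only for $t\ge1$ lets $\supp\chi_0^R$ reach $\partial_*K^R_\theta$ when $\theta>\tfrac{\pi}{4}$ (the distance from the origin to $\partial_*K^R_\theta$ is $R\cotan\theta<R$ there), so the membership $\chi_0^Ru\in H^1_0(K^R_\theta)$ as written requires the same shrinking of the cutoff that you make explicit.
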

\begin{proof}
By the Dirichlet-Neumann monotonicity, one has the upper bound
 \[
 E_n(N^R_{\theta,\alpha}) \leq E_n(D^R_{\theta,\alpha}) \leq \alpha^2(\mathcal{E}_n(\theta) + Ce^{-cR}).
 \]
 For the lower bounds in (i) and (ii), it is again sufficient to consider the case $\alpha = 1$. Let $ \chi_0$, $\chi_1$, and $\chi_j^R$ be as in the proof of Lemma \ref{lem: eigenvalues of dirichlet on karl}. By Lemma \ref{lem: IMS partition}, we have 
\[
n^R_{\theta,\alpha} (u) \geq n^R_{\theta,\alpha}(\chi_0^R u) + n^R_{\theta,\alpha}(\chi_1^R u) 
- \frac{A}{R^2} \| u \|^2_{K^R_\theta} 
= d^R_{\theta,\alpha} (\chi_0^R u ) + p^{R,\frac{R}{2}}_{\theta,\alpha} (\chi_1^R u) -  \frac{A}{R^2} \| u \|^2_{K^R_\theta}
\]
with $A =\| \chi_0' \|^2_\infty + \| \chi_1'\|^2_\infty<\infty$. Consider the map
\[ 
J: D(n^R_{\theta,\alpha}) \rightarrow D(d^R_{\theta,\alpha}) \oplus D(p^{R,\frac{R}{2}}_\theta), \quad u \mapsto (\chi_0^R u, \chi_1^Ru). \]
Since
\begin{gather*}
\| Ju \|^2_{L^2(K^R_\theta)} = \| \chi_0^R u \|^2_{L^2(K^R_\theta)} + \| \chi_1^R u\|^2_{L^2\left(K^{R,\frac{R}{2}}_\theta\right)} = \| u \|^2_{L^2(K^R_\theta)},\\
\big(n^R_{\theta,\alpha} + \frac{A}{R^2}\big) (u) \geq (d^R_{\theta,\alpha} \oplus p^{R,\frac{R}{2}}_{\theta}) (J u),
\end{gather*}
we deduce that $ E_n(N^R_{\theta,\alpha}) \geq E_n(D^R_\theta\oplus P^{R,\frac{R}{2}}_{\theta,\alpha}) - \frac{A}{R^2}$ for any $n\in \nn$. Hence, for any fixed $n \in \{ 1, \dots,\kappa(\theta) \}$, the lower bound for $P^{R,\frac{R}{2}}_{\theta,\alpha}$ from Lemma \ref{lem: P R,r,theta} gives
\[
E_n(D^R_{\theta,\alpha} \oplus P^{R,\frac{R}{2}}_{\theta,\alpha}) = E_n(D^R_{\theta,\alpha}) = \alpha^2(\mathcal{E}_n(\theta) + \mathcal{O}(e^{-c\alpha R})),
\]
and therefore
\[
E_n(N^R_{\theta,\alpha}) \geq \alpha^2(\mathcal{E}_n(\theta) + \mathcal{O}(e^{-c\alpha R}) - \frac{A}{(\alpha R)^2} ) = \alpha^2(\mathcal{E}_n(\theta) + \mathcal{O}(\frac{1}{(\alpha R)^2})),
\]
which completes the proof of (i). Finally, for $n = \kappa(\theta) + 1$, By Lemma \ref{lem: P R,r,theta} and Lemma \ref{lem: eigenvalues of dirichlet on karl}(ii) we get
\[
E_{\kappa(\theta) + 1} (N^R_{\theta,\alpha}) \geq \min \left\{ E_{\kappa(\theta) + 1}(D^R_{\theta,\alpha}), E_1(P^{R,\frac{R}{2}}_{\theta,\alpha})) \right\} - \frac{A}{R^2} = -\frac{\alpha^2}{4}+ o(\alpha^2).
\]
This shows (ii) and achieves the proof.
\end{proof}
\subsection{The non-resonance condition}
While Lemma \ref{lem: eigenvalues of neumann on karl} shows that the first $\kappa(\theta)$ eigenvalues of $N^R_{\theta,\alpha}$ are close to those of $H^\alpha_\theta$, we shall introduce a non-resonance condition that imposes a restriction on the asymptotic behavior of the subsequent eigenvalue.
\begin{definition}\label{def non_resonant}
    We say that a half-angle $\theta \in (0, \frac{\pi}{2}) $ is non-resonant if there exists $C>0$ such that 
    $$ E_{\kappa(\theta)+1}(N^R_{\theta,\alpha}) \geq -\frac{\alpha^2}{4} + \frac{C}{R^2}, $$
    for $\alpha > 0$ fixed and $R \to \infty$. We also say that $\theta \in (\frac{\pi}{2},\pi)$ is non-resonant if $\pi-\theta$ is non-resonant.
\end{definition}
Note that, by Lemma \ref{lem: scaling kite}, the non-resonance condition is equivalent to requiring that
\[
    E_{\kappa(\theta)+1} (N^R_{\theta,\alpha}) = \alpha^2 E_{\kappa(\theta) + 1} (N^{\alpha R}_{\theta,1}) \geq \alpha^2\left(-\frac{1}{4} + \frac{C}{(\alpha R)^2}\right) = -\frac{\alpha^2}{4} + \frac{C}{R^2}
\]
    as $\alpha R \to \infty$.
\begin{lemma}
\label{lem: existence of non resonant angles}
All half-angles $\theta \in [\frac{\pi}{4},\frac{\pi}{2})$ are non-resonant with $\kappa(\theta)=1$.
\end{lemma}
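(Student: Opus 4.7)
The plan is to separate the statement into its two claims: the combinatorial assertion $\kappa(\theta) = 1$ is immediate from Lemma \ref{lem: kappa theta = 1} since $[\tfrac{\pi}{4}, \tfrac{\pi}{2}) \subset [\tfrac{\pi}{6}, \tfrac{\pi}{2})$. By the scaling in Corollary \ref{lem: scaling kite}, the non-resonance condition reduces to showing $E_2(N^R_{\theta,1}) \geq -\tfrac{1}{4} + CR^{-2}$ as $R\to\infty$. First I would use the reflection $\sigma:(x_1,x_2)\mapsto(x_1,-x_2)$, which preserves $K^R_\theta$, $\Gamma^R_\theta$ and the Laplacian, to obtain a decomposition $N^R_{\theta,1}\cong N^{R,+}\oplus N^{R,-}$ on $L^2(K^R_{\theta,+})$, where both pieces carry Neumann conditions on $\partial K^R_{\theta,+}$ and the $\delta$-interaction on the upper arm, and $N^{R,-}$ has an additional Dirichlet condition on $\partial K^R_{\theta,+}\cap\{x_2 = 0\}$. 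Because $\kappa(\theta) = 1$ and the unique bound state of $H^1_\theta$ below $-\tfrac14$ is $\sigma$-invariant, one has $E_1(N^R_{\theta,1}) = E_1(N^{R,+})$ and $E_2(N^R_{\theta,1}) = \min\{E_2(N^{R,+}), E_1(N^{R,-})\}$, so it suffices to show $E_1(N^{R,-}), E_2(N^{R,+}) \geq -\tfrac{1}{4}+CR^{-2}$.

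Next I would rotate coordinates so the upper arm becomes a segment of a new horizontal axis $\{\tilde x_2 = 0\}$, identifying $K^R_{\theta,+}$ with the right triangle $D$ of vertices $A=(-R\cot\theta,R)$, $T=(R\cot\theta,R)$, $B=(R\cot\theta,-R)$ (right angle at $T$), in which the upper arm becomes the midsegment joining the midpoint of the hypotenuse $AB$ to the midpoint of the leg $TB$. The geometric role of the hypothesis $\theta\geq\tfrac{\pi}{4}$ is that $\cot\theta\leq 1$, hence $D\subset\mathcal{R}:=[-R\cot\theta,R\cot\theta]\times[-R,R]$. On $\mathcal R$ I would introduce the comparison operator $M_R$ generated by
\[
m_R(v)=\int_{\mathcal R}|\nabla v|^2\,d\tilde x-\int_{-R\cot\theta}^{R\cot\theta}|v(\tilde x_1,0)|^2\,d\tilde x_1,\qquad D(m_R)=H^1(\mathcal R),
\]
which separates variables as $M_R\cong L_N\otimes I+I\otimes T^N_{R,1}$; by Proposition \ref{lem: ev of t,N,L,alpha} its two lowest eigenvalues satisfy $\Lambda_1(M_R) = -\tfrac{1}{4}+\mathcal O(e^{-R/2})$ and $\Lambda_2(M_R) = -\tfrac{1}{4}+\bigl(\tfrac{\pi\tan\theta}{2R}\bigr)^2+\mathcal O(e^{-R/2})$, with ground state $\Phi_0$ independent of $\tilde x_1$.

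For $E_1(N^{R,-})$, the Dirichlet condition on $AB$ lets me extend each $u\in D(n^{R,-})$ by zero to $\tilde u\in H^1(\mathcal R)$; the surplus $\delta$-contribution on $\{\tilde x_2 = 0\}\setminus D$ vanishes because $\tilde u = 0$ there, giving $m_R(\tilde u) = n^{R,-}(u)$ and $\|\tilde u\|^2_{\mathcal R} = \|u\|^2_D$. Decomposing $\tilde u = c\Phi_0+\tilde u_\perp$ orthogonally in $L^2(\mathcal R)$, the exponential concentration of $\Phi_0$ near $\tilde x_2 = 0$, combined with the fact that this strip meets $D$ essentially only over $\tilde x_1\in[0,R\cot\theta]$, yields $\|\Phi_0\|^2_{L^2(D)} = \tfrac{1}{2}(1+o(1))\|\Phi_0\|^2_{L^2(\mathcal R)}$; Cauchy-Schwarz then bounds $c^2\|\Phi_0\|^2/\|\tilde u\|^2$ by $\tfrac{1}{2}+o(1)$, and the resulting Rayleigh-quotient estimate
\[
\frac{n^{R,-}(u)}{\|u\|^2}\geq \Lambda_2(M_R)-\frac{c^2\|\Phi_0\|^2}{\|\tilde u\|^2}\bigl(\Lambda_2(M_R)-\Lambda_1(M_R)\bigr)\geq \tfrac{1}{2}\bigl(\Lambda_1(M_R)+\Lambda_2(M_R)\bigr)+o(R^{-2})
\]
delivers $E_1(N^{R,-})\geq -\tfrac{1}{4}+CR^{-2}$.

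The principal obstacle is the parallel bound for $E_2(N^{R,+})$, where the zero-extension fails because $u\in D(n^{R,+})$ need not vanish on $AB$. My plan is the ground-state substitution $u=\psi w$ with $\psi$ the (unique positive) eigenfunction of the symmetric part of $H^1_\theta$ on the half-plane with Neumann trace on $\{x_2=0\}$: integration by parts converts $n^{R,+}(u)$ into $\mathcal E_1(\theta)\|u\|^2+\int\psi^2|\nabla w|^2$ plus boundary terms which are $\mathcal{O}(e^{-cR})$ on $\partial K^R_{\theta,+}\setminus\{x_2=0\}$ by the Agmon decay of Corollary \ref{lem: agmon estimate HAlphaTheta} and vanish on $\{x_2=0\}$ by Neumann-symmetry. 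Orthogonality of $u$ to the ground state of $N^{R,+}$ forces $\int\psi^2 w$ to be small, and a weighted Poincar\'e inequality along the arm direction then yields the desired $R^{-2}$ gap. The hardest point will be matching the exponentially small corner contributions with the algebraic $R^{-2}$ gap so that neither is swallowed by the other.
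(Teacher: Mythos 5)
Your skeleton coincides with the paper's: both reduce by scaling to $\alpha=1$, split $N^R_{\theta,1}$ by the reflection symmetry into a Neumann half $N^{R,+}$ and a Dirichlet half $N^{R,-}$ on the half-kite, and reduce the claim to the two bounds $E_1(N^{R,-})\geq-\tfrac14+CR^{-2}$ and $E_2(N^{R,+})\geq-\tfrac14+CR^{-2}$ (and $\kappa(\theta)=1$ is indeed immediate from Lemma~\ref{lem: kappa theta = 1}). Your treatment of the Dirichlet half — zero extension across the hypotenuse into the rectangle $\mathcal R$, separation of variables there, and projection onto the transverse ground state $\Phi_0$ using $\|\Phi_0\|^2_{L^2(D)}=\tfrac12(1+o(1))\|\Phi_0\|^2_{L^2(\mathcal R)}$ — is a correct, if more elaborate, alternative to the paper's route (Dirichlet bracketing into a rotated rectangle and separation into $T^{ND}$ operators via Proposition~\ref{lem: ev of t,N,L,alpha}(vi) and Lemma~\ref{lem: ev of t,ND,L}). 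Note, however, that $D\subset\mathcal R$ holds for every $\theta\in(0,\tfrac{\pi}{2})$, so this step does not actually consume the hypothesis $\theta\geq\tfrac{\pi}{4}$.

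The genuine gap is the Neumann half, which you yourself identify as the principal obstacle. The ground-state substitution $u=\psi w$ with the corner-localized state $\psi$ is circular: the ``weighted Poincar\'e inequality along the arm'' you would need, namely $\int\psi^2|\nabla w|^2\geq\bigl(-\tfrac14+CR^{-2}-\mathcal E_1(\theta)\bigr)\int\psi^2|w|^2$ for $w$ orthogonal to constants in $L^2(\psi^2\,\mathrm{d}x)$, is — by the very ground-state transform you invoke — exactly equivalent to the statement $E_2(N^{R,+})\geq-\tfrac14+CR^{-2}$, and no independent mechanism for proving it is given; since $\psi$ decays exponentially the corresponding Poincar\'e constant is not controllable by soft arguments. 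Moreover, because $w=u/\psi$ is exponentially large where $\psi$ is exponentially small, the boundary terms on $\partial K^{R,+}_\theta\setminus\{x_2=0\}$ are of the form $\int\psi\,\partial_\nu\psi\,|w|^2$ and cannot be dismissed as $\mathcal O(e^{-cR})$ merely because $\partial_\nu\psi$ is small. Tellingly, your plan never uses $\theta\geq\tfrac{\pi}{4}$ in this half, whereas this is precisely where the hypothesis is essential: the paper rotates the half-kite so that the interaction segment lies on a coordinate axis and then rescales the transverse variable by $\tan\theta$, using $\tan^2\theta\geq1$ to dominate the quadratic form by the one for $\theta=\tfrac{\pi}{4}$; the square case is then settled by enlarging the interaction support to the full coordinate axes and separating variables, giving $E_2\geq E_1(T^N_{R,1})+E_2(T^N_{R,1})\geq-\tfrac14+CR^{-2}$. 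Without some such use of the hypothesis the Neumann bound cannot close, since non-resonance for $\theta<\tfrac{\pi}{4}$ is left open.
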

\begin{proof}Without loss of generality we may assume that $\alpha = 1$ . Because of Proposition \ref{lem: big prop}, we know that $\kappa(\theta)=1$ holds for all $\theta \in [\frac{\pi}{4}, \frac{\pi}{2})$. Therefore, it suffices to show that there exist constants $C>0$ and sufficiently large $R$ such that
\[
E_2(N^R_{\theta,1}) \geq -\frac{1}{4} + \frac{C}{R^2}. 
\]
We first consider the case $\theta = \frac{\pi}{4}$. Note that in this case, one has $K^R_\theta = (-R,R)^2$, and $\Gamma^R_\theta$ coincides with the positive parts of the $x_1$- and $x_2$-axes. This geometric setting allows us to estimate, for any  $u \in H^1(K^R_\theta)$
\[
n^R_{\frac{\pi}{4},1}(u) \geq \int_{(-R,R)^2} |\nabla u|^2 \mathrm{d}x - \int_{-R}^R (|u(x_1,0)|^2 + |u(0,x_1)|^2) \mathrm{d} x_1 =: q^R(u),
\]
where $D(q^R) = D(n^R_{ \frac{\pi}{4},1 } ) = H^1((-R,R)^2)$. Clearly, $Q^R$ is unitarily equivalent to $ I \otimes T^N_{R,1} + T^N_{R,1} \otimes I $, and for some $C_0 > 0 $ one has
\begin{align*}
E_2(N^R_{\frac{\pi}{4},1}) \geq E_2(Q^R) &= E_1(T^N_{R,1}) + E_2(T^N_{R,1})> -\frac{1}{4} + \mathcal{O}(e^{-\frac{R}{2}}) + \frac{\pi^2}{4R^2}\geq -\frac{1}{4} + \frac{C_0}{R^2},
\end{align*}
 which shows the statement for $\theta = \frac{\pi}{4}$.

\begin{figure}
         \centering
         \begin{tikzpicture}[x=0.75pt,y=0.75pt,yscale=-1,xscale=1]

\draw    (170,180) -- (170,63) ;
\draw [shift={(170,60)}, rotate = 90] [fill={rgb, 255:red, 0; green, 0; blue, 0 }  ][line width=0.08]  [draw opacity=0] (3.57,-1.72) -- (0,0) -- (3.57,1.72) -- cycle    ;
\draw    (50,160) -- (287,160) ;
\draw [shift={(290,160)}, rotate = 180] [fill={rgb, 255:red, 0; green, 0; blue, 0 }  ][line width=0.08]  [draw opacity=0] (3.57,-1.72) -- (0,0) -- (3.57,1.72) -- cycle    ;
\draw    (190,150) .. controls (194.33,152.5) and (194.33,156) .. (193,159.83) ;
\draw    (270.33,155) -- (270.33,165) ;
\draw    (70,154.67) -- (70,164.67) ;
\draw  [fill={rgb, 255:red, 155; green, 155; blue, 155 }  ,fill opacity=0.3 ] (230,80) -- (270,160) -- (70,160) -- cycle ;
\draw  [line width=1.25]   (170,160) -- (250,120) ;
\draw [densely dotted] [line width=1.25]     (70,160) -- (270,160) ;

\draw (184.33,152) node [anchor=north west][inner sep=0.75pt]  [font=\tiny] [align=left] {$\displaystyle \theta $};
\draw (259.33,168.67) node [anchor=north west][inner sep=0.75pt]  [font=\tiny] [align=left] {$\displaystyle \frac{R}{\sin \ \theta }$};
\draw (59,169.33) node [anchor=north west][inner sep=0.75pt]  [font=\tiny] [align=left] {$\displaystyle \frac{-R}{\sin \ \theta }$};
\draw (283,162.33) node [anchor=north west][inner sep=0.75pt]  [font=\tiny] [align=left] {$\displaystyle x_{1}$};
\draw (156.33,61.33) node [anchor=north west][inner sep=0.75pt]  [font=\tiny] [align=left] {$\displaystyle x_{2}$};
\draw (105.57,87.05) node [anchor=north west][inner sep=0.75pt]  [font=\scriptsize] [align=left] {$\displaystyle K_{\theta }^{R,+}$};
\end{tikzpicture}

         \caption{The triangle $K^{R,+}_\theta$.}
         \label{fig: KRtheta geq 0}
\end{figure}
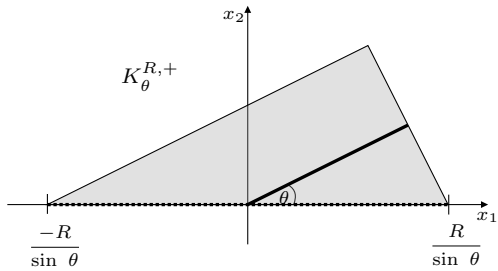
We now turn to the case $\theta \in (\frac{\pi}{4}, \frac{\pi}{2})$. We use the axial symmetry of $K^R_\theta$ by introducing the unitary transform
\begin{align*}
\Phi: L^2(K^R_\theta) &\rightarrow L^2(K^{R,+}_\theta) \oplus   L^2(K^{R,+}_\theta)\\
u  &\mapsto \begin{pmatrix} g\\ h
	\end{pmatrix} := \frac{1}{\sqrt{2}}\begin{pmatrix} u(x_1,x_2)+u(x_1,-x_2) \\ u(x_1,x_2)-u(x_1,-x_2)\end{pmatrix},
\end{align*}
where
\[
K^{R,+}_\theta = K^R_\theta \cap \{(x_1,x_2) : x_2 >0 \},
\]
see Figure \ref{fig: KRtheta geq 0}. Hence, it holds that
\[
n^R_\theta(u) = n^{R,N}_\theta(g) + n_\theta^{R,D}(h) = (n^{R,N}_\theta \oplus n^{R,D}_\theta )(\Phi(u))
\]
where for $\bullet= D,N$, $n_\theta^{R,\bullet}$ is defined by
\[
n_\theta^{R,N}(g) = \int_{K^{R,+}_\theta} |\nabla g|^2 \mathrm{d}x - \int_{0}^{R \cos \theta } |g(\frac{x_2}{\tan\theta}, x_2)|^2 \frac{\mathrm{d}x_2}{\sin\theta},
\]
with $D(n_\theta^{R,N} ) = H^1(K^{R,+}_\theta)$ and $D(n_\theta^{R,N}) = \{ u \in H^1(K^{R,+}_\theta) \mid u( \,\cdot \,,0) = 0 \}$. This proves that $N^R_\theta$ is unitarily equivalent to the direct sum $N^{R,N}_\theta \oplus N^{R,D}_\theta$, which allows us to analyze the spectra of $N^{R,N}_\theta $ and $N^{R,D}_\theta$ separately. 

We begin by establishing a lower bound for the first eigenvalue in the Dirichlet case. By applying Dirichlet bracketing, one obtains the inequality $N^{R,D}_\theta \geq \widetilde{N}^{R,D}_\theta $, where $\widetilde{N}^{R,D}_\theta$ denotes the Laplacian on $\Pi^R_\theta$. The domain $\Pi^R_\theta$ is defined as the rectangle  $(-R,R) \times (\frac{-R}{\tan \theta}, \frac{R}{\tan \theta})$ rotated counterclockwise by angle $\theta$, Dirichlet boundary conditions are imposed on the portion of the boundary below the $x_1$-axis, Neumann boundary conditions on the remaining part of the boundary, and the $\delta$-interaction is supported on the $x_1$-axis; see Figure \ref{fig: rotation with karl}(a). 
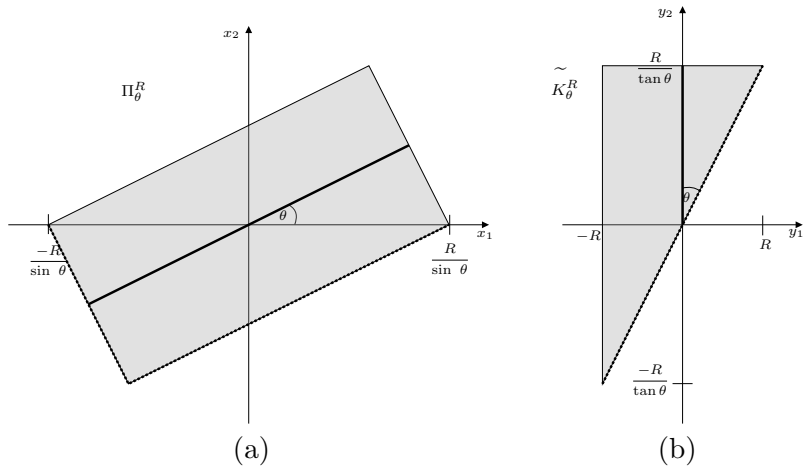
\begin{figure}[b]
\centering
\begin{tabular}{ccc}
         \scalebox{0.75}{\begin{tikzpicture}[x=1pt,y=1pt,yscale=-1,xscale=1]

\draw    (170,260) -- (170,63) ;
\draw [shift={(170,60)}, rotate = 90] [fill={rgb, 255:red, 0; green, 0; blue, 0 }  ][line width=0.08]  [draw opacity=0] (3.57,-1.72) -- (0,0) -- (3.57,1.72) -- cycle    ;
\draw    (50,160) -- (287,160) ;
\draw [shift={(290,160)}, rotate = 180] [fill={rgb, 255:red, 0; green, 0; blue, 0 }  ][line width=0.08]  [draw opacity=0] (3.57,-1.72) -- (0,0) -- (3.57,1.72) -- cycle    ;
\draw    (190,150) .. controls (194.33,152.5) and (194.33,156) .. (193,159.83) ;
\draw    (270.33,155) -- (270.33,165) ;
\draw    (70,154.67) -- (70,164.67) ;
\draw  [fill={rgb, 255:red, 155; green, 155; blue, 155 }  ,fill opacity=0.3 ] (230,80) -- (270,160) -- (110,240) -- (70,160) -- cycle ;
\draw  [line width=1.25]  (90,200) -- (250,120) ;
\draw [densely dotted] [line width=1.25]   (70,160) -- (110,240) -- (270,160) ;

\draw (184.33,152) node [anchor=north west][inner sep=0.75pt]  [font=\tiny] [align=left] {$\displaystyle \theta $};
\draw (259.33,168.67) node [anchor=north west][inner sep=0.75pt]  [font=\tiny] [align=left] {$\displaystyle \frac{R}{\sin \ \theta }$};
\draw (59,169.33) node [anchor=north west][inner sep=0.75pt]  [font=\tiny] [align=left] {$\displaystyle \frac{-R}{\sin \ \theta }$};
\draw (283,162.33) node [anchor=north west][inner sep=0.75pt]  [font=\tiny] [align=left] {$\displaystyle x_{1}$};
\draw (156.33,61.33) node [anchor=north west][inner sep=0.75pt]  [font=\tiny] [align=left] {$\displaystyle x_{2}$};
\draw (105.57,87.05) node [anchor=north west][inner sep=0.75pt]  [font=\scriptsize] [align=left] {$\displaystyle \Pi_{\theta }^{R}$};
\end{tikzpicture}}
&\qquad
&
\scalebox{0.75}{\begin{tikzpicture}[x=1pt,y=1pt,yscale=-1,xscale=1]

\draw    (127.57,241.67) -- (127.57,34.67) ;
\draw [shift={(127.57,31.67)}, rotate = 90] [fill={rgb, 255:red, 0; green, 0; blue, 0 }  ][line width=0.08]  [draw opacity=0] (3.57,-1.72) -- (0,0) -- (3.57,1.72) -- cycle    ;
\draw    (67.57,141.67) -- (184.57,141.67) ;
\draw [shift={(187.57,141.67)}, rotate = 180] [fill={rgb, 255:red, 0; green, 0; blue, 0 }  ][line width=0.08]  [draw opacity=0] (3.57,-1.72) -- (0,0) -- (3.57,1.72) -- cycle    ;
\draw  [fill={rgb, 255:red, 155; green, 155; blue, 155 }  ,fill opacity=0.3 ] (87.57,61.67) -- (87.57,221.67) -- (167.57,61.67) -- cycle ;
\draw  [line width=1.25]   (127.57,141.67) -- (127.57,61.67) ;
\draw    (167.57,137) -- (167.57,147) ;
\draw    (122.57,221.57) -- (123.9,221.57) -- (132.57,221.57) ;
\draw    (127.32,123.29) .. controls (130.82,122.29) and (134.07,123.04) .. (136.07,124.54) ;
\draw [densely dotted] [line width=1.25]    (87.57,221.67) -- (167.57,61.67) ;

\draw (73.57,143.67) node [anchor=north west][inner sep=0.75pt]  [font=\tiny] [align=left] {$\displaystyle -R$};
\draw (164.9,148) node [anchor=north west][inner sep=0.75pt]  [font=\tiny] [align=left] {$\displaystyle R$};
\draw (179.57,142.67) node [anchor=north west][inner sep=0.75pt]  [font=\tiny] [align=left] {$\displaystyle y_{1}$};
\draw (114.24,32) node [anchor=north west][inner sep=0.75pt]  [font=\tiny] [align=left] {$\displaystyle y_{2}$};
\draw (100.76,211.19) node [anchor=north west][inner sep=0.75pt]  [font=\tiny] [align=left] {$\displaystyle \frac{-R}{\tan \theta }$};
\draw (103.24,54) node [anchor=north west][inner sep=0.75pt]  [font=\tiny] [align=left] {$\displaystyle \frac{R}{\tan \theta }$};
\draw (61,67.81) node [anchor=north west][inner sep=0.75pt]  [font=\scriptsize] [align=left] {$\displaystyle K_{\theta }^{R}$};
\draw (62.14,61.67) node [anchor=north west][inner sep=0.75pt]   [align=left] {$\displaystyle \widetilde{\ \ }$};
\draw (126.57,124.42) node [anchor=north west][inner sep=0.75pt]  [font=\tiny] [align=left] {$\displaystyle \theta $};
\end{tikzpicture}}\\
(a) && (b)
\end{tabular}
\caption{(a) Continuation of $K^{R,+}_\theta$ via Dirichlet bracketing. (b) $K^{R,+}_\theta$ rotated counterclockwise by angle $ \frac{\pi}{2} -\theta $.}
\label{fig: rotation with karl}
\end{figure}
After a rotation and separation of variables, it is easy to see that $\widetilde{N}^{R,D}$ is unitarily equivalent to $ I \otimes T^{ND}_{R,1}+T^{ND}_{\frac{R}{\tan \theta}} \otimes I$, and by using Proposition \ref{lem: ev of t,N,L,alpha}(vi) and Lemma \ref{lem: ev of t,ND,L} one gets for $R$ sufficiently large that
\[
E_1(N^{R,D}) \geq E_1(T^{ND}_{R,1}) + E_1(T^{ND}_{\frac{2R}{\tan \theta}}) > - \frac{1} {4} - C e^{-\frac{R}{2}} +  \frac{\pi^2 \tan^2 \theta}{16 R^2} \geq -\frac{1}{4} + \frac{C_D}{R^2},
\]
for some $C_D>0$.  Thus, it remains to establish a lower bound for the Neumann case. To this end, we first apply a counterclockwise rotation by angle $\pi -\theta$  (see Figure \ref{fig: rotation with karl}(b)) to get a unitarily equivalent operator associated with 
\begin{align*}
q^{R}_\theta(g) &= \int_{\tilde{K}_\theta^R}  | \nabla g|^2 \mathrm{d}x - \int_{0}^{\frac{R}{\tan \theta}} |g(0,x_2)|^2 \mathrm{d} x_2
, \quad D(q^R_\theta) = H^1(\tilde{K}^R_\theta),
\end{align*}
where
\begin{align*}
 \tilde{K}^R_\theta &= \Big\{ (y_1,y_2) \in (-R,R) \times (-\frac{R}{\tan \theta}, \frac{R}{\tan \theta}) \mid - y_2 < \frac{y_1}{\tan \theta} \Big\}.
 \end{align*}
We then apply another unitary transform consisting of a scaling by $\tan \theta$:
\[
\Psi:L^2(\tilde{K}^{R \tan \theta}_{\frac{\pi}{4}}) \rightarrow L^2(\tilde{K}^R_{\theta}), \quad g \mapsto \sqrt{\tan \theta} \cdot g(x_1,x_2 \tan\theta),
\]
which leads to 
\[
\tilde{q}^R_\theta (v) = \int_{\tilde{K}^R_{\frac{\pi}{4}}} |\partial_{x_1}v(x_1,t)|^2 + \tan^2\theta |\partial_{t} v(x_1,t)|^2  \mathrm{d}x_1 \text{ d}t - \int_{0}^R |\tilde{g}(0, t)|^2  \mathrm{d} t, \quad D(\tilde{q}_\theta^R) = H^1(\tilde{K}^R_{\frac{\pi}{4}}).
\]
From this, it follows that 
\[
\tilde{q}^R_\theta (v) = \tilde{q}_{\frac{\pi}{4}}^R(v) +(\tan^2\theta - 1) \int_{\tilde{K}^R_{\frac{\pi}{4}}} |\partial_t v(x_1,t) |^2 \mathrm{d} x_1 \text{ d}t \geq \tilde{q}_{\frac{\pi}{4}}^R(v),
\]
and by the Min-Max principle, we obtain
\[
E_2(N^{N,R}_\theta) = E_2(  \tilde{Q}^R_\theta ) \geq E_2(\tilde{Q}^R_{\frac{\pi}{4}}) = E_2(N^{N,R}_{\frac{\pi}{4}}) \geq  -\frac{1}{4} + \frac{ C_0}{R^2},
\]
which concludes the proof. 
\end{proof}

By noting that the change $\theta\mapsto \pi-\theta$ corresponds to a unitary transform, we summarize our observations on non-resonance angles as follows:
\begin{corollary}\label{corol-nonres}
All half-angles $\theta \in [\frac{\pi}{4},\frac{\pi}{2})\cup (\frac{\pi}{2},\frac{3\pi}{4}]$ 
are non-resonant with $\kappa(\theta)=1$.
\end{corollary}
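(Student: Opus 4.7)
The plan is to split the argument into the acute case $\theta \in [\frac{\pi}{4},\frac{\pi}{2})$ and the obtuse case $\theta \in (\frac{\pi}{2},\frac{3\pi}{4}]$, and then reduce the obtuse case to the acute one via the reflection $\theta \mapsto \pi-\theta$.

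For $\theta \in [\frac{\pi}{4},\frac{\pi}{2})$ I would simply invoke Lemma~\ref{lem: existence of non resonant angles}, which yields both conclusions: $\kappa(\theta)=1$ and the required lower bound $E_{\kappa(\theta)+1}(N^R_{\theta,\alpha})\geq -\tfrac{\alpha^2}{4}+\tfrac{C}{R^2}$ as $R\to\infty$. For $\theta \in (\frac{\pi}{2},\frac{3\pi}{4}]$ I would set $\theta':=\pi-\theta\in[\frac{\pi}{4},\frac{\pi}{2})$. Proposition~\ref{lem: big prop}(ii) gives the unitary equivalence $H^\alpha_\theta\cong H^\alpha_{\theta'}$, hence $\kappa(\theta)=\kappa(\theta')$, and Lemma~\ref{lem: existence of non resonant angles} applied to $\theta'$ yields $\kappa(\theta')=1$. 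As for the non-resonance condition, Definition~\ref{def non_resonant} declares an obtuse half-angle $\theta$ non-resonant exactly when its acute complement $\pi-\theta$ is non-resonant, so the non-resonance of $\theta$ is immediate from that of $\theta'$, again by Lemma~\ref{lem: existence of non resonant angles}.

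I do not anticipate any real obstacle here: the entire substance of the corollary is already contained in Lemma~\ref{lem: existence of non resonant angles} (which does the analytic work for the acute range via Dirichlet--Neumann bracketing and the explicit separation of variables on the kite $K^R_{\pi/4}$) and in the reflection symmetry recorded in Proposition~\ref{lem: big prop}(ii). The corollary is a bookkeeping observation packaging these two ingredients together with the convention introduced in Definition~\ref{def non_resonant} for obtuse half-angles.
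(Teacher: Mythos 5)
Your proposal is correct and matches the paper's argument exactly: the acute range is Lemma~\ref{lem: existence of non resonant angles}, and the obtuse range follows from the reflection $\theta\mapsto\pi-\theta$ via the unitary equivalence in Proposition~\ref{lem: big prop}(ii) together with the convention for obtuse half-angles built into Definition~\ref{def non_resonant}. Nothing further is needed.
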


For the purpose of certain estimates later on, we introduce the following operator.
\begin{definition}For $K^R_\theta$ and $\partial_* K^R_\theta$ as in Definition \ref{def: Karl}, we set 
    \[
    r^R_{\theta,\alpha}(u) = \int_{K^R_\theta} |u|^2 \mathrm{d}x - \alpha \int_{\Gamma_\theta^R} |u|^2 \dS - \alpha\int_{\partial_* K^R_\theta} |u|^2 \dS, \quad D(r^R_{\theta,\alpha}) = H^1(K^R_\theta).
    \]
\end{definition}
\begin{lemma}
\label{lem: Kite lower bound robin bc}
    There exists $c>0$ such that $ R^R_{\theta,\alpha} \geq - c\alpha^2$.
\end{lemma}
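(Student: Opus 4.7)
The plan is to control the two negative boundary-type terms appearing in $r^R_{\theta,\alpha}$ by half of the Dirichlet energy plus a multiple of $\alpha^2\|u\|^2$, by means of a standard trace interpolation inequality on the Lipschitz domain $K^R_\theta$ combined with Young's inequality with a parameter of order $\alpha^{-1}$.

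Since $K^R_\theta$ is a bounded Lipschitz domain, $\partial_* K^R_\theta$ is a Lipschitz portion of its boundary, and $\Gamma^R_\theta\subset\overline{K^R_\theta}$ is a regular Lipschitz hypersurface, the classical trace theorem combined with an interpolation argument yields constants $C_1,C_2>0$ (depending on $R$ and $\theta$) such that, for every $\varepsilon>0$ and every $u\in H^1(K^R_\theta)$,
\[
\int_{\partial_* K^R_\theta}|u|^2\dS\leq \varepsilon\int_{K^R_\theta}|\nabla u|^2\mathrm{d}x + \frac{C_1}{\varepsilon}\int_{K^R_\theta}|u|^2\mathrm{d}x,
\]
and analogously for $\Gamma^R_\theta$ with constant $C_2$. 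Multiplying each of these estimates by $\alpha$ and choosing $\varepsilon:=1/(4\alpha)$ yields, after summation,
\[
\alpha\int_{\partial_* K^R_\theta}|u|^2\dS + \alpha\int_{\Gamma^R_\theta}|u|^2\dS \leq \tfrac{1}{2}\|\nabla u\|^2_{L^2(K^R_\theta)} + 4(C_1+C_2)\alpha^2\|u\|^2_{L^2(K^R_\theta)}.
\]
Inserting this estimate into the definition of $r^R_{\theta,\alpha}$ immediately gives $r^R_{\theta,\alpha}(u)\geq -c\alpha^2\|u\|^2_{L^2(K^R_\theta)}$ with $c:=4(C_1+C_2)$, whence the conclusion by the variational characterization.

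The argument presents no real obstacle, as it is merely a direct application of the trace theorem together with a Young-type absorption tuned to the size of $\alpha$. Should a version of the bound with $c$ independent of $R$ be required in later applications (e.g.\ in the regime $R\to 0$ with $\alpha R\to\infty$), one first applies the unitary rescaling $v(y):=R\,u(Ry)$; a direct computation analogous to Lemma \ref{lem: scaling anything} gives the identification $R^R_{\theta,\alpha}\cong R^{-2}\,R^1_{\theta,\alpha R}$, and the trace inequalities on the fixed domain $K^1_\theta$, applied with $\varepsilon:=1/(4\alpha R)$, then furnish a constant depending only on $\theta$, valid for $\alpha R\geq 1$.
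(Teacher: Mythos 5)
Your argument is correct, but it takes a different route from the paper, and one point of emphasis needs adjusting. The paper first doubles the $\delta$-term, splits $K^R_\theta$ into two sub-kites $K_{in}$ and $K_{out}$ whose boundaries jointly contain $\Gamma^R_\theta$ and $\partial K^R_\theta$, thereby converting everything into pure Robin boundary conditions, and then cites the known lower bound $R^1_{\bullet,\beta}\geq -c\beta^2$ for Robin Laplacians on Lipschitz domains from \cite[Lemma 2.7]{KOBP20} after rescaling. You instead prove that Robin-type bound directly, absorbing both the interior $\delta$-term on $\Gamma^R_\theta$ and the boundary term on $\partial_* K^R_\theta$ via the trace interpolation inequality and Young's inequality with $\varepsilon\sim\alpha^{-1}$; this is self-contained, avoids the geometric decomposition, and is a legitimate simplification since the trace inequality applies equally to the interior Lipschitz curve and to the boundary portion. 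The one caveat: your main argument produces constants $C_1,C_2$ depending on $R$, whereas the lemma is applied later (Lemma \ref{lem: lower bound robin bc V delta}) with $R=r(\delta)\to 0$ and $\alpha R\to\infty$, so the $R$-uniform version is not optional but is the actual content of the statement. Your closing remark supplies exactly the needed fix — the unitary identification $R^R_{\theta,\alpha}\cong R^{-2}R^1_{\theta,\alpha R}$ (consistent with what the paper uses) followed by the trace inequality on the fixed kite $K^1_\theta$ with $\varepsilon=1/(4\alpha R)$ — so there is no gap, but that rescaling should be the first step of the proof rather than an afterthought, and the conclusion should be stated as holding for $\alpha R$ sufficiently large (so that $\varepsilon$ lies in the admissible range of the trace inequality), which is the regime in which the lemma is used.
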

\begin{proof}
Set $ K_{in} := \frac{R}{2\sin \theta} + K^\frac{R}{2}_\theta $ and $K_{out} = K^R_\theta \setminus K_{in}$ as shown in Figure \ref{fig: Kin Kout}. 
\begin{figure}
    \centering
    \scalebox{0.75}{\begin{tikzpicture}[x=0.7pt,y=0.7pt,yscale=-1,xscale=1]

\draw    (140,210) -- (140,13) ;
\draw [shift={(140,10)}, rotate = 90] [fill={rgb, 255:red, 0; green, 0; blue, 0 }  ][line width=0.08]  [draw opacity=0] (3.57,-1.72) -- (0,0) -- (3.57,1.72) -- cycle    ;
\draw  [fill={rgb, 255:red, 0; green, 0; blue, 0 }  ,fill opacity=0.1 ] (200,30) -- (240,110) -- (200,190) -- (40,110) -- cycle ;
\draw    (20,110) -- (257,110) ;
\draw [shift={(260,110)}, rotate = 180] [fill={rgb, 255:red, 0; green, 0; blue, 0 }  ][line width=0.08]  [draw opacity=0] (3.57,-1.72) -- (0,0) -- (3.57,1.72) -- cycle    ;
\draw [line width=1.25]   (140,110) -- (220,70) ;
\draw [line width=1.25]   (140,110) -- (220,150) ;
\draw   (200,30) -- (220,70) -- (140,110) -- (220,150) -- (200,190) -- (40,110) -- cycle ;

\draw (251,111) node [anchor=north west][inner sep=0.75pt]  [font=\tiny] [align=left] {$\displaystyle x_{1}$};
\draw (128,12) node [anchor=north west][inner sep=0.75pt]  [font=\tiny] [align=left] {$\displaystyle x_{2}$};
\draw (211,92) node [anchor=north west][inner sep=0.75pt]  [font=\scriptsize] [align=left] {$\displaystyle K_{in}$};
\draw (167,56) node [anchor=north west][inner sep=0.75pt]  [font=\scriptsize] [align=left] {$\displaystyle K_{out}$};
\draw (78,44.33) node [anchor=north west][inner sep=0.75pt]  [font=\scriptsize] [align=left] {$\displaystyle K_{\theta }^{R}$};

\end{tikzpicture}}
    \caption{$K^R_\theta$ divided into $K_{out}$ and $K_{in}$.}
    \label{fig: Kin Kout}
\end{figure}
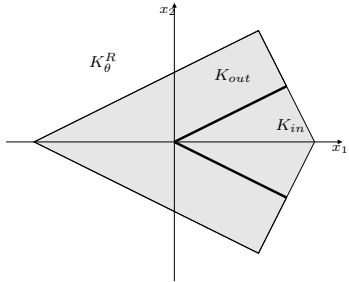

 Consider the map
    \[
    J: H^1(K^R_\theta) \rightarrow H^1(K_{in}) \times H^1(K_{out}), \quad u \mapsto (u\upharpoonright
_{K_{in}}, u\upharpoonright
_{K_{out}}) = (u_{in}, u_{out}).
    \]
    Since $\partial_* K^R_\theta \subset \partial K^R_\theta$, for any $u \in H^1(K^R_\theta)$ one has 
    \begin{align*}
         r^R_{\theta,\alpha} &\geq \int_{K^R_\theta} |u|^2 \mathrm{d}x - 2\alpha \int_{\Gamma_\theta^R} |u|^2 \dS - \alpha \int_{\partial K^R_\theta} |u|^2 \dS \\
         &= \int_{K_{in}} |u|^2 \mathrm{d}x - \alpha \int_{\partial K_{in}} |u|^2 \dS +\int_{K_{out}} |u|^2 \mathrm{d}x - \alpha \int_{\partial K_{out}} |u|^2 \dS \\
         &=: r^R_{in,\alpha}(u_{in}) + r^R_{out,\alpha}(u_{out}) = (r^R_{in,\alpha} \oplus r^R_{out,\alpha})(Ju),
    \end{align*}
    where $R_{in/out}$ is the Laplacian on $K_{in/out}$ with $\alpha$-Robin boundary condition. Consequently, it follows that
  $R^R_{\theta,\alpha} \geq R^R_{in,\alpha} \oplus R^R_{out,\alpha}$.
    Using the same arguments as in Lemma \ref{lem: scaling anything}, one can show that
    $$ R^R_{in,\alpha} \cong \frac{1}{R^2} R^1_{in, \alpha R}\quad R^R_{out,\alpha} \cong \frac{1}{R^2} R^1_{out, \alpha R}. $$
    Thus,  \cite[Lemma 2.7]{KOBP20} ensures that there are constants $c_{in} > 0 $ and $c_{out} > 0 $ such that, as $\alpha R \to \infty$, there holds
    \[
    R^1_{in,\alpha R} \geq -c_{in}(\alpha R)^2 \quad\text{and}\quad R^1_{out,\alpha R} \geq -c_{in}(\alpha R)^2,
    \]
    which entails that $R^{R}_{in,\alpha} \geq  - c_{in}\alpha^2$ and $ R^{R}_{out,\alpha} \geq  - c_{in}\alpha^2$. Therefore, there is $c>0$ such that $R^R_{\theta,\alpha} \geq -c\alpha^2$, and the lemma is proved. 
\end{proof}

\section{Neighborhoods of curved corners}
\label{sec: V delta}
To analyze the eigenvalue asymptotics near the corner, we first construct a neighborhood around the corner by splitting the curve into two segments meeting there, each smoothly continued beyond the intersection. This continuation allows us to build a neighborhood whose spectral properties remain asymptotically independent of the continuation choice; see Figure \ref{fig: V delta 2}(a). We then find a bi-Lipschitz map that straightens this neighborhood into a kite, whose spectral behavior was analyzed previously. The construction adapts techniques from \cite{KOBP20} to this setting.
\begin{figure}
    \centering
    \begin{tabular}{ccc}
        \begin{tikzpicture}[x=0.75pt,y=0.75pt,yscale=-1,xscale=1]

\draw [fill={rgb, 255:red, 155; green, 155; blue, 155 }  ,fill opacity=0.3 ]   (215.89,55.89) -- (240.33,123.5) -- (198.67,187.17) ;
\draw  [draw opacity=0][fill={rgb, 255:red, 155; green, 155; blue, 155 }  ,fill opacity=0.3 ] (215.89,55.89) -- (198.67,187.17) -- (82.67,114.5) -- (121.06,97.22) -- cycle ;
\draw [fill={rgb, 255:red, 255; green, 255; blue, 255 }  ,fill opacity=1 ]   (215.89,55.89) .. controls (181.22,67.22) and (154.14,82.76) .. (138.48,90.76) .. controls (122.81,98.76) and (105.67,99.83) .. (82.67,114.5) ;
\draw [fill={rgb, 255:red, 255; green, 255; blue, 255 }  ,fill opacity=1 ]   (82.67,114.5) .. controls (102.33,118.5) and (126,132.5) .. (136.67,142.83) .. controls (147.33,153.17) and (164.67,167.5) .. (198.67,187.17) ;
\draw    (80,60) .. controls (117.67,48.17) and (120.33,90.5) .. (160,120) .. controls (199.67,149.5) and (226,144.17) .. (250,190) ;
\draw    (70,170) .. controls (98.67,165.5) and (129,144.17) .. (160,120) .. controls (191,95.83) and (242.33,92.83) .. (270,70) ;
\draw    (70,120) -- (277,120) ;
\draw [shift={(280,120)}, rotate = 180] [fill={rgb, 255:red, 0; green, 0; blue, 0 }  ][line width=0.08]  [draw opacity=0] (3.57,-1.72) -- (0,0) -- (3.57,1.72) -- cycle    ;
\draw    (160,210) -- (160,43) ;
\draw [shift={(160,40)}, rotate = 90] [fill={rgb, 255:red, 0; green, 0; blue, 0 }  ][line width=0.08]  [draw opacity=0] (3.57,-1.72) -- (0,0) -- (3.57,1.72) -- cycle    ;
\draw [line width=1.5]    (160,120) .. controls (194.33,97.17) and (216.1,97.1) .. (228.43,90.43) ;
\draw [line width=1.5]    (160,120) .. controls (193,142.9) and (201.2,140.7) .. (220.2,154.5) ;
\draw [fill={rgb, 255:red, 155; green, 155; blue, 155 }  ,fill opacity=0.3 ][line width=0.75]    (82.67,114.5) .. controls (92.56,107.67) and (111.22,100.33) .. (122.56,97) ;
\draw [fill={rgb, 255:red, 155; green, 155; blue, 155 }  ,fill opacity=0.3 ]   (144.11,87.67) .. controls (175,71.67) and (199,61.22) .. (215.89,55.89) ;

\draw (269.25,122.75) node [anchor=north west][inner sep=0.75pt]  [font=\tiny] [align=left] {$\displaystyle x_{1}$};
\draw (146.5,40) node [anchor=north west][inner sep=0.75pt]  [font=\tiny] [align=left] {$\displaystyle x_{2}$};
\draw (190,76.25) node [anchor=north west][inner sep=0.75pt]  [font=\scriptsize] [align=left] {$\displaystyle V_{t}$};
\draw (170.67,135) node [anchor=north west][inner sep=0.75pt]  [font=\scriptsize,color={black}  ,opacity=1 ] [align=left] {$\displaystyle \Gamma_{t}$};
\end{tikzpicture}
&\qquad
&
 \begin{tikzpicture}[x=0.75pt,y=0.75pt,yscale=-1,xscale=1]

\draw    (170,220) -- (170,53) ;
\draw [shift={(170,50)}, rotate = 90] [fill={rgb, 255:red, 0; green, 0; blue, 0 }  ][line width=0.08]  [draw opacity=0] (3.57,-1.72) -- (0,0) -- (3.57,1.72) -- cycle    ;
\draw    (80,140) -- (287,140) ;
\draw [shift={(290,140)}, rotate = 180] [fill={rgb, 255:red, 0; green, 0; blue, 0 }  ][line width=0.08]  [draw opacity=0] (3.57,-1.72) -- (0,0) -- (3.57,1.72) -- cycle    ;
\draw [line width=0.08]   (80,190) .. controls (108.67,185.5) and (139,164.17) .. (170,140) .. controls (201,115.83) and (252.33,112.83) .. (280,90) ;
\draw [line width=0.08]   (90,80) .. controls (127.67,68.17) and (130.33,110.5) .. (170,140) .. controls (209.67,169.5) and (236,164.17) .. (260,210) ;
\draw    (207.93,120.75) -- (200.9,101.74) ;
\draw [shift={(199.86,98.93)}, rotate = 69.7] [fill={rgb, 255:red, 0; green, 0; blue, 0 }  ][line width=0.08]  [draw opacity=0] (3.57,-1.72) -- (0,0) -- (3.57,1.72) -- cycle    ;
\draw    (207.93,120.75) -- (187.5,129.02) ;
\draw [shift={(184.71,130.14)}, rotate = 337.97] [fill={rgb, 255:red, 0; green, 0; blue, 0 }  ][line width=0.08]  [draw opacity=0] (3.57,-1.72) -- (0,0) -- (3.57,1.72) -- cycle    ;
\draw    (207.55,161.86) -- (197.55,180.64) ;
\draw [shift={(196.14,183.29)}, rotate = 298.03] [fill={rgb, 255:red, 0; green, 0; blue, 0 }  ][line width=0.08]  [draw opacity=0] (3.57,-1.72) -- (0,0) -- (3.57,1.72) -- cycle    ;
\draw    (207.55,161.86) -- (187.97,152.06) ;
\draw [shift={(185.29,150.71)}, rotate = 26.59] [fill={rgb, 255:red, 0; green, 0; blue, 0 }  ][line width=0.08]  [draw opacity=0] (3.57,-1.72) -- (0,0) -- (3.57,1.72) -- cycle    ;
\draw [densely dotted] [line width=0.85]   (84.14,128.14) .. controls (120.43,112.43) and (123.86,146.71) .. (170,140) .. controls (216.14,133.29) and (259.86,146.14) .. (272.14,151) ;
\draw    (258.14,146.43) -- (233.29,176.71) ;
\draw    (243.29,109) -- (258.14,146.43) ;
\draw    (259.67,151.03) -- (238.62,176.68) ;
\draw [shift={(236.71,179)}, rotate = 309.38] [fill={rgb, 255:red, 0; green, 0; blue, 0 }  ][line width=0.08]  [draw opacity=0] (3.57,-1.72) -- (0,0) -- (3.57,1.72) -- cycle    ;
\draw [shift={(261.57,148.71)}, rotate = 129.38] [fill={rgb, 255:red, 0; green, 0; blue, 0 }  ][line width=0.08]  [draw opacity=0] (3.57,-1.72) -- (0,0) -- (3.57,1.72) -- cycle    ;
\draw    (247.54,111.5) -- (260.18,143.35) ;
\draw [shift={(261.29,146.14)}, rotate = 248.35] [fill={rgb, 255:red, 0; green, 0; blue, 0 }  ][line width=0.08]  [draw opacity=0] (3.57,-1.72) -- (0,0) -- (3.57,1.72) -- cycle    ;
\draw [shift={(246.43,108.71)}, rotate = 68.35] [fill={rgb, 255:red, 0; green, 0; blue, 0 }  ][line width=0.08]  [draw opacity=0] (3.57,-1.72) -- (0,0) -- (3.57,1.72) -- cycle    ;

\draw (278,141) node [anchor=north west][inner sep=0.75pt]  [font=\tiny] [align=left] {$\displaystyle x_{1}$};
\draw (158,51) node [anchor=north west][inner sep=0.75pt]  [font=\tiny] [align=left] {$\displaystyle x_{2}$};
\draw (261,82) node [anchor=north west][inner sep=0.75pt]  [font=\scriptsize,color={black}  ,opacity=1 ] [align=left] {$\displaystyle \Gamma_{+}$};
\draw (261,192) node [anchor=north west][inner sep=0.75pt]  [font=\scriptsize,color={black}  ,opacity=1 ] [align=left] {$\displaystyle \Gamma_{-}$};
\draw (179.33,117.38) node [anchor=north west][inner sep=0.75pt]  [font=\tiny] [align=left] {$\displaystyle T_{+}$};
\draw (178.81,153.19) node [anchor=north west][inner sep=0.75pt]  [font=\tiny] [align=left] {$\displaystyle T_{-}$};
\draw (187.48,97.38) node [anchor=north west][inner sep=0.75pt]  [font=\tiny] [align=left] {$\displaystyle n_{+}$};
\draw (185.57,171.24) node [anchor=north west][inner sep=0.75pt]  [font=\tiny] [align=left] {$\displaystyle n_{-}$};
\draw (251.14,164) node [anchor=north west][inner sep=0.75pt]  [font=\tiny] [align=left] {$\displaystyle t$};
\draw (255.71,122.86) node [anchor=north west][inner sep=0.75pt]  [font=\tiny] [align=left] {$\displaystyle t$};
\draw (223.14,91.71) node [anchor=north west][inner sep=0.75pt]  [font=\scriptsize] [align=left] {$\displaystyle A_{+}( t)$};
\draw (211.43,178) node [anchor=north west][inner sep=0.75pt]  [font=\scriptsize] [align=left] {$\displaystyle A_{-}( t)$};
\draw (230.86,142.57) node [anchor=north west][inner sep=0.75pt]  [font=\scriptsize,color={black}  ,opacity=1 ] [align=left] {$\displaystyle Y( t)$};
\end{tikzpicture}\\
(a) && (b)
\end{tabular}
        \caption{(a) A neighborhood of a corner with a smooth continuation of the $\delta$-interaction support. (b) $\Gamma_+$ and $\Gamma_-$ intersecting at angle $2\theta$ and the graph of $Y(t)$ in dashed dots.}
        \label{fig: V delta 2}
\end{figure}
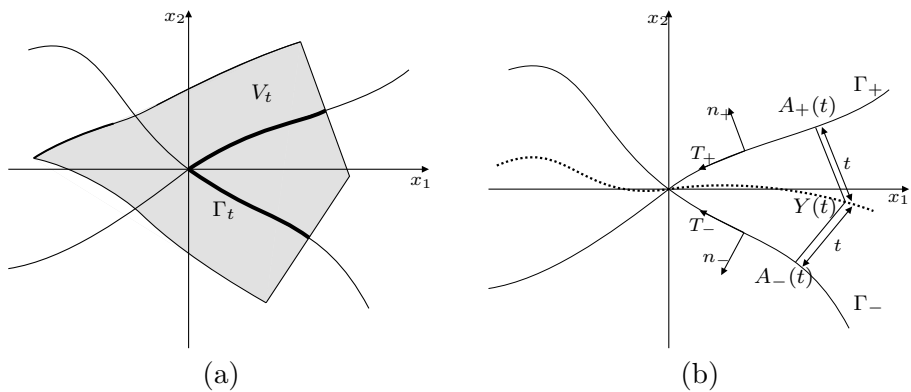
\subsection{Geometric setting and change of variables}
As discussed before, we consider the setting of two curves intersecting at a given angle. More precisely, we introduce the following notations for the remainder of this section.
\begin{notation}\label{notation param}
Let $\Gamma_+$ and $\Gamma_-$ be two injective $C^3$-smooth curves intersecting exactly at the origin with an angle $2\theta \in (0,\pi)$. For parameters $\pm s_\pm >0 $, consider the arc-length parametrizations $\gamma_\pm:[s_-,s_+] \rightarrow \rr^2$ of $\Gamma_\pm$. Without loss of generality, we may assume that 
\[
\Gamma_+ \cap \Gamma_- = (0,0) = \gamma_{\pm}(0), \quad \gamma'_\pm(0) = (\cos \theta, \pm \sin\theta),
\]
Furthermore, we define the tangent vectors, normal vectors, and curvatures of $\Gamma_\pm$ by
\begin{align*}T_\pm(s) = \begin{pmatrix}
    T_1^\pm \\ T_2^\pm
\end{pmatrix} = \gamma'_\pm(s), \quad n_\pm(s) = \begin{pmatrix}
    n_1^\pm \\ n_2^\pm
\end{pmatrix} =\begin{pmatrix}
    0 & 1\\ \mp 1 & 0
\end{pmatrix} \cdot T_\pm(s), \quad n'_\pm(s)=k_\pm(s) \gamma'_\pm(s). 
\end{align*}
\end{notation}
For the construction of the neighborhood, the same arguments in the proof of \cite[Lemma A.1]{KOBP20} imply the following:  There exists $t_1 > 0$ and a $C^2$-smooth function $Y:(-t_1,t_1) \rightarrow \rr^2$ such that, for any $t \in (-t_1,t_1)$, the point $Y(t)$ lies at distance exactly $t$ from both curves $\Gamma_+$ and $\Gamma_-$. More precisely, there exist functions $A_\pm: (-t_1,t_1) \to \Gamma_\pm$ such that $ A_\pm(t)- Y(t) = t $ holds for all $t$, and each $A_\pm $ can be written as $A_\pm(t) = \gamma_\pm(\lambda_\pm(t))$, where
\begin{align}\label{def lambda func}
\lambda_\pm \in C^2, \quad \lambda_\pm(0) = 0, \quad \lambda'_\pm(0) = \cotan \theta, \quad Y(0) = \begin{pmatrix}
    0\\0
\end{pmatrix}, \quad Y'(0) = \frac{1}{\sin \theta} \begin{pmatrix}
    1\\0
\end{pmatrix}.
\end{align}
We refer to Figure \ref{fig: V delta 2}(b) for a visualization of this construction.

\begin{figure}
    \centering
    \begin{tikzpicture}[x=0.75pt,y=0.75pt,yscale=-1,xscale=1]

\draw [densely dotted][line width=1.20][color={black}  ,draw opacity=1 ][fill={rgb, 255:red, 155; green, 155; blue, 155 }  ,fill opacity=0.3 ]   (215.89,55.89) -- (240.33,123.5) -- (198.67,187.17) ;
\draw  [draw opacity=0][fill={rgb, 255:red, 155; green, 155; blue, 155 }  ,fill opacity=0.3 ] (215.89,55.89) -- (198.67,187.17) -- (82.67,114.5) -- (121.06,97.22) -- cycle ;
\draw [fill={rgb, 255:red, 255; green, 255; blue, 255 }  ,fill opacity=5 ]   (215.89,55.89) .. controls (181.22,67.22) and (154.14,82.76) .. (138.48,90.76) .. controls (122.81,98.76) and (105.67,99.83) .. (82.67,114.5) ;
\draw [fill={rgb, 255:red, 255; green, 255; blue, 255 }  ,fill opacity=1 ]   (82.67,114.5) .. controls (102.33,118.5) and (126,132.5) .. (136.67,142.83) .. controls (147.33,153.17) and (164.67,167.5) .. (198.67,187.17) ;
\draw    (80,60) .. controls (117.67,48.17) and (120.33,90.5) .. (160,120) .. controls (199.67,149.5) and (226,144.17) .. (250,190) ;
\draw    (70,170) .. controls (98.67,165.5) and (129,144.17) .. (160,120) .. controls (191,95.83) and (242.33,92.83) .. (270,70) ;
\draw    (70,120) -- (277,120) ;
\draw [shift={(280,120)}, rotate = 180] [fill={rgb, 255:red, 0; green, 0; blue, 0 }  ][line width=0.08]  [draw opacity=0] (3.57,-1.72) -- (0,0) -- (3.57,1.72) -- cycle    ;
\draw    (160,210) -- (160,43) ;
\draw [shift={(160,40)}, rotate = 90] [fill={rgb, 255:red, 0; green, 0; blue, 0 }  ][line width=0.08]  [draw opacity=0] (3.57,-1.72) -- (0,0) -- (3.57,1.72) -- cycle    ;
\draw [fill={rgb, 255:red, 155; green, 155; blue, 155 }  ,fill opacity=0.3 ][line width=0.75]    (82.67,114.5) .. controls (92.56,107.67) and (111.22,100.33) .. (122.56,97) ;
\draw [fill={rgb, 255:red, 155; green, 155; blue, 155 }  ,fill opacity=0.3 ]   (144.11,87.67) .. controls (175,71.67) and (199,61.22) .. (215.89,55.89) ;
\draw    (241.7,128.02) -- (224.83,153.98) ;
\draw [shift={(223.2,156.5)}, rotate = 303] [fill={rgb, 255:red, 0; green, 0; blue, 0 }  ][line width=0.08]  [draw opacity=0] (3.57,-1.72) -- (0,0) -- (3.57,1.72) -- cycle    ;
\draw [shift={(243.33,125.5)}, rotate = 123] [fill={rgb, 255:red, 0; green, 0; blue, 0 }  ][line width=0.08]  [draw opacity=0] (3.57,-1.72) -- (0,0) -- (3.57,1.72) -- cycle    ;
\draw    (219.9,160.35) -- (203.03,186.32) ;
\draw [shift={(201.4,188.83)}, rotate = 303] [fill={rgb, 255:red, 0; green, 0; blue, 0 }  ][line width=0.08]  [draw opacity=0] (3.57,-1.72) -- (0,0) -- (3.57,1.72) -- cycle    ;
\draw [shift={(221.53,157.83)}, rotate = 123] [fill={rgb, 255:red, 0; green, 0; blue, 0 }  ][line width=0.08]  [draw opacity=0] (3.57,-1.72) -- (0,0) -- (3.57,1.72) -- cycle    ;
\draw   (222.83,92.94) -- (220.29,87.28) -- (225.89,84.77) ;
\draw [line width=1.5]    (160,120) .. controls (194.33,97.17) and (215.33,97.17) .. (228.43,90.43) ;
\draw   (214.98,151.28) -- (218.23,146) -- (223.46,149.22) ;
\draw [line width=1.5]    (160,120) .. controls (193,142.9) and (201.2,140.7) .. (220.2,154.5) ;

\draw (269.25,122.75) node [anchor=north west][inner sep=0.75pt]  [font=\tiny] [align=left] {$\displaystyle x_{1}$};
\draw (146.5,40) node [anchor=north west][inner sep=0.75pt]  [font=\tiny] [align=left] {$\displaystyle x_{2}$};
\draw (190,76.25) node [anchor=north west][inner sep=0.75pt]  [font=\scriptsize] [align=left] {$\displaystyle V_{t}$};
\draw (170.67,135) node [anchor=north west][inner sep=0.75pt]  [font=\scriptsize,color={black}  ,opacity=1 ] [align=left] {$\displaystyle \Gamma_{t}$};
\draw (226.67,59) node [anchor=north west][inner sep=0.75pt]  [font=\scriptsize,color={black}  ,opacity=1 ] [align=left] {$\displaystyle \partial_{*} V_{t}$};
\draw (236.67,141) node [anchor=north west][inner sep=0.75pt]  [font=\tiny] [align=left] {$\displaystyle t$};
\draw (214.67,171.67) node [anchor=north west][inner sep=0.75pt]  [font=\tiny] [align=left] {$\displaystyle t$};
\end{tikzpicture}
    \caption{The neighborhood $V_t$ constructed around  $\Gamma_t$.}
    \label{fig: V delta all properties}
\end{figure}
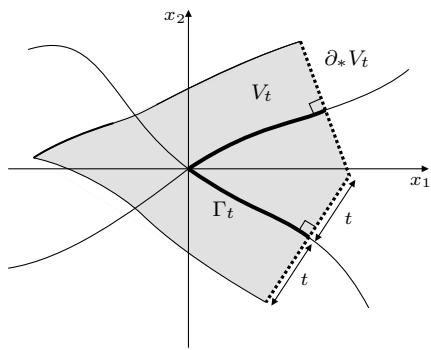

The construction of an appropriate neighborhood, which is illustrated in Fig.~\ref{fig: V delta all properties}, can be summarized by the following lemma. 
\begin{lemma}\label{lem: straigthen curved kite}There exist
\begin{itemize}
    \item[(i)] a $C^2$-smooth function $r$ defined in a neighborhood of $0$ with $r(0) = 0$ and $r'(0) = 1$,
    \item[(ii)] $C^2$-smooth functions $\lambda_\pm $ satisfying $\lambda_\pm(0) =0$ and $\lambda_\pm'(0) =\cotan\theta$,
    \item[(iii)] a bi-Lipschitz mapping $\phi_V: K^{r(t)}_\theta \rightarrow \phi_V(K^{r(t)}_\theta) =: V_t $,  with $ \phi'_V(x) = I_2 + \mathcal{O}(| x |)$ for $|x| \rightarrow 0$,
\end{itemize}
such that, defining 
    \begin{align*}
    A_{\pm}:= \gamma_\pm \circ \lambda_\pm,\quad  \Gamma_t := A_+([0, t ))\cup A_-([0, t )), \\
    \partial_* V_t := \{  A_\pm(t) + N_\pm(A_\pm(t)) \cdot \tau :  \, \tau \in (-t,t) \} ,
    \end{align*}
the following identities hold for all sufficiently small $t >0$
\[
\Gamma_t=  \phi_V(\Gamma_\theta^{r(t)}), \quad \partial_*V_t = \phi_V(\partial_* K^{r(t)}_\theta). 
\]
\end{lemma}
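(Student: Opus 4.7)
The plan is to construct the map $\phi_V$ by combining tubular coordinate maps around each branch $\Gamma_\pm$, using the bisector curve $Y(\cdot)$ (which plays the role of the $x_1$-axis of the straight kite) as a patching device. I will take $r(t):=t$ and let $\lambda_\pm$ be the auxiliary functions already arising in the construction of $Y$, which by~\eqref{def lambda func} satisfy $\lambda_\pm(0)=0$ and $\lambda_\pm'(0)=\cotan\theta$. After extending $\gamma_\pm$ to $C^3$-smooth arc-length parametrizations on a neighborhood of $0$ in $\R$ (any admissible extension, since the statement is local), the tubular maps
\[
\Phi_\pm(s,\sigma):=\gamma_\pm(s)+\sigma\, n_\pm(s)
\]
are $C^2$-diffeomorphisms from neighborhoods of $(0,0)$ onto neighborhoods of the origin in $\R^2$, by the inverse function theorem applied where their Jacobians $(T_\pm(0),\,n_\pm(0))$ are orthogonal.

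For a point $x=s(\cos\theta,\sin\theta)+\sigma(\sin\theta,-\cos\theta)$ in the upper half $\{x_2\ge 0\}$ of $K^{r(t)}_\theta$, I would set
\[
\phi_V(x) := \Phi_+\bigl(L_+(s),\sigma\bigr), \qquad L_+(s):=\lambda_+(s\tan\theta).
\]
This choice gives $L_+(0)=0$, $L_+'(0)=\tan\theta\cdot\cotan\theta=1$, and $L_+(r(t)/\tan\theta)=\lambda_+(t)$, so that the upper leg $\{\sigma=0,\ s\in[0,t/\tan\theta]\}$ of $\Gamma_\theta^{r(t)}$ is mapped onto $A_+([0,t))$. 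A direct computation in tubular coordinates (analogous to the one carried out for the top corner and right vertex of the kite in Section~\ref{sec: Karl}) shows that the upper portion of $\partial_* K^{r(t)}_\theta$ corresponds to $\{s=t/\tan\theta,\ \sigma\in(-t,t)\}$, hence is sent to $\{A_+(t)+\sigma\, n_+(A_+(t)):\sigma\in(-t,t)\}$, the upper component of $\partial_* V_t$. An analogous construction on the lower half using $\lambda_-$ and $\Phi_-$ handles the other side.

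The Jacobian of $\phi_V$ at the origin is obtained by composing the (orthogonal) inverse of the straight tubular change of coordinates on $K^R_\theta$ with $D\Phi_+(0,0)$; since both are the reflection matrix with columns $T_+(0)$ and $n_+(0)$ and this matrix squares to $I_2$, one obtains $\phi_V'(0)=I_2$ directly, and the $C^2$-smoothness then yields $\phi_V'(x)=I_2+\mathcal{O}(|x|)$ as $|x|\to 0$. The bi-Lipschitz property on all of $K^{r(t)}_\theta$ follows for $t$ sufficiently small from the non-vanishing of $\det\phi_V'$ and a compactness argument, noting that for small $t$ the kite lies within the domain of validity of the tubular coordinates.

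The main obstacle lies in the compatibility of the upper and lower definitions along the common axis $\{x_2=0\}$: the two tubular maps $\Phi_\pm$, pre-composed with their respective straight tubular coordinates, will not automatically agree on the curvilinear bisector $Y$, since the extensions of $\gamma_\pm$ past the origin were chosen independently. Forcing coincidence there requires either adjusting the auxiliary extensions of $\gamma_\pm$ so that the two definitions match on $Y(\cdot)$, or explicitly interpolating the two tubular maps via a smooth cutoff supported near the axis (as done in \cite[Lemma~A.1]{KOBP20} for the Robin case); either option preserves $\phi_V'(0)=I_2$ because both half-constructions share the same derivative at the origin. Once this patching is arranged, the required identities $\Gamma_t=\phi_V(\Gamma_\theta^{r(t)})$ and $\partial_* V_t=\phi_V(\partial_* K^{r(t)}_\theta)$ hold by the leg-wise verification sketched above, and $r(t)=t$, $\lambda_\pm$ clearly meet the regularity and derivative conditions (i)--(ii).
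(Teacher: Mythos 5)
Your construction is sound and is in fact very close to the paper's own proof: both glue two half-constructions along the equidistant bisector curve $Y$, the paper doing so with coordinates based at $Y$ (the maps $F_\pm(\sigma,\tau)=\varepsilon(\sigma)+\tau m_\pm(\sigma)$ composed with a shear $H$), while you use tubular coordinates based at $\Gamma_\pm$ precomposed with the reparametrization $L_\pm(s)=\lambda_\pm(s\tan\theta)$. Your verifications of (i), (ii), of $\phi_V'(0)=I_2$, and of the identities for $\Gamma_t$ and $\partial_*V_t$ on each half are correct (and choosing $r(t)=t$ is legitimate here, whereas the paper takes $r(t)=\sin\theta\,\sigma(t)$; the lemma only asks for existence of some $r$).

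The one genuine problem is that you leave open precisely the step you call the ``main obstacle'', and the two fixes you propose are the wrong diagnosis: with your choice of $L_\pm$ the matching along $\{x_2=0\}$ is \emph{automatic}, and no adjustment of the extensions or cutoff interpolation is needed (the latter would in fact endanger injectivity and the exact identity $\phi_V(\Gamma_\theta^{r(t)})=\Gamma_t$). Indeed, on the axis one has $s=x_1\cos\theta$ and $\sigma=x_1\sin\theta$, hence $L_\pm(s)=\lambda_\pm(s\tan\theta)=\lambda_\pm(\sigma)$, so the upper map gives $\gamma_+(\lambda_+(\sigma))+\sigma\,n_+(\lambda_+(\sigma))=A_+(\sigma)+\sigma\,n_+(\lambda_+(\sigma))$. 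By the defining property of the bisector ($|Y(\tau)-A_\pm(\tau)|=|\tau|$ with $Y(\tau)-A_\pm(\tau)\perp\Gamma_\pm$ at $A_\pm(\tau)$, and the sign fixed by the first-order expansion $Y(\tau)-A_+(\tau)=\tau(\sin\theta,-\cos\theta)+\mathcal{O}(\tau^2)=\tau n_+(0)+\mathcal{O}(\tau^2)$), this equals $Y(\sigma)$; the lower map gives $A_-(\sigma)+\sigma\,n_-(\lambda_-(\sigma))=Y(\sigma)$ as well. Since both halves use the \emph{same} $Y$ and the \emph{same} $\lambda_\pm$ from the equidistance construction, the dependence on the chosen extensions is irrelevant for the gluing. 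You should also be slightly more careful with global injectivity: nonvanishing of $\det\phi_V'$ plus compactness only gives local injectivity, so one must note in addition that the images of the two open half-kites lie on opposite sides of the curve $Y$ (which follows from $\partial_{x_2}\phi_V(0)=(0,1)$ and continuity for small $t$); this is again where the bisector is essential, and it is the same point the paper glosses over when asserting that $\phi_V$ is bi-Lipschitz.
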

\begin{proof}

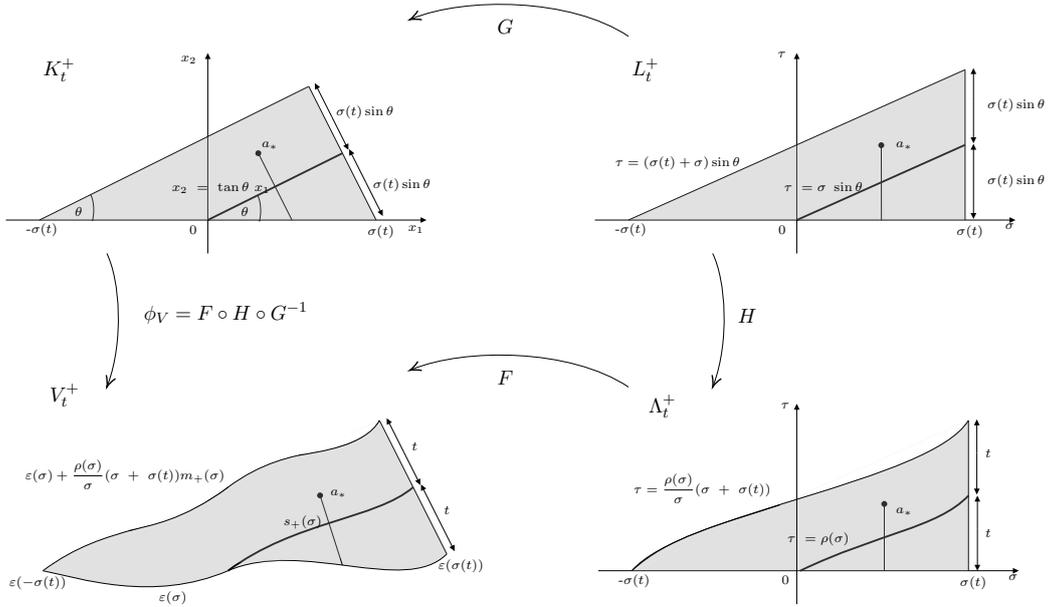
\begin{figure}
    \centering
    \scalebox{0.7}{\begin{tikzpicture}[x=0.9pt,y=0.9pt,yscale=-1,xscale=1]

\draw    (30,140) -- (277,140) ;
\draw [shift={(280,140)}, rotate = 180] [fill={rgb, 255:red, 0; green, 0; blue, 0 }  ][line width=0.08]  [draw opacity=0] (3.57,-1.72) -- (0,0) -- (3.57,1.72) -- cycle    ;
\draw    (150,160) -- (150,43) ;
\draw [shift={(150,40)}, rotate = 90] [fill={rgb, 255:red, 0; green, 0; blue, 0 }  ][line width=0.08]  [draw opacity=0] (3.57,-1.72) -- (0,0) -- (3.57,1.72) -- cycle    ;
\draw [line width=1.0]   (150,140) -- (230,100) ;
\draw    (50,140) -- (210,60) ;
\draw    (210,60) -- (250,140) ;
\draw    (223.6,78) -- (232.26,95.32) ;
\draw [shift={(233.6,98)}, rotate = 243.43] [fill={rgb, 255:red, 0; green, 0; blue, 0 }  ][line width=0.08]  [draw opacity=0] (3.57,-1.72) -- (0,0) -- (3.57,1.72) -- cycle    ;
\draw    (223.6,78) -- (214.94,60.68) ;
\draw [shift={(213.6,58)}, rotate = 63.43] [fill={rgb, 255:red, 0; green, 0; blue, 0 }  ][line width=0.08]  [draw opacity=0] (3.57,-1.72) -- (0,0) -- (3.57,1.72) -- cycle    ;
\draw    (244,117.6) -- (252.66,134.92) ;
\draw [shift={(254,137.6)}, rotate = 243.43] [fill={rgb, 255:red, 0; green, 0; blue, 0 }  ][line width=0.08]  [draw opacity=0] (3.57,-1.72) -- (0,0) -- (3.57,1.72) -- cycle    ;
\draw    (244,117.6) -- (235.34,100.28) ;
\draw [shift={(234,97.6)}, rotate = 63.43] [fill={rgb, 255:red, 0; green, 0; blue, 0 }  ][line width=0.08]  [draw opacity=0] (3.57,-1.72) -- (0,0) -- (3.57,1.72) -- cycle    ;
\draw  [draw opacity=0] (81.43,140.43) .. controls (81.91,138.64) and (82.2,136.41) .. (82.2,134) .. controls (82.2,130.13) and (81.46,126.73) .. (80.36,124.82) -- (78.3,134) -- cycle ; \draw   (81.43,140.43) .. controls (81.91,138.64) and (82.2,136.41) .. (82.2,134) .. controls (82.2,130.13) and (81.46,126.73) .. (80.36,124.82) ;  
\draw  [draw opacity=0] (180.63,140.83) .. controls (181.11,139.04) and (181.4,136.81) .. (181.4,134.4) .. controls (181.4,130.53) and (180.66,127.13) .. (179.56,125.22) -- (177.5,134.4) -- cycle ; \draw   (180.63,140.83) .. controls (181.11,139.04) and (181.4,136.81) .. (181.4,134.4) .. controls (181.4,130.53) and (180.66,127.13) .. (179.56,125.22) ;  
\draw    (200,140) -- (180,100) ;
\draw [shift={(180,100)}, rotate = 243.43] [color={rgb, 255:red, 0; green, 0; blue, 0 }  ][fill={rgb, 255:red, 0; green, 0; blue, 0 }  ][line width=0.75]      (0, 0) circle [x radius= 1.34, y radius= 1.34]   ;
\draw    (380,140) -- (627,140) ;
\draw [shift={(630,140)}, rotate = 180] [fill={rgb, 255:red, 0; green, 0; blue, 0 }  ][line width=0.08]  [draw opacity=0] (3.57,-1.72) -- (0,0) -- (3.57,1.72) -- cycle    ;
\draw    (500,160) -- (500,43) ;
\draw [shift={(500,40)}, rotate = 90] [fill={rgb, 255:red, 0; green, 0; blue, 0 }  ][line width=0.08]  [draw opacity=0] (3.57,-1.72) -- (0,0) -- (3.57,1.72) -- cycle    ;
\draw [line width=1.0]   (500,140) -- (600,95) ;
\draw    (400,140) -- (600,50) ;
\draw    (600,50) -- (600,140) ;
\draw    (550.3,140.1) -- (550.2,95.2) ;
\draw [shift={(550.2,95.2)}, rotate = 269.87] [color={rgb, 255:red, 0; green, 0; blue, 0 }  ][fill={rgb, 255:red, 0; green, 0; blue, 0 }  ][line width=0.75]      (0, 0) circle [x radius= 1.34, y radius= 1.34]   ;
\draw    (605,69) -- (605,52) ;
\draw [shift={(605,49)}, rotate = 90] [fill={rgb, 255:red, 0; green, 0; blue, 0 }  ][line width=0.08]  [draw opacity=0] (3.57,-1.72) -- (0,0) -- (3.57,1.72) -- cycle    ;
\draw    (605,69) -- (605,91.03) ;
\draw [shift={(605,94.03)}, rotate = 270] [fill={rgb, 255:red, 0; green, 0; blue, 0 }  ][line width=0.08]  [draw opacity=0] (3.57,-1.72) -- (0,0) -- (3.57,1.72) -- cycle    ;
\draw    (382,350) -- (629,350) ;
\draw [shift={(632,350)}, rotate = 180] [fill={rgb, 255:red, 0; green, 0; blue, 0 }  ][line width=0.08]  [draw opacity=0] (3.57,-1.72) -- (0,0) -- (3.57,1.72) -- cycle    ;
\draw    (602,260) -- (602,350) ;
\draw    (552,350) -- (552,310) ;
\draw [shift={(552,310)}, rotate = 270] [color={rgb, 255:red, 0; green, 0; blue, 0 }  ][fill={rgb, 255:red, 0; green, 0; blue, 0 }  ][line width=0.75]      (0, 0) circle [x radius= 1.34, y radius= 1.34]   ;
\draw [line width=1.0]   (502,350) .. controls (548.2,327.6) and (583.8,324.4) .. (602,305) ;
\draw [fill={rgb, 255:red, 155; green, 155; blue, 155 }  ,fill opacity=0.3 ]   (52,350) .. controls (73.8,354.4) and (117.4,369.2) .. (162,350) .. controls (206.6,330.8) and (267,365) .. (292,340) ;
\draw    (252,260) -- (292,340) ;
\draw [line width=1.0]  (162,350) .. controls (202,320) and (249.8,319.2) .. (272,300) ;
\draw [fill={rgb, 255:red, 155; green, 155; blue, 155 }  ,fill opacity=0.3 ]   (52,350) .. controls (92,320) and (122,330) .. (162,300) .. controls (202,270) and (232.2,286.4) .. (252,260) ;
\draw    (605,114.33) -- (605,97.33) ;
\draw [shift={(605,94.33)}, rotate = 90] [fill={rgb, 255:red, 0; green, 0; blue, 0 }  ][line width=0.08]  [draw opacity=0] (3.57,-1.72) -- (0,0) -- (3.57,1.72) -- cycle    ;
\draw    (605,114.33) -- (605,136.37) ;
\draw [shift={(605,139.37)}, rotate = 270] [fill={rgb, 255:red, 0; green, 0; blue, 0 }  ][line width=0.08]  [draw opacity=0] (3.57,-1.72) -- (0,0) -- (3.57,1.72) -- cycle    ;
\draw    (607,279.33) -- (607,262.33) ;
\draw [shift={(607,259.33)}, rotate = 90] [fill={rgb, 255:red, 0; green, 0; blue, 0 }  ][line width=0.08]  [draw opacity=0] (3.57,-1.72) -- (0,0) -- (3.57,1.72) -- cycle    ;
\draw    (607,279.33) -- (607,301.37) ;
\draw [shift={(607,304.37)}, rotate = 270] [fill={rgb, 255:red, 0; green, 0; blue, 0 }  ][line width=0.08]  [draw opacity=0] (3.57,-1.72) -- (0,0) -- (3.57,1.72) -- cycle    ;
\draw    (607.33,325) -- (607.33,308) ;
\draw [shift={(607.33,305)}, rotate = 90] [fill={rgb, 255:red, 0; green, 0; blue, 0 }  ][line width=0.08]  [draw opacity=0] (3.57,-1.72) -- (0,0) -- (3.57,1.72) -- cycle    ;
\draw    (607.33,325) -- (607.33,347.03) ;
\draw [shift={(607.33,350.03)}, rotate = 270] [fill={rgb, 255:red, 0; green, 0; blue, 0 }  ][line width=0.08]  [draw opacity=0] (3.57,-1.72) -- (0,0) -- (3.57,1.72) -- cycle    ;
\draw    (266.33,278.67) -- (274.99,295.98) ;
\draw [shift={(276.33,298.67)}, rotate = 243.43] [fill={rgb, 255:red, 0; green, 0; blue, 0 }  ][line width=0.08]  [draw opacity=0] (3.57,-1.72) -- (0,0) -- (3.57,1.72) -- cycle    ;
\draw    (266.33,278.67) -- (257.67,261.35) ;
\draw [shift={(256.33,258.67)}, rotate = 63.43] [fill={rgb, 255:red, 0; green, 0; blue, 0 }  ][line width=0.08]  [draw opacity=0] (3.57,-1.72) -- (0,0) -- (3.57,1.72) -- cycle    ;
\draw    (286.67,318.33) -- (295.33,335.65) ;
\draw [shift={(296.67,338.33)}, rotate = 243.43] [fill={rgb, 255:red, 0; green, 0; blue, 0 }  ][line width=0.08]  [draw opacity=0] (3.57,-1.72) -- (0,0) -- (3.57,1.72) -- cycle    ;
\draw    (286.67,318.33) -- (278.01,301.02) ;
\draw [shift={(276.67,298.33)}, rotate = 63.43] [fill={rgb, 255:red, 0; green, 0; blue, 0 }  ][line width=0.08]  [draw opacity=0] (3.57,-1.72) -- (0,0) -- (3.57,1.72) -- cycle    ;
\draw    (400,30) .. controls (365.53,6.8) and (302.91,6.46) .. (271.42,19.4) ;
\draw [shift={(270,20)}, rotate = 336.39] [color={rgb, 255:red, 0; green, 0; blue, 0 }  ][line width=0.75]    (6.56,-2.94) .. controls (4.17,-1.38) and (1.99,-0.4) .. (0,0) .. controls (1.99,0.4) and (4.17,1.38) .. (6.56,2.94)   ;
\draw    (90,160) .. controls (97.84,176.12) and (100.11,212.51) .. (90.6,238.43) ;
\draw [shift={(90,240)}, rotate = 291.42] [color={rgb, 255:red, 0; green, 0; blue, 0 }  ][line width=0.75]    (6.56,-2.94) .. controls (4.17,-1.38) and (1.99,-0.4) .. (0,0) .. controls (1.99,0.4) and (4.17,1.38) .. (6.56,2.94)   ;
\draw    (400,240) .. controls (365.53,216.8) and (302.91,216.46) .. (271.42,229.4) ;
\draw [shift={(270,230)}, rotate = 336.39] [color={rgb, 255:red, 0; green, 0; blue, 0 }  ][line width=0.75]    (6.56,-2.94) .. controls (4.17,-1.38) and (1.99,-0.4) .. (0,0) .. controls (1.99,0.4) and (4.17,1.38) .. (6.56,2.94)   ;
\draw    (450,160) .. controls (457.84,176.12) and (460.11,212.51) .. (450.6,238.43) ;
\draw [shift={(450,240)}, rotate = 291.42] [color={rgb, 255:red, 0; green, 0; blue, 0 }  ][line width=0.75]    (6.56,-2.94) .. controls (4.17,-1.38) and (1.99,-0.4) .. (0,0) .. controls (1.99,0.4) and (4.17,1.38) .. (6.56,2.94)   ;
\draw    (230,346.7) -- (216.71,304.91) ;
\draw [shift={(216.71,304.91)}, rotate = 252.36] [color={rgb, 255:red, 0; green, 0; blue, 0 }  ][fill={rgb, 255:red, 0; green, 0; blue, 0 }  ][line width=0.75]      (0, 0) circle [x radius= 1.34, y radius= 1.34]   ;
\draw  [draw opacity=0][fill={rgb, 255:red, 155; green, 155; blue, 155 }  ,fill opacity=0.3 ] (210,60) -- (250,140) -- (50,140) -- cycle ;
\draw  [draw opacity=0][fill={rgb, 255:red, 155; green, 155; blue, 155 }  ,fill opacity=0.3 ] (600,50) -- (600,140) -- (400,140) -- cycle ;
\draw  [draw opacity=0][fill={rgb, 255:red, 155; green, 155; blue, 155 }  ,fill opacity=0.3 ] (602,260) -- (602,350) -- (402,350) -- cycle ;
\draw [fill={rgb, 255:red, 255; green, 255; blue, 255 }  ,fill opacity=1 ]   (402,350) .. controls (431,315.6) and (579.4,296.8) .. (602,260) ;
\draw [fill={rgb, 255:red, 155; green, 155; blue, 155 }  ,fill opacity=0.3 ]   (402,350) .. controls (423.57,329.77) and (453.57,322.63) .. (489.86,310.63) ;
\draw    (500,370) -- (500,253) ;
\draw [shift={(500,250)}, rotate = 90] [fill={rgb, 255:red, 0; green, 0; blue, 0 }  ][line width=0.08]  [draw opacity=0] (3.57,-1.72) -- (0,0) -- (3.57,1.72) -- cycle    ;
\draw  [draw opacity=0][fill={rgb, 255:red, 155; green, 155; blue, 155 }  ,fill opacity=0.3 ] (252,260) -- (292,340) -- (52,350) -- cycle ;
\draw [fill={rgb, 255:red, 255; green, 255; blue, 255 }  ,fill opacity=1 ]   (103,326.07) .. controls (116.25,322.57) and (134.75,320.25) .. (162,300) ;
\draw [fill={rgb, 255:red, 255; green, 255; blue, 255 }  ,fill opacity=1 ]   (204.25,280.82) .. controls (216,278.82) and (243.5,275.32) .. (252,260) ;

\draw (268,142) node [anchor=north west][inner sep=0.75pt]  [font=\tiny] [align=left] {$\displaystyle x_{1}$};
\draw (133,40.6) node [anchor=north west][inner sep=0.75pt]  [font=\tiny] [align=left] {$\displaystyle x_{2}$};
\draw (51,42) node [anchor=north west][inner sep=0.75pt]  [font=\normalsize,color={rgb, 255:red, 0; green, 0; blue, 0 }  ,opacity=1 ] [align=left] {$\displaystyle K_{t}^{+}$};
\draw (225.2,70.6) node [anchor=north west][inner sep=0.75pt]  [font=\tiny] [align=left] {$\displaystyle \sigma ( t)\sin \theta $};
\draw (247.2,113.8) node [anchor=north west][inner sep=0.75pt]  [font=\tiny] [align=left] {$\displaystyle \sigma ( t)\sin \theta $};
\draw (70,131) node [anchor=north west][inner sep=0.75pt]  [font=\tiny] [align=left] {$\displaystyle \theta $};
\draw (168.8,131) node [anchor=north west][inner sep=0.75pt]  [font=\tiny] [align=left] {$\displaystyle \theta $};
\draw (244,142) node [anchor=north west][inner sep=0.75pt]  [font=\tiny] [align=left] {$\displaystyle \sigma ( t)$};
\draw (41,141) node [anchor=north west][inner sep=0.75pt]  [font=\tiny] [align=left] {\mbox{-}$\displaystyle \sigma ( t)$};
\draw (138,142) node [anchor=north west][inner sep=0.75pt]  [font=\tiny] [align=left] {$\displaystyle 0$};
\draw (594,142) node [anchor=north west][inner sep=0.75pt]  [font=\tiny] [align=left] {$\displaystyle \sigma ( t)$};
\draw (391,141) node [anchor=north west][inner sep=0.75pt]  [font=\tiny] [align=left] {\mbox{-}$\displaystyle \sigma ( t)$};
\draw (488,142) node [anchor=north west][inner sep=0.75pt]  [font=\tiny] [align=left] {$\displaystyle 0$};
\draw (127.6,116.2) node [anchor=north west][inner sep=0.75pt]  [font=\tiny,color={black}  ,opacity=1 ] [align=left] {$\displaystyle x_{2} \ =\ \tan \theta \ x_{1}$};
\draw (487.6,38.2) node [anchor=north west][inner sep=0.75pt]  [font=\tiny] [align=left] {$\displaystyle \tau $};
\draw (622.8,141) node [anchor=north west][inner sep=0.75pt]  [font=\tiny] [align=left] {$\displaystyle \sigma $};
\draw (401,42) node [anchor=north west][inner sep=0.75pt]  [font=\normalsize] [align=left] {$\displaystyle L_{t}^{+}$};
\draw (491.33,116) node [anchor=north west][inner sep=0.75pt]  [font=\tiny,color={black}  ,opacity=1 ] [align=left] {$\displaystyle \tau \ =\sigma \ \sin \theta \ $};
\draw (612.33,66) node [anchor=north west][inner sep=0.75pt]  [font=\tiny] [align=left] {$\displaystyle \sigma ( t)\sin \theta $};
\draw (596,352) node [anchor=north west][inner sep=0.75pt]  [font=\tiny] [align=left] {$\displaystyle \sigma ( t)$};
\draw (393,351) node [anchor=north west][inner sep=0.75pt]  [font=\tiny] [align=left] {\mbox{-}$\displaystyle \sigma ( t)$};
\draw (490,352) node [anchor=north west][inner sep=0.75pt]  [font=\tiny] [align=left] {$\displaystyle 0$};
\draw (489.6,248.2) node [anchor=north west][inner sep=0.75pt]  [font=\tiny] [align=left] {$\displaystyle \tau $};
\draw (624.8,351) node [anchor=north west][inner sep=0.75pt]  [font=\tiny] [align=left] {$\displaystyle \sigma $};
\draw (493.33,326) node [anchor=north west][inner sep=0.75pt]  [font=\tiny,color={black}  ,opacity=1 ] [align=left] {$\displaystyle \tau \ =\rho ( \sigma ) \ $};
\draw (402,291) node [anchor=north west][inner sep=0.75pt]  [font=\tiny] [align=left] {$\displaystyle \tau =\frac{\rho ( \sigma )}{\sigma }( \sigma \ +\ \sigma ( t))$};
\draw (411,243.5) node [anchor=north west][inner sep=0.75pt]   [align=left] {$\displaystyle \Lambda_{t}^{+}$};
\draw (120,361) node [anchor=north west][inner sep=0.75pt]  [font=\tiny] [align=left] {$\displaystyle \varepsilon ( \sigma )$};
\draw (55,237) node [anchor=north west][inner sep=0.75pt]   [align=left] {$\displaystyle V_{t}^{+}$};
\draw (31,352) node [anchor=north west][inner sep=0.75pt]  [font=\tiny] [align=left] {$\displaystyle \varepsilon ( -\sigma ( t))$};
\draw (286,342) node [anchor=north west][inner sep=0.75pt]  [font=\tiny] [align=left] {$\displaystyle \varepsilon ( \sigma ( t))$};
\draw (612.33,111.33) node [anchor=north west][inner sep=0.75pt]  [font=\tiny] [align=left] {$\displaystyle \sigma ( t)\sin \theta $};
\draw (611,276.53) node [anchor=north west][inner sep=0.75pt]  [font=\tiny] [align=left] {$\displaystyle t$};
\draw (611,323.87) node [anchor=north west][inner sep=0.75pt]  [font=\tiny] [align=left] {$\displaystyle t$};
\draw (270.33,271.87) node [anchor=north west][inner sep=0.75pt]  [font=\tiny] [align=left] {$\displaystyle t$};
\draw (289.33,310.2) node [anchor=north west][inner sep=0.75pt]  [font=\tiny] [align=left] {$\displaystyle t$};
\draw (194,315.87) node [anchor=north west][inner sep=0.75pt]  [font=\tiny,color={black}  ,opacity=1 ] [align=left] {$\displaystyle s_{+}( \sigma )$};
\draw (40.5,282.5) node [anchor=north west][inner sep=0.75pt]  [font=\tiny] [align=left] {$\displaystyle \varepsilon ( \sigma ) +\frac{\rho ( \sigma )}{\sigma }( \sigma \ +\ \sigma ( t)) m_{+}( \sigma ) \ $};
\draw (321,19) node [anchor=north west][inner sep=0.75pt]   [align=left] {$\displaystyle G$};
\draw (464,192) node [anchor=north west][inner sep=0.75pt]   [align=left] {$\displaystyle H$};
\draw (321,229) node [anchor=north west][inner sep=0.75pt]   [align=left] {$\displaystyle F$};
\draw (111,188) node [anchor=north west][inner sep=0.75pt]   [align=left] {$\displaystyle \phi_{V} =F\circ H\circ G^{-1}$};
\draw (391,101) node [anchor=north west][inner sep=0.75pt]  [font=\tiny] [align=left] {$\displaystyle \tau =( \sigma ( t) +\sigma )\sin \theta $};
\draw (181,92) node [anchor=north west][inner sep=0.75pt]  [font=\tiny] [align=left] {$\displaystyle a_{*}$};
\draw (558,92) node [anchor=north west][inner sep=0.75pt]  [font=\tiny] [align=left] {$\displaystyle a_{*}$};
\draw (558,311) node [anchor=north west][inner sep=0.75pt]  [font=\tiny] [align=left] {$\displaystyle a_{*}$};
\draw (221.81,300.52) node [anchor=north west][inner sep=0.75pt]  [font=\tiny] [align=left] {$\displaystyle a_{*}$};

\end{tikzpicture}}
    \caption{A breakdown of the construction of $V_t$}
    \label{fig: Vt construction}
\end{figure}
 We can assume, without loss of generality, that there exists $t_0>0$ such that $Y'(t) \neq 0$ for all $t \in [-t_0,t_0]$. Our goal is to parametrize the curve $Y$ by arch-length. To this end, consider the function $\sigma $ defined by
 \[
 \sigma(0) = 0, \quad \sigma'(0) = |Y'(0)| = \frac{1}{\sin \theta}, \quad \sigma' = |Y'| > 0.
 \]
Then, the map $\sigma:(-t_0,t_0) \rightarrow (- \sigma_-,  \sigma_+) $ is $C^2$-diffeomorphism for some $\sigma_\pm >0$, and its inverse $\rho := \sigma^{-1}: [-\sigma_-,\sigma_+] \rightarrow [-t_0,t_0]$ satisfies the properties $\rho(0) = 0$ and $\rho'(0) = \frac{1}{\sigma'(0)} = \sin \theta$. Using $\rho$, we reparametrize the curve $Y$ and its associated functions as 
\[
\varepsilon:= Y \circ \rho, \quad s_\pm := \lambda_\pm \circ \rho, \quad B_\pm := \gamma_\pm \circ s_\pm, \quad m_\pm := n_\pm \circ s_\pm,
\]
where the existence of $\lambda_\pm$ follows from the discussion preceding equation \eqref{def lambda func}.
With these definitions, we now proceed to construct $V_t$ and $\phi_V$. To do so, we begin by cutting the kite $K^{\sin \theta \sigma(t)}_\theta$ along the $x_1$-axis, and define its upper and lower halves as $K^\pm_t := K^{\sin \theta \sigma(t)}_\theta \cap \{ \pm x_2 >0 \}$ as illustrated in Figure \ref{fig: Vt construction}. 
Define the linear map $G_\pm: L^\pm_t \rightarrow K^\pm_t$ by 
\begin{align*}
G_\pm (\sigma, \tau ) &= \sigma \begin{pmatrix}
    1\\0
\end{pmatrix} + \tau \begin{pmatrix}
    -\sin \theta\\ \pm \cos \theta 
\end{pmatrix}.
  \end{align*}
Since $G_\pm$ is a bijective, linear map, it follows that $G_\pm: L^\pm_t \rightarrow K^\pm_t$ is a smooth diffeomorphism. Next, we define the triangles $L^\pm$ as 
\begin{align*}
  L_t^\pm &= \{ (\sigma, \tau) : - \sigma(t)< \sigma < \sigma(t), \,  0 < \pm \tau < (\sigma + \sigma(t)) \sin \theta  \}.
  \end{align*}
Using the map $ H(\sigma,\tau) = (\sigma, \frac{\tau \rho(\sigma )}{\sigma \sin \theta})$, we can further transform $L_t^\pm$ into the curved triangle
\[
\Lambda_t^\pm = \{ (\sigma, \tau ) : -\sigma(t) < \sigma < \sigma(t), \,  0 < \pm \tau < \frac{\rho(\sigma)}{\sigma} (\sigma + \sigma(t))  \}.
\]
Note that
\[
H'(\sigma,\tau) = \begin{pmatrix}
    1 & 0 \\
    \frac{\tau( \sigma \rho(\sigma)- \rho'(\sigma))}{\sigma^2 \sin \theta} & \frac{\rho(\sigma)}{\sigma  \sin \theta}
\end{pmatrix}, \quad \det (H'(\sigma,\tau)) =  \frac{\rho(\sigma)}{\sigma  \sin \theta} = 1 + \mathcal{O}(\sigma) \neq 0 \text{ for $\sigma $ near 0}.
\]
Thus, by the inverse function theorem, it follows that $H: L_t^\pm \rightarrow \Lambda_t^\pm $ is a diffeomorphism for sufficiently small $t$. We can now parametrize a neighborhood of $\Gamma_t$ via the maps 
\[
F_\pm: \Lambda_t \rightarrow F_\pm(\Lambda_t), \quad  (\sigma,\tau) \mapsto \varepsilon(\sigma) + \tau m_\pm (\sigma),
\]
and define the open set 
\[
V_t := \left(\overline{F^+(\Lambda_t^+)} \cup \overline{F^-(\Lambda_t^-)}\right)^\circ.
\]
Using again the inverse function theorem, one sees that $F_\pm$ is a diffeomorphism since we have
\[
F_\pm'(0,0) = (\varepsilon'(0) \quad  m_\pm(0)) = \begin{pmatrix}
    1 & -\sin \theta \\
    0 & \pm \cos \theta
\end{pmatrix}, \quad \det (F_\pm') = \pm \cos \theta \neq 0.
\]
With this, we are now able to define the bi-Lipschitz diffeomorphism 
 \[
 \phi_V = F_\pm \circ H\circ G^{-1}_\pm: K^R_\theta \rightarrow V_t \text{ for } \pm x_2 >0
 \]
which can be extended continuously to $x_2  = 0$ by
\[
F_\pm \circ H\circ G^{-1}_\pm(x_1,0) = F_\pm \circ H (x_1,0) = F_\pm (x_1,0) = \varepsilon(x_1)  .
\]
By construction, $\phi_V$ is $C^2$-smooth on $K^t_\pm$ and continuous up to the boundary $x_2 = 0$. Therefore, $\phi_V$ is Lipschitz and maps $\Gamma_\theta^{r(t)} = \{ x \cdot(\cos \theta, \pm\sin \theta  ): x \in (0, \sigma(t) / \cos \theta ) \}$ to $\Gamma_t$ and $ \partial_*K^R_\theta $ to $\partial_* V_t$, i.e.,
\begin{align*} 
F \circ H \circ G(\Gamma^{r(t)}_\theta) &= F(\{ (\pm\rho(\sigma), \sigma) \mid \sigma \in (0,\sigma(t))  \}) = \Gamma_t ,\\
 F \circ H \circ G(\partial_* K^{r(t)}_\theta) &= F(\{ (\sigma(t), \sigma): \sigma \in (0,2t)  \}) = \partial_*V_t.
 \end{align*}
Now we compute
\[
\phi_V'(0,0) = F'(0,0)H'(0,0)G(0,0) = \begin{pmatrix}
    1 & -\sin \theta\\ 0 & \pm \cos \theta
\end{pmatrix}
\begin{pmatrix}
    1&0\\0&1
\end{pmatrix}
\begin{pmatrix}
    1 & \pm \tan \theta \\ 0 & \pm \frac{1}{\cos \theta}
\end{pmatrix}
=I_2,
\]
and since $ F_\pm \circ H \circ G^{-1}_\pm $ is $C^1$-smooth, it follows that $\phi'_V = I_2 + \mathcal{O}(| x |)$ as $|x| \rightarrow 0$. The inverse function theorem implies that $\phi_V^{-1}$ is also $C^1$-smooth on $ V^\pm_t$. Moreover, the continuity of $\phi_V^{-1}$ along the curve $\varepsilon(\sigma)$ ensures that it is Lipschitz continuous. Therefore, $\phi_V$ is bi-Lipschitz, and (iii) is proved.

Finally, (i) follows by defining $r(t) := \sin \theta \sigma(t)$, and we see that $r'(0) = \frac{\sin \theta}{\sin \theta} =1 $, and this concludes the proof.
\end{proof}
The following lemma provides further properties of $V_t$.
\begin{lemma}\label{lem: norm estimate in V_ t}
There exist $0 < a < b$ such that, for all sufficiently small $t>0$, we have $B_{at }(0) \subset V_t \subset B_{bt}(0)$. In particular, for any $c \in (0,1)$ there exist $0<\tilde{a}<\tilde{b}$ such that $V_{\tilde{a}t} \subset B_{ct}(0) \subset B_t(0) \subset V_{\tilde{b}t}$ holds for sufficiently small $t$.
\end{lemma}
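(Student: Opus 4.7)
The plan is to combine the bi-Lipschitz property of the straightening map $\phi_V$ from Lemma~\ref{lem: straigthen curved kite} with the elementary geometric fact that the straight kite $K^R_\theta$ is comparable to a disk of radius $R$ around the origin. First, I would establish that there exist constants $0<c_1<C_1$, depending only on $\theta$, such that
\[
B_{c_1 R}(0)\subset K^R_\theta\subset B_{C_1 R}(0)\quad\text{for all }R>0,
\]
which follows from the self-similarity $K^R_\theta = R\cdot K^1_\theta$ and the fact that $K^1_\theta$ is a bounded convex open set having the origin as an interior point. Next, since $\phi_V(0)=0$ (the continuous extension of $\phi_V$ along $x_2=0$ is $x_1\mapsto\varepsilon(x_1)$, and $\varepsilon(0)=Y(0)=0$) and $\phi_V'(x)=I_2+\mathcal{O}(|x|)$, for all sufficiently small $t$ there are constants $L_1,L_2>0$ (close to $1$) with $L_1|x|\leq |\phi_V(x)|\leq L_2|x|$ for every $x\in\overline{K^{r(t)}_\theta}$. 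Together with the expansion $r(t)=t+\mathcal{O}(t^2)$, which yields $r(t)\in(t/2,2t)$ for $t$ small, this sets up both inclusions.

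The upper bound $V_t\subset B_{bt}(0)$ is then immediate: any $y\in V_t$ is $y=\phi_V(x)$ with $x\in K^{r(t)}_\theta\subset B_{C_1 r(t)}(0)$, so $|y|\leq L_2 C_1 r(t)\leq 2L_2 C_1\,t$, and one takes $b:=2L_2C_1$.

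The lower bound $B_{at}(0)\subset V_t$ requires a small boundary argument. Since $\phi_V$ is a bi-Lipschitz homeomorphism, $V_t$ is open and connected, contains $0$, and its topological boundary is $\phi_V(\partial K^{r(t)}_\theta)$. For any $y=\phi_V(x)\in\partial V_t$ one has $x\in\partial K^{r(t)}_\theta$, whence $|x|\geq c_1 r(t)$, so $|y|\geq L_1 c_1 r(t)\geq L_1 c_1\,t/2$. Choosing any $a\in(0,L_1 c_1/2)$ forces $B_{at}(0)\cap\partial V_t=\emptyset$, and connectedness of $B_{at}(0)$ together with $0\in V_t$ then gives $B_{at}(0)\subset V_t$.

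The second assertion is a direct rescaling of the first: for given $c\in(0,1)$, set $\tilde a:=c/b$ and $\tilde b:=1/a$; applying the first inclusion at parameters $\tilde a\,t$ and $\tilde b\,t$ yields the chain $V_{\tilde a t}\subset B_{ct}(0)\subset B_t(0)\subset V_{\tilde b t}$, while $\tilde a<\tilde b$ holds automatically since $ac<a<b$. The only genuine technical point is verifying that the Lipschitz bounds $L_1,L_2$ and the comparison $r(t)\sim t$ can be chosen uniformly on a common range $t\in(0,t_0)$, which is immediate from the $C^2$-smoothness of $r$ and from $\phi_V'(x)=I_2+\mathcal{O}(|x|)$ combined with $K^{r(t)}_\theta\subset B_{C_1 r(t)}(0)\to\{0\}$ as $t\to 0$; no delicate estimate is involved.
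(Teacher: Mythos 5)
Your proof is correct and follows essentially the same route as the paper: sandwich the straight kite $K^R_\theta$ between balls of radius comparable to $R$, use $\phi_V(0)=0$ together with $\phi_V'(x)=I_2+\mathcal{O}(|x|)$ to get $|\phi_V(x)|\asymp|x|$ on $\overline{K^{r(t)}_\theta}$, and invoke $r(t)=t+\mathcal{O}(t^2)$. Your boundary-avoidance/connectedness argument for the inclusion $B_{at}(0)\subset V_t$, and your derivation of the second chain by rescaling the first inclusion at parameters $\tilde a t$ and $\tilde b t$ rather than inverting $r$, are if anything slightly cleaner and more complete than the paper's terse inequality manipulations.
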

\begin{proof}   Let $\phi_V$ and $r$ be as in Lemma \ref{lem: straigthen curved kite}. Then,  $y\in V_t \Leftrightarrow y=\phi_V(x)$ for some $x \in K_\theta^{r(t)}$ with $\phi'_V(x) = I_2 + \mathcal{O}(| x |)$. Thus, for $| x |$ sufficiently small, a Taylor expansion of $\phi_V$ shows that $\frac{1}{2}| y| \leq | x | \leq 2 | y|$. Note that $B_{r(t)}(0) \subset K^{r(t)}_\theta \subset B_{\frac{r(t)}{\sin \theta}} (0)$,  and since $r(t) = t+\mathcal{O}(t^2)$ as $t \rightarrow 0$, it follows by another Taylor expansion that there is $0<a\leq 1$ such that
    \[
   a t < |x|\leq  2| y| \quad\text{and} \quad    \frac{1}{2} | y| \leq|x|< \frac{2 t}{\sin\theta}=: \frac{b}{2},
    \]
 holds for sufficiently small $t$. Now, let  $c\in(0,1)$ and remark that for $t$ small enough,  $r$ is invertible with $r^{-1}(t) = t + \mathcal{O}(t^2)$. This implies that $K^t_\theta = K^{r(r^{-1}(t))}_\theta$  and thus $\phi_V(K^t_\theta) = V_{r^{-1}(t)} \subset V_{2t}$ for sufficiently small $t>0$. Therefore, similar arguments as before show that $V_{\tilde{a}t}\subset B_{ct}\subset B_t\subset V_{\tilde{b}}$ holds true with $\tilde{a}=\frac{c\sin\theta }{2}$ and $\tilde{b}=\frac{2}{a}$. This concludes the proof.
\end{proof}
Next, we construct a family of smooth cutoff functions in $V_t$ that satisfy the transmission condition on $\Gamma_t$.
\begin{lemma}
\label{lem: cuttoff function for V delta}
There exist constants $\eta > 0$ and $0< a < b <1 $ such that for some $\delta_0 > 0 $, $ C > 0 $, and for every $\delta \in (0,\delta_0)$, there exist $C^2$-smooth functions $\chi_\delta: \overline{V_\eta} \rightarrow \rr$ satisfying the following properties:
\begin{itemize}
    \item[(1)] $0 \leq \chi_\delta \leq 1$.
    \item[(2)] For all $\beta \in \mathbb{N}^2$ the uniform estimate $\| \partial^\beta \chi_\delta \|_{\infty} \leq C \delta^{- | \beta |}$ holds.
    \item[(3)] $\chi_\delta \equiv 1$ in $V_{a \delta}$.
    \item[(4)] $\supp \chi_\delta \subset V_{b \delta}$.
    \item[(5)] The normal derivative of $\chi_\delta$ vanishes on $\Gamma_\pm$.
\end{itemize}
\end{lemma}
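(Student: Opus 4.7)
My plan is to construct $\chi_\delta$ directly on $V_\eta$ as a combination of two cutoffs in the respective arc-length coordinates along $\Gamma_\pm$, weighted by a scale-invariant angular partition of unity. The guiding observation is that the arc-length coordinate $s_+(y) := s_+^\Gamma(\pi_+(y))$, obtained by orthogonal projection onto $\Gamma_+$, is well-defined and $C^2$-smooth in a tubular neighborhood of $\Gamma_+$ (thanks to the $C^3$-smoothness of $\Gamma_+$) and satisfies $\partial_{n_+} s_+ \equiv 0$ on $\Gamma_+$, because the projection is constant along normals. Any function of $s_+$ alone therefore automatically has vanishing normal derivative on $\Gamma_+$, reducing condition~(5) to arranging the partition of unity appropriately.

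For the partition, I fix $h \in C^\infty(\R)$ with $h(t)+h(-t)\equiv 1$, $h\equiv 1$ on a neighborhood of $\sin\theta$, and $h \equiv 0$ on a neighborhood of $-\sin\theta$. Using the bi-Lipschitz map $\phi_V$ from Lemma~\ref{lem: straigthen curved kite}, set
\[
\tilde\alpha_\pm(x):= h\bigl(\pm x_2/|x|\bigr)\text{ on }K^{r(\eta)}_\theta\setminus\{0\}, \qquad \alpha_\pm:= \tilde\alpha_\pm\circ\phi_V^{-1}.
\]
Since $\Gamma_\theta^\pm$ is the ray from the origin at angle $\pm\theta$, the function $\tilde\alpha_\pm$ is identically $1$ in a conical neighborhood of $\Gamma_\theta^\pm$ (and $\tilde\alpha_\mp\equiv 0$ there), so $\alpha_\pm\equiv 1$ and $\alpha_\mp\equiv 0$ in a fixed $\delta$-independent neighborhood of $\Gamma_\pm$ in $V_\eta\setminus\{0\}$. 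Scale invariance of $\tilde\alpha_\pm$ yields $|\partial^\beta \alpha_\pm(y)|\leq C_\beta\,|y|^{-|\beta|}$ via the chain rule and the bound $|\phi_V^{-1}(y)|\asymp |y|$ from Lemma~\ref{lem: norm estimate in V_ t}.

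Fix a smooth cutoff $\rho\in C^\infty(\R_+,[0,1])$ with $\rho\equiv 1$ on $[0,a']$ and $\rho\equiv 0$ on $[b',\infty)$, and define
\[
\chi_\delta(y):= \alpha_+(y)\,\rho\bigl(s_+(y)/\delta\bigr) + \alpha_-(y)\,\rho\bigl(s_-(y)/\delta\bigr),
\]
extended continuously by $\chi_\delta(0):=1$. Properties (1)--(4) follow routinely: for $|y|\lesssim\delta$ one has $s_\pm(y)\lesssim\delta$, so $\rho(s_\pm/\delta)=1$ and $\chi_\delta=\alpha_++\alpha_-=1$, giving (1), (3) and continuity at the origin; property (4) holds because $\rho(s_\pm/\delta)=0$ for $|y|\gtrsim\delta$; estimate (2) combines $|y|\gtrsim\delta$ on the effective support with the chain-rule bounds on $\alpha_\pm$ and on $\rho(s_\pm/\delta)$, each contributing $\lesssim\delta^{-|\beta|}$. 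Property (5) is the decisive point: on $\Gamma_+$, $\alpha_+\equiv 1$ and $\alpha_-\equiv 0$ hold in a fixed neighborhood, so $\chi_\delta= \rho(s_+/\delta)$ there and hence $\nabla\chi_\delta= \delta^{-1}\rho'(s_+/\delta)\,\nabla s_+$; since $\nabla s_+$ is tangent to $\Gamma_+$ at every point of $\Gamma_+$, the normal derivative $\partial_{n_+}\chi_\delta$ vanishes on $\Gamma_+$, and the argument on $\Gamma_-$ is identical.

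The main obstacle is reconciling the required flatness of the partition of unity with the fact that $\Gamma_+$ and $\Gamma_-$ intersect at the corner (so no disjoint neighborhoods exist globally). The scale-invariant angular construction via $h(\pm x_2/|x|)$ bypasses this: away from the origin, the partition is genuinely flat on a conical neighborhood of each $\Gamma_\theta^\pm$ and transports under $\phi_V$ to a flat neighborhood of each $\Gamma_\pm$, while near the origin property (5) is automatic because $\chi_\delta\equiv 1$ there.
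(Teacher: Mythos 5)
Your mechanism for property (5) --- that $\nabla s_\pm$ is tangent to $\Gamma_\pm$ along $\Gamma_\pm$, so a function of $s_\pm$ alone has vanishing normal derivative there --- is exactly the one the paper uses, and your angular partition of unity is an appealing way to avoid the paper's delicate tuning of constants. However, the construction fails property (4), and this is a genuine gap. The claim that $\rho(s_\pm/\delta)=0$ for $|y|\gtrsim\delta$ is false: the level set $\{s_+=0\}$ is the entire normal line to $\widetilde{\Gamma}_+$ through the corner (all points projecting onto $\gamma_+(0)$), so $\{|s_+|\le b'\delta\}$ is a strip of width $\mathcal{O}(\delta)$ around that whole line, reaching points of $V_\eta$ at distance comparable to the fixed size of $V_\eta$ from the corner. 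On the two arms of that line, which point in the directions $\pm(\sin\theta,-\cos\theta)$, one has $\alpha_+\approx h(\mp\cos\theta)$, and the constraint $h(t)+h(-t)\equiv1$ forces $h(\cos\theta)+h(-\cos\theta)=1$, so $\alpha_+\ge\tfrac12$ on at least one arm; there $|s_-|\approx|y|\sin 2\theta\gg\delta$, so the second summand vanishes and $\chi_\delta=\alpha_+\cdot\rho(s_+/\delta)\ge\tfrac12$ at points with $|y|\gg\delta$. Hence $\supp\chi_\delta\not\subset V_{b\delta}$. This is precisely why the paper takes the \emph{product} $\chi(s_+/\delta)\,\chi(s_-/\delta)$: the lines $\{s_+=0\}$ and $\{s_-=0\}$ are transversal (since $2\theta\neq0,\pi$), so $|s_+(y)|+|s_-(y)|\gtrsim|y|$ and the product is supported in $B_{\mathcal{O}(\delta)}(0)$ --- at the cost that property (5) then needs the careful choice of plateau and support radii $c_0<c_1$ via $|\cos2\theta|$ so that $\chi'(s_-(\gamma_+(s))/\delta)$ vanishes wherever $\chi(s/\delta)\neq0$ on $\Gamma_+$, i.e.\ exactly the step you were trying to bypass. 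Repairing (4) by inserting a radial factor $\zeta(|y|/\delta)$ is not an option either, since $\partial_{n_+}|y|\neq0$ on $\Gamma_+$ away from the corner, which would destroy the exact vanishing required in (5) and used in Lemma \ref{lem: estimate for subspaces} to keep $\chi_\delta\psi_n$ in $D(H^\Gamma_\alpha)$.

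A secondary issue is regularity: $\phi_V$ from Lemma \ref{lem: straigthen curved kite} is glued from two $C^2$ pieces along $\{x_2=0\}$ and is only Lipschitz across the image of that line, while $\tilde\alpha_\pm$ is not locally constant there (it equals $\tfrac12$ on $\{x_2=0\}$); hence $\alpha_\pm=\tilde\alpha_\pm\circ\phi_V^{-1}$, and with it $\chi_\delta$, need not be $C^2$. This part is fixable by defining the angular cutoff directly in the physical variable (for small $\eta$ each $\Gamma_\pm$ stays inside any fixed cone around its tangent ray), and one should also take $\rho(|s_\pm|/\delta)$ since $s_\pm$ changes sign across the corner. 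But the failure of (4) is structural, and the natural repair leads back to the paper's product construction.
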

\begin{proof} Recall the definition of $\gamma_\pm$, $T_\pm$, and $n_\pm$ from Notation \ref{notation param}. Let $t_0 > 0 $ and $s_0 > 0$, and consider the coordinates maps
    \[
    \phi_\pm: (-s_0,s_0) \times (-t_0,t_0) \rightarrow \rr^2 , \quad \phi_\pm(s,t) = \gamma_\pm(s) - t N_\pm(s).
    \]
It is straightforward to verify that the Jacobian matrix $\phi'$ of $\phi_\pm$ and its determinant are given by 
    \[ \phi_\pm'(s,t) = \begin{pmatrix}
        (1-tk_\pm(s))T_1^\pm(s) & -n_1^\pm(s) \\  (1-tk_\pm(s))T_2^\pm(s) & -n_2^\pm(s)
    \end{pmatrix}, \quad \det(\phi_\pm'(s,t)) = (1-tk_\pm(s))^2.
    \]
    Choosing $s_0$ and $t_0$ sufficiently small, it follows that $\phi$ is invertible. Consequently, there exists a constant $\eta > 0$ such that the maps $(s,t) \mapsto \phi_\pm(s,t)$ can be inverted to yield $C^2$-smooth functions $s_\pm$ and $t_\pm$ with the following properties, as guaranteed by the inverse function theorem:
    \begin{align*}
    &\nabla s_\pm(x) = \pm \frac{1}{1-t_\pm(x) K_\pm(x)} (N_2^\pm(x), - N_1^\pm (x)) , \quad K_{\pm} := k_\pm \circ s_\pm , \quad N^\pm_j := n^\pm_j \circ s_\pm.
     \end{align*}
     Hence, $s_\pm(0,0) = (0,0)$ and $\nabla s_\pm(0,0) = (\cos\theta, \pm \sin \theta)$, and we deduce that 
\begin{align*}
     s_\pm (x_1,x_2) = \langle (\cos \theta, \pm \sin\theta),(x_1,x_2) \rangle + \mathcal{O} (x_1^2 + x_2^2) \quad  \text{as } (x_1,x_2) \rightarrow (0,0).
     \end{align*}
  Moreover, we have $s_\pm(\gamma_\pm(s)) = s$ and as $s\to0$ there holds
    \[
    s_\pm(\gamma_\mp(s)) =  (\cos\theta, \pm \sin \theta)\cdot \gamma'_\pm(0)s + \mathcal{O}(s^2) = \cos(2\theta)s + \mathcal{O}(s^2). \quad 
    \]
    Keeping this notation in mind, we further set $c:= \min(\sin \theta, \cos \theta) \cdot \max (\frac{1}{2}, \left| \cos 2 \theta \right| ) \in (0,1) $. In view of Lemma \ref{lem: norm estimate in V_ t}, there are $0 < \tilde{a} < \tilde{b}$ such that
    \[
    V_{\tilde{a} \tilde\delta} \subset B_{c \tilde{\delta} }(0) \subset B_{\tilde{\delta}}(0) \subset V_{\tilde{b} \tilde{\delta}}
    \]
    holds for all sufficiently small $\tilde{\delta} > 0$. Now, let $d_0 > 0$ be such that $b := \tilde{b} \cdot d_0 <1 $ and set $a : = \tilde{a} \cdot d_0 $, $ \delta := \tilde{\delta} /d_0  $. Then, one has
    \[
    V_{a \delta} \subset B_{c d_0 \delta}(0) \subset B_{d_0 \delta}(0) \subset V_{b \delta}.
    \]
    We are now going to construct a cutoff function $\chi_\delta$ that satisfies assertions  (1), (2), (5), and such that
    \[
    \chi_\delta \equiv 1 \quad \text{in } B_{c d_0 \delta}(0) \quad \text{ and } \quad 
    \supp\chi_\delta \subset B_{d_0 \delta}(0),
    \]
    which then yields the desired result. For this, we define the constants
    \[
    c_0 := c \cdot d_0 = \min(\sin \theta, \cos \theta) \cdot \max (\frac{1}{2}, \left| \cos 2 \theta \right| ) \cdot d_0 ,  \quad c_1 := \min(\sin \theta, \cos \theta) \cdot d_0 > c_0,
    \]
    and for a sufficiently small fixed $\varepsilon > 0$, consider a smooth  function $\chi: \mathbb{R} \rightarrow [0,1]$ with
    \[
    \chi \equiv 1 \text{ in } [- c_0 - \varepsilon, c_0 + \varepsilon] \quad \text{ and }\quad \supp \chi \subseteq [ - c_1 + \varepsilon, c_1 - \varepsilon],
    \]
    and define the cutoff function $\chi_\delta$ by
    \[
    \chi_\delta (x) := \chi\left(\frac{s_+(x)}{\delta}\right) \cdot \chi\left(\frac{s_-(x)}{\delta}\right).
    \]
    By construction,  $\chi_\delta$ satisfies assertions (1) and (2). To verify assertion (3),  note that for $x \in B_{c_0 \delta}(0) $ and sufficiently small $\delta>0$,  we have
    \[
    \frac{\left|s_\pm(x)\right|}{\delta} = \frac{\left|\langle(\cos \theta, \pm \sin\theta),(x_1, x_2) \rangle + \mathcal{O}(x_1^2 + x_2^2) \right|}{\delta} \leq \frac{| x | + \mathcal{O}(| x |^2)}{\delta} \leq \frac{c_0 \delta + \mathcal{O}(\delta^2)}{\delta} \leq  c_0 + \varepsilon,
    \]
    which yields $\chi(s_\pm(x) /\delta) = 1$ in $ B_{c_0 \delta}(0)$, and this implies (3). To prove assertion (4), it suffices to show that outside the ball $B_{d_0\delta}(0)$, either $\chi(s_+(x) /\delta) \equiv 0$ or $\chi(s_-(x) /\delta) \equiv 0$. Let $ | x | \geq d_0 \delta$. For $x_1 \geq 0$,  $x_2 < 0$, and sufficiently small $\delta > 0$, we have
    \begin{align*}
    \frac{\left|s_-(x)\right|}{\delta} &=  \frac{\left|\cos \theta x_1 - \sin\theta x_2 + \mathcal{O}(x_1^2 + x_2^2)\right|}{\delta}  = \frac{\left|\cos \theta |x_1| + \sin \theta |x_2|\right|}{\delta} + \mathcal{O}(\delta) \\
    &\geq \min(\sin \theta, \cos\theta) \frac{|x_1 | + |x_2|}{\delta} + \mathcal{O}(\delta) \geq \frac{c | x |}{\delta} + \mathcal{O}(\delta) \geq c_1 - \varepsilon. 
    \end{align*}
    An analogous argument applies for $x_1 <0  $ and $x_2 \geq 0$. In the case where $x_1$ and $x_2$ have the same sign, the same estimate can be established similarly for $|s_+(x)/\delta|$, which concludes the proof of (4).
    
    We now prove (5) for $\Gamma_+$ with $s>0$, as the case $\Gamma_-$ with $s < 0$ follows analogously. For $s>0$ and $x=\gamma_+(s)\in\Gamma_+$ we have
    \begin{align*}
    \frac{\partial \chi_\delta}{\partial n_+} (x) =& \langle n_+(s), (\nabla\chi_\delta) (\gamma_+(s))  \rangle 
    = \frac{1}{\delta} \chi'\left(s_+(\gamma_+(s))/\delta\right) \chi(s_-(\gamma_+(s))/\delta) \cdot  \langle n_+(s), (\nabla s_+)(\gamma_+(s)) \rangle  \\
    &+ \frac{1}{\delta} \chi'(s_-(\gamma_+(s))/\delta) \chi(s_+(\gamma_+(s))/\delta)  \cdot  \langle n_+(s), (\nabla s_-)(\gamma_+(s)) \rangle.  
    \end{align*}
    As  $(\nabla s_+)(\gamma_+(s)) = (n_2^+ (s), - n_1^+(s))$, it follows that $\langle n_+(s), (\nabla s_+)(\gamma_+(s)) \rangle = 0$, and thus
    \[
    \frac{\partial \chi_\delta}{\partial n_+} (x) =\frac{1}{\delta} \chi'(s_-(\gamma_+(s))/\delta) \chi(s/\delta)  \cdot  \langle n_+(s), (\nabla s_-)(\gamma_+(s)) \rangle.
    \]
    Since $ \chi(s/\delta) = 0 $ for $ \left|s/\delta\right| \geq c_1 - \varepsilon$, it suffices to prove that  $\chi'(s_-(\gamma_+(s))/\delta)$ vanishes for all $|s| < \delta(c_1 - \varepsilon)$. By expanding $s_-(\gamma_+ (s))$, we have
    \[
    s_-(\gamma_+ (s)) = \langle (\cos \theta, - \sin \theta), \gamma_+'(0) \rangle + \mathcal{O}(s^2) = \cos(2 \theta)s + \mathcal{O}(s^2).
    \]
    For sufficiently small $\delta$, this implies
    \[
    \frac{\left|s_-(\gamma_+(s))\right|}{\delta} = \frac{\left|\cos(2 \theta )s + \mathcal{O}(s^2)\right|}{\delta} \leq \left|\cos(2 \theta)\right| (c_1 - \varepsilon) + \mathcal{O}(\delta) \leq c_0 + \varepsilon.
    \]
    As $\chi$ is constant in $[-c_0- \varepsilon, c_0 + \varepsilon ]$, its derivative vanishes there. Consequently, the normal derivative of $\chi_\delta$ vanishes identically on $\Gamma_+$, which shows (5) and completes the proof of the lemma.
\end{proof}

\subsection{Spectral properties}
After defining appropriate neighborhoods near each corner of the curve supporting the $\delta$-interaction, we proceed to analyze the analogues of the operators introduced in Section \ref{ope straight} within this context. In particular, we derive the asymptotics of their first $\kappa(\theta)$ eigenvalues (see Corollary \ref{lem: ev dirichlet neuman truncated curved} below), which constitutes a key step in proving Theorem \ref{theo: corner induced} on the asymptotic behavior of corner-induced eigenvalues.
\begin{definition}
 Let $V_\delta$ be as in Lemma \ref{lem: straigthen curved kite}. For $x\in\{ d,n \}$, we define the sesquilinear form of the Dirichlet/Neumann Laplacian with a strong $\delta$-interaction on $\Gamma_\delta$ by
    \[ x^\delta_{\Gamma, \alpha} (u) = \int_{V_\delta} |\nabla u|^2 \mathrm{d}x  - \alpha \int_{\Gamma_\delta} |u|^2 \dS, \quad  D(d^\delta_{\Gamma,\alpha}) = H^1_0(V_\delta), \quad  D(n^\delta_{\Gamma,\alpha}) = H^1(V_\delta).
    \]
We further set  $V_{\delta,\rho} := V_\delta \setminus \overline{V_\rho} $ and $ \Gamma_{\delta,\rho}: = \Gamma_\delta \cap V_{\delta,\rho}\, $ for $0<\rho<\delta$, and define
        \[
        p^{\delta,\rho}_{\Gamma,\alpha}(u) = \int_{V_{\delta,\rho}} |\nabla u|^2 \mathrm{d}x - \alpha \int_{\Gamma_{\delta,\rho}} |u|^2 \dS, \quad D(p^{\delta,\rho}_{\Gamma,\alpha}) = H^1(V_{\delta,\rho}).
        \]
\end{definition}
The existence of the diffeomorphism between $V_\delta$ and $K^{r(t)}_{\theta}$ allows us to apply a change of variables, which facilitates a direct comparison between the operators arising in these respective domains.
\begin{lemma}
\label{lem: unitary equivalence for V delta }
    There exist $a_0,a_1,\delta_0 >0$ such that for all $\delta \in (0,\delta_0)$, $\rho \in (0,\delta)$, and  $n \in \mathbb{N}$, the following inequalities hold:
    \begin{align*}
        (1-a_0\delta) E_n(N^{r(\delta)}_{\theta,\alpha(1+a_1 \delta)})& \leq E_n(N^\delta_{\Gamma,\alpha}) \leq(1+a_0\delta) E_n(N^{r(\delta)}_{\theta,\alpha(1-a_1 \delta)}), \\
    (1-a_0\delta) E_n(D^{r(\delta)}_{\theta,\alpha(1+a_1 \delta)}) &\leq E_n(D^\delta_{\Gamma,\alpha}) \leq(1+a_0\delta) E_n(D^{r(\delta)}_{\theta,\alpha(1-a_1 \delta)}),\\
         (1-a_0\delta) E_n(P^{r(\delta),r(\rho)}_{\theta,\alpha(1+a_1 \delta)}) &\leq E_n(P^{\delta,\rho}_{\Gamma,\alpha}) \leq(1+a_0\delta) E_n(P^{r(\delta),r(\rho)}_{\theta,\alpha(1-a_1 \delta)}),
         \end{align*}
        as $\alpha \to \infty$, $\delta \to 0^+ $, and $ \alpha \delta \to \infty$.
\end{lemma}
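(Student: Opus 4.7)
The plan is to transport all three spectral comparisons through the bi-Lipschitz diffeomorphism $\phi_V : K^{r(\delta)}_\theta \to V_\delta$ furnished by Lemma \ref{lem: straigthen curved kite}. Since $\phi_V'(x) = I_2 + \mathcal{O}(|x|)$ and $|x| = \mathcal{O}(\delta)$ uniformly on $K^{r(\delta)}_\theta$, all geometric distortions occur at relative order $\delta$. The pullback $Ju := u \circ \phi_V$ defines a linear bijection $H^1(V_\delta) \leftrightarrow H^1(K^{r(\delta)}_\theta)$, restricting to a bijection between the $H^1_0$-spaces (because $\phi_V$ identifies the respective boundaries), and sending $n$-dimensional subspaces to $n$-dimensional subspaces.

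First I would apply the change-of-variables formula. The asymptotics $|\det\phi_V'| = 1 + \mathcal{O}(\delta)$, $(\phi_V')^{-T}(\phi_V')^{-1} = I_2 + \mathcal{O}(\delta)$, and the arc-length distortion $|\phi_V'(x)T(x)| = 1 + \mathcal{O}(\delta)$ for unit tangent vectors $T$ of $\Gamma_\theta^{r(\delta)}$ yield, for every $u \in H^1(V_\delta)$ with $\tilde u = Ju$, uniform two-sided estimates $\|u\|_{L^2}^2 = (1+\mathcal{O}(\delta))\|\tilde u\|_{L^2}^2$, $\int_{V_\delta}|\nabla u|^2\,dy = (1+\mathcal{O}(\delta))\int_{K^{r(\delta)}_\theta}|\nabla\tilde u|^2\,dx$, and $\int_{\Gamma_\delta}|u|^2\,dS = (1+\mathcal{O}(\delta))\int_{\Gamma_\theta^{r(\delta)}}|\tilde u|^2\,dS$. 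Balancing the three bounds and absorbing the boundary-measure error into a shifted coupling produces $a_1 > 0$ and $c > 0$ (both independent of $u$, $\delta$, $\alpha$) such that
\[
(1-c\delta)\, n^{r(\delta)}_{\theta, \alpha(1+a_1\delta)}(\tilde u) \;\leq\; n^\delta_{\Gamma,\alpha}(u) \;\leq\; (1+c\delta)\, n^{r(\delta)}_{\theta, \alpha(1-a_1\delta)}(\tilde u)
\]
for every admissible $u$.

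Next I would invoke the min-max principle through the bijection $J$. For the upper eigenvalue bound I would select an $n$-dimensional optimal trial subspace $\tilde L \subset H^1(K^{r(\delta)}_\theta)$ for $E_n(N^{r(\delta)}_{\theta,\alpha(1-a_1\delta)})$, transport it to $L = J^{-1}(\tilde L) \subset H^1(V_\delta)$, and estimate $\sup_{u \in L} n^\delta_{\Gamma,\alpha}(u)/\|u\|^2$; combining the form bound with $\|u\|^2 \in [(1-c\delta)\|\tilde u\|^2, (1+c\delta)\|\tilde u\|^2]$ and performing a short case analysis on the sign of the form value consolidates the factors $(1\pm c\delta)^{\pm 1}$ in numerator and denominator into a single multiplicative prefactor of the required form $(1 + a_0\delta)$, yielding the asserted upper bound. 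The lower bound follows by a symmetric argument starting from an optimal subspace in $H^1(V_\delta)$ and pushing it to $K^{r(\delta)}_\theta$ via $J$. The Dirichlet case is identical because $J$ preserves $H^1_0$; the annular version for $P^{\delta,\rho}_{\Gamma,\alpha}$ is handled by observing that $\phi_V$ restricts to a bi-Lipschitz diffeomorphism $K^{r(\delta),r(\rho)}_\theta \to V_{\delta,\rho}$ sending $\Gamma^{r(\delta),r(\rho)}_\theta$ onto $\Gamma_{\delta,\rho}$, and that the annular domain still has diameter $\mathcal{O}(\delta)$, so the uniform geometric estimates remain valid.

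The main technical obstacle will be the sign-tracking in the Rayleigh-quotient step. Since the first $\kappa(\theta)$ eigenvalues of the comparison operators are negative of order $-\alpha^2$ (by Lemma \ref{lem: eigenvalues of neumann on karl}) while all higher ones lie above $-\alpha^2/4$, the multiplicative errors $1\pm c\delta$ translate into additive perturbations of order $\delta\alpha^2$ whose effect on the Rayleigh quotient changes direction depending on whether the form value is positive or negative. The two-sided symmetry of $\phi_V' - I_2 = \mathcal{O}(|x|)$ near the corner, together with the freedom to adjust the three constants $c$, $a_0$, $a_1$ jointly (with $a_0$ taken slightly smaller than $2a_1$), is what ultimately allows both the upper and lower bounds of the asserted form to hold simultaneously for every $n$; this coordinated choice is the delicate point of the argument.
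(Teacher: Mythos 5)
Your overall strategy coincides with the paper's: pull everything back through the bi-Lipschitz map $\phi_V$ of Lemma \ref{lem: straigthen curved kite}, record that the volume element, the metric, and the arc-length element are each distorted by a factor $1+\mathcal{O}(\delta)$, absorb the boundary-term distortion into a shifted coupling $\alpha(1\pm a_1\delta)$, and conclude by min-max; the treatment of the Dirichlet case and of the annular domain $V_{\delta,\rho}=\phi_V(K^{r(\delta),r(\rho)}_\theta)$ is also as in the paper.

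There is, however, a genuine gap in the step you yourself flag as delicate, and your proposed resolution does not close it. You first assemble a \emph{global} form inequality $n^\delta_{\Gamma,\alpha}(u)\le (1+c\delta)\,n^{r(\delta)}_{\theta,\alpha(1-a_1\delta)}(\tilde u)$ and then try to divide by the norms, handling the sign of the form by cases. Write $q:=n^{r(\delta)}_{\theta,\alpha(1-a_1\delta)}(\tilde u)/\|\tilde u\|^2$. When $q\ge 0$ the consolidation works: $\tfrac{n^\delta_{\Gamma,\alpha}(u)}{\|u\|^2}\le\tfrac{1+c\delta}{1-c\delta}\,q\le(1+a_0\delta)q$. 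But when $q<0$ — which is exactly the relevant regime, since the eigenvalues being compared are of order $-\alpha^2$ — the best available bound is
\[
\frac{n^\delta_{\Gamma,\alpha}(u)}{\|u\|^2}\;\le\;\frac{(1+c\delta)\,q\,\|\tilde u\|^2}{(1+c\delta)\,\|\tilde u\|^2}\;=\;q,
\]
and $q>(1+a_0\delta)q$ for $q<0$, so this does \emph{not} yield the claimed prefactor; no joint choice of the constants fixes both sign cases simultaneously, because the two cases would force $\tfrac{1+c\delta}{1-c\delta}\le 1+a_0\delta\le\tfrac{1+c\delta}{1+c\delta}$, which is impossible. Moreover, inside a fixed trial subspace the sign of $q(\tilde u)$ varies with $\tilde u$, so a per-vector case analysis does not survive the supremum in the min-max. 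The fix — and this is what the paper does — is to never form the global form inequality at all: apply the $(1\pm b_j\delta)$ corrections separately to the three sign-definite quantities $\|\nabla\tilde u\|^2\ge 0$, $\int_{\Gamma^{r(\delta)}_\theta}|\tilde u|^2\,\mathrm{d}S\ge 0$ and $\|\tilde u\|^2>0$, pushing each error in the favorable direction, so that one obtains the \emph{pointwise} Rayleigh-quotient inequality
\[
\frac{n^\delta_{\Gamma,\alpha}(u)}{\|u\|^2_{L^2(V_\delta)}}\;\ge\;(1-a_0\delta)\,\frac{n^{r(\delta)}_{\theta,\alpha(1+a_1\delta)}(\tilde u)}{\|\tilde u\|^2_{L^2(K^{r(\delta)}_\theta)}}
\]
for every $u$, with no case distinction; min-max then gives the stated eigenvalue inequality directly. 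As written, your argument proves the weaker bounds with prefactor $1$ in the negative-form case, which is not the statement of the lemma.
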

\begin{proof}We provide the detailed estimates only for $D^\delta_{\Gamma,\alpha}$, as the same argument applies analogously to $N^\delta_{\Gamma,\alpha}$ and $P^{\delta,\rho}_{\Gamma,\alpha}$. By Lemma \ref{lem: straigthen curved kite}, there exist $\delta_0 > 0 $ and a function $r \in C^2$ satisfying $r(0)=0$ and $r'(0) = 1$, such that for every $\delta \in (0, \delta_0)$, the map $\phi_V: K^{r(\delta)}_\theta \rightarrow V_\delta $ is bi-Lipschitz with $\phi_V'(x) = I_2 + \mathcal{O}(x)$.  Consider the map $\Phi : L^2(V_\delta) \rightarrow L^2(K^{r(\delta)}_\theta)$ defined by $ \Phi(v)= v \circ \phi_V =: u$. Then, $\Phi: H^1(V_\delta) \mapsto H^1(K^{r(\delta)}_\theta)$ is bijective, and the boundary condition $u = 0$ on $ \partial K^{r(\delta)}_\theta $ holds if and only if $v = 0$ in $ \partial V_\delta$. To apply the Min-Max principle for the required estimates, we first need to find suitable approximations for the norms $ \| v \|_{V_\delta}$, $\| \nabla v \|_{V_\delta}$, and $\| v \|_{L^2(\Gamma_\delta)}$.
     
    We start with $ \| v \|^2_{V_\delta}$. Using the change of variables $u=v \circ \phi_V $, we obtain 
    \[
    \int_{V_\delta} | v |^2 \mathrm{d}x  = \int_{K^{r(\delta)}_\theta}|u|^2 |\det (\phi_V')| \mathrm{d} x  = \int_{K^{r(\delta)}_\theta}|u|^2 ({1 + \mathcal{O}(|x|)}) \mathrm{d} x.
    \]
    Hence, there exists $b_1>0$ such that $ 1- b_1 \delta \leq |\det (\phi_V')| \leq 1 + b_1 \delta $,
    and thus
    \[
    (1-b_1\delta)\int_{K^{r(\delta)}_\theta} |u|^2 \mathrm{d}x \leq \int_{V_\delta} | v |^2 \mathrm{d}x \leq (1+b_1\delta)\int_{K^{r(\delta)}_\theta}|u|^2 \mathrm{d}x,
    \]
    which is equivalent to $(1-b_1\delta) \| u \|^2_{K^{r(\delta)}_\theta} \leq \| v \|^2_{V_\delta} \leq (1+b_1\delta) \| u \|^2_{K^{r(\delta)}_\theta}$.
    Similarly,
    \[
    \int_{V_\delta} |\nabla v|^2 \mathrm{d}x  =  \int_{K^{r(\delta)}_\theta} \sum_{j,k = 1}^2 G^{j,k} \partial_j u \, \partial_k u  |\det (\phi_V')| \mathrm{d} x,
    \]
    where the matrix $G^{j,k}$ is the inverse of $G_{j,k} = \langle \partial_j \phi_V, \partial_k \phi_V \rangle =\delta_{j,k} + \mathcal{O}(\delta) $, and therefore satisfies
    \[
    G^{j,k}=(1 + \mathcal{O}(\delta))^{-1}(\delta_{j,k} + \mathcal{O}(\delta)) =(1 + \mathcal{O}(\delta))(\delta_{j,k} + \mathcal{O}(\delta)) = \delta_{j,k} + \mathcal{O}(\delta).
    \]
    Consequently,  there exists $b_2 > 0$ such that for any $u\in H^1(K^{r(\delta) }_\theta)$ one has
    \[
    (1-b_2\delta) |\nabla u|^2 \leq \sum_{j,k = 1}^2 G^{j,k} \partial_j u \, \partial_k u \leq (1+b_2\delta) |\nabla u|^2,
    \]
    and we deduce that
        \[
        (1-b_2\delta) \int_{K^{r(\delta) }_\theta} |\nabla u|^2 \mathrm{d}x
        \leq \| \nabla v \|^2_{V_\delta}
        \leq  (1+b_2\delta) \int_{K^{r(\delta) }_\theta} |\nabla u|^2 \mathrm{d}x.
        \]
    To estimate $\| v \|_{L^2(\Gamma_\delta)}$,  consider an arc-length parametrization $\gamma:I_\delta \rightarrow \Gamma^\delta_\theta$. Then, $ \phi(\gamma (t)) $ is a parametrization of $\Gamma_\delta $, and we have
    \[
    \int_{\Gamma_\delta} | v |^2 \dS = \int_{I_\delta}  | u(t)|^2 |\phi'(\gamma(t)) \gamma'(t) | \mathrm{d}t.
    \]
Using the fact that  $\phi'(x) = I_2 + \mathcal{O}(|x|)$, it follows that there exists $b_3 > $ such that
\[
1- b_3 \delta \leq |\phi'(\gamma(t)) \gamma'(t)| \leq 1 + b_3 \delta,
\]
Therefore,
\[
(1- b_3 \delta) \int_{\Gamma_\theta^{r(\delta)}} |u|^2 \dS 
\leq \int_{\Gamma_\delta} | v |^2 \dS \leq  (1+ b_3 \delta) \int_{\Gamma_\theta^{r(\delta)}} |u|^2 \dS,
\]
which yields the desired estimate for $\| v \|_{L^2(\Gamma_\delta)}$. By combining the previous estimates, we obtain
\[
\frac{d^\delta_{\Gamma,\alpha}(v)}{\| v  \|^2_{L^2(V_\delta)}} 
\geq \frac{1-b_2\delta}{1+b_1  \delta} \frac{\int_{K^{r(\delta)}_\theta} |\nabla u|^2 \mathrm{d}x }{\| u \|^2_{L^2(K^{r(\delta)}_\theta)}} 
- \alpha \frac{1+ b_3 \delta}{1-b_1 \delta} \frac{\int_{\Gamma_\theta^{r(\delta)}} |u|^2 \dS}{\| u \|^2_{L^2(K^{r(\delta)}_\theta)}},\quad \forall v\neq0.
\]
Moreover, there exist $a_0, a_1 > 0$ such that for all sufficiently small $\delta > 0 $, 
 \[
 \frac{1-b_2\delta}{1+b_1  \delta}=1+ \mathcal{O}(\delta) \geq 1- a_0 \delta\quad\text{and}\quad  \frac{1}{1-a_0 \delta} \cdot  \frac{1+b_3\delta}{1-b_1  \delta} \leq  1+ a_1 \delta,
 \]
which allows us to write the further bound
 \[
 \frac{d^\delta_{\Gamma,\alpha}(v)}{\| v  \|^2_{L^2(V_\delta)}} 
\geq (1-a_0 \delta) \frac{ \| \nabla u  \|^2_{K ^{r(\delta)}_\theta } - \alpha(1+ a_1 \delta) \int_{\Gamma_\theta^{r(\delta)}} |u|^2 \dS }{\| u \|^2_{K_\theta^{r(\delta)}} } = (1- a_0 \delta) \frac{d^{r(\delta)}_{\theta, \alpha(1+a_1\delta) } (\Phi v)}{\| \Phi v\|^2_{K^{r(\delta) }_{\theta} } }.
\]
Since $\Phi_\delta$ is bijective, the Min-Max principle implies
\[
E_n(D^\delta_{\Gamma,\alpha}) \geq (1-a_0 \delta) \cdot E_n(D^{r(\delta)}_{\theta,\alpha(1+a_1\delta)} ).
\]
\end{proof}
The following eigenvalue asymptotics follow directly from the lemma above.
\begin{corollary}\label{lem: ev dirichlet neuman truncated curved}
As $\alpha \to \infty$, $ \delta \to 0^+$, and $\alpha \delta \to \infty$, the following asymptotics hold:
\begin{align*}
E_n(D^\delta_{\Gamma,\alpha}) &= \alpha^2(\mathcal{E}(\theta) + \mathcal{O}( \delta + e^{-c \alpha \delta})), \quad \forall n \in \{1,\dots,\kappa(\theta) \},\\
 E_n(N^\delta_{\Gamma,\alpha}) &= \alpha^2(\mathcal{E}(\theta) + \mathcal{O}( \delta + 1/(\alpha\delta)^2)), \quad  \forall n \in \{1, \dots,\kappa(\theta)\},\\
 E_{k(\theta) + 1} (D^\delta_{\Gamma,\alpha}) &\geq E_{\kappa(\theta) +1 }(N^\delta_{\Gamma,\alpha}) \geq -\frac{\alpha^2}{4} + o(\alpha^2) .
 \end{align*}
\end{corollary}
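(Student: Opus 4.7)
The plan is to read the corollary off directly from Lemma~\ref{lem: unitary equivalence for V delta }, which sandwiches each curved eigenvalue between the corresponding straight-kite eigenvalues (with slightly perturbed coupling), combined with the kite asymptotics in Lemmas~\ref{lem: eigenvalues of dirichlet on karl} and \ref{lem: eigenvalues of neumann on karl}. The guiding observation is that $r(\delta)=\delta(1+\mathcal{O}(\delta))$, so the asymptotic regime $\alpha r(\delta)\to\infty$ is equivalent to $\alpha\delta\to\infty$, and all the multiplicative factors produced along the way are of the form $1+\mathcal{O}(\delta)$.

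For the Dirichlet assertion, I apply Lemma~\ref{lem: unitary equivalence for V delta } together with Lemma~\ref{lem: eigenvalues of dirichlet on karl}(i) at $R=r(\delta)$ and coupling $\alpha(1\pm a_1\delta)$. Both bounds then take the shape
\[
\alpha^2(1\pm a_1\delta)^2(1\pm a_0\delta)\bigl(\mathcal{E}_n(\theta)+\mathcal{O}(e^{-c'\alpha\delta})\bigr),
\]
where the exponential constant is only slightly perturbed because $r(\delta)/\delta\to 1$ and $(1\pm a_1\delta)\to 1$. Since $\mathcal{E}_n(\theta)$ is a fixed negative constant (by Lemma~\ref{lem: H1theta in [-1/4)} it is bounded away from $-1/4$ and $-\infty$), the prefactor $1+\mathcal{O}(\delta)$ contributes an absolute error $\mathcal{O}(\alpha^2\delta)$ on top of $\mathcal{O}(\alpha^2 e^{-c'\alpha\delta})$. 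Matching the two-sided bounds yields $E_n(D^\delta_{\Gamma,\alpha})=\alpha^2(\mathcal{E}_n(\theta)+\mathcal{O}(\delta+e^{-c\alpha\delta}))$ for every $n\in\{1,\dots,\kappa(\theta)\}$.

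The Neumann statement is obtained by the same sandwich argument, now using Lemma~\ref{lem: eigenvalues of neumann on karl}(i), which replaces the exponential remainder by $1/(\alpha R)^2=(1+\mathcal{O}(\delta))/(\alpha\delta)^2$; combined with the $\mathcal{O}(\delta)$ term coming from the prefactor $1+\mathcal{O}(\delta)$ this yields the announced $\mathcal{O}(\delta+1/(\alpha\delta)^2)$.

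For the $(\kappa(\theta)+1)$-st eigenvalue, the inequality $E_{\kappa(\theta)+1}(D^\delta_{\Gamma,\alpha})\ge E_{\kappa(\theta)+1}(N^\delta_{\Gamma,\alpha})$ is just Dirichlet--Neumann bracketing on $V_\delta$ (the form domain $H^1_0(V_\delta)$ sits inside $H^1(V_\delta)$ and the two forms agree there, so the min-max principle applies). For the remaining lower bound, I use the Neumann inequality of Lemma~\ref{lem: unitary equivalence for V delta } with $n=\kappa(\theta)+1$ followed by Lemma~\ref{lem: eigenvalues of neumann on karl}(ii):
\[
E_{\kappa(\theta)+1}(N^\delta_{\Gamma,\alpha})\geq(1-a_0\delta)\,E_{\kappa(\theta)+1}\bigl(N^{r(\delta)}_{\theta,\alpha(1+a_1\delta)}\bigr)\geq (1-a_0\delta)\Bigl(-\tfrac{\alpha^2(1+a_1\delta)^2}{4}+o(\alpha^2)\Bigr),
\]
and since $\delta\to 0^+$ the perturbation $\alpha^2\delta$ is itself $o(\alpha^2)$, so the right-hand side equals $-\alpha^2/4+o(\alpha^2)$. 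The whole proof is essentially mechanical once Lemma~\ref{lem: unitary equivalence for V delta } is available; the only point requiring some care is that every multiplicative factor of the form $1+\mathcal{O}(\delta)$ must be turned into an additive error, which works because $\mathcal{E}_n(\theta)$ is bounded and because $\alpha^2\delta$ is absorbed into $o(\alpha^2)$ in the last estimate.
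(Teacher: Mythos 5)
Your argument is correct and follows exactly the route of the paper's own proof: the two-sided comparison of Lemma~\ref{lem: unitary equivalence for V delta } at $R=r(\delta)$ and coupling $\alpha(1\pm a_1\delta)$, combined with the straight-kite asymptotics of Lemmas~\ref{lem: eigenvalues of dirichlet on karl} and~\ref{lem: eigenvalues of neumann on karl}, with Dirichlet--Neumann monotonicity for the final inequality. Your bookkeeping of the $1+\mathcal{O}(\delta)$ prefactors into an additive $\mathcal{O}(\alpha^2\delta)$ error is in fact slightly cleaner than the paper's displayed computation for the Neumann case.
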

\begin{proof}
By Lemma \ref{lem: unitary equivalence for V delta }, for $X \in \{ N,D \}$ and any $n \in \nn $, we have
\[
(1 + \mathcal{O}(\delta))E_n(X^{r(\delta)}_{\theta,\alpha(1+a \delta)}) 
\leq E_n(X^\delta_{\Gamma,\alpha}) \leq 
(1 + \mathcal{O}(\delta))E_n(X^{r(\delta)}_{\theta,\alpha(1-a \delta)}).
\]
Since $r(\delta) = \mathcal{O}(\delta) $ as $\delta \to 0$, it follows that 
$r(\delta) \alpha \to \infty $ as $\alpha \delta \to \infty$. This allows us to apply Lemma \ref{lem: eigenvalues of dirichlet on karl} and Lemma \ref{lem: eigenvalues of neumann on karl}; in particular, there exists $c>0$ such that
\begin{align*}E_n(D^\delta_{\Gamma,\alpha}) &= (1 + \mathcal{O}(\delta))^3 (\mathcal{E}(\theta) + \mathcal{O}(e^{-c \alpha r(\delta)})) \alpha^2 = (\mathcal{E}(\theta) + \mathcal{O}(\delta + e^{-c \alpha \delta})) \alpha^2,\\ E_n(N^\delta_{\Gamma,\alpha}) &= (1 + \mathcal{O}(\delta))^3 \left(\mathcal{E}(\theta) 
+ \mathcal{O}\left(\frac{1}{(\alpha r(\delta))^2}\right)\right) \alpha^2 
= \left(\mathcal{E}(\theta)  + \mathcal{O}\left(\alpha^2 \delta + \frac{1}{\delta^2}\right) \right) \alpha^2,
\end{align*}
and furthermore, 
\begin{align*}
   E_{k(\theta) + 1} (D^\delta_{\Gamma,\alpha}) &\geq  E_{\kappa(\theta) +1 }(N^\delta_{\Gamma,\alpha}) \geq (1+ a_0 \delta)E_{\kappa(\theta)+1} (N^{r(\delta)}_{\theta,\alpha(1+ a_0 \delta)}) \\
   &\geq (1+ a_0 \delta)( -1/4 + o(1) ) \alpha^2 (1+ a_1 \delta)^2 =  -\frac{\alpha^2}{4} + o(\alpha^2).
    \end{align*}
   where the first inequality follows by monotonicity.
    \end{proof}
Note that the eigenvalues depend asymptotically only on the angle $\theta$  and are independent of the specific geometry of the curves beyond their intersection point. This result also extends to the following case.
\begin{corollary}\label{lem: ev p truncated curved} It holds that $E_1(P^{\delta,\rho}_{\Gamma,\alpha}) \geq -\alpha^2/4 + o(\alpha^2)$.
\end{corollary}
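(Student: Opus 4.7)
The plan is to chain together Lemma \ref{lem: unitary equivalence for V delta } (which compares the curved-annulus operator $P^{\delta,\rho}_{\Gamma,\alpha}$ with its straight counterpart $P^{r(\delta),r(\rho)}_{\theta,\alpha'}$) and Lemma \ref{lem: P R,r,theta} (which gives the lower bound $\alpha^2(-1/4 - Ce^{-r\alpha/2})$ for the straight model). The corollary should be read in the asymptotic regime $\alpha\to\infty$, $\delta\to 0^+$, $0<\rho<\delta$, together with $\alpha\rho\to\infty$ (which is the natural analogue of the condition appearing in the preceding corollary and is what Lemma \ref{lem: P R,r,theta} requires).

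The first step will be to apply the third inequality in Lemma \ref{lem: unitary equivalence for V delta } with $n=1$, yielding
\[
E_1(P^{\delta,\rho}_{\Gamma,\alpha}) \geq (1-a_0\delta)\, E_1\!\bigl(P^{r(\delta),r(\rho)}_{\theta,\alpha(1+a_1\delta)}\bigr).
\]
Since $r(\rho) = \rho + \mathcal{O}(\rho^2)$ as $\rho\to 0^+$, the assumption $\alpha\rho\to\infty$ forces $\alpha(1+a_1\delta)\,r(\rho)\to\infty$, which puts us in the regime where Lemma \ref{lem: P R,r,theta} is applicable (with $R=r(\delta)$, $r=r(\rho)$, coupling constant $\alpha(1+a_1\delta)$). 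Invoking it gives
\[
E_1\!\bigl(P^{r(\delta),r(\rho)}_{\theta,\alpha(1+a_1\delta)}\bigr) \geq \alpha^2(1+a_1\delta)^2\Bigl(-\tfrac{1}{4} - C\,e^{-\frac{1}{2}\alpha(1+a_1\delta)\,r(\rho)}\Bigr).
\]

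Combining the two bounds yields
\[
E_1(P^{\delta,\rho}_{\Gamma,\alpha}) \geq -\tfrac{\alpha^2}{4}(1-a_0\delta)(1+a_1\delta)^2 - C\,\alpha^2(1-a_0\delta)(1+a_1\delta)^2\,e^{-\frac{1}{2}\alpha(1+a_1\delta)\,r(\rho)}.
\]
The scaling prefactor satisfies $(1-a_0\delta)(1+a_1\delta)^2 = 1 + \mathcal{O}(\delta)$ as $\delta\to 0^+$, so the first term contributes $-\alpha^2/4 + \mathcal{O}(\alpha^2\delta)$, which is $-\alpha^2/4 + o(\alpha^2)$. For the second term, the hypothesis $\alpha\rho\to\infty$ forces the exponential $e^{-\frac{1}{2}\alpha(1+a_1\delta)\,r(\rho)}$ to decay faster than any negative power of $\alpha$, so even after multiplication by $\alpha^2$ the contribution is $o(\alpha^2)$. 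This yields exactly the claimed bound.

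The only subtlety, which I would flag at the start of the proof, is pinning down the asymptotic regime: one needs both $\alpha\delta\to\infty$ (to apply Lemma \ref{lem: unitary equivalence for V delta }) and $\alpha\rho\to\infty$ (so that the exponential remainder from Lemma \ref{lem: P R,r,theta} is $o(1)$, not merely bounded). No genuine obstacle arises beyond verifying that these two lemmas compose cleanly and that the prefactor $(1-a_0\delta)(1+a_1\delta)^2$ is $1+\mathcal{O}(\delta)$; the argument is a direct concatenation of results already established in Sections \ref{sec: Karl} and \ref{sec: V delta}.
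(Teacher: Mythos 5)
Your proof is correct and follows exactly the route the paper intends: the paper's own proof is a one-line sketch saying to combine Lemma \ref{lem: unitary equivalence for V delta } with the straight-kite lower bound (it cites Lemma \ref{lem: Kite lower bound robin bc}, which appears to be a mis-reference for Lemma \ref{lem: P R,r,theta}, since only the latter yields the constant $-\tfrac14$), and you have simply written out that concatenation in full. Your remark that the implicit asymptotic regime must include $\alpha\rho\to\infty$ for the exponential remainder to be $o(1)$ is a fair and accurate reading of the (unstated) hypotheses.
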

\begin{proof} The proof follows the same lines as the one in  Corollary \ref{lem: ev dirichlet neuman truncated curved}, by combining Lemma \ref{lem: unitary equivalence for V delta } and Lemma \ref{lem: Kite lower bound robin bc}.
\end{proof}
Similarly to the operator $H^\alpha_\theta$ (see Corollary \ref{lem: agmon estimate HAlphaTheta}), one can show that the eigenfunctions of $N^\delta_{\Gamma,\alpha}$ satisfy an Agmon-type estimate.
\begin{lemma}
\label{lem: agmon truncated curved}
Let $\psi^{\delta,n}_{\Gamma,\alpha}$ be an eigenfunction corresponding to the $n$th eigenvalue of $N^\delta_{\Gamma,\alpha}$. Then there exist $c,C >0$ such that for $\delta \to 0^+$ and $\alpha \delta \to \infty$, it holds that
\[
\int_{V_\delta} e^{c \alpha | x |} \left(\frac{1}{\alpha^2} | \nabla \psi^{\delta,n}_{\Gamma,\alpha}|^2 + |\psi^{\delta,n}_{\Gamma,\alpha}| \right) \mathrm{d} x \leq C \| \psi^{\delta,n}_{\Gamma,\alpha}\|_{L^2(V_\delta)}.
\]
\end{lemma}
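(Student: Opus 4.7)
The plan is to carry out the Agmon scheme of Lemma \ref{lem: agmon estimate H1Theta} directly on $V_\delta$, with the IMS partition placed at the natural length scale $\rho_0/\alpha$ of the eigenfunction, $\rho_0>0$ being a fixed large constant. The spectral lower bounds on the two pieces of the decomposition will be obtained from the curvilinear-straight comparison of Lemma \ref{lem: unitary equivalence for V delta } combined with Lemma \ref{lem: Kite lower bound robin bc} (inner piece) and Lemma \ref{lem: P R,r,theta} (outer piece). The point of this choice of scale is that the weight $e^{f_L}$ stays uniformly bounded on the support of the inner cutoff by a constant independent of $\alpha$ and $\delta$.

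Set $\psi:=\psi^{\delta,n}_{\Gamma,\alpha}$, $E:=E_n(N^\delta_{\Gamma,\alpha})$, and $\gamma:=-\tfrac14-\mathcal{E}_n(\theta)>0$ (positive since $n\leq\kappa(\theta)$). By Corollary \ref{lem: ev dirichlet neuman truncated curved}, $\mu:=\sqrt{-\tfrac{\alpha^2}{4}-E}$ satisfies $\mu\geq\sqrt{\gamma}\,\alpha(1+o(1))$ in our regime. Fix $\varepsilon\in(0,1)$ and define $f_L(x):=(1-\varepsilon)\mu\min(|x|,L)$, so $|\nabla f_L|^2\leq(1-\varepsilon)^2\mu^2$. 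Using the eigenvalue equation $-\Delta\psi=E\psi$ on $V_\delta\setminus\Gamma_\delta$, the Neumann condition on $\partial V_\delta\setminus\Gamma_\delta$, and the transmission condition on $\Gamma_\delta$, integration by parts yields the weighted identity
\[
n^\delta_{\Gamma,\alpha}(e^{f_L}\psi)=\int_{V_\delta}(E+|\nabla f_L|^2)e^{2f_L}|\psi|^2\,\mathrm{d}x\leq\Bigl(-\tfrac{\alpha^2}{4}-(2\varepsilon-\varepsilon^2)\mu^2\Bigr)\|e^{f_L}\psi\|^2.
\]

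To produce a matching lower bound, pick $\chi_0,\chi_1\in C^\infty(\mathbb{R}_+)$ with $\chi_0^2+\chi_1^2\equiv1$, $\chi_0=1$ on $[0,1]$, $\chi_0=0$ on $[2,\infty)$, and set $\chi^\alpha_j(x):=\chi_j(\alpha|x|/\rho_0)$, so $\mathrm{supp}\,\chi^\alpha_0\subset V_{2\rho_0/\alpha}$ and $|\nabla\chi^\alpha_j|\leq C\alpha/\rho_0$. Lemma \ref{lem: IMS partition} gives
\[
n^\delta_{\Gamma,\alpha}(u)=n^{2\rho_0/\alpha}_{\Gamma,\alpha}(\chi^\alpha_0 u)+p^{\delta,\rho_0/\alpha}_{\Gamma,\alpha}(\chi^\alpha_1 u)-\int_{V_\delta}(|\nabla\chi^\alpha_0|^2+|\nabla\chi^\alpha_1|^2)|u|^2\,\mathrm{d}x.
\]
Lemma \ref{lem: unitary equivalence for V delta } reduces the inner and outer pieces to their straight-kite analogues; after rescaling, one invokes Lemma \ref{lem: Kite lower bound robin bc} (together with $N\geq R$) to get $N^{2\rho_0/\alpha}_{\Gamma,\alpha}\geq-c_{\rho_0}\alpha^2$ for some constant $c_{\rho_0}$, and Lemma \ref{lem: P R,r,theta} to get $P^{\delta,\rho_0/\alpha}_{\Gamma,\alpha}\geq\alpha^2\bigl(-\tfrac14-C_1 e^{-\rho_0/2}\bigr)(1+o(1))$. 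The IMS error is $\leq C_2\alpha^2/\rho_0^2\,\|u\|^2$. Combining these via $(\chi^\alpha_0)^2+(\chi^\alpha_1)^2\equiv1$ and comparing with the upper bound applied to $u=e^{f_L}\psi$, then choosing $\rho_0$ large enough that $C_1 e^{-\rho_0/2}+C_2/\rho_0^2$ is smaller than $(2\varepsilon-\varepsilon^2)\gamma/2$, we arrive after rearrangement at
\[
\|e^{f_L}\psi\|^2\leq K\|\chi^\alpha_0 e^{f_L}\psi\|^2
\]
for a constant $K$ independent of $\alpha$, $\delta$, and $L$.

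The crucial final observation is that on $\mathrm{supp}\,\chi^\alpha_0\subset V_{2\rho_0/\alpha}$, Lemma \ref{lem: norm estimate in V_ t} yields $|x|\leq 2b\rho_0/\alpha$, hence $f_L(x)\leq 2b(1-\varepsilon)\mu\rho_0/\alpha\leq 2b(1-\varepsilon)\sqrt{\gamma}\rho_0\cdot(1+o(1))$, a constant \emph{independent} of $\alpha,\delta,L$. Therefore $\|\chi^\alpha_0 e^{f_L}\psi\|\leq K'\|\psi\|$ uniformly, which gives $\|e^{f_L}\psi\|^2\leq KK'\|\psi\|^2$. Letting $L\to\infty$ yields the weighted $L^2$-estimate for $\psi$ with $c=(1-\varepsilon)\sqrt{\gamma}$; the gradient version follows by isolating $\tfrac12\|\nabla(e^{f_L}\psi)\|^2$ in the weighted identity, exactly as in the second half of the proof of Lemma \ref{lem: agmon estimate H1Theta}. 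The main technical delicacy is verifying that Lemmas \ref{lem: P R,r,theta} and \ref{lem: Kite lower bound robin bc}, whose statements invoke the limit $\alpha R\to\infty$, may be applied when the scaled parameter $\alpha\cdot\rho_0/\alpha=\rho_0$ is a fixed large constant; this is unproblematic because those lemmas hold for any sufficiently large fixed value of $\alpha R$, with the constants depending only on that value.
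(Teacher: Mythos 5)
Your proposal is correct and follows essentially the same scheme as the paper's proof: the weighted identity $n^\delta_{\Gamma,\alpha}(e^{f}\psi)=\int(E+|\nabla f|^2)e^{2f}|\psi|^2$, a decomposition of $V_\delta$ at scale $\sim\rho_0/\alpha$, lower bounds for the inner and outer pieces transferred from the straight kite, and the key observation that the weight is uniformly bounded on the inner region by Lemma \ref{lem: norm estimate in V_ t}. The only implementation differences are that you use a smooth IMS partition where the paper restricts sharply to $V_\rho$ and $V_{\delta,\rho}$ with $\rho=L/\alpha$, you bound the inner piece by the crude Robin comparison $N\geq R\geq -c\alpha^2$ (Lemmas \ref{lem: unitary equivalence for V delta } and \ref{lem: Kite lower bound robin bc}) instead of by the eigenvalue asymptotics of Corollary \ref{lem: ev dirichlet neuman truncated curved}, and you take a near-optimal Agmon rate $(1-\varepsilon)\mu$ instead of a small fixed $b$; all of these are sound, and your closing remark about applying Lemmas \ref{lem: P R,r,theta} and \ref{lem: Kite lower bound robin bc} at a fixed large value of $\alpha R$ addresses a point the paper itself glosses over.
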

\begin{proof}
Consider the function $f: V_\delta \rightarrow \rr$ defined by $f(x)= b | x |$ for some $b>0$ to be chosen later. 
Analogously to the proof of Lemma $\ref{lem: agmon estimate H1Theta}$ we can write:
\begin{align*}
    n^\delta_{\Gamma,\alpha}(e^{\alpha f} \psi ) &= \int_{V_\delta} |\nabla(e^{\alpha f}\psi)| \mathrm{d}x - \alpha \int_{\Gamma_\delta} e^{2 \alpha f} | \psi |^2 \dS \\
   & = \int_{V_\delta} e^{2 \alpha f} ((-\Delta\psi) \psi + \alpha^2 |\nabla f|^2 \psi^2 ) \mathrm{d}x
= \int_{V_\delta} e^{2\alpha f} (E_n(N^\delta_{\Gamma,\alpha}) +b^2\alpha^2 ) | \psi |^2 \mathrm{d}x.
\end{align*}
By Corollary \ref{lem: ev dirichlet neuman truncated curved}, we have $E_n(N^\delta_{\Gamma,\alpha}) = (\mathcal{E}_n(\theta) + o(1))\alpha^2$, and for any $\varepsilon>0$ it holds as $\alpha \to \infty$ that
\begin{equation} \label{eq: agmon curved 1}
    n^\delta_{\Gamma,\alpha}(e^{\alpha f}) \leq (\mathcal{E}_n(\theta) + b^2 + \varepsilon) \alpha^2 \int_{V_\delta} e^{2 \alpha f} | \psi |^2 \mathrm{d}x.
\end{equation}
Let $\eta \in (0,1)$ and set $\rho = \frac{L}{\alpha}$, where both $\eta$ and $L>0$ will be chosen later. Then we have
\begin{align*}
    n^\delta_{\Gamma,\alpha}(e^{\alpha f} \psi ) &=  \int_{V_\delta} |\nabla(e^{\alpha f})|^2 \mathrm{d}x - \alpha \int_{\Gamma_\delta} e^{2 \alpha f} | \psi|^2 \dS \\
    &= \eta \int_{V_\delta} | \nabla(e^{\alpha f} \psi) |^2 \mathrm{d}x  + (1-\eta)
 (n^\rho_{\Gamma, \frac{\alpha}{1-\eta}} { (e^{\alpha f} \psi) } + p^{\rho,\delta}_{\Gamma, \frac{\alpha}{1-\eta}} { (e^{\alpha f} \psi) } ) \\
 &\geq  \eta \int_{V_\delta} | \nabla(e^{\alpha f} \psi) |^2 \mathrm{d}x  
  + (1-\eta)
 \left(E_1(N^\rho_{\Gamma, \frac{\alpha}{1-\eta}}) \| e^{\alpha f} \psi \|^2_{L^2 (V_\rho)} + E_1(P^{\rho,\delta}_{\Gamma, \frac{\alpha}{1-\eta}} ) \| e^{\alpha f} \psi \|^2_{L^2(V_{\delta,\rho})}\right).
\end{align*}
By Corollary \ref{lem: ev dirichlet neuman truncated curved}  and Corollary \ref{lem: ev p truncated curved}, it holds that
\[
E_1(N^\rho_{\Gamma, \frac{\alpha}{1-\eta}}) \geq (\mathcal{E}_n(\theta) - \varepsilon) \frac{\alpha^2}{(1-\eta)^2}, \quad  E_1(P^{\rho,\delta}_{\Gamma, \frac{\alpha}{1-\eta}} )  \geq - \frac{(\frac{1}{4}+ \varepsilon) \alpha^2}{(1-\eta)^2},
\]
which can be substituted into the previous inequality to yield
\[
n^\delta_{\Gamma,\alpha}(e^{\alpha f} \psi ) \geq   \eta \int_{V_\delta} | \nabla(e^{\alpha f} \psi) |^2 \mathrm{d}x  
  + \alpha^2 \frac{\mathcal{E}_n(\theta) - \varepsilon}{(1-\eta)} \|e^{\alpha f} \psi \|^2_{L^2 (V_\rho)} 
  - \alpha^2 \frac{\frac{1}{4} - \varepsilon}{1- \eta} \|e^{\alpha f} \psi \|^2_{L^2(V_{\delta,\rho})}.
  \]
  Incorporating this bound into inequality  \eqref{eq: agmon curved 1} and rearranging terms yields
\begin{align*}
\eta \int_{V_\delta} | \nabla(e^{\alpha f})|^2 \mathrm{d}x  + \left( - \mathcal{E}_n(\theta) - b^2 - \varepsilon - \frac{\frac{1}{4} + \varepsilon}{1-\eta}\right)& \alpha^2 \|e^{\alpha f} \psi \|^2_{L^2 (V_{\delta,\rho})}  \\
&\leq\left(\mathcal{E}_n(\theta) + b^2 + \varepsilon - \frac{\mathcal{E}_1(\theta)-\varepsilon}{1-\eta}\right)\alpha^2\|e^{\alpha f} \psi \|^2_{L^2 (V_\rho)},
\end{align*}
which can be equivalently expressed as
 \[
 \eta \int_{V_\delta} |\nabla(e^{\alpha f})|^2 \mathrm{d}x  + a_0 \alpha^2 \|e^{\alpha f} \psi \|^2_{L^2 (V_{\delta,\rho})} \leq b_0 \alpha^2\|e^{\alpha f} \psi \|^2_{L^2 (V_\rho)},
 \]
where the constants are defined by 
\begin{align*}
a_0&:= - \mathcal{E}_n(\theta) - b^2 - \varepsilon - \frac{\frac{1}{4} + \varepsilon}{1-\eta}
= \frac{- \mathcal{E}_n(\theta)   -\frac{1}{4}  + (\eta b^2 - b^2 + \eta \mathcal{E}_n(\theta) -2 \varepsilon + \varepsilon \eta)  }{1-\eta},\\
b_0 &:= \mathcal{E}_n(\theta) + b^2 + \varepsilon - \frac{ \mathcal{E}_1(\theta)-\varepsilon}{1-\eta} = b^2 + \frac{ \mathcal{E}_n(\theta) - \mathcal{E}_1(\theta) - \eta \mathcal{E}_n(\theta) + 2 \varepsilon - \varepsilon \eta}{1-\eta}.
\end{align*}
Recall that $- \mathcal{E}_n(\theta)    -1/4> 0$ and $\mathcal{E}_n(\theta)(1-\eta) - \mathcal{E}_1(\theta)>0$. Hence, by choosing $\varepsilon>0$, $\eta\in(0,1)$ , and $b >0$ sufficiently small, we ensure that $ a_0 > 0 $ and $b_0 > b^2 > 0 $. Consequently, by Lemma \ref{lem: norm estimate in V_ t} there exists $a > 0 $ such that $ V_\rho \subset B_{a \rho}(0) $, and therefore, we get $ \alpha f(x) = \alpha b | x | \leq \alpha b a(L/\alpha) = abL$ for any $x \in  V_\rho$. Substituting this into the previous inequality, we obtain, for $b_1 = b_0 e^{2b aL} $, the estimate
\[
\eta \int_{V_\delta} | \nabla (e^{\alpha f}  \psi)|^2 \mathrm{d}x + a_0 \alpha^2 \int_{V_\delta,\rho} e^{2\alpha f} | \psi |^2 \mathrm{d} x \leq b_1 \alpha^2 \int_{V_\rho} | \psi |^2\mathrm{d}x.
\]
From here, it follows that
\begin{align} \label{eq: agmon curved 2}
\begin{split}
    \int_{V_\delta} | \nabla(e^{\alpha f} \psi)|^2 + &2b^2 \alpha^2 {e^{2\alpha f}} | \psi |^2 \mathrm{d} x \\
    &= \frac{\eta}{\eta} \int_{V_\delta} |\nabla(e^{\alpha f} \psi  )|^2 \mathrm{d} x + \frac{2b^2}{a_0}  a_0 \alpha^2 \int_{V_{\delta,\rho}} e^{2 \alpha f} | \psi |^2 \mathrm{d}x  + 2 b^2 \alpha^2 \int_{V_\rho} | \psi |^2 \mathrm{d}x\\
     &\leq \left(\frac{1}{\eta} b_1 + \frac{2b^2}{a_0} + 2b^2 \right) \alpha^2 \int_{V_\rho} | \psi |^2 \mathrm{d}x=: b_2 \alpha^2 \int_{V_\rho} | \psi |^2 \mathrm{d}x  \leq b_2 \alpha^2 \| \psi \|^2_{L^2(V_\delta)}.
     \end{split}
\end{align}
Next, we estimate the gradient term:
\begin{align*}
    |\nabla(e^{\alpha f } \psi)|^2 &\geq |e^{\alpha f} \nabla \psi| + b^2\alpha^2 |e^{\alpha f} \psi| - 2  |e^{\alpha f} \nabla \psi| |b \alpha e^{\alpha f} \psi| \\
    &\geq |e^{\alpha f} \nabla \psi| + b^2\alpha^2 |e^{\alpha f} \psi| - 2 ( \frac{1}{4} |e^{\alpha f} \nabla \psi|^2  + | b \alpha e^{\alpha f} \psi|^2)\\
    &\geq \frac{1}{2} |e^{\alpha f} \nabla \psi|^2 - b^2 \alpha^2|e^{\alpha f} \psi|^2. 
\end{align*}
Substituting this estimate into \eqref{eq: agmon curved 2} yields
\[\int_{V_\delta} e^{2 b \alpha | x |} (\frac{1}{2} | \nabla \psi |^2 + b^2 \alpha^2 | \psi |^2 ) \mathrm{d}x \leq b_2 \alpha^2 \| \psi \|^2_{L^2(V_\delta)},
\]
and the desired estimate follows with constants $ c =  2b$ and $C = b_2 (2+1/b^2)$.
\end{proof}

\subsection{Non-resonance condition in the curved setting} Recall that the non-resonant condition is specified in Definition \ref{def non_resonant}. In this section, we establish a crucial estimate in Corollary \ref{lem: upper bound LT in V delta }, which is essential for the proof of Theorem \ref{theo: side induced} regarding edge-induced eigenvalues. Theorem \ref{theo: side induced} requires the non-resonance condition to hold for all angles, underscoring that the proof of Corollary \ref{lem: upper bound LT in V delta } relies fundamentally on this assumption. Therefore, throughout this section, we assume that the angle $\theta$ is non-resonant.

First, consider the following sesquilinear form and its associated lower bound:
\begin{definition} Let $\partial_* V_\delta$ be as in Lemma \ref{lem: straigthen curved kite}. Define the sesquilinear form
    \[
    r^\delta_{\Gamma, \alpha} (u) = \int_{V_\delta} |\nabla u|^2 \mathrm{d}x  - \alpha \int_{\Gamma_\delta} |u|^2 \dS- \alpha \int_{\partial_* V_\delta} |u|^2 \dS , \quad   D(r^\delta_{\Gamma,\alpha}) = H^1(V_\delta).
    \]
\end{definition}
\begin{lemma}\label{lem: lower bound robin bc V delta} There exists a constant $c>0$ such that $R^\delta_{\Gamma,\alpha} \geq - c \alpha^2$.
\end{lemma}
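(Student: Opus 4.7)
The plan is to transfer the straight-kite estimate of Lemma~\ref{lem: Kite lower bound robin bc} to the curved setting by means of the bi-Lipschitz diffeomorphism $\phi_V: K^{r(\delta)}_\theta \to V_\delta$ provided by Lemma~\ref{lem: straigthen curved kite}, exactly as was done for $D^\delta_{\Gamma,\alpha}$, $N^\delta_{\Gamma,\alpha}$, $P^{\delta,\rho}_{\Gamma,\alpha}$ in Lemma~\ref{lem: unitary equivalence for V delta }. The key added ingredient compared to that lemma is that we must also control the boundary integral over $\partial_* V_\delta$, since $r^\delta_{\Gamma,\alpha}$ contains an $\alpha$-Robin term on the outer edges, not only a $\delta$-interaction on $\Gamma_\delta$.

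I would first set $u = v \circ \phi_V$ and reuse the three change-of-variables estimates from the proof of Lemma~\ref{lem: unitary equivalence for V delta }: the $L^2$-identity $\|v\|^2_{V_\delta} = (1+\mathcal{O}(\delta))\|u\|^2_{K^{r(\delta)}_\theta}$, the Dirichlet-energy identity $\|\nabla v\|^2_{V_\delta} = (1+\mathcal{O}(\delta)) \int_{K^{r(\delta)}_\theta}|\nabla u|^2\,\mathrm{d}x$, and the arc-length identity $\int_{\Gamma_\delta}|v|^2\,\dS = (1+\mathcal{O}(\delta)) \int_{\Gamma^{r(\delta)}_\theta}|u|^2\,\dS$. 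By construction of $\phi_V$, since $\phi_V'(x) = I_2 + \mathcal{O}(|x|)$ and $|x| = \mathcal{O}(\delta)$ on $\partial K^{r(\delta)}_\theta$ (via Lemma~\ref{lem: norm estimate in V_ t} applied to the preimage), an analogous bound holds for the surface measure on $\partial_* V_\delta$:
\[
\int_{\partial_* V_\delta} |v|^2 \,\dS = (1+\mathcal{O}(\delta)) \int_{\partial_* K^{r(\delta)}_\theta} |u|^2 \,\dS.
\]
Combining these gives constants $a_0, a_1 > 0$ such that for sufficiently small $\delta$,
\[
r^\delta_{\Gamma,\alpha}(v) \geq (1 - a_0 \delta)\, r^{r(\delta)}_{\theta,\, \alpha(1+a_1\delta)}(u),
\qquad
\|v\|^2_{L^2(V_\delta)} \leq (1 + a_0\delta) \|u\|^2_{L^2(K^{r(\delta)}_\theta)}.
\]

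With this reduction, the rest is immediate: Lemma~\ref{lem: Kite lower bound robin bc} applied to $R^{r(\delta)}_{\theta,\alpha(1+a_1\delta)}$ yields a constant $c_0 > 0$ with
\[
r^{r(\delta)}_{\theta,\alpha(1+a_1\delta)}(u) \geq -c_0 \alpha^2 (1+a_1\delta)^2 \|u\|^2_{L^2(K^{r(\delta)}_\theta)}.
\]
Substituting into the previous display and absorbing the $(1\pm a_0\delta)$ and $(1+a_1\delta)^2$ factors into a new constant $c$ (valid uniformly for $\delta$ in a small interval), we conclude $r^\delta_{\Gamma,\alpha}(v) \geq -c\alpha^2 \|v\|^2_{L^2(V_\delta)}$ for all $v \in H^1(V_\delta)$, which is the claimed inequality.

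I do not anticipate a substantial obstacle here: the only mildly delicate point is checking that the surface measure on the outer boundary $\partial_* V_\delta$ is distorted by no more than $1+\mathcal{O}(\delta)$ relative to that on $\partial_* K^{r(\delta)}_\theta$. This follows from the $C^1$-regularity of $\phi_V$ up to $\partial_* K^{r(\delta)}_\theta$ (which lies away from the corner vertex) together with the localization $|x| = \mathcal{O}(\delta)$ throughout $K^{r(\delta)}_\theta$, so the tangential Jacobian factor is $1+\mathcal{O}(\delta)$ uniformly on that part of the boundary.
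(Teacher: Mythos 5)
Your proposal is correct and follows essentially the same route as the paper: transfer the form to the straight kite $K^{r(\delta)}_\theta$ via $\phi_V$ (as in Lemma~\ref{lem: unitary equivalence for V delta }), apply the lower bound $R^{R}_{\theta,\alpha}\ge -c\alpha^2$ of Lemma~\ref{lem: Kite lower bound robin bc}, and absorb the $1+\mathcal{O}(\delta)$ factors. In fact you supply a detail the paper leaves implicit, namely the $1+\mathcal{O}(\delta)$ distortion of the surface measure on $\partial_* V_\delta$, which is needed because $r^\delta_{\Gamma,\alpha}$ carries the extra Robin term; your justification of it via $\phi_V'=I_2+\mathcal{O}(|x|)$ on $\partial_* K^{r(\delta)}_\theta$ is sound.
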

\begin{proof}
    Arguing as in the proof of Corollary \ref{lem: ev dirichlet neuman truncated curved}, one can show that there exist $a_0,a_1 >0$ such that
    \[
    E_1(R^\delta_{\Gamma,\alpha}) \geq (1-a_0) E_1(R^{r(\delta)}_{\theta, \alpha(1+a_1\delta)}).
    \]
    Since $r(\delta) = \delta + \mathcal{O}(\delta^2)$, one has $r(\delta)\alpha \to \infty$ as $\delta \to 0$. Thus, by Lemma \ref{lem: Kite lower bound robin bc}, there exists $c>0$ such that 
    \[
    (1-a_0) E_1(R^{r(\delta)}_{\theta, \alpha(1+a_1\delta)})\geq  (1-a_0\delta)(-c\alpha^2)(1+a_1\delta) = -{c} \alpha^2(1+(a_1-a_0)\delta -\delta^2).
    \]
    Since $|(a_1-a_0)\delta -\delta^2|<1$ for sufficiently small $\delta$, we obtain 
    \[
    E_1(R^\delta_{\Gamma,\alpha}) \geq (1-a_0) E_1(R^{r(\delta)}_{\theta, \alpha(1+a_1\delta)})\geq   - \tilde{c}\alpha^2,
    \]
    for some $\tilde{c} >0$ when $\alpha$ is sufficiently large.
\end{proof}
Thanks to Lemma \ref{lem: straigthen curved kite}, we can also translate the non-resonance condition to the curved setting.
\begin{corollary}\label{lem: idk man i just got here} 
    There exists $c>0 $ such that for $\alpha \delta \to \infty$, $\delta \to 0 $, $\alpha \to \infty$, and $\alpha^2 \delta^3 \to 0$, there holds
    \[
    E_{\kappa(\theta) +1 } (D^\delta_{\Gamma,\alpha}) \geq E_{\kappa(\theta) +1 } (N^\delta_{\Gamma,\alpha}) \geq - \frac{\alpha^2}{4} + \frac{c}{\delta^2}.
    \]
\end{corollary}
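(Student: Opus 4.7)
The plan is to transfer the non-resonance condition from the straight kite to the curved kite via the comparison estimate of Lemma \ref{lem: unitary equivalence for V delta }, and then verify that the additional assumption $\alpha^2\delta^3\to 0$ suffices to absorb all residual terms arising from the change of variables.

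First I would dispose of the Dirichlet bound. The inequality $E_{\kappa(\theta)+1}(D^\delta_{\Gamma,\alpha}) \geq E_{\kappa(\theta)+1}(N^\delta_{\Gamma,\alpha})$ is immediate from $H^1_0(V_\delta)\subset H^1(V_\delta)$ and the min-max principle, so everything reduces to the Neumann case.

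Next I would invoke Lemma \ref{lem: unitary equivalence for V delta } to obtain
\[
E_{\kappa(\theta)+1}(N^\delta_{\Gamma,\alpha}) \geq (1-a_0\delta)\, E_{\kappa(\theta)+1}\bigl(N^{r(\delta)}_{\theta,\alpha(1+a_1\delta)}\bigr).
\]
Because $\alpha\delta\to\infty$ and $r(\delta) = \delta + \mathcal{O}(\delta^2)$, the product $\alpha(1+a_1\delta)\cdot r(\delta)$ also tends to infinity, so the non-resonance hypothesis on $\theta$ (in its rescaled form noted directly after Definition \ref{def non_resonant}) applies and yields some $C>0$ with
\[
E_{\kappa(\theta)+1}\bigl(N^{r(\delta)}_{\theta,\alpha(1+a_1\delta)}\bigr) \geq -\frac{\alpha^2(1+a_1\delta)^2}{4} + \frac{C}{r(\delta)^2}.
\]

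The remaining step is the expansion. Writing $X:=-\alpha^2(1+a_1\delta)^2/4$ and $Y:=C/r(\delta)^2$, one has
\[
(1-a_0\delta)(X+Y) = X + Y + a_0\delta\cdot\tfrac{\alpha^2(1+a_1\delta)^2}{4} - a_0\delta Y.
\]
Using $r(\delta)^{-2} = \delta^{-2} + \mathcal{O}(\delta^{-1})$ one finds $Y = C/\delta^2 + \mathcal{O}(\delta^{-1})$ and $a_0\delta Y = \mathcal{O}(\delta^{-1})$, while $X = -\alpha^2/4 + \mathcal{O}(\alpha^2\delta)$ and $a_0\delta \cdot\alpha^2(1+a_1\delta)^2/4 = \mathcal{O}(\alpha^2\delta)$. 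Collecting,
\[
(1-a_0\delta)(X+Y) = -\frac{\alpha^2}{4} + \frac{C}{\delta^2} + \mathcal{O}(\alpha^2\delta) + \mathcal{O}(\delta^{-1}).
\]
The key observation is that the hypothesis $\alpha^2\delta^3\to 0$ gives $\alpha^2\delta = (\alpha^2\delta^3)\cdot\delta^{-2} = o(\delta^{-2})$, so both error terms are $o(\delta^{-2})$. Therefore, for any $c\in(0,C)$, in the prescribed regime the inequality
\[
E_{\kappa(\theta)+1}(N^\delta_{\Gamma,\alpha}) \geq -\frac{\alpha^2}{4} + \frac{c}{\delta^2}
\]
holds for all sufficiently small $\delta$ and large $\alpha$. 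There is no real obstacle here; the only point requiring some care is the bookkeeping that makes clear why $\alpha^2\delta^3\to 0$ is the precise asymptotic constraint ensuring that the leading correction term from the metric distortion ($\mathcal{O}(\alpha^2\delta)$) does not compete with the positive contribution $C/\delta^2$ supplied by non-resonance.
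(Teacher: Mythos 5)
Your proposal is correct and follows essentially the same route as the paper: reduce the Dirichlet bound to the Neumann one by form-domain inclusion, transfer to the straight kite via Lemma \ref{lem: unitary equivalence for V delta }, apply the rescaled non-resonance condition with coupling $\alpha(1+a_1\delta)$ and radius $r(\delta)$, and absorb the $\mathcal{O}(\alpha^2\delta)$ and $\mathcal{O}(\delta^{-1})$ remainders into $C/\delta^2$ using $\alpha^2\delta^3\to 0$. Your bookkeeping of the error terms is in fact cleaner than the paper's own write-up of the final estimate.
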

\begin{proof} By Lemma \ref{lem: straigthen curved kite} there exist $a_0,a_1 > 0 $ and a $C^2$-smooth function $r$ with $r(0) = 0$, $r'(0) =1$ such that for all $n\in\nn$,  $E_n(N^\delta_{\Gamma, \alpha}) \geq (1-a_0\delta) E_n(N^{r(\delta)}_{\theta,\alpha(1+ a_1\delta)})$. Since $\theta$ is non-resonant and $|r(\delta)| = | \delta + \mathcal{O}(\delta) | \geq \delta/2$ for sufficiently small $\delta$, it follows that
    \begin{align*}E_{\kappa(\theta +1)} (N^{r(\delta)}_{\theta,\alpha(1+ a_1\delta)}) &\geq -\frac{\alpha^2}{4}(1+a_1\delta)^2 + \frac{c}{r(\delta)^2}\\
    &\geq - \frac{\alpha^2}{4} + \frac{1}{\delta^2}(\frac{c}{2} - \alpha^2 \delta^3 \frac{a_1}{2} - \alpha^2 \delta^4 \frac{a_1^2}{4} ) \geq -\frac{\alpha^2}{4} + \frac{c_1}{\delta^2} 
    \end{align*}
    for some $c_1 >0$, using the asymptotics $\alpha^2 \delta^3 \to 0$.  Combining the above with the initial bound, we get
    \begin{align*}
        E_{\kappa(\theta)+1}(N^\delta_{\Gamma, \alpha}) &\geq (1-a_0\delta) E_{\kappa(\theta)+1}(N^{r(\delta)}_{\theta,\alpha(1+ a_1\delta)}) \geq (1-a_0\delta)(-\frac{\alpha^2}{4} + \frac{c_0}{\delta^2})\\
     &= -\frac{\alpha^2}{4} + \frac{c_1}{\delta^2}(1-a_0\delta + a_0 \delta\frac{\alpha^2}{4} \geq -\frac{\alpha^2}{4} + \frac{c_2}{\delta^2}.
     \end{align*}
     since 
    for some $c_2 >0$ and sufficiently small $\delta$,  which completes the proof.
\end{proof}
We are now ready to prove the estimate needed for the proof of Theorem \ref{theo: side induced}.
\begin{corollary}\label{lem: upper bound LT in V delta }
    Let $ \mathcal{L}$ be the subspace spanned by eigenfunctions corresponding to the first $\kappa(\theta)$ eigenvalues of $N^\delta_{\Gamma,\alpha}$. Then there exists $b> 0 $ such that as $\alpha \delta \to \infty$, $\delta \to 0 $, $\alpha \to \infty$, and $\alpha^2 \delta^3 \to 0$, there hold
    \begin{align*} \| v \|^2_{L^2(V_\delta)} &\leq  b \delta^2 \left(n^\delta_{\Gamma,\alpha} (v) + \frac{\alpha^2}{4} \| v  \|^2_{L^2(V_\delta)} \right), \quad \forall v \in H^1(V_\delta) \cap \mathcal{L}^{\perp},\\
    \int_{\partial_* V_\delta} | v |^2 \dS  &\leq b \alpha \delta^2 \left(n^\delta_{\Gamma,\alpha}(v) + \frac{\alpha^2}{4} \| v  \|^2_{L^2(V_\delta)} \right) , \quad \forall v \in H^1(V_\delta) \cap \mathcal{L}^{\perp} .
    \end{align*}
\end{corollary}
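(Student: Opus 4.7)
The plan is to deduce both inequalities as direct consequences of the spectral estimates already established, namely the non-resonance bound in Corollary \ref{lem: idk man i just got here} and the Robin-type lower bound in Lemma \ref{lem: lower bound robin bc V delta}. The first inequality will follow from the min-max principle applied on $\mathcal{L}^\perp$, while the second is obtained by combining this with the Robin estimate.

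First, I would prove the $L^2$-estimate. Let $v \in H^1(V_\delta) \cap \mathcal{L}^\perp$. By construction, $\mathcal{L}$ is the span of the eigenfunctions associated to the first $\kappa(\theta)$ eigenvalues of $N^\delta_{\Gamma,\alpha}$, so the min-max principle gives
\[
n^\delta_{\Gamma,\alpha}(v) \geq E_{\kappa(\theta)+1}(N^\delta_{\Gamma,\alpha})\,\|v\|^2_{L^2(V_\delta)}.
\]
Corollary \ref{lem: idk man i just got here} then yields, in the asymptotic regime under consideration, a constant $c>0$ with
\[
n^\delta_{\Gamma,\alpha}(v) + \frac{\alpha^2}{4}\|v\|^2_{L^2(V_\delta)} \geq \frac{c}{\delta^2}\,\|v\|^2_{L^2(V_\delta)},
\]
which rearranges to $\|v\|^2_{L^2(V_\delta)} \leq (1/c)\,\delta^2\bigl(n^\delta_{\Gamma,\alpha}(v) + \tfrac{\alpha^2}{4}\|v\|^2_{L^2(V_\delta)}\bigr)$, i.e.\ the first claim with $b_1 := 1/c$.

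Next, for the boundary estimate, I would use Lemma \ref{lem: lower bound robin bc V delta}, which furnishes a constant $\tilde c > 0$ such that
\[
r^\delta_{\Gamma,\alpha}(v) = n^\delta_{\Gamma,\alpha}(v) - \alpha\int_{\partial_* V_\delta}|v|^2\,\dS \;\geq\; -\tilde c\,\alpha^2\|v\|^2_{L^2(V_\delta)}.
\]
Writing $A := n^\delta_{\Gamma,\alpha}(v) + \tfrac{\alpha^2}{4}\|v\|^2_{L^2(V_\delta)}$ (note $A\ge 0$ by the first step), this gives
\[
\alpha\int_{\partial_* V_\delta}|v|^2\,\dS \leq n^\delta_{\Gamma,\alpha}(v) + \tilde c\,\alpha^2\|v\|^2_{L^2(V_\delta)} \leq A + \tilde c\,\alpha^2\|v\|^2_{L^2(V_\delta)}.
\]
Applying the first inequality $\|v\|^2_{L^2(V_\delta)}\leq b_1\delta^2 A$ to the last term and dividing by $\alpha$ yields
\[
\int_{\partial_* V_\delta}|v|^2\,\dS \leq \frac{A}{\alpha}\bigl(1 + \tilde c b_1\,\alpha^2\delta^2\bigr).
\]
Since $\alpha\delta\to\infty$, for $\alpha\delta$ sufficiently large we have $1\leq \alpha^2\delta^2$, so the right-hand side is bounded by $(1+\tilde c b_1)\alpha\delta^2 A$, which gives the second claim with, e.g., $b := \max(b_1, 1+\tilde c b_1)$.

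There is no substantial technical obstacle here: the two target inequalities are essentially a repackaging of the non-resonance bound from Corollary \ref{lem: idk man i just got here} and the Robin lower bound from Lemma \ref{lem: lower bound robin bc V delta}, chained together via the min-max principle on $\mathcal{L}^\perp$. The only point that requires mild care is verifying that $\alpha\delta$ is large enough to absorb the additive constant in $(1 + \tilde c b_1 \alpha^2\delta^2)$ into the $\alpha^2\delta^2$ term, which is guaranteed by the asymptotic regime $\alpha\delta\to\infty$.
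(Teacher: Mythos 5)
Your proposal is correct and follows essentially the same route as the paper: the first inequality via the min-max principle on $\mathcal{L}^\perp$ combined with the non-resonance bound of Corollary \ref{lem: idk man i just got here}, and the second by feeding the first into the Robin lower bound of Lemma \ref{lem: lower bound robin bc V delta} and absorbing the $1/\alpha$ term using $\alpha\delta\to\infty$. No gaps.
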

\begin{proof} Let $v \in H^1(V_\delta) \cap \mathcal{L}^{\perp}$. Then, Corollary \ref{lem: idk man i just got here} together with the spectral theorem yields 
\[
n^\delta_{\Gamma,\alpha}(v) \geq E_{\kappa(\theta)+1}(N^\delta_{\Gamma,\alpha}) \geq -\frac{\alpha^2}{4} \| v  \|^2_{L^2(V_\delta)} + \frac{c}{\delta^2} \| v  \|^2_{L^2(V_\delta)},
\]
which gives the first inequality.  To prove the second inequality, note that for any $u \in  D(N^\delta_{\Gamma,\alpha}) = D(R^\delta_{\Gamma,\alpha})=H^1(V_\delta)$, Lemma \ref{lem: lower bound robin bc V delta} implies 
\[
n^\delta_{\Gamma,\alpha}(u) - \alpha \int_{\partial_* V_\delta} |u|^2 \dS  = r^\delta_{\Gamma,\alpha}(u) \geq -c_0 \alpha^2 \| u \|^2_{L^2(V_\delta)},
\]
which is equivalent to
\[
 \int_{\partial_* V_\delta} |u|^2 \dS \leq \frac{1}{\alpha} n^\delta_{\Gamma,\alpha}(u) + c_0 \alpha \| u \|^2_{V_\delta}.
\]
Applying the first inequality for $u \in H^1(V_\delta) \cap \mathcal{L}^{\perp}$ yields
\begin{align*}\int_{\partial_* V_\delta} |u|^2 \dS &\leq \frac{1}{\alpha} n^\delta_{\Gamma,\alpha}(u) + c_0 \alpha b\delta^2\left(n^\delta_{\Gamma,\alpha}(u) + \frac{\alpha^2}{4}\| u \|^2_{L^2(V_\delta)} \right)\\
	&\leq \big(\frac{1}{\alpha} + c_0b \alpha \delta^2\big)\left(n^\delta_{\Gamma,\alpha}(u) + \frac{\alpha^2}{4}\| u \|^2_{L^2(V_\delta)} \right),
\end{align*}
which gives the second inequality by noting that $ \frac{1}{\alpha} = \alpha \delta^2 \cdot (\frac{1}{\delta \alpha})^2 = o(\alpha \delta^2)$ as $\alpha \delta \to \infty$ and $\alpha \to \infty$.
\end{proof}

\section{Neighborhoods of curved edges}\label{sec: W delta}
The main goal of this section is to construct an appropriate neighborhood around a smooth open arc contained in an edge of the piecewise smooth curve supporting the $\delta$-interaction. Within these neighborhoods, we analyze the spectral properties of Dirichlet and Neumann Laplacians subjected to the $\delta$-interaction localized on the specific open arcs. 

\subsection{Geometric setting and change of variables} We begin with the geometric construction of a tubular neighborhood around a given arc of a curve. Namely, throughout this section, for some $l>0$, we consider an open arc $\Gamma$ defined by an arc-length parametrization $\gamma: [0,l] \rightarrow \rr^2$, where $\gamma$ is $C^{3}$-smooth injective function with $|\gamma^\prime|=1$. At each point $\gamma(s)\in\Gamma$, we denote by $\tau(s) := \gamma'(s)$ the tangent vector and by $\nu(s)$ the normal vector with the convention that $\tau(s)\wedge\nu(s)=-1$. We further denote by $k(s)$ the curvature of $\Gamma$ defined by $\nu'(s)=k(s)\tau(s)$ at each point $\gamma(s)$, and we set $k_{max}:= \| k\|_\infty$ for the maximal curvature of $\Gamma$. Note that the restriction of $k$ onto any subinterval strictly contained in $(0,l)$ will still be denoted by $k$, the meaning being clear from the context.

To construct a tubular neighborhood $W_\delta$ around parts of the curve $\Gamma$ that shrinks as the parameter $\delta>0$ tends to zero, we proceed as follows. Let $\delta_0 > 0$ be fixed, and define the $C^1$-smooth adjustment functions 
\[
\lambda_{0}, \, \lambda_{l} : [0, \delta_0) \rightarrow [0,\infty), \quad \lambda_{0}(0)=\lambda_{l}(0) = 0, \quad \lambda_{0}'(0), \lambda_{l}'(0) \geq 0,
\]
along with the mapping 
\[
\phi_W:(0,l) \times (- \delta_0,\delta_0) \rightarrow \rr^2, \quad (s,t) \mapsto \gamma(s)-t \nu(s).
\]
Finally, define 
 \[
 I_\delta := (\lambda_0(\delta), l - \lambda_l(\delta)) , \quad  \Pi_\delta := I_\delta \times (-\delta, \delta), \quad W_\delta:= \phi(\Pi_\delta) , \quad \Gamma_\delta := \phi(I_\delta \times \{0\}).
 \]
A visualization of $\Pi_\delta$ and $W_\delta$ can be found in Figure \ref{fig: W delta}.
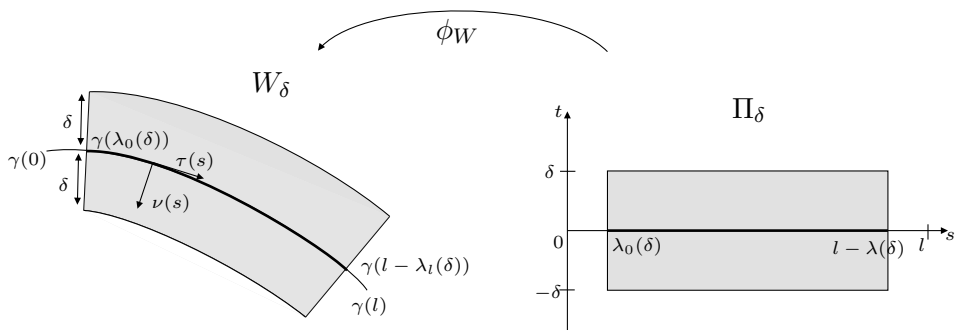
\begin{figure}
    \centering
    \begin{tikzpicture}[x=0.75pt,y=0.75pt,yscale=-1,xscale=1]

\draw    (100,110) .. controls (159,101.83) and (250,163.83) .. (260,180) ;
\draw    (121.42,80.17) -- (118.58,139.83) ;
\draw    (271.33,142.83) -- (229.33,193.5) ;
\draw  [fill={rgb, 255:red, 155; green, 155; blue, 155 }  ,fill opacity=0.3 ] (121.42,80.17) .. controls (168.67,78.5) and (238.67,115.17) .. (271.33,142.83) ;
\draw    (118.58,139.83) .. controls (142.33,141.5) and (197,167.83) .. (229.33,193.5) ;
\draw  [fill={rgb, 255:red, 155; green, 155; blue, 155 }  ,fill opacity=0.3 ] (380,120) -- (520,120) -- (520,180) -- (380,180) -- cycle ;
\draw    (360,200) -- (360,93) ;
\draw [shift={(360,90)}, rotate = 90] [fill={rgb, 255:red, 0; green, 0; blue, 0 }  ][line width=0.08]  [draw opacity=0] (3.57,-1.72) -- (0,0) -- (3.57,1.72) -- cycle    ;
\draw    (520,150) -- (547,150) ;
\draw [shift={(550,150)}, rotate = 180] [fill={rgb, 255:red, 0; green, 0; blue, 0 }  ][line width=0.08]  [draw opacity=0] (3.57,-1.72) -- (0,0) -- (3.57,1.72) -- cycle    ;
\draw    (360,150) -- (380,150) ;
\draw    (356.33,120.17) -- (364,120.17) ;
\draw    (356,179.83) -- (363.67,179.83) ;
\draw    (540,146.83) -- (540,154.5) ;
\draw  [draw opacity=0] (121.42,80.17) -- (271.33,142.83) -- (229.33,193.5) -- (118.58,139.83) -- cycle ;
\draw  [draw opacity=0][fill={rgb, 255:red, 155; green, 155; blue, 155 }  ,fill opacity=0.27 ] (121.42,80.17) -- (271.33,142.83) -- (229.33,193.5) -- (118.58,139.83) -- cycle ;
\draw [fill={rgb, 255:red, 255; green, 255; blue, 255 }  ,fill opacity=1 ]   (118.58,139.83) .. controls (143.4,141.8) and (197.8,168.6) .. (229.33,193.5) ;
\draw    (117.53,84.2) -- (117.07,104) ;
\draw [shift={(117,107)}, rotate = 271.33] [fill={rgb, 255:red, 0; green, 0; blue, 0 }  ][line width=0.08]  [draw opacity=0] (3.57,-1.72) -- (0,0) -- (3.57,1.72) -- cycle    ;
\draw [shift={(117.6,81.2)}, rotate = 91.33] [fill={rgb, 255:red, 0; green, 0; blue, 0 }  ][line width=0.08]  [draw opacity=0] (3.57,-1.72) -- (0,0) -- (3.57,1.72) -- cycle    ;
\draw    (115.93,114.2) -- (115.47,134) ;
\draw [shift={(115.4,137)}, rotate = 271.33] [fill={rgb, 255:red, 0; green, 0; blue, 0 }  ][line width=0.08]  [draw opacity=0] (3.57,-1.72) -- (0,0) -- (3.57,1.72) -- cycle    ;
\draw [shift={(116,111.2)}, rotate = 91.33] [fill={rgb, 255:red, 0; green, 0; blue, 0 }  ][line width=0.08]  [draw opacity=0] (3.57,-1.72) -- (0,0) -- (3.57,1.72) -- cycle    ;
\draw [line width=1.0]    (120,110) .. controls (163,109.5) and (237.67,157.5) .. (250,170) ;
\draw [line width=1.0]    (380,150) -- (520,150) ;
\draw    (153.11,115.72) -- (175.74,122.89) ;
\draw [shift={(178.6,123.8)}, rotate = 197.58] [fill={rgb, 255:red, 0; green, 0; blue, 0 }  ][line width=0.08]  [draw opacity=0] (3.57,-1.72) -- (0,0) -- (3.57,1.72) -- cycle    ;
\draw    (153.11,115.72) -- (146.34,136.15) ;
\draw [shift={(145.4,139)}, rotate = 288.32] [fill={rgb, 255:red, 0; green, 0; blue, 0 }  ][line width=0.08]  [draw opacity=0] (3.57,-1.72) -- (0,0) -- (3.57,1.72) -- cycle    ;
\draw    (380,60) .. controls (352.84,32.6) and (260.76,33.66) .. (237.46,57.25) ;
\draw [shift={(235.5,59.5)}, rotate = 307.14] [fill={rgb, 255:red, 0; green, 0; blue, 0 }  ][line width=0.08]  [draw opacity=0] (3.57,-1.72) -- (0,0) -- (3.57,1.72) -- cycle    ;

\draw (79.4,108.8) node [anchor=north west][inner sep=0.75pt]  [font=\tiny] [align=left] {$\displaystyle \gamma ( 0)$};
\draw (251,182) node [anchor=north west][inner sep=0.75pt]  [font=\tiny] [align=left] {$\displaystyle \gamma ( l)$};
\draw (121,98.73) node [anchor=north west][inner sep=0.75pt]  [font=\tiny] [align=left] {$\displaystyle \gamma ( \lambda_{0}( \delta ))$};
\draw (255.6,162.2) node [anchor=north west][inner sep=0.75pt]  [font=\tiny] [align=left] {$\displaystyle \gamma ( l-\lambda_{l}( \delta ))$};
\draw (106.67,90.47) node [anchor=north west][inner sep=0.75pt]  [font=\tiny] [align=left] {$\displaystyle \delta $};
\draw (351.67,151.33) node [anchor=north west][inner sep=0.75pt]  [font=\tiny] [align=left] {$\displaystyle 0$};
\draw (381,152) node [anchor=north west][inner sep=0.75pt]  [font=\tiny] [align=left] {$\displaystyle \lambda_{0}( \delta )$};
\draw (534,152.33) node [anchor=north west][inner sep=0.75pt]  [font=\tiny] [align=left] {$\displaystyle l$};
\draw (491,153.33) node [anchor=north west][inner sep=0.75pt]  [font=\tiny] [align=left] {$\displaystyle l-\lambda ( \delta )$};
\draw (347.33,116) node [anchor=north west][inner sep=0.75pt]  [font=\tiny] [align=left] {$\displaystyle \delta $};
\draw (342.33,176) node [anchor=north west][inner sep=0.75pt]  [font=\tiny] [align=left] {$\displaystyle -\delta $};
\draw (547.33,149.67) node [anchor=north west][inner sep=0.75pt]  [font=\tiny] [align=left] {$\displaystyle s$};
\draw (352.33,86) node [anchor=north west][inner sep=0.75pt]  [font=\tiny] [align=left] {$\displaystyle t$};
\draw (201,69) node [anchor=north west][inner sep=0.75pt]  [font=\normalsize] [align=left] {$\displaystyle W_{\delta }$};
\draw (441,82) node [anchor=north west][inner sep=0.75pt]   [align=left] {$\displaystyle \Pi_{\delta }$};
\draw (104.67,122.47) node [anchor=north west][inner sep=0.75pt]  [font=\tiny] [align=left] {$\displaystyle \delta $};
\draw (163.2,109) node [anchor=north west][inner sep=0.75pt]  [font=\tiny] [align=left] {$\displaystyle \tau ( s)$};
\draw (151.25,130.36) node [anchor=north west][inner sep=0.75pt]  [font=\tiny] [align=left] {$\displaystyle \nu ( s)$};
\draw (293,42) node [anchor=north west][inner sep=0.75pt]   [align=left] {$\displaystyle \phi_{W}$};
\end{tikzpicture}
    \caption{Diffeomorphism between $\Pi_\delta$ and $W_\delta$}
    \label{fig: W delta}
\end{figure}

Note that for $G=(G_{ij})$, where $G_{ij} = \langle \partial_i \phi_W , \partial_j \phi_W \rangle$, one obtains
\[
G = \begin{pmatrix}
    (1- k(s))^2 & 0 \\ 0 & 1
\end{pmatrix}.
\]
In particular, $G$ is invertible provided that $\delta_0 \cdot k_{max} <  1$. By the implicit function theorem , for any $\delta_0 < 1/k_{max}$, the mapping $\phi_W: \Pi_\delta \rightarrow W_\delta $ is a diffeomorphism for all $\delta \in (0, \delta_0)$.

Let us now define the Laplacians with a $\delta$-interaction supported on $\Gamma$, whose spectral properties will be the focus of our analysis.
\begin{definition} For $x \in \{d,n\}$ and $\alpha> 0$, we define the sesquilinear form 
    \[
    x^\delta_{W, \alpha}(u) = \int_{W_\delta} |\nabla u|^2 \mathrm{d}x - \alpha\int_{\Gamma_\delta} |u|^2 \dS, \quad D(d^\delta_{W,\alpha}) = H^1_0(W_\delta), \quad D(n^\delta_{W,\alpha}) = H^1(W_\delta).
    \]
\end{definition}
By constructing a diffeomorphism between $W_\delta$ and $\Pi_\delta$, we can perform a change of variables and derive suitable estimates for the unitary equivalent operators that arise in this context.
\begin{lemma}\label{lem: unitary equivalence strip} Define the unitary operator
    \[
    \Phi:L^2(W_\delta) \rightarrow L^2(\Pi_\delta), \quad u(s,t) \mapsto (1-t k(s))^{\frac{1}{2}} u(\phi_W(s,t))  =: g(s,t).
    \]
    For given constants $a_D, a_N, \beta \in \rr$, consider the sesquilinear forms $b^D_{\delta,\alpha}$ and $ b^N_{\delta,\alpha}$ with $D(b^D_{\delta,\alpha}) = H^1_0(\Pi_\delta)$ and $D(b^N_{\delta,\alpha}) = H^1(\Pi_\delta)$, defined by
         \begin{align*}
             b^D_{\delta,\alpha}(g) :=& \int_{I_\delta} \int_{-\delta}^\delta \left( (1+ a_D \delta) | \partial_s g  |^2 + | \partial_t g  |^2 - \left(\frac{k^2}{4} - a_D \delta\right)|  g  |^2 \right)\mathrm{d}t \mathrm{d}s - \alpha \int_{I_\delta} |  g(s,0)|^2 \mathrm{d}s,\\
               b^N_{\delta,\alpha} (g) :=& \int_{I_\delta}\int_{-\delta}^\delta \left( (1-a_N \delta) |  \partial_sg |^2 + | \partial_t g |^2 -\left(\frac{k^2}{4} +a_N\delta \right)|  g  |^2\right) \mathrm{d}t \mathrm{d}s  \\
               &- \alpha \int_{I_\delta} |  g(s,0)|^2 \mathrm{d}s - \beta \int_{I_\delta} |  g(s,\delta)|^2 + | g(s,-\delta) |^2 \mathrm{d}s. 
         \end{align*}
Then, for sufficiently small  $\delta > 0$, there exist $a_D, a_N, \beta > 0$ such that  
    \begin{align*} 
    d^\delta_{W,\alpha}(u) \leq b^D_{\delta,\alpha} (g),\quad \forall u \in D(d^\delta_{W,\alpha}),\quad \text{and}\quad n^\delta_{W,\alpha} (u) \geq d^N_{\delta,\alpha} (g),  \quad \forall u\in D(n^\delta_{W,\alpha}).
    \end{align*}
\end{lemma}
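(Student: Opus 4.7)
My plan is to pull the forms $d^\delta_{W,\alpha}$ and $n^\delta_{W,\alpha}$ back to the straight strip $\Pi_\delta$ via the unitary $\Phi$ and then compare the transformed integrand with the explicit models $b^D_{\delta,\alpha}$ and $b^N_{\delta,\alpha}$ using the smallness of $|tk(s)|\le \delta k_{\max}$. By the change of variables in orthogonal tubular coordinates one has
\begin{equation*}
\int_{W_\delta}|\nabla u|^2\,\mathrm{d}x = \int_{\Pi_\delta}\left[\frac{|\partial_s\tilde u|^2}{(1-tk)^2}+|\partial_t\tilde u|^2\right](1-tk)\,\mathrm{d}s\,\mathrm{d}t, \qquad \tilde u:=u\circ\phi_W,
\end{equation*}
while unitarity of $\Phi$ makes the $L^2$-norms coincide and identifies $\int_{\Gamma_\delta}|u|^2\,\dS$ with $\int_{I_\delta}|g(s,0)|^2\,\mathrm{d}s$, so the $\delta$-interaction terms match without any correction.

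Next I would substitute $\tilde u = (1-tk)^{-1/2}g$ and expand. In the $t$-direction this produces a cross term $\tfrac{k}{1-tk}\Re(\overline{\partial_t g}\,g) = \tfrac{k}{2(1-tk)}\partial_t|g|^2$ which I would integrate by parts; the boundary contribution $B(g):=\bigl[\tfrac{k}{2(1-tk)}|g|^2\bigr]_{t=-\delta}^{t=\delta}$ is recorded, and the interior contribution combines with the pre-existing $\tfrac{k^2}{4(1-tk)^2}|g|^2$ to yield the curvature-induced potential $-\tfrac{k^2}{4(1-tk)^2}|g|^2$. After gathering terms one obtains
\begin{equation*}
\int_{W_\delta}|\nabla u|^2\,\mathrm{d}x = \int_{\Pi_\delta}\left[\frac{|\partial_s g|^2}{(1-tk)^2}+|\partial_t g|^2-\frac{k^2\,|g|^2}{4(1-tk)^2}+R(g)\right]\mathrm{d}s\,\mathrm{d}t + B(g),
\end{equation*}
where $R(g) = \tfrac{tk'}{(1-tk)^3}\Re(\overline{\partial_s g}\,g) + \tfrac{(tk')^2}{4(1-tk)^4}|g|^2$ gathers the $s$-cross terms arising from differentiating the weight $(1-tk)^{-1/2}$.

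The final step is a Taylor expansion in $tk$: the bounds $(1-tk)^{-2} = 1 + \mathcal O(\delta)$ and $k^2(1-tk)^{-2} = k^2 + \mathcal O(\delta)$ hold uniformly with constants depending only on $k_{\max}$ and $\|k'\|_\infty$. For the Dirichlet estimate $d^\delta_{W,\alpha}(u)\le b^D_{\delta,\alpha}(g)$, the fact that $g$ vanishes on $\partial\Pi_\delta$ kills $B(g)$; a Cauchy--Schwarz estimate $|R(g)|\le C\delta(|\partial_s g|^2+|g|^2)$ together with the Taylor bounds produces $\mathcal O(\delta)$-corrections which can be absorbed by choosing $a_D$ large enough in terms of $k_{\max}$ and $\|k'\|_\infty$. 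For the Neumann estimate $n^\delta_{W,\alpha}(u)\ge b^N_{\delta,\alpha}(g)$ the same Taylor bounds are used with reversed sign, and the boundary term is dominated by $-\beta(|g(s,\delta)|^2+|g(s,-\delta)|^2)$ with $\beta = k_{\max}/(2(1-\delta k_{\max}))$ uniformly bounded. The main technical difficulty lies in the Neumann direction: one must simultaneously dominate the $s$-cross term inside $R(g)$ and the $t$-boundary term $B(g)$, both of which are linear in $\partial g$. However, $B(g)$ matches exactly the trace terms already appearing in $b^N_{\delta,\alpha}$, and $R(g)$ has the form absorbable by the $a_N\delta$ correction, so the estimate closes provided $a_N$ and $\beta$ are chosen sufficiently large.
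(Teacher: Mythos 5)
Your proposal is correct and follows essentially the same route as the paper: pull the forms back to $\Pi_\delta$ via $\Phi$, substitute $v=(1-tk)^{-1/2}g$, integrate the $t$-cross term by parts so that the interior contribution turns $+\tfrac{k^2}{4(1-tk)^2}$ into $-\tfrac{k^2}{4(1-tk)^2}$, kill (Dirichlet) or dominate by $\beta$ (Neumann) the resulting boundary terms, and absorb the remaining $s$-cross terms and Taylor errors $\lvert(1-tk)^{-j}-1\rvert=\mathcal O(\delta)$ into the $a_D\delta$, $a_N\delta$ corrections. No substantive differences.
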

\begin{proof} Since $\phi_W$ and $(1-tk(s))^{\frac{1}{2}}$ are smooth, it is clear that  $\Phi(H^1(W_\delta)) = H^1(\Pi_\delta)$.  We are going to construct unitary equivalent operators for $D^\delta_{W,\alpha}$ and $N^\delta_{W,\alpha}$ through $\Phi$. Set $v := u \circ \phi_W$, then performing the change of variables yields 
    \[
    \int_{W_\delta} |\nabla u|^2 \mathrm{d}x - \alpha \int_{\Gamma_\delta} |u|^2 \dS  = \int_{I_\delta}\int_{-\delta}^\delta \frac{1}{1-tk} |\partial_s v|^2 + (1-tk)|\partial_t v|^2 \mathrm{d}x - \alpha\int_{I_\delta}|v(s,0)|^2 \dS.
    \]
    Substituting $v = (1-tk)^{-\frac{1}{2}}g$, we compute
    \begin{align*}
        \int_{I_\delta} \int_{- \delta}^\delta \frac{1}{(1-tk)^2} &\left|\partial_s g + \frac{tk'}{2(1-tk)}g\right|^2 + \left|\partial_t g + \frac{k}{2(1-tk)}g\right|^2 \mathrm{d}t \text{ d}s - \alpha\int_{I_\delta}| g(s,0) |^2 \dS \\
         =& \int_{I_\delta} \int_{-\delta}^\delta \frac{1}{(1-tk)^2} |  \partial_s g |^2 + \frac{t k'}{(1-tk)^3} \Re (g \partial_s g) + \frac{(tk')^2}{4(1-tk)^4}|  g  |^2 \\
          &+ |  \partial_t g |^2 + \frac{k}{1-tk} \Re(g \partial_tg) + \frac{k^2}{4 (1-tk)^2} |  g  |^2 \mathrm{d}t \mathrm{d}s - \alpha \int_{I_\delta} | g(s,0) |^2 \dS .
    \end{align*}
    By integration by parts, we have
    \begin{align*}
    \int_{-\delta}^\delta \frac{k}{1-tk} \Re( g \partial_t g) \text{ d}t  &= \frac{1}{2} \int_{-\delta}^\delta \frac{k}{1-tk} \partial_t |  g  |^2  \mathrm{d}t\\
    &= \frac{k}{2(1-\delta k)} |  g(s,\delta)|^2 - \frac{k}{2(1+\delta k)} | g(s,-\delta) |^2 + \int_{-\delta}^\delta \frac{k^2}{2(1-tk)^2} |  g  |^2 \mathrm{d}t.
    \end{align*}
Therefore, the sesquilinar form associated to $ N^\delta_{W,\alpha} $ is unitarily equivalent to 
    \begin{align*}
        \tilde{n}^\delta_{W,\alpha}(g)  =& \int_{I_\delta} \int_{-\delta}^\delta \Bigg[ \frac{1}{(1-tk)^2} |  \partial_s g |^2 + \frac{tk'}{(1-tk)^3} \Re( g \partial_s g) + |  \partial_t g |^2 \\
        &+ \left(\frac{(tk')^2}{4(1-tk)^4} - \frac{k^2}{4(1-tk)^2}\right) |  g  |^2 \Bigg]\mathrm{d} t \mathrm{d}s - \alpha \int_{I_\delta} | g(s,0) |^2 \mathrm{d}s  \\
        &+ \int_{I_\delta}\left( \frac{k}{1-\delta k} |  g(s,\delta)|^2 + \frac{k}{1+\delta k} | g(s,-\delta) |^2\right) \mathrm{d}s, 
    \end{align*}
    with domain $D(\tilde{n}^\delta_{W,\alpha}) = H^1(\Pi_\delta)$. For sufficiently small $\delta<\delta_0$ and $t\in(0,\delta)$, we estimate $\tilde{n}^\delta_{W,\alpha}$ from below by applying the estimate
    \begin{align*}
        | g \partial_s g| \leq \frac{1}{2}( |  g  | ^2 + |  \partial_s g |^2), \quad | tk'(s)| \leq \delta \max_{s \in [0,l]} | k'(s) |,
    \end{align*}
     and the expansion estimates
     \begin{align*}
         \left| \frac{1}{(1-tk)^j} - 1\right| = \left| \frac{ \sum_{i = 1}^j \binom{i}{j}  (tk)^i   }{(1-tk)^j}\right| \leq \frac{\delta {\sum_{i = 1}^j \binom{i}{j}  \delta_0^{i-1}k_{max}^i }}{(1- \delta_0 k_{max})^j} \leq c \delta, \quad j \in \{1,2,3,4 \},
     \end{align*}
   for some $c>0$. Hence, for $\beta =k_{max}/(1 - \delta_0 k_{max}) $ and some $a_N> 0$, we obtain
    \begin{align*}
        \tilde{n}^\delta_{W,\alpha} (g) \geq& \int_{I_\delta}\int_{-\delta}^\delta \left[ (1-a_N \delta) |  \partial_sg |^2 + |  \partial_t g |^2 -\left(\frac{k^2}{4} +\delta a_N \right)|  g  |^2 \right]\mathrm{d}t \mathrm{d}s \\
       & - \alpha \int_{I_\delta} | g(s,0) |^2 \mathrm{d}s - \beta \int_{I_\delta} |  g(s,\delta)|^2 + | g(s,-\delta) |^2 \mathrm{d}s =:b^N_{\delta,\alpha} (g).
    \end{align*}
    Using similar arguments for  $d^\delta_{W,\alpha}$ and its unitarily equivalent form $  \tilde{d}^\delta_{W,\alpha}$ which coincides with $\tilde{n}^\delta_{W,\alpha}(g) $ but with domain $D( \tilde{d}^\delta_{W,\alpha}) = H^1_0(\Pi_\delta)$, we find for some $a_D>0$ that 
    \[
    \tilde{d}^\delta_{W,\alpha} (g) \leq   \int_{I_\delta} \int_{-\delta}^\delta \left[(1+ a_D \delta) | \partial_s g  |^2 + | \partial_t g  |^2 - \left(\frac{k^2}{4} - a_D \delta\right) |  g  |^2 \right]\mathrm{d}t \mathrm{d}s - \alpha \int_{I_\delta} | g(s,0) |^2 \mathrm{d}s=b^D_{\delta,\alpha}(g).
    \]
\end{proof}

\subsection{Spectral properties}In this subsection, we apply Lemma \ref{lem: unitary equivalence strip} to obtain estimates for the eigenvalues of $N^\delta_{W,\alpha}$ and $D^\delta_{W,\alpha}$. We denote by
    \begin{itemize}
        \item $D_\delta$ := Dirichlet Laplacian on $I_\delta$,
        \item  $D_l$ := Dirichlet Laplacian on $(0,l)$.
    \end{itemize}
We begin by proving a useful lemma comparing these two operators:
\begin{lemma}\label{lem: make it Laplace - k^2/4 }
Let $b \geq 0$. For any fixed $n \in \mathbb{N}$, it holds that
    \[
    E_n((1 + b \delta)D_\delta - k^2/4) = E_n(D_l - k^2/4) + \mathcal{O}(\delta)\quad \text{as } \delta\to 0.
    \]
\end{lemma}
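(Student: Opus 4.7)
My plan is to sandwich $E_n\bigl((1+b\delta)D_\delta - k^2/4\bigr)$ between $E_n(D_l - k^2/4) \pm \mathcal{O}(\delta)$ using the min-max principle in both directions. Throughout, I set $l_\delta := l - \lambda_0(\delta) - \lambda_l(\delta)$, and use that since $\lambda_0, \lambda_l \in C^1$ with value $0$ at $0$, one has $\lambda_0(\delta), \lambda_l(\delta) = \mathcal{O}(\delta)$ and $l_\delta/l = 1 + \mathcal{O}(\delta)$. Throughout, $\sigma_j := E_j(D_l - k^2/4)$, and $\psi_1, \dots, \psi_n$ will denote a corresponding $L^2(0,l)$-orthonormal system of eigenfunctions.

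\medskip

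\noindent\textbf{Lower bound.} Any $u \in H^1_0(I_\delta)$ extends by zero to $\tilde u \in H^1_0(0,l)$, and this extension preserves $L^2$-norms and gradient norms. Since $u \equiv 0$ outside $I_\delta$, also $\int_{I_\delta} (k^2/4)|u|^2 \, \mathrm{d}s = \int_0^l (k^2/4)|\tilde u|^2 \, \mathrm{d}s$. Therefore, for every $u \in H^1_0(I_\delta)$,
\[
\bigl\langle u, \bigl((1+b\delta) D_\delta - k^2/4 \bigr) u\bigr\rangle = (1+b\delta) \int_0^l |\tilde u'|^2 \mathrm{d}s - \int_0^l (k^2/4)|\tilde u|^2 \mathrm{d}s.
\]
The right-hand side is the quadratic form of $(1+b\delta)D_l - k^2/4$ evaluated on $\tilde u$. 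Since extension by zero is an isometry of $L^2$, the min-max principle (Lemma~\ref{lem: comparing operators}) yields
\[
E_n\bigl((1+b\delta)D_\delta - k^2/4\bigr) \geq E_n\bigl((1+b\delta) D_l - k^2/4\bigr).
\]
The latter equals $\sigma_n + b\delta \cdot E_n(D_l) = \sigma_n + \mathcal{O}(\delta)$ (for instance, by applying min-max to the eigenfunctions $\psi_j$ of $D_l - k^2/4$, whose $D_l$-energies $\sigma_j + \int_0^l (k^2/4)|\psi_j|^2$ remain uniformly bounded in $n$).

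\medskip

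\noindent\textbf{Upper bound.} Let $\phi_\delta:(0,l)\to I_\delta$ be the affine bijection $\phi_\delta(y) = \lambda_0(\delta) + (l_\delta/l)\,y$, so $\phi_\delta'(y) = l_\delta/l = 1 + \mathcal{O}(\delta)$. For each $j=1,\dots,n$ set $\tilde\psi_j := \psi_j \circ \phi_\delta^{-1} \in H^1_0(I_\delta)$; the functions $\tilde\psi_j$ are linearly independent since $\phi_\delta$ is a bijection, so $L_\delta := \mathrm{span}\{\tilde\psi_1, \dots, \tilde\psi_n\}$ is $n$-dimensional. Let $v = \sum_j c_j \tilde\psi_j$ and $w = \sum_j c_j \psi_j \in H^1_0(0,l)$. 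Changing variables $x = \phi_\delta(y)$ gives
\begin{align*}
\int_{I_\delta}|v'|^2 \mathrm{d}x &= (l/l_\delta) \int_0^l |w'|^2 \mathrm{d}y,\\
\int_{I_\delta}|v|^2 \mathrm{d}x &= (l_\delta/l) \int_0^l |w|^2 \mathrm{d}y,\\
\int_{I_\delta}(k^2/4)|v|^2 \mathrm{d}x &= (l_\delta/l) \int_0^l \tfrac{k(\phi_\delta(y))^2}{4}|w|^2 \mathrm{d}y.
\end{align*}
Since $k$ is $C^1$ on $[0,l]$, $k(\phi_\delta(y)) = k(y) + \mathcal{O}(\delta)$ uniformly, so the last integral equals $(1+\mathcal{O}(\delta))\int_0^l (k^2/4)|w|^2 \mathrm{d}y + \mathcal{O}(\delta)\|w\|^2$. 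Combining,
\begin{align*}
\int_{I_\delta}\!\bigl[(1+b\delta)|v'|^2 - (k^2/4)|v|^2\bigr]\mathrm{d}x
&= (1+\mathcal{O}(\delta))\!\int_0^l\!|w'|^2\mathrm{d}y
- (1+\mathcal{O}(\delta))\!\int_0^l\!(k^2/4)|w|^2 \mathrm{d}y + \mathcal{O}(\delta)\|w\|^2\\
&\leq (\sigma_n + \mathcal{O}(\delta))\,\|w\|^2,
\end{align*}
where in the last step we used that for the orthonormal eigenfunctions $\psi_j$ with $c=(c_1,\dots,c_n)$,
\[
\int_0^l\!\bigl[|w'|^2 - (k^2/4)|w|^2\bigr]\mathrm{d}y = \sum_{j=1}^n \sigma_j|c_j|^2 \leq \sigma_n \|c\|^2 = \sigma_n \|w\|^2,
\]
and separately $\int_0^l |w'|^2 \mathrm{d}y \leq C\|w\|^2$ with $C := \sigma_n + \|k\|_\infty^2/4$. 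Since $\|v\|^2_{L^2(I_\delta)} = (l_\delta/l)\|w\|^2$, the Rayleigh quotient of every $v\in L_\delta\setminus\{0\}$ is at most $\sigma_n + \mathcal{O}(\delta)$. The min-max principle then gives $E_n\bigl((1+b\delta)D_\delta - k^2/4\bigr) \leq \sigma_n + \mathcal{O}(\delta)$.

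\medskip

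\noindent\textbf{Main difficulty.} There is no serious obstacle; everything reduces to the elementary facts that $I_\delta \to (0,l)$ linearly in $\delta$ and that $k$ is $C^1$. The only point that requires a moment of care is the upper bound: one must transplant eigenfunctions rather than restrict them, since $H^1_0$-functions on $(0,l)$ are not in general zero at the endpoints of $I_\delta$. The affine reparametrization $\phi_\delta$ handles this cleanly.
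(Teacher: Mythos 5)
Your proof is correct and follows essentially the same route as the paper's: extension by zero plus monotonicity in $b\delta\geq 0$ for the lower bound, and transplantation of the eigenfunctions of $D_l-k^2/4$ via the affine reparametrization of $(0,l)$ onto $I_\delta$ (using the Lipschitz continuity of $k^2$) for the upper bound. The only cosmetic differences are that the paper packages the affine map as a unitary operator with a Jacobian factor, and your parenthetical identity "$=\sigma_n+b\delta\,E_n(D_l)$" is not literally an operator identity, but the min-max argument you immediately give in its place is the correct justification.
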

\begin{proof}
    Fix $n \in \mathbb{N}$  and  let $J: L^2(I_\delta) \to L^2(0,l)$ be the extension-by-zero operator. It is straightforward to verify that
    \[
    \| Ju \|_{L^2(0,l)} = \| u \|_{L^2(I_\delta)} \quad \text{and} \quad ((1+ b \delta)D_\delta - \frac{k^2}{4} ) (u) = ((1+b \delta)D_l - \frac{k^2}{4}) (Ju).
    \]
    Hence, by the min-max principle, we get
    \[
    E_n\left((1 + b \delta)D_\delta - \frac{k^2}{4}\right)  \geq E_n\left((1 + b \delta) D_l - \frac{k^2}{4}\right)  \geq E_n\left(D_l - \frac{k^2}{4}\right).
    \]
    On the other hand, consider the bijective linear map $\phi:  [\lambda_0, l - \lambda_l]  \rightarrow [0, l]$ and its inverse given by
    \[\phi (y) = \frac{l}{l - \lambda_l(\delta) - \lambda_0(\delta) }(y- \lambda_0 (\delta)),
    \quad \phi^{-1}( x) =   \left(1- \frac{\lambda_l (\delta) + \lambda_0(\delta)}{l}\right)x  + \lambda_0 (\delta),
    \]
    and define the corresponding operator
    \[
    \Phi: L^2 (0,l) \rightarrow L^2 (I_ \delta), \quad f(x) \mapsto f (\phi(y)) \cdot (\phi'(y))^{\frac{1}{2}} = f(\phi(y))(1+ \frac{\lambda_l(\delta) + \lambda_0(\delta)}{l-\lambda_l(\delta) - \lambda_0(\delta)})^{\frac{1}{2}}.
    \]
    Since $\phi$ is smooth and invertible, $\Phi$ is also invertible. Moreover, a direct computation shows that
    \[
    \| \Phi (f)  \|^2_{L^2} = \int_{0}^{l} |f(\Phi(x))|^2 \cdot \phi'(x) \mathrm{d}x = \int_{\lambda_0(\delta)}^{l - \lambda_l(\delta)} | f|^2 \mathrm{d}y = \| f \|^2_{L^2(I_\delta)},
    \]
    which means that $\Phi$ is unitary. Combining this with a change of variables, we get
    \begin{align*}
    ((1 + b \delta)D_\delta- \frac{k^2}{4} ) (\Phi f) &= \int_{I_\delta} (1+ b \delta)|f'(\phi(x))|^2 \cdot (\phi'(x))^3 - \frac{k^2(x)}{4} |f(\phi(x))|^2 \phi'(x) \mathrm{d}x \\
    &= \int_0^{l} (1 + b \delta) | f'|^2 \cdot (\phi')^2 - \frac{k^2(y(1- \frac{\lambda_l(\delta) - \lambda_0 (\delta) }{l}) + \lambda_0 )  }{4} |f'| \mathrm{d}y =: q(f).
    \end{align*}
    Thus, it suffices to find a suitable upper bound for $q(f)$. For this,  note that $\phi' = 1+ \mathcal{O}(\delta)$, so for some $c_0 >0$,
    \begin{equation} \label{eq: agmon curved 3}\int_0^{l} (1 + b \delta) | f'|^2 \cdot (\phi')^2 \mathrm{d}y \leq (1+ c_0\delta) \int_0^{l} | f'|^2 \mathrm{d}y.
    \end{equation}
 As $k^2$ is Lipschitz continuous, denoting by $L$ its Lipschitz constant, we have
    \begin{align*} 
    \left| k^2 \left(y + \lambda_0(\delta) - y \frac{\lambda_l(\delta) + \lambda_0(\delta) }{l} \right)  - k^2(y)\right| & \leq L \left| \lambda_0(\delta) - y \frac{\lambda_l(\delta) + \lambda_0(\delta) }{l} \right|\\
   & \leq L (2\lambda_0(\delta) + \lambda_l(\delta) ).
    \end{align*}
    Hence, 
    \[
    \int_0^{l} \frac{k^2(y(1- \frac{\lambda_l(\delta) - \lambda_0 (\delta) }{l}) + \lambda_0 )  }{4} | f(y)| \mathrm{d}y 
    \geq (1+c_0 \delta) \int_0^{l} k^2 | f|^2  \mathrm{d}y  -  (c_1\delta +  c_0 \| k^2 \|_{\infty}\delta ) \| f \|^2_{L^2}
    \]
    for some $c_0,c_1>0$. Combining this estimate with \eqref{eq: agmon curved 3} gives 
    \[
    q(f) \leq (1+c_0 \delta) (D_l - \frac{k^2}{4} ) (f) + c_2 \delta \| f \|^2_{L^2(0,l)}
    \]
    for a suitable $c_2 >0$ and  sufficiently small $\delta$. Applying the min-max principle, we conclude that
    \[
    E_n((1 + b \delta)D_\delta - \frac{k^2}{4}) =E_n(Q) \leq  E_n(D_l - \frac{k^2}{4} ) + \mathcal{O}(\delta)
    \]
    as $\delta \to 0$, which completes the proof.
\end{proof}

With the help of Lemma \ref{lem: make it Laplace - k^2/4 }, we can derive an upper bound for $E_n(D^\delta_{W,\alpha})$.
\begin{lemma}\label{lem: ev strip dirichlet}
For any fixed $n \in \mathbb{N}$, there exists $c_D > 0$ such that, as $\delta \to 0^+$ and $\alpha \delta \to \infty$, one has
  \[
  E_n(D^\delta_{W,\alpha}) \leq -\frac{\alpha^2}{4} + E_n( D_l - \frac{k^2}{4}) + c_D( \delta + \alpha^2 e^{-\frac{1}{2} \delta \alpha} ).
  \]
\end{lemma}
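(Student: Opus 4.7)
The main idea is to compare $E_n(D^\delta_{W,\alpha})$ with $E_n(B^D_{\delta,\alpha})$ via Lemma~\ref{lem: unitary equivalence strip}, and then exploit the fact that $b^D_{\delta,\alpha}$ separates cleanly into a longitudinal part on $I_\delta$ and a transverse part on $(-\delta,\delta)$: the longitudinal part is governed by $(1+a_D\delta)D_\delta - k^2/4$, and the transverse part is exactly the one-dimensional operator $T^D_{\delta,\alpha}$ from Definition~\ref{def t X}. So the whole proof will rest on constructing an explicit $n$-dimensional product-type trial subspace.

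The construction goes as follows. Let $\phi_1,\dots,\phi_n\in H^1_0(I_\delta)$ be an $L^2(I_\delta)$-orthonormal family of eigenfunctions of $A_\delta := (1+a_D\delta)D_\delta - k^2/4$ with eigenvalues $\mu_1\le\mu_2\le\dots\le\mu_n$, and let $\psi\in H^1_0(-\delta,\delta)$ be the $L^2$-normalized ground state of $T^D_{\delta,\alpha}$. Set $g_i(s,t):=\phi_i(s)\psi(t)\in H^1_0(\Pi_\delta)$; these are $L^2(\Pi_\delta)$-orthonormal. A direct calculation exploiting the product form and the orthogonality of the $\phi_i$'s in $L^2$ and with respect to $A_\delta$ shows that the cross-terms vanish and
\[
b^D_{\delta,\alpha}(g_i,g_j) = \bigl(\mu_i + a_D\delta + E_1(T^D_{\delta,\alpha})\bigr)\,\delta_{ij}.
\]
Hence on $V_n:=\operatorname{span}\{g_1,\dots,g_n\}$ the form $b^D_{\delta,\alpha}$ is diagonalized, and by the min-max principle together with Lemma~\ref{lem: unitary equivalence strip} we obtain
\[
E_n(D^\delta_{W,\alpha}) \;\le\; E_n(B^D_{\delta,\alpha}) \;\le\; \mu_n + a_D\delta + E_1(T^D_{\delta,\alpha}).
\]

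It remains to estimate the right-hand side. Lemma~\ref{lem: make it Laplace - k^2/4 } (with $b=a_D$) yields $\mu_n = E_n(D_l - k^2/4) + \mathcal{O}(\delta)$, and Proposition~\ref{lem: ev of t,N,L,alpha}(i) yields $E_1(T^D_{\delta,\alpha}) < -\tfrac{\alpha^2}{4} + \mathcal{O}(\alpha^2 e^{-\tfrac{1}{2}\delta\alpha})$. Absorbing the $\mathcal{O}(\delta)$ and $a_D\delta$ terms into a common constant $c_D>0$ delivers the claim.

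\textbf{Main obstacle.} There is no substantial obstacle: the upper-bounding form $b^D_{\delta,\alpha}$ provided by Lemma~\ref{lem: unitary equivalence strip} was specifically engineered so that the transverse and longitudinal variables decouple, and the only mild technicality is to verify the cross-term identity and organize the three independent error contributions ($\mathcal{O}(\delta)$ from the longitudinal reduction, $\mathcal{O}(\alpha^2 e^{-\delta\alpha/2})$ from the transverse ground state, and a harmless $a_D\delta$ shift) into the stated remainder.
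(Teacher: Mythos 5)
Your proposal is correct and follows essentially the same route as the paper: reduce to $B^D_{\delta,\alpha}$ via Lemma~\ref{lem: unitary equivalence strip}, exploit the separation of variables (the paper writes $B^D_{\delta,\alpha}\cong((1+a_D\delta)D_\delta-\tfrac{k^2}{4})\otimes I+I\otimes T^D_{\delta,\alpha}+a_D\delta$, of which your product trial subspace $\operatorname{span}\{\phi_i\otimes\psi\}$ with vanishing cross-terms is just the explicit min-max rendition), and then conclude with Lemma~\ref{lem: make it Laplace - k^2/4 } and Proposition~\ref{lem: ev of t,N,L,alpha}(i). No gaps.
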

\begin{proof} 
Let $n \in \mathbb{N}$. By Lemma \ref{lem: unitary equivalence strip}, we have $E_n(D^\delta_{W,\alpha}) \leq E_n(B^D_{\delta,\alpha})$ for some $a_D >0$, and moreover,
\[
B^D_{\delta,\alpha} \cong ((1+ a_D \delta)D_\delta- \frac{k^2}{4} )\otimes I + I \otimes T^D_{\delta,\alpha} +a_D \delta,
\]
with $T^D_{\delta,\alpha}$ as in Definition \ref{def t X}. Applying Lemma \ref{lem: make it Laplace - k^2/4 } yields
\[
E_n((1+ a_D \delta)D_\delta- \frac{k^2}{4}) = E_n (D_l - \frac{k^2}{4}) + \mathcal{O}(\delta) = \mathcal{O}(1) \quad \text{as }  \delta \to 0.
\]
Furthermore, assertions (i) and (iv) from Proposition \ref{lem: ev of t,N,L,alpha} ensure that there is a $c>0$ such that, for sufficiently large $\alpha \delta$,
\[
E_1(T^D_{\delta,\alpha}) < -\frac{\alpha^2}{4} + c \alpha^2 e^{- \frac{1}{2}\delta \alpha } , \quad\text{and}\quad E_2(T^D_{\delta,\alpha}) \geq 0.
\]
Combining these estimates, it follows that for sufficiently large $\alpha$ there exists $c_D >0 $ such that
\[
E_n(D^\delta_{W,\alpha}) \leq -\frac{\alpha^2}{4} + E_n( D_l - \frac{k^2}{4}) + c_D (\delta +  \alpha^2 e^{-\frac{1}{2} \delta \alpha}).
\]
\end{proof}
We conclude this part by deriving a lower bound for the Neumann operator $N^\delta_{W,\alpha}$.

\begin{lemma}
\label{lem: ev strip neuman} Let $T^{\beta}_{\delta, \alpha}$ be as in Definition \ref{def t X} and $\psi \in H^1(-\delta,\delta)$ be a normalized eigenfunction associated to its first eigenvalue. Define the projection 
 \[
 P:L^2(W_\delta) \rightarrow L^2(I_\delta), \quad (Pu)(s) = \int_{-\delta}^\delta \overline{\psi}(t) (\phi_W u ) (s,t) \mathrm{d}t,
 \]
 then there exist $a_N,\beta >0$ for which the inequality
\[
n^\delta_{W,\alpha} (u) \geq (1- a_N \delta) \| Pu'\|^2_{L^2(I_\delta)} +  
\int_{I_\delta} ( -\frac{\alpha^2}{4} - \frac{k^2}{4} ) |Pu|^2 \mathrm{d}s - a_N(  \alpha^2e^{-\frac{1}{2}\delta \alpha} + \delta ) \| Pu \|^2_{L^2(I_\delta)}
\]
holds and in particular, 
\[
N^\delta_{W,\alpha} \geq - \frac{\alpha^2}{4} - \frac{\|k\|_\infty^2}{4} + \mathcal{O}(\delta  + \alpha^2 e^{-\frac{1}{2} \alpha \delta}) 
\]
as $\alpha \to \infty$, $\delta \to0^+$ and $\alpha \delta \to \infty$.
\end{lemma}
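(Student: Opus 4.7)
The plan is first to invoke Lemma \ref{lem: unitary equivalence strip} to replace the original form $n^\delta_{W,\alpha}(u)$ by the model form $b^N_{\delta,\alpha}(g)$ on the straight strip $\Pi_\delta$, where $g=\Phi u$. Regrouping the integrand of $b^N_{\delta,\alpha}$ slice by slice in $s$ shows that the purely transverse part is precisely the one-dimensional form $t^\beta_{\delta,\alpha}$ of Definition \ref{def t X}; explicitly,
\[
b^N_{\delta,\alpha}(g)=\int_{I_\delta}\Big[(1-a_N\delta)\int_{-\delta}^\delta|\partial_s g|^2 \mathrm{d}t + t^\beta_{\delta,\alpha}\big(g(s,\cdot)\big)-\int_{-\delta}^\delta\big(\tfrac{k(s)^2}{4}+a_N\delta\big)|g|^2\mathrm{d}t\Big] \mathrm{d}s.
\]
This almost separates variables, and the deviation from full separation is controlled only by the curvature-dependent potential $\tfrac{k(s)^2}{4}+a_N\delta$.

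The next step is a transverse spectral decomposition of each slice relative to $T^\beta_{\delta,\alpha}$: write $g(s,t)=(Pu)(s)\psi(t)+g^\perp(s,t)$ with $g^\perp(s,\cdot)\perp\psi$ in $L^2(-\delta,\delta)$ for every $s$. Because $\psi$ is independent of $s$, this orthogonality is preserved by $\partial_s$, so Pythagoras gives
\[
\int_{-\delta}^\delta|\partial_s g|^2 \mathrm{d}t=|(Pu)'(s)|^2+\|\partial_s g^\perp(s,\cdot)\|^2,\qquad \int_{-\delta}^\delta|g|^2 \mathrm{d}t=|(Pu)(s)|^2+\|g^\perp(s,\cdot)\|^2.
\]
Moreover, by the spectral theorem applied to $T^\beta_{\delta,\alpha}$ one has
\[
t^\beta_{\delta,\alpha}(g(s,\cdot))\geq E_1(T^\beta_{\delta,\alpha})|Pu(s)|^2+E_2(T^\beta_{\delta,\alpha})\|g^\perp(s,\cdot)\|^2,
\]
and Proposition \ref{lem: ev of t,N,L,alpha}(iii),(iv) supplies the bounds $E_1(T^\beta_{\delta,\alpha})\geq -\tfrac{\alpha^2}{4}-C\alpha^2 e^{-\delta\alpha/2}$ and $E_2(T^\beta_{\delta,\alpha})\geq c/\delta^2$ with uniform constants in the stated asymptotic regime.

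Substituting these inequalities into the displayed decomposition of $b^N_{\delta,\alpha}(g)$ and integrating over $s$ splits the lower bound into two independent contributions: one quadratic in $Pu$ with coefficient $-\tfrac{\alpha^2}{4}-\tfrac{k^2}{4}+\mathcal{O}(\delta+\alpha^2 e^{-\delta\alpha/2})$, and one in $g^\perp$ with coefficient at least $c/\delta^2-\|k\|_\infty^2/4-a_N\delta$. For $\delta$ small enough the second coefficient is nonnegative, so dropping the whole $g^\perp$ block (together with the nonnegative gradient term $\|\partial_s g^\perp\|^2$) yields the first claimed inequality. For the ``in particular'' statement one instead retains the $g^\perp$ block: since its coefficient is positive and far above the very negative $Pu$ coefficient, both pieces are bounded below by $-\tfrac{\alpha^2}{4}-\tfrac{\|k\|_\infty^2}{4}+\mathcal{O}(\delta+\alpha^2 e^{-\delta\alpha/2})$ times the corresponding $L^2$-norm; summing and using $\|Pu\|^2+\|g^\perp\|^2=\|g\|^2=\|u\|^2_{L^2(W_\delta)}$ (unitarity of $\Phi$) gives the stated operator bound on $N^\delta_{W,\alpha}$.

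The main technical nuisance, rather than a serious obstacle, is book-keeping of the Jacobian $(1-tk(s))^{1/2}$ hidden in the definition of $P$: as $P$ is defined through $u\circ\phi_W$ while the decomposition is natural for $g$, there is an $\mathcal{O}(\delta)$ discrepancy between $(Pu)(s)$ and $\langle g(s,\cdot),\psi\rangle$, which is uniform in $s$ and can be absorbed into the $a_N\delta$ error. The hypothesis $\alpha\delta\to\infty$ is used to ensure $\alpha^2 e^{-\delta\alpha/2}$ is genuinely small (so that $E_1(T^\beta_{\delta,\alpha})$ has the stated expansion), while $\delta\to 0$ guarantees that $c/\delta^2$ eventually dominates the curvature perturbation, making the $g^\perp$ block nonnegative.
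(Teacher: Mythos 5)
Your proposal follows essentially the same route as the paper: reduce to $b^N_{\delta,\alpha}$ via Lemma \ref{lem: unitary equivalence strip}, decompose $g=(Pu)\psi+g^\perp$ transversally (the paper's $g=f\psi+z$), bound the fibered form by $E_1(T^\beta_{\delta,\alpha})$ and $E_2(T^\beta_{\delta,\alpha})$ using Proposition \ref{lem: ev of t,N,L,alpha}, and drop the nonnegative $g^\perp$ block. Your side remark about the Jacobian factor $(1-tk)^{1/2}$ in the definition of $P$ is well spotted — the paper silently identifies $Pu(s)$ with $\langle\psi,g(s,\cdot)\rangle$ (consistent with how $P_j$ is later defined in Section \ref{sec: Endzeit}) — and your absorption of the resulting $\mathcal{O}(\delta)$ discrepancy into $a_N\delta$ is a legitimate fix.
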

\begin{proof}
By Lemma \ref{lem: unitary equivalence strip}, there exist $a_N,\beta >0$ such that
$ n^\delta_{W,\alpha} (u) \geq  b^N_{\delta, \alpha}(g)$. Set 
\[
f := Pu, \quad  z(s,t) := g(s,t) - f(s) \psi(t) \in L^2(\Pi_\delta).
\]
Observe that $z(\cdot,t)$ is orthogonal to  $\psi $ in $L^2(-\delta,\delta)$:
\[
\int_{-\delta}^\delta \overline{\psi}(t) z(\cdot,t) \mathrm{d}t = \int_{-\delta}^\delta \overline{\psi}(t) g(s,t) \mathrm{d}t  - \underbrace{ \int_{-\delta}^\delta | \psi(t) |^2 \mathrm{d} t}_{= 1} \cdot \int_{-\delta}^\delta \overline{\psi}(\tau) g(s,\tau) \mathrm{d} \tau \mathrm{d}t = 0.
\]
 Using the decomposition $g(s,t)= z(s,t) + f(s)\psi(t)$ and applying the min-max principle, we find
\begin{align*}
     \int_{I_\delta} \int_{-\delta}^\delta |  \partial_t g |^2 \mathrm{d}t - \alpha | g(s,0) |^2 - &\beta(|  g(s,\delta)|^2 + | g(s,-\delta) |^2) \mathrm{d}s \\
      &\geq \int_{I_\delta} \int_{-\delta}^\delta E_1(T^\beta_{\delta,\alpha}) |f(s) \psi(t)|^2 + E_2(T^\beta_{\delta,\alpha})|z(s,t)|^2 \mathrm{d}t \text{ d}s \\
&= E_1(T^\beta_{\delta,\alpha}) \| f\|^2_{L^2(I_\delta)} + E_2(T^\beta_{\delta,\alpha}) \|z\|^2_{L^2(\pi_\delta)}.
\end{align*}
Since $\psi$ and $\partial_s z(\cdot,t)$ are orthogonal, we have
\[
\int_{I_\delta} \int_{-\delta}^\delta |  \partial_s g |^2 \mathrm{d}t \text{ d}s = \| f' \|^2_{L^2(I_\delta)} + \|\partial_s z\|^2_{L^2(\Pi_\delta)} \geq \| f' \|^2_{L^2(I_\delta)}.
\]
Moreover, using the orthogonality of $z(\cdot,t)$ and $\psi(t)$ along with the independence of $k$ on $t$, it follows that
\[
\int_{I_\delta} \int_{-\delta}^\delta |  g  |^2 (-\frac{k^2}{4} - \delta a_N) \mathrm{d}t \text{ d}s = \int_{I_\delta} | f|^2 (-\frac{k^2}{4} - \delta a_N) \mathrm{d}s + \int_{I_\delta}\int_{-\delta}^\delta |z|^2(- \frac{k^2}{4} -\delta a_N).
\]
Combining these estimates, we obtain
\begin{align*}
b^N_{\delta, \alpha} (g) &\geq (1- a_N \delta) \| f' \|^2_{L^2(I_\delta)} + \int_{I_\delta} (E_1(T^\beta_{\delta,\alpha})- \frac{k^2}{4} - \delta a_N) | f|^2 \mathrm{d}s + \int_{\Pi_\delta} (E_2(T^\beta_{\delta,\alpha}) - \frac{k^2}{4} - \delta a_N) |z|^2 \mathrm{d}t \text{ d}s 
\\
&\geq (1- a_N \delta) \| f' \|^2_{L^2(I_\delta)} +  
\int_{I_\delta} ( -\frac{\alpha^2}{4} - \frac{k^2}{4}) | f|^2 \mathrm{d}s - ( c \alpha^2e^{-\frac{1}{2}\delta \alpha} + \delta a_N) \| f \|^2_{L^2(I_\delta)},
\end{align*}
where we used the fact, thanks to Proposition \ref{lem: ev of t,N,L,alpha}, there exists $c>0$ such that
\[
(E_2(T^\beta_{\delta,\alpha}) - \frac{k^2}{4} - \delta a_N) \geq \frac{c}{\delta^2}- \frac{k^2}{4} - \delta a_N \geq 0 \quad \text{for sufficiently small}\,\delta. 
\]
The second asserted inequality for $N^\delta_{W,\alpha}$ follows directly from the first assertion. 
\end{proof}

\section{Schr\"{o}dinger operator with a strong $\delta$-interaction supported on a curve with corners}\label{sec: Endzeit}
In this section, we apply the previous constructions and results to prove Theorems \eqref{theo: corner induced} and \eqref{theo: side induced}.
\subsection{Decomposition of $\rr^2$ into neighborhoods of corners and edges}
We begin by defining the notion of a curve $\Gamma\subset\rr^2$ with corners as used in this context. In the following, we let $\Omega_+$ be the bounded part of $\rr^2$ enclosed by $\Gamma$ and set $\Omega_-=\rr^2\setminus\overline{\Omega_+}$. We further denote by $\nu$ the outward unit normal to $\Omega_+$.
\begin{definition} Let $\Gamma \subset \rr^2 $ be an injective,  continuous, closed curve. We say that $\Gamma$ is a \textit{curve with $M\geq1$ corners} if the following hold: 
    \begin{itemize}
        \item[(1)] There exist \textit{vertices} $A_1, \dots,A_M \in \rr^2$ and positive lengths $l_1,\dots , l_M > 0 $
        \item[(2)] There exist $ C^3$-smooth arc-length parameterizations $\gamma_j: [0,l_j] \rightarrow \rr^2$ with $|\gamma'_j|=1$, $j = 1, \dots,M$, such that 
        \begin{itemize}
            \item[(a)] The interiors $\gamma_j((0,l_j))$, $j=1,\dots, M$, are pairwise disjoint.
            \item[(b)] $ \gamma_j(0) = A_j$ and  $\gamma_j(l_j) = A_{j+1}$ for $j = 1, \ldots , M$, with the convention  $A_1 \equiv A_{M+1}$.
            \item[(c)] $\Gamma = \bigcup_{j=1}^M \Gamma_j$ where $\Gamma_j:= \gamma_j([0,l_j])$, $j = 1, \dots,M$.
        \end{itemize} 
    \end{itemize}
Furthermore, we assume that each $\gamma_j$ is orientated in such that, for the outward normal $\nu_j(s)$ to the bounded enclosed region by $\Gamma$ at a point $s\in(O,l_j)$, we have $\nu_j(s)\wedge \gamma'(s)=1$. Moreover, $k_j(s)$ denote the curvature of $\gamma_j(s)$ at $s\in(0,l_j)$, and let $\theta_j \in [0,\pi]$ be the half-angle between the tangent vectors of $\gamma_{j-1}$ and $\gamma_j$ at vertex $A_j$, defined by the relations
\[
\cos (2 \theta_j) = - \langle \nabla \gamma_j(0), \nabla \gamma_{j-1} (l_{j-1})  \rangle, \quad  \sin (2 \theta_j) = -\det( \nabla \gamma_j (0) \; \,\,\, \nabla \gamma_{j-1}(l_{j-1}) ).
\]
We further assume that $ \theta_j \notin \{  0, \frac{\pi}{2}, \pi \} $ for all $j = 1, \ldots, M$. A visualization is given in Figure~\ref{fig: curve with corners}.
\end{definition}
\begin{figure}
    \centering
    \scalebox{0.8}{\begin{tikzpicture}[x=0.9pt,y=0.9pt,yscale=-1,xscale=1]

\draw [line width=1.0]    (90,160) .. controls (137.4,154.2) and (113.8,67.8) .. (190,80) ;
\draw [line width=1.0]    (90,160) .. controls (141.4,210.6) and (288.2,261) .. (360,230) ;
\draw [line width=1.0]     (190,80) .. controls (227.4,55.4) and (297,93.4) .. (290,130) ;
\draw [line width=1.0]     (290,130) .. controls (365.8,102.2) and (367.4,205.8) .. (360,230) ;
\draw    (106.67,153.17) .. controls (113.67,159.5) and (108.67,169.17) .. (104.67,172.17) ;
\draw    (344,235.17) .. controls (343.33,221.5) and (351.33,217.17) .. (362,219.17) ;
\draw  [draw opacity=0] (201.07,75.28) .. controls (201.4,76.16) and (201.65,77.08) .. (201.81,78.05) .. controls (203.01,85.29) and (198.68,92.03) .. (192.16,93.11) .. controls (185.64,94.18) and (179.38,89.19) .. (178.19,81.95) .. controls (178.03,80.97) and (177.97,80.01) .. (178,79.06) -- (190,80) -- cycle ; \draw   (201.07,75.28) .. controls (201.4,76.16) and (201.65,77.08) .. (201.81,78.05) .. controls (203.01,85.29) and (198.68,92.03) .. (192.16,93.11) .. controls (185.64,94.18) and (179.38,89.19) .. (178.19,81.95) .. controls (178.03,80.97) and (177.97,80.01) .. (178,79.06) ;  
\draw  [draw opacity=0] (301.32,127.26) .. controls (301.76,129.64) and (301.5,132.2) .. (300.42,134.62) .. controls (297.71,140.73) and (290.85,143.62) .. (285.09,141.06) .. controls (279.34,138.51) and (276.87,131.49) .. (279.58,125.38) .. controls (281.49,121.07) and (285.46,118.37) .. (289.65,118.04) -- (290,130) -- cycle ; \draw   (301.32,127.26) .. controls (301.76,129.64) and (301.5,132.2) .. (300.42,134.62) .. controls (297.71,140.73) and (290.85,143.62) .. (285.09,141.06) .. controls (279.34,138.51) and (276.87,131.49) .. (279.58,125.38) .. controls (281.49,121.07) and (285.46,118.37) .. (289.65,118.04) ;  

\draw (181.93,62.67) node [anchor=north west][inner sep=0.75pt]  [font=\scriptsize,color={black}  ,opacity=1 ] [align=left] {$\displaystyle A_{1}$};
\draw (66.67,157) node [anchor=north west][inner sep=0.75pt]  [font=\scriptsize,color={black}  ,opacity=1 ] [align=left] {$\displaystyle A_{2}$};
\draw (362.07,225.13) node [anchor=north west][inner sep=0.75pt]  [font=\scriptsize,color={black}  ,opacity=1 ] [align=left] {$\displaystyle A_{3}$};
\draw (293.2,110.27) node [anchor=north west][inner sep=0.75pt]  [font=\scriptsize,color={black}  ,opacity=1 ] [align=left] {$\displaystyle A_{4}$};
\draw (107.07,98.27) node [anchor=north west][inner sep=0.75pt]  [font=\scriptsize,color={black}  ,opacity=1 ] [align=left] {$\displaystyle \Gamma_{1}$};
\draw (183.33,81) node [anchor=north west][inner sep=0.75pt]  [font=\tiny] [align=left] {$\displaystyle 2\theta_{1}$};
\draw (94.67,158.33) node [anchor=north west][inner sep=0.75pt]  [font=\tiny] [align=left] {$\displaystyle 2\theta_{2}$};
\draw (346,222.33) node [anchor=north west][inner sep=0.75pt]  [font=\tiny] [align=left] {$\displaystyle 2\theta_{3}$};
\draw (281.67,130.67) node [anchor=north west][inner sep=0.75pt]  [font=\tiny] [align=left] {$\displaystyle 2\theta_{4}$};
\draw (367.07,157.27) node [anchor=north west][inner sep=0.75pt]  [font=\scriptsize,color={black}  ,opacity=1 ] [align=left] {$\displaystyle \Gamma_{3}$};
\draw (170.4,218.6) node [anchor=north west][inner sep=0.75pt]  [font=\scriptsize,color={black}  ,opacity=1 ] [align=left] {$\displaystyle \Gamma_{2}$};
\draw (265.73,64.93) node [anchor=north west][inner sep=0.75pt]  [font=\scriptsize,color={black}  ,opacity=1 ] [align=left] {$\displaystyle \Gamma_{4}$};
\end{tikzpicture}}
        \caption{A curve with corners}
        \label{fig: curve with corners}
\end{figure}
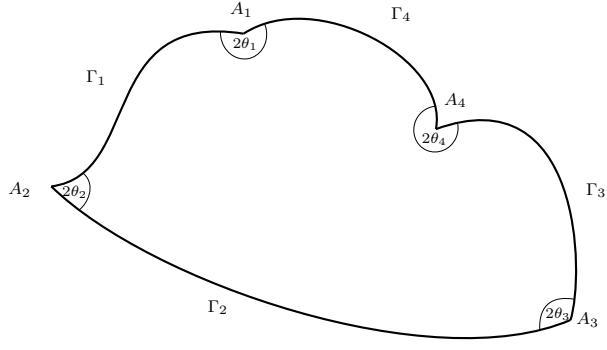

We now describe how to decompose $\rr^2$ into suitable neighborhoods around the corners and edges of a curve with corners. We first construct a decomposition of a neighborhood of a curve $\Gamma$ having $M$ corners as follows:
    \begin{itemize}
        \item[(i)] \textbf{Corners:} Consider a corner with interior angle $2\theta_j$. The regularity assumptions on the curve segments $ \gamma_{j-1} $ and $\gamma_j$ near the corner $A_j$ ensure the existence of  $C^3$-smooth extensions $\widetilde{\gamma_{j-1}},\, \widetilde{\gamma_j}$ beyond $A_j$.  Depending on the angle $\theta_j$, define the reparametrized curves $\gamma_{j}^\ast,\, \gamma_{j-1}^\ast$ by:
        \begin{enumerate}
            \item If  $\theta_j \in (0,\frac{\pi}{2})$, set
            $\gamma^*_j(s) := \widetilde{\gamma_j}(s) $ and $ \gamma_{j-1}^*(s) := \widetilde{\gamma_{j-1}}( l_{j-1}-s )$.
            \item If $\theta_j \in (\frac{\pi}{2}, \pi)$, set $\gamma^*_j(s) := \widetilde{\gamma_{j-1}}(l_{j-1}-s) $ and $ \gamma_{j-1}^*(s) := \widetilde{\gamma_{j}}( s ) $.
        \end{enumerate}
        After appropriate shifting and rotation of $\gamma^*_j$ and $\gamma^*_{j-1}$, we are exactly in the setting of of Section \ref{sec: V delta}. Thus, there exist neighborhoods $V_{j,\delta}$ of the corners, and functions
        \[
        \lambda_j^{0/l} (\delta) = \delta  + \mathcal{O}(\delta^2) \quad \text{as }\, \delta \to 0,
        \]
        such that for each $j \in 1, \dots M$,
        \[
        \partial V_{j,\delta}\, \cap \gamma_{j-1} = \{ \gamma_{j-1}(l_{j-1} - \lambda_{j-1}^l(\delta)) \}  ,\quad \partial V_{j,\delta} \, \cap \gamma_{j} = \{ \gamma_j(\lambda_j^0(\delta)) \}.
        \]
         Moreover, denote $ \partial_* V_{j,\delta} \subsetneq \partial V_{j,\delta} $ the straight boundary segments opposite to the angle $2 \theta_j$. These segments are orthogonal to the tangential vectors at $\gamma_{j-1}(l_{j-1}- \lambda^l_{j-1}(\delta))$ and $\gamma_j(\lambda_j^0(\delta))$, as described in Lemma \ref{lem: straigthen curved kite}. 
         
         Note that the eigenvalue asymptotics of the Laplacians on $V_{j,\delta}$ is independent of the choice of  extension $\widetilde{\gamma_{j-1}},\, \widetilde{\gamma_j}$.
        \item[(ii)] \textbf{Edges:} The neighborhoods around the edges $\gamma_j$ are tubular neighborhoods $\delta$ as constructed in Section \ref{sec: W delta}. Define
        \begin{align*} I_{j,\delta} :=  (\lambda_j^0(\delta) , l_j - \lambda_j^l(\delta)), \quad \Pi_{j,\delta} := I_{j,\delta} \times (-\delta, \delta),\\
        W_{j,\delta} := \phi_{W,j}(\Pi_{j,\delta}), \quad \phi_{W,j} (s,t) = \gamma_j(s) - t \nu_j(s).
        \end{align*}
    \end{itemize}
    We further define 
        \[\Omega^*_\delta = \rr^2 \setminus( \bigcup_{j=1}^M W_{j,\delta} \cup \bigcup_{j=1}^M V_{j,\delta} ),
        \]
         and set 
         \[
         \partial_* W_{j,\delta} := \phi_{W,j}(\{ \lambda_j^0(\delta), l_j -  \lambda_j^l(\delta) \}\times(-\delta,\delta)).
         \]
        By construction, these satisfy
        \[
        \bigcup_{j=1}^M \partial_* W_{j,\delta} = \bigcup_{j=1}^M \partial_* V_{j,\delta},
        \]
        and the sets $W_{j,\delta}$, $V_{j,\delta}$, and $\Omega_\delta^*$ form a pairwise disjoint decomposition of $\rr^2$. A visualization of this decomposition is provided in Figure \ref{fig: decomposition near Gamma}. 
        
        Using the notations from Section \ref{sec: angle}, define
        \begin{align*} 
        \mathcal{K} &:= \kappa(\theta_1) + \dots \kappa(\theta_M),\\
        \mathcal{E} &:= \text{ the disjoint union of }  \{ \mathcal{E}_n(\theta_j) \mid  n = 1, \dots, \kappa(\theta_j)  \}, \,  j = 1,\dots, M,\\
        \mathcal{E}_n &:= \text{the $n$th element of } \mathcal{E} \text{ in non decreasing order}.
        \end{align*}

\begin{figure}
    \centering
        \scalebox{0.7}{\begin{tikzpicture}[x=1pt,y=1pt,yscale=-1,xscale=1]

\draw    (115.4,162.2) -- (98.6,184.2) ;
\draw    (97.4,141.4) -- (115.4,162.2) ;
\draw    (187.4,67.4) -- (184.2,92.2) ;
\draw    (187.4,67.4) -- (201.4,89) ;
\draw    (301,113) -- (277,121.8) ;
\draw    (301,113) -- (304.6,138.2) ;
\draw    (374.6,224.2) -- (347,221) ;
\draw    (347,221) -- (355.8,245.4) ;
\draw [fill={rgb, 255:red, 155; green, 155; blue, 155 }  ,fill opacity=0.2 ]   (97.4,141.4) .. controls (119.4,123) and (111,60.2) .. (187.4,67.4) ;
\draw [fill={rgb, 255:red, 155; green, 155; blue, 155 }  ,fill opacity=0.2 ]   (187.4,67.4) .. controls (229.8,42.2) and (292.6,77.4) .. (301,113) ;
\draw [fill={rgb, 255:red, 155; green, 155; blue, 155 }  ,fill opacity=0.2 ]   (301,113) .. controls (346.6,104.6) and (385.8,141) .. (374.6,224.2) ;
\draw [fill={rgb, 255:red, 155; green, 155; blue, 155 }  ,fill opacity=0.2 ]   (98.6,184.2) .. controls (149.8,227.4) and (281.4,273.4) .. (355.8,245.4) ;
\draw    (355.8,245.4) .. controls (360.67,242.5) and (364.67,240.17) .. (371,236.83) ;
\draw    (371,236.83) .. controls (371,232.83) and (373.4,229.4) .. (374.6,224.2) ;
\draw  [draw opacity=0][fill={rgb, 255:red, 155; green, 155; blue, 155 }  ,fill opacity=0.2 ] (187.4,67.4) -- (184.2,92.2) -- (115.4,162.2) -- (97.4,141.4) -- cycle ;
\draw [fill={rgb, 255:red, 255; green, 255; blue, 255 }  ,fill opacity=1 ]   (115.4,162.2) .. controls (153.8,128.6) and (139,89.4) .. (184.2,92.2) ;
\draw  [draw opacity=0][fill={rgb, 255:red, 155; green, 155; blue, 155 }  ,fill opacity=0.2 ] (187.4,67.4) -- (301,113) -- (277,121.8) -- (201.4,89) -- cycle ;
\draw [fill={rgb, 255:red, 255; green, 255; blue, 255 }  ,fill opacity=1 ]   (201.4,89) .. controls (221.8,75) and (268.2,97.4) .. (277,121.8) ;
\draw  [draw opacity=0][fill={rgb, 255:red, 155; green, 155; blue, 155 }  ,fill opacity=0.2 ] (301,113) -- (374.6,224.2) -- (347,221) -- (304.6,138.2) -- cycle ;
\draw [fill={rgb, 255:red, 255; green, 255; blue, 255 }  ,fill opacity=1 ]   (347,221) .. controls (352.2,171.4) and (340.6,136.6) .. (304.6,138.2) ;
\draw  [draw opacity=0][fill={rgb, 255:red, 155; green, 155; blue, 155 }  ,fill opacity=0.2 ] (115.4,162.2) -- (347,221) -- (355.8,245.4) -- (98.6,184.2) -- cycle ;
\draw [fill={rgb, 255:red, 255; green, 255; blue, 255 }  ,fill opacity=1 ]   (115.4,162.2) .. controls (182.6,205) and (269.8,236.2) .. (347,221) ;
\draw  [draw opacity=0][fill={rgb, 255:red, 155; green, 155; blue, 155 }  ,fill opacity=0.4 ] (97.4,141.4) -- (115.4,162.2) -- (98.6,184.2) -- (74.6,153.4) -- cycle ;
\draw [fill={rgb, 255:red, 255; green, 255; blue, 255 }  ,fill opacity=1 ]   (74.6,153.4) .. controls (75.88,153.2) and (77.11,152.95) .. (78.29,152.65) .. controls (88.87,150) and (95.24,143.92) .. (97.4,141.4) ;
\draw [fill={rgb, 255:red, 155; green, 155; blue, 155 }  ,fill opacity=0.4 ]   (74.6,153.4) .. controls (80.6,159.4) and (84.2,171) .. (98.6,184.2) ;
\draw  [draw opacity=0][fill={rgb, 255:red, 155; green, 155; blue, 155 }  ,fill opacity=0.4 ] (201.4,89) -- (187.4,67.4) -- (184.2,92.2) -- (193,92.6) -- cycle ;
\draw [fill={rgb, 255:red, 255; green, 255; blue, 255 }  ,fill opacity=1 ]   (184.2,92.2) .. controls (189,91.4) and (185.4,91.8) .. (193,92.6) ;
\draw [fill={rgb, 255:red, 255; green, 255; blue, 255 }  ,fill opacity=1 ]   (193,92.6) .. controls (195.4,90.2) and (196.2,89.8) .. (201.4,89) ;
\draw  [draw opacity=0][fill={rgb, 255:red, 155; green, 155; blue, 155 }  ,fill opacity=0.4 ] (301,113) -- (277,121.8) -- (282.6,141.8) -- (304.6,138.2) -- cycle ;
\draw [fill={rgb, 255:red, 255; green, 255; blue, 255 }  ,fill opacity=1 ]   (282.6,141.8) .. controls (287.4,141.4) and (293.33,137.83) .. (304.6,138.2) ;
\draw [fill={rgb, 255:red, 255; green, 255; blue, 255 }  ,fill opacity=1 ]   (282.6,141.8) .. controls (282.6,131) and (278.67,125.83) .. (277,121.8) ;
\draw  [draw opacity=0][fill={rgb, 255:red, 155; green, 155; blue, 155 }  ,fill opacity=0.4 ] (371,236.83) -- (355.8,245.4) -- (347,221) -- (374.6,224.2) -- cycle ;
\draw [line width=1.5]    (190,80) .. controls (227.4,55.4) and (297,93.4) .. (290,130) ;
\draw [line width=1.5]    (290,130) .. controls (365.8,102.2) and (367.4,205.8) .. (360,230) ;
\draw [line width=1.5]    (90,160) .. controls (141.4,210.6) and (288.2,261) .. (360,230) ;
\draw [line width=1.5]    (90,160) .. controls (137.4,154.2) and (113.8,67.8) .. (190,80) ;

\draw (208.67,144) node [anchor=north west][inner sep=0.75pt]   [align=left] {$\displaystyle \Omega_{\delta }^{*}$};
\draw (308,70.6) node [anchor=north west][inner sep=0.75pt]   [align=left] {$\displaystyle \Omega_{\delta }^{*}$};
\draw (220.73,192.27) node [anchor=north west][inner sep=0.75pt]  [font=\normalsize,color={black}  ,opacity=1 ] [align=left] {$\displaystyle \Gamma $};
\draw (184.47,80.93) node [anchor=north west][inner sep=0.75pt]  [font=\tiny] [align=left] {$\displaystyle V_{1,\delta }$};
\draw (95.67,158.67) node [anchor=north west][inner sep=0.75pt]  [font=\tiny] [align=left] {$\displaystyle V_{2,\delta }$};
\draw (351,232) node [anchor=north west][inner sep=0.75pt]  [font=\tiny] [align=left] {$\displaystyle V_{3,\delta }$};
\draw (280.67,130) node [anchor=north west][inner sep=0.75pt]  [font=\tiny] [align=left] {$\displaystyle V_{4,\delta }$};
\draw (150,85) node [anchor=north west][inner sep=0.75pt]  [font=\tiny] [align=left] {$\displaystyle W_{1,\delta }$};
\draw (215.33,231) node [anchor=north west][inner sep=0.75pt]  [font=\tiny] [align=left] {$\displaystyle W_{2,\delta }$};
\draw (314.33,114.67) node [anchor=north west][inner sep=0.75pt]  [font=\tiny] [align=left] {$\displaystyle W_{3,\delta }$};
\draw (222,62.33) node [anchor=north west][inner sep=0.75pt]  [font=\tiny] [align=left] {$\displaystyle W_{4,\delta }$};

\end{tikzpicture}}
        \caption{Decomposition of a neighborhood of a curve with corners}
        \label{fig: decomposition near Gamma}
\end{figure}
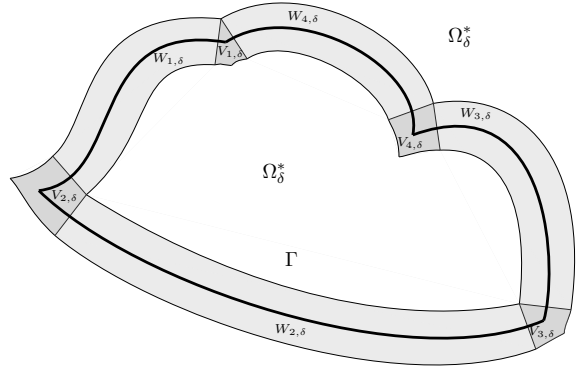

Finally,  recall that the  Schr\"{o}dinger operator with a strong $\delta$-interaction of strength $\alpha>0$ supported on a curve $\Gamma$ with $M\geq1$ corners, whose eigenvalue asymptotics we aim to derive in this final section, is the self-adjoint operator $H^\Gamma_\alpha$ defined by the sesquilinear form
    \[
    h^\Gamma_\alpha (u) = \int_{\rr^2} |\nabla u|^2 \mathrm{d}x - \alpha \int_\Gamma |u|^2 \dS , \quad D(h^\Gamma_\alpha) = H^1(\rr^2).
    \]
   Moreover, for $X \in \{ d,n \}$ and $U_{j,\delta} \in \{ V_{j,\delta},W_{j,\delta}\}$, $j=1,\ldots,M$, we define the forms
    $$ X^U_j(u) = \int_{U_{j,\delta}} | \nabla u|^2 \mathrm{d}x - \alpha \int_{U_{j,\delta} \cap \Gamma} |u|^2 \dS, \quad   D(n^U_j) = H^1(U_{j,\delta}), \, D(d^U_j) = H^1_0(U_{j,\delta})  $$
    i.e., the Dirichlet/Neumann Laplacians in $V_{j,\delta}$ / $W_{j,\delta}$ with $\delta$-interactions supported on their respective parts of $\Gamma$. 
    
    The following operators will also be needed:
    \begin{itemize}
        \item $N_0$ := Neumann Laplacian on $\Omega_\delta^*$,
        \item $D_{j,\delta}$ := Dirichlet Laplacian on $I_{j,\delta}$,
        \item $D_j$ := Dirichlet Laplacian on $(0,l_j)$,
        \item $R_j^V$:= the operator $N_j^V$ with an additional $\alpha$-Robin boundary condition on $\partial_* V_{j,\delta}$.
    \end{itemize}

\subsection{Asymptotics of corner-induced eigenvalues.} In this subsection, we prove the main result, Theorem \ref{theo: corner induced}. The following lemma summarizes key results from Section \ref{sec: V delta}.
\begin{lemma}
\label{lem: eigenvalues in corner neighborhoods}
As $\alpha \to \infty$, $\delta\to 0$, and $\alpha \delta \to \infty$, the following asymptotic relations holds:
    \begin{align*}
    E_n\big(\bigoplus_{j=1}^{M} N_j^V \big) &= \alpha^2 \mathcal{E}_n  + \mathcal{O}(\alpha^2 \delta+ \frac{1}{\delta^2}), \quad n = 1,\ldots,\mathcal{K},\\
    E_n\big(\bigoplus_{j=1}^{M} D_j^V\big) &= \alpha^2\mathcal{E}_n  + \mathcal{O}(\alpha^2 \delta + \alpha^2 e^{-c \alpha \delta}) , \quad n = 1,\ldots,\mathcal{K},\\
     E_{\mathcal{K}+1} \big(\bigoplus_{j=1}^{M} N_j^V\big) &\geq - \frac{\alpha^2}{4} + {o}(\alpha^2).
     \end{align*}
\end{lemma}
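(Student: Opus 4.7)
The plan is to reduce the lemma to the single-corner estimates already established in Corollary~\ref{lem: ev dirichlet neuman truncated curved} (and its companion for the $(\kappa(\theta)+1)$-th eigenvalue), and then to combine them through the elementary fact that the spectrum of a direct sum of self-adjoint operators is the union of the individual spectra with multiplicities preserved. For each corner index $j \in \{1,\dots,M\}$, the operator $N^V_j$ (resp.\ $D^V_j$) is precisely the operator $N^\delta_{\Gamma,\alpha}$ (resp.\ $D^\delta_{\Gamma,\alpha}$) from Section~\ref{sec: V delta} associated with the local angle $2\theta_j$ and with a suitable smooth continuation of the edges past the corner; the choice of continuation does not affect the eigenvalue asymptotics.

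First I would invoke Corollary~\ref{lem: ev dirichlet neuman truncated curved} separately for each $j$ to obtain, as $\alpha\to\infty$, $\delta\to 0^+$, $\alpha\delta\to\infty$,
\begin{align*}
E_n(N^V_j) &= \alpha^2\bigl(\mathcal{E}_n(\theta_j) + \mathcal{O}(\delta + 1/(\alpha\delta)^2)\bigr), \qquad n=1,\dots,\kappa(\theta_j),\\
E_n(D^V_j) &= \alpha^2\bigl(\mathcal{E}_n(\theta_j) + \mathcal{O}(\delta + e^{-c\alpha\delta})\bigr), \qquad n=1,\dots,\kappa(\theta_j),\\
E_{\kappa(\theta_j)+1}(N^V_j) &\geq -\tfrac{\alpha^2}{4}+o(\alpha^2).
\end{align*}
Multiplying through yields error terms $\mathcal{O}(\alpha^2\delta + 1/\delta^2)$ in the Neumann case and $\mathcal{O}(\alpha^2\delta + \alpha^2 e^{-c\alpha\delta})$ in the Dirichlet case, matching exactly the bounds claimed in the lemma.

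Next I would use that $\spec\bigl(\bigoplus_{j=1}^M N^V_j\bigr)$ (counted with multiplicities, below $\Sigma$) is obtained by merging the families $\{E_n(N^V_j):n=1,\dots,\kappa(\theta_j)\}$ over $j$ in non-decreasing order; by definition of $\mathcal{E}$ and $\mathcal{E}_n$, this merged sequence is precisely $\alpha^2\mathcal{E}_1 \le \alpha^2\mathcal{E}_2 \le \dots \le \alpha^2\mathcal{E}_{\mathcal{K}}$, up to the indicated error. The point requiring a small verification is that, in the asymptotic regime, all $\mathcal{K}$ corner-induced eigenvalues lie strictly below $-\alpha^2/4$ while the next one does not: since $\mathcal{E}_n < -1/4$ for $n\le\mathcal{K}$ by Proposition~\ref{lem: big prop}(iv)–(v), the lower-order error terms cannot push the first $\mathcal{K}$ eigenvalues above $-\alpha^2/4$ in the considered regime, so the enumeration of the direct sum indeed yields $E_n\bigl(\bigoplus_j N^V_j\bigr)=\alpha^2\mathcal{E}_n+\mathcal{O}(\alpha^2\delta+1/\delta^2)$ for $n=1,\dots,\mathcal{K}$, and the same remark gives the Dirichlet version.

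For the lower bound on $E_{\mathcal{K}+1}\bigl(\bigoplus_j N^V_j\bigr)$, note that the $(\mathcal{K}+1)$-th eigenvalue of the direct sum must equal the $(\kappa(\theta_{j_0})+1)$-th eigenvalue of some single summand $N^V_{j_0}$, which by the third bullet above is at least $-\alpha^2/4+o(\alpha^2)$. The main (very mild) obstacle is thus bookkeeping: ensuring that the asymptotic error in each corner is small enough that the sorted merging of the $M$ finite eigenvalue lists reproduces the global ordering $\mathcal{E}_1\le\dots\le\mathcal{E}_{\mathcal{K}}$ without spurious reshuffling, which follows from the fact that the leading $\mathcal{O}(1)$ terms $\mathcal{E}_n(\theta_j)$ are strictly less than $-1/4$ and fixed, whereas the error is $o(1)$ in all three parameters $\delta$, $1/(\alpha\delta)$, and $\alpha e^{-c\alpha\delta}$.
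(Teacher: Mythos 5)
Your proposal is correct and follows essentially the same route as the paper: apply Corollary~\ref{lem: ev dirichlet neuman truncated curved} corner by corner, then merge the resulting eigenvalue lists of the direct sum, using that each $\mathcal{E}_n(\theta_j)<-\tfrac14$ is a fixed constant while the errors are $o(1)\cdot\alpha^2$. The only detail the paper makes explicit that you elide is the case of obtuse corners $\theta_j\in(\tfrac{\pi}{2},\pi)$, where Section~\ref{sec: V delta} applies only to the complementary half-angle $\pi-\theta_j$ and one invokes Proposition~\ref{lem: big prop}(ii) to get $\kappa(\pi-\theta_j)=\kappa(\theta_j)$ and $\mathcal{E}_n(\pi-\theta_j)=\mathcal{E}_n(\theta_j)$; this is a one-line fix, not a gap in substance.
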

\begin{proof}
For $\theta_j \in (0,\frac{\pi}{2})$, Corollary \ref{lem: ev dirichlet neuman truncated curved} yields $E_{\mathcal{K}+1} (N_j^V) \geq - \alpha^2/4 + o(\alpha^2)$, and
$$ E_n(N_j^V) = \alpha^2 \mathcal{E}_n(\theta_j) + \mathcal{O}\big(\alpha^2 \delta + \frac{1}{\delta^2}\big) \quad \text{for } n = 1, \dots, \kappa(\theta_j). $$
For $\theta_j \in (\frac{\pi}{2} ,\pi)$ and $n = 1, \dots, \kappa(\pi-\theta_j) = \kappa(\theta_j)$,  Proposition \ref{lem: big prop}(ii) gives
$$ E_n(N_j^V) = \alpha^2 \mathcal{E}_n(\pi-\theta_j) + \mathcal{O}\big(\alpha^2 \delta + \frac{1}{\delta^2}\big) = \alpha^2 \mathcal{E}_n(\theta_j) + \mathcal{O}(\alpha^2 \delta + \frac{1}{\delta^2}).$$
The first and third identities in the lemma follow directly from these results. The second identity can be established analogously by applying Corollary \ref{lem: ev dirichlet neuman truncated curved}. 
\end{proof}
Using this lemma, we conclude the proof of Theorem \ref{theo: corner induced} by proving the following Proposition.
\begin{proposition}\label{lem: corner induced EV} As $\alpha \to \infty$, it holds that
    \begin{align}
    E_n(H^\Gamma_\alpha) &= \mathcal{E}_n \alpha^2 + \mathcal{O}(\alpha^{\frac{4}{3}}) \text{ for each } n \in \{1,\dots,\mathcal{K} \}, \label{est1}\\
    E_{n}(H^\Gamma_\alpha)&=-\dfrac{\alpha^2}{4}+o(\alpha^2) \text{ for each }n\ge \mathcal{K}+1 \label{est2}.
    \end{align}
\end{proposition}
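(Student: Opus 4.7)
The plan is to apply Dirichlet-Neumann bracketing along the partition $\R^2 = \bigcup_j V_{j,\delta}\cup\bigcup_j W_{j,\delta}\cup\Omega^*_\delta$ introduced above, with the scale chosen as $\delta := \alpha^{-2/3}$ so that the two corner error terms from Lemma \ref{lem: eigenvalues in corner neighborhoods}, namely $\alpha^2\delta$ and $1/\delta^2$, are both of order $\alpha^{4/3}$.

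For the lower bound, any $u\in H^1(\R^2)$ decomposes additively:
\[
h^\Gamma_\alpha(u) = \sum_{j=1}^M n_j^V(u|_{V_{j,\delta}}) + \sum_{j=1}^M n_j^W(u|_{W_{j,\delta}}) + \int_{\Omega^*_\delta}|\nabla u|^2\,\mathrm{d}x,
\]
since $\Gamma\subset\bigcup_j(V_{j,\delta}\cup W_{j,\delta})$ and the restriction map is an isometry of $L^2$-spaces. Lemma \ref{lem: comparing operators} then gives $H^\Gamma_\alpha \ge \bigoplus_j N_j^V \oplus \bigoplus_j N_j^W \oplus N_0$ in the min-max sense. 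By Lemma \ref{lem: eigenvalues in corner neighborhoods}, the first $\mathcal{K}$ eigenvalues of $\bigoplus_j N_j^V$ are $\alpha^2\mathcal{E}_n+\mathcal{O}(\alpha^{4/3})$, while the $(\mathcal{K}+1)$-st is $\ge -\alpha^2/4+o(\alpha^2)$; by Lemma \ref{lem: ev strip neuman} every eigenvalue of $N_j^W$ is $\ge -\alpha^2/4-\|k_j\|_\infty^2/4+o(\alpha^2)=-\alpha^2/4+o(\alpha^2)$; and $N_0\ge 0$. Since $\mathcal{E}_\mathcal{K}<-1/4$, for large $\alpha$ the first $\mathcal{K}$ eigenvalues of the direct sum are precisely those from the corner block, which yields the lower bound in \eqref{est1} and $E_n(H^\Gamma_\alpha)\ge -\alpha^2/4+o(\alpha^2)$ for $n\ge \mathcal{K}+1$.

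For the upper bound, extending functions supported in a single piece by zero to all of $\R^2$ preserves $H^1$-regularity, so Lemma \ref{lem: comparing operators} with the obvious isometric embedding gives
\[
H^\Gamma_\alpha \le \bigoplus_{j=1}^M D_j^V \oplus \bigoplus_{j=1}^M D_j^W \oplus D_0,
\]
where $D_0\ge 0$ is the Dirichlet Laplacian on $\Omega^*_\delta$. Lemma \ref{lem: eigenvalues in corner neighborhoods} supplies $\mathcal{K}$ eigenvalues of the form $\alpha^2\mathcal{E}_n+\mathcal{O}(\alpha^{4/3})$ in the first block, while Lemma \ref{lem: ev strip dirichlet} gives, for any fixed $m$, at least $m$ eigenvalues of $\bigoplus_j D_j^W$ below $-\alpha^2/4+C_m$ for some $m$-dependent constant $C_m$, since $\alpha^2 e^{-\alpha\delta/2}=o(1)$ for $\delta=\alpha^{-2/3}$. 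Because $\alpha^2\mathcal{E}_n<-\alpha^2/4-c\alpha^2$ for $n\le\mathcal{K}$ and large $\alpha$, the direct sum has at least $\mathcal{K}+n$ eigenvalues below $-\alpha^2/4+O(1)$, and its $n$-th eigenvalue agrees with $\alpha^2\mathcal{E}_n+\mathcal{O}(\alpha^{4/3})$ for $n\le \mathcal{K}$, giving both \eqref{est1} and $E_n(H^\Gamma_\alpha)\le -\alpha^2/4+o(\alpha^2)$ for $n\ge \mathcal{K}+1$. Finally, the existence of at least $\mathcal{K}+n$ discrete eigenvalues below $\inf\spece H^\Gamma_\alpha=0$ follows from the same upper bound once $\alpha$ is large enough to ensure that the aforementioned $\mathcal{K}+n$ Rayleigh quotients are strictly negative.

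The principal obstacle is the matching of the first $\mathcal{K}$ eigenvalues of the bracketing direct sums with those of the corner blocks alone; this requires the strict separation $\mathcal{E}_\mathcal{K}<-1/4$ (which follows from the definition of $\mathcal{E}$ and the fact that $\spece H^1_{\theta_j}=[-1/4,\infty)$ from Proposition \ref{lem: big prop}) to dominate the $O(1)$-correction in the edge eigenvalues. No further test-function construction is necessary for \eqref{est2}, since the Dirichlet edge eigenfunctions (extended by zero) already supply arbitrarily many vectors with Rayleigh quotient $-\alpha^2/4+O(1)$, and the $\alpha^{4/3}$ rate in \eqref{est1} is optimal within this scheme because it reflects the balance $\alpha^2\delta = 1/\delta^2$.
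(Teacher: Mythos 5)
Your proposal is correct and follows essentially the same route as the paper: Dirichlet--Neumann bracketing over the corner/edge/exterior decomposition, Lemma \ref{lem: eigenvalues in corner neighborhoods} for the corner blocks, Lemmas \ref{lem: ev strip dirichlet} and \ref{lem: ev strip neuman} for the edges, and the choice $\delta=\alpha^{-2/3}$ to balance the errors $\alpha^2\delta$ and $\delta^{-2}$. The only (immaterial) difference is that you run a single Dirichlet bracketing containing all pieces with $\delta=\alpha^{-2/3}$, whereas the paper uses only $\bigoplus_j D_j^V$ for \eqref{est1} and a separate bracketing around one edge neighborhood with $\delta=\alpha^{-1/2}$ for the upper bound in \eqref{est2}.
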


\begin{proof}
    By applying Dirichlet-Neumann bracketing, for any $n \in \mathbb{N}$ we have
    \begin{align} \label{eq: proof corner ev 1} 
    E_n\big(N_0 \oplus \big(\bigoplus_{j=1}^M N_j^V\big) \oplus\big(\bigoplus_{j=1}^M N_j^W\big)\big)     \leq \Lambda_n\big(H^\Gamma_\alpha\big) \leq E_n\big(\bigoplus_{j=1}^M D_j^V\big).
    \end{align}
    By Lemma \ref{lem: ev strip neuman}, we have the lower bound $ N_j^W \geq -\alpha^2/4 + o(\alpha^2)$ for all $j = 1,\dots M$, and trivially $N_0 \geq 0 $. Combining this with Lemma \ref{lem: eigenvalues in corner neighborhoods}, it follows that
    \[
    E_n(N_0 \oplus (\bigoplus_{j=1}^M N_j^V) \oplus(\bigoplus_{j=1}^M N_j^W))  = E_n(\bigoplus_{j=1}^M N_j^V), \quad n = 1, \dots \mathcal{K}.
    \]
    Substituting this into \eqref{eq: proof corner ev 1}, together with Lemma \ref{lem: eigenvalues in corner neighborhoods}, yields
    \[
    \Lambda_n(H^\Gamma_\alpha) = \alpha^2 \mathcal{E}_n + \mathcal{O}(\frac{1}{\delta^2} + \alpha^2 \delta + \alpha^2 e^{-c\alpha\delta}).
    \]
    Since $\mathcal{E}_n < - \frac{1}{4}$, there exists $\alpha_0 >0$ such that for all $\alpha \geq \alpha_0$ one has  $\Lambda_n(H^\Gamma_\alpha) < \Sigma(H^\Gamma_\alpha) = 0 $, meaning $\Lambda_n(H^\Gamma_\alpha)$ is in fact the  $n$th eigenvalue of $H^\Gamma_\alpha$. Consequently, choosing $\delta := \alpha^{- \frac{2}{3}}$  gives \eqref{est1}.
    
    For the second estimate, from \eqref{eq: proof corner ev 1} and Lemma \ref{lem: eigenvalues in corner neighborhoods} we obtain
    \[
    \Lambda_{\mathcal{K}+1}(H^\Gamma_\alpha) \geq \min \big\{ \Lambda_1(N_0), E_{\mathcal{K}+1}\big(\bigoplus_{j=1}^M N_j^V \big), E_1\big(\bigoplus_{j=1}^M N_j^W\big)  \big\} \geq - \frac{\alpha^2}{4}+ o (\alpha^2).
    \]
    On the other hand, using the Dirichlet bracketing around $W_{1,\delta}$ for each $n\in\N$ we obtain
    $\Lambda_n(H^\Gamma_\alpha)\le E_n(D^W_1)$, while $E_n(D^W_1)=-\frac{\alpha^2}{4}+\mathcal{O}(1)$ by Lemma \ref{lem: ev strip dirichlet} (say, for $\delta:=1/\sqrt{\alpha}$).
    By combining these estimates, for each $n\ge \mathcal{K}+1$ we obtain
    \[
    -\dfrac{\alpha^2}{4}+o(\alpha^2)\le \Lambda_n(H^\Gamma_\alpha)\le-\dfrac{\alpha^2}{4}+\mathcal{O}(1).
    \]
    As the left-hand side is negative for large $\alpha$ (hence lies below the bottom of the essential spectrum), one also has $\Lambda_n(H^\Gamma_\alpha)=E_n(H^\Gamma_\alpha)$, which concludes the proof of \eqref{est2}.    
\end{proof}
\subsection{Asymptotics of edge-induced eigenvalues}\label{ss73}
In this subsection, we prove the main result, Theorem \ref{theo: side induced}. This is accomplished by establishing suitable upper and lower bounds for $E_{\mathcal{K}+n}(H^\Gamma_\alpha)$ via the techniques developed in \cite{KOBP20}.

{}From now on, we assume that:
\[
\text{all angles $\theta_j$ are non-resonant.}
\] Moreover, we consider the following asymptotic regime for some fixed, sufficiently large $C>0$ to be specified later:
    \begin{equation} \label{eq: asymptotics} 
    \alpha \to \infty, \quad \delta =  \frac{C \log \alpha}{\alpha}\to  0,
    \end{equation}
    Under this regime, it follows that $\alpha \delta \to \infty$ and $\alpha^2 \delta^3 \to 0$.
We first derive an upper bound for $E_{\mathcal{K}+n}(H^\Gamma_\alpha)$:
\begin{proposition}
\label{lem: upper bound side induced ev}
    For any $n \in \mathbb{N}$, there exists $\alpha_0 >0$ such that for all $\alpha \geq \alpha_0$, the operator $H^\Gamma_\alpha$ has at least $\mathcal{K} +n$ discrete eigenvalues, with the following upper bound 
     \[
     E_{\mathcal{K}+n}(H^\Gamma_\alpha) \leq -\frac{\alpha^2}{4} + E_n\Big(\bigoplus_{j = 1}^M \big(D_j- \frac{k^2_j}{4} \big)\Big) + \mathcal{O} (\frac{\log \alpha}{\alpha}) .
     \]
\end{proposition}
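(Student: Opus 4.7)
My plan is a Dirichlet bracketing argument in the spirit of the proof of Proposition \ref{lem: corner induced EV}, refined using the sharper strip estimate from Lemma \ref{lem: ev strip dirichlet}. Throughout I fix $\delta := C(\log\alpha)/\alpha$ for a sufficiently large constant $C > 0$: this realizes the regime \eqref{eq: asymptotics} and, for $C \ge 6$, guarantees $\alpha^2 e^{-\alpha\delta/2} = \alpha^{2-C/2} = O((\log\alpha)/\alpha)$. I denote by $D_0$ the Dirichlet Laplacian on $\Omega_\delta^*$; it is non-negative and plays no role in the leading-order asymptotics.

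The first step is Dirichlet bracketing on the disjoint decomposition $\R^2 = \bigsqcup_{j} V_{j,\delta} \sqcup \bigsqcup_{j} W_{j,\delta} \sqcup \Omega_\delta^*$. Imposing Dirichlet conditions on all internal interfaces restricts the form domain to a subspace of $H^1(\R^2)$, and the resulting operator decouples as a direct sum; the min-max principle gives
\[
\Lambda_m(H^\Gamma_\alpha) \le \Lambda_m\Bigl(\bigoplus_{j=1}^M D_j^V \,\oplus\, \bigoplus_{j=1}^M D_j^W \,\oplus\, D_0\Bigr), \qquad m \in \nn.
\]
The second step is to bound this right-hand side at $m = \mathcal{K}+n$ by applying the min-max principle with the $(\mathcal{K}+n)$-dimensional test subspace spanned by the first $\mathcal{K}$ eigenvectors of $\bigoplus_j D_j^V$ together with the first $n$ eigenvectors of $\bigoplus_j D_j^W$. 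This yields
\[
\Lambda_{\mathcal{K}+n}\Bigl(\bigoplus_j D_j^V \oplus \bigoplus_j D_j^W \oplus D_0\Bigr) \le \max\Bigl\{E_{\mathcal{K}}\bigl(\bigoplus_j D_j^V\bigr),\; E_n\bigl(\bigoplus_j D_j^W\bigr)\Bigr\}.
\]

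For the two terms in the maximum, Lemma \ref{lem: eigenvalues in corner neighborhoods} yields $E_{\mathcal{K}}(\bigoplus_j D_j^V) = \alpha^2 \mathcal{E}_{\mathcal{K}} + O(\alpha^2\delta + \alpha^2 e^{-c\alpha\delta})$ with $\mathcal{E}_{\mathcal{K}} < -\frac{1}{4}$, hence this quantity is bounded above by $-\alpha^2/4 - c_0 \alpha^2$ for some $c_0 > 0$ and $\alpha$ large. For the edge term, applying Lemma \ref{lem: ev strip dirichlet} termwise in the direct sum (termwise majorization of families of real numbers is preserved after sorting in non-decreasing order) gives
\[
E_n\bigl(\bigoplus_j D_j^W\bigr) \le -\frac{\alpha^2}{4} + E_n\Bigl(\bigoplus_{j=1}^M \bigl(D_j - \tfrac{k_j^2}{4}\bigr)\Bigr) + O\Bigl(\frac{\log\alpha}{\alpha}\Bigr),
\]
where the choice of $\delta$ has been used to absorb $c_D(\delta + \alpha^2 e^{-\alpha\delta/2})$ into $O((\log\alpha)/\alpha)$. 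Since the latter dominates the former for large $\alpha$, the maximum is realized by this edge term, giving the asserted upper bound. Finally, because the right-hand side is strictly negative while $\inf\sigma_{\mathrm{ess}}(H^\Gamma_\alpha) = 0$, each of $\Lambda_1(H^\Gamma_\alpha),\dots,\Lambda_{\mathcal{K}+n}(H^\Gamma_\alpha)$ lies below the bottom of the essential spectrum and is therefore a genuine discrete eigenvalue, completing the claim.

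This is the easy direction of Theorem \ref{theo: side induced} and no serious obstacle arises. The only points requiring care are the tuning of $C$ in $\delta = C\log\alpha/\alpha$ so that the exponential remainder from Lemma \ref{lem: ev strip dirichlet} is swallowed by $O((\log\alpha)/\alpha)$, and the verification that the $\mathcal{K}$ corner-induced levels of $\bigoplus_j D_j^V$ sit strictly below the edge levels so that the min-max test space actually targets the edge modes at position $\mathcal{K}+n$. Notably, the non-resonance assumption is not used in this upper bound, consistent with the paper's remark that the lower bound treated in Section \ref{ss73} is the more demanding half of the argument.
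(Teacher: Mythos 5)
Your proof is correct, and it follows the same skeleton as the paper's (Dirichlet bracketing over the decomposition into $V_{j,\delta}$, $W_{j,\delta}$, $\Omega_\delta^*$, then Lemma \ref{lem: eigenvalues in corner neighborhoods} for the corner blocks and Lemma \ref{lem: ev strip dirichlet} with $\delta=C\log\alpha/\alpha$, $C\ge 6$, for the edge blocks). The one genuine difference is how you count eigenvalues in the decoupled direct sum. The paper pins down the exact position of the edge levels: it invokes the non-resonance assumption through Corollary \ref{lem: idk man i just got here} to get $E_{\mathcal{K}+1}(\bigoplus_j D_j^V)\ge -\tfrac{\alpha^2}{4}+\tfrac{c_0}{\delta^2}$, sandwiches $E_n(\bigoplus_j D_j^W)$ strictly between $E_{\mathcal{K}}(\bigoplus_j D_j^V)$ and $E_{\mathcal{K}+1}(\bigoplus_j D_j^V)$, and concludes the identity $E_{\mathcal{K}+n}\bigl(\bigoplus_j D_j^V\oplus\bigoplus_j D_j^W\bigr)=E_n\bigl(\bigoplus_j D_j^W\bigr)$. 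You instead use only the one-sided min-max estimate $\Lambda_{\mathcal{K}+n}(A\oplus B\oplus D_0)\le\max\{E_{\mathcal{K}}(A),E_n(B)\}$ obtained from the $(\mathcal{K}+n)$-dimensional test space, together with $E_{\mathcal{K}}(\bigoplus_j D_j^V)\le-\tfrac{\alpha^2}{4}-c_0\alpha^2$; this suffices for an upper bound and, as you correctly observe, dispenses with the non-resonance hypothesis for this half of Theorem \ref{theo: side induced}. Your auxiliary steps are also sound: the termwise transfer of Lemma \ref{lem: ev strip dirichlet} to the sorted union of spectra is a valid elementary fact, and the final passage from Rayleigh quotients to genuine discrete eigenvalues via negativity against $\Sigma(H^\Gamma_\alpha)=0$ matches the paper. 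What the paper's longer route buys is the exact identification of which block the $(\mathcal{K}+n)$-th level comes from (useful conceptually for the matching lower bound), whereas your route is shorter and shows the upper bound holds under weaker hypotheses.
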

\begin{proof}
    Let $n \in \mathbb{N}$. By the Dirichlet-bracketing, we have
    \begin{align}\label{eq: upper bound side induced ev1}
        \Lambda_{\mathcal{K}+n}(H^\Gamma_\alpha) \leq E_{\mathcal{K}+n}\big(\big(\bigoplus_{j =1}^M D_j^V\big)\oplus \big(\bigoplus_{j = 1}^M D_j^W\big)\big).
    \end{align} 
    Due to the non-resonance assumption, Corollary \ref{lem: idk man i just got here} guarantees the existence of a constant $c_0 >0$ such that
    $ E_{\kappa(\theta_j) +1} (D_j^V) \geq -\alpha^2/4 + \frac{c_0}{\delta^2}$ holds for each $j=1,\dots M$. Consequently,
    \[
    E_{\mathcal{K} +1} \big(\bigoplus_{j = 1}^M D_j^V\big) \geq - \frac{\alpha^2}{4} + \frac{c_0}{\delta^2}.
    \]
    Choosing $C\geq 6$ in the asymptotic regime \eqref{eq: asymptotics}, it follows from Lemma \ref{lem: ev strip dirichlet} that there exists $c_D >0$ such that
    \[
    E_n(D_j^W) \leq -\frac{\alpha^2}{4} + E_n( D_j - \frac{k_j^2}{4}) + c_D( \delta + \alpha^2 e^{-\frac{1}{2} \delta \alpha} ) \leq -\frac{\alpha^2}{4} + E_n( D_j - \frac{k_j^2}{4}) + c_D( \frac{1+C\log\alpha}{\alpha} ).
    \]
    Combining the above leads to    
    \begin{equation} E_n(\bigoplus_{j=1}^M D_j^W ) \leq - \frac{\alpha^2}{4} + E_n \left(\bigoplus_{j = 1}^M (D_j - \frac{k_j^2}{4}) \right) + \mathcal{O}(\frac{\log  \alpha}{\alpha})
    \leq -\frac{\alpha^2}{4} + \frac{c_0}{\delta^2} \leq E_{\mathcal{K}+1} (\bigoplus_{j =1}^M D_j^V ) \end{equation}
    Moreover, by Lemma \ref{lem: eigenvalues in corner neighborhoods}, for sufficiently large $\alpha$, 
    \begin{equation*}
         E_{\mathcal{K}} (\bigoplus_{j = 1}^M D_j^V) \leq -(\frac{1}{4} + \varepsilon) \alpha^2 = -\frac{\alpha^2}{4} - \alpha^2 \varepsilon  \leq E_n(\bigoplus_{j = 1}^M D_j^W)  \leq  E_{\mathcal{K}+1} (\bigoplus_{j = 1}^M D_j^V) 
    \end{equation*}
    for some $\varepsilon>0$. Together with the inequality \eqref{eq: upper bound side induced ev1}, it follows that
    $$ \Lambda_{\mathcal{K}+n}(H^\Gamma_\alpha) \leq E_{\mathcal{K} + n} ((\bigoplus_{j =1}^M D_j^V  ) \oplus ( \bigoplus_{j =1}^M  D_j^W )) = E_n(D_j^W) \leq - \frac{\alpha^2}{4} + E_n\left(\bigoplus_{j = 1}^M (D_j - \frac{k_j^2}{4})\right) + \mathcal{O}(\frac{\log  \alpha}{\alpha})$$
    We conclude the proof by noting that there exists $\alpha_0>0$ such that, for $\alpha \geq \alpha_0$, one has $\Lambda_{\mathcal{K} + n}(H^\Gamma_\alpha) <\Sigma(H^\Gamma_\alpha) =0$, and thus $E_{\mathcal{K}+n}(H^\Gamma_\alpha) = \Lambda_{\mathcal{K}+n}(H^\Gamma_\alpha)$.
\end{proof}

The remainder of this subsection is devoted to establishing a lower bound for the edge-induced eigenvalues. We begin by introducing some notation. From now on, we set
\begin{itemize}
    \item $L:=$ the subspace of $L^2(\rr^2)$ spanned by the first $\mathcal{K}$ eigenfunctions of $H^\Gamma_\alpha$,
    \item $L_j :=$ the subspace of $L^2(V_{j,\delta})$ spanned by the first $\kappa(\theta_j)$ eigenfunctions of $N_j^V$,
    \item $\sigma_j:L^2(\rr^2) \to L^2(V_{j,\delta})$ the restriction operator onto $V_{j,\delta}$.
\end{itemize}
Before proceeding to the final proof, we require two preliminary lemmas. The first concerns the distance between the subspaces $ \sigma^*_j L_j $ and $L$.
\begin{lemma}\label{lem: estimate for subspaces}
    For any $j \in \{1,\dots , M \}$ and under the asymptotics regime \eqref{eq: asymptotics}, there exists $c>0$ such that
    $d( \sigma_j^* L_j, L) = \mathcal{O}(e^{- c \alpha \delta})$.
\end{lemma}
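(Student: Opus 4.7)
The plan is to construct functions $\varphi_n\in D(H^\Gamma_\alpha)$ that are simultaneously exponentially close in $L^2(\R^2)$ to the zero-extensions of a basis of $L_j$ and that serve as quasimodes for $H^\Gamma_\alpha$ with exponentially small residue; the conclusion then follows from Proposition~\ref{lem: distance between subspaces} combined with the triangle inequality~\eqref{lem: triangular inequality distance subspaces}.

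Fix $j$ and let $\psi_1,\dots,\psi_{\kappa(\theta_j)}$ be an orthonormal basis of $L_j$, corresponding to the first $\kappa(\theta_j)$ eigenvalues $\mu_n:=E_n(N_j^V)$. Let $\chi_\delta$ be a cutoff function on $V_{j,\delta}$ as provided by Lemma~\ref{lem: cuttoff function for V delta}: $\chi_\delta\equiv 1$ on $V_{j,a\delta}$, $\supp\chi_\delta\subset V_{j,b\delta}$, $\|\partial^\beta\chi_\delta\|_\infty\le C\delta^{-|\beta|}$, and $\partial_\nu\chi_\delta=0$ on $\Gamma$. Set $\varphi_n:=\chi_\delta\psi_n$, extended by zero to $\R^2$. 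Since $\chi_\delta$ vanishes near $\partial V_{j,\delta}$, $\varphi_n\in H^1(\R^2)$; and because $\psi_n$ fulfills the transmission condition $[\partial_\nu\psi_n]_\Gamma=\alpha\psi_n$ on $\Gamma\cap V_{j,\delta}$ while $\chi_\delta$ is smooth across $\Gamma$ with vanishing normal derivative, the jump $[\partial_\nu\varphi_n]_\Gamma=\chi_\delta\,[\partial_\nu\psi_n]_\Gamma=\alpha\varphi_n$ is preserved, so $\varphi_n\in D(H^\Gamma_\alpha)$ with
\begin{equation*}
(H^\Gamma_\alpha-\mu_n)\varphi_n=-(\Delta\chi_\delta)\psi_n-2\nabla\chi_\delta\cdot\nabla\psi_n
\end{equation*}
a.e.\ on $\R^2$, and supported in $V_{j,b\delta}\setminus V_{j,a\delta}$, where $|x|\gtrsim\delta$ by Lemma~\ref{lem: norm estimate in V_ t}. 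Inserting the bounds on $\partial^\beta\chi_\delta$ into the weighted Agmon estimate of Lemma~\ref{lem: agmon truncated curved} yields
\begin{equation*}
\|(H^\Gamma_\alpha-\mu_n)\varphi_n\|_{L^2(\R^2)}\le C\,\frac{\alpha^2}{\delta^2}\,e^{-c\alpha\delta},
\end{equation*}
which, under the regime~\eqref{eq: asymptotics} with $C$ in $\delta=C\log\alpha/\alpha$ sufficiently large, is $\mathcal{O}(e^{-c''\alpha\delta})$. The same Agmon bound shows that the Gram matrix of $\{\varphi_n\}$ equals $I+\mathcal{O}(e^{-c\alpha\delta})$ (so its smallest eigenvalue $\lambda$ is bounded below by $1/2$ for large $\alpha$) and that $\|\sigma_j^*\psi_n-\varphi_n\|_{L^2(\R^2)}^2=\int|(1-\chi_\delta)\psi_n|^2=\mathcal{O}(e^{-c\alpha\delta})$.

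By Proposition~\ref{lem: corner induced EV} the first $\mathcal{K}$ eigenvalues of $H^\Gamma_\alpha$ equal $\mathcal{E}_n\alpha^2+\mathcal{O}(\alpha^{4/3})$ and lie strictly below $-\alpha^2/4$, while $E_{\mathcal{K}+1}(H^\Gamma_\alpha)=-\alpha^2/4+o(\alpha^2)$; hence one may choose a compact interval $I$ containing all $\mu_n$ and all of $E_1(H^\Gamma_\alpha),\dots,E_{\mathcal{K}}(H^\Gamma_\alpha)$ but at distance $\eta\gtrsim\alpha^2$ from $\spec(H^\Gamma_\alpha)\setminus I$. Applying Proposition~\ref{lem: distance between subspaces} to $A=H^\Gamma_\alpha$ with $L_1=\mathrm{span}\{\varphi_n\}$ and $L_2=L$ gives $d(L_1,L)\le\varepsilon\eta^{-1}\sqrt{\kappa(\theta_j)/\lambda}=\mathcal{O}(e^{-c'\alpha\delta})$; combining this with $d(\sigma_j^*L_j,L_1)=\mathcal{O}(e^{-c\alpha\delta})$ via~\eqref{lem: triangular inequality distance subspaces} yields the claim. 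The main technical obstacle is to keep the polynomial $\alpha^2/\delta^2$ produced by two derivatives of $\chi_\delta$ dominated by $e^{-c\alpha\delta}$---this is what forces $C$ to be chosen large in $\delta=C\log\alpha/\alpha$---while the other delicate point, namely that $\varphi_n$ belongs to $D(H^\Gamma_\alpha)$, is handled precisely by property~(5) of Lemma~\ref{lem: cuttoff function for V delta}.
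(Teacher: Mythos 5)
Your proposal is correct and follows essentially the same route as the paper's proof: the quasimodes $\chi_\delta\psi_n$ built from the cutoff of Lemma \ref{lem: cuttoff function for V delta}, membership in $D(H^\Gamma_\alpha)$ via the vanishing normal derivative, the Agmon estimate of Lemma \ref{lem: agmon truncated curved} to control the commutator terms and the Gram matrix, the spectral gap from Proposition \ref{lem: corner induced EV} to get $\eta\gtrsim\alpha^2$ in Proposition \ref{lem: distance between subspaces}, and the triangle inequality \eqref{lem: triangular inequality distance subspaces}. Your residual bound $\alpha^2\delta^{-2}e^{-c\alpha\delta}$ is slightly coarser than the paper's $\alpha\delta^{-1}e^{-c\alpha\delta}$, but the polynomial prefactor is absorbed exactly as you indicate.
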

\begin{proof}
Let $j \in \{1,\dots,M \}$ be fixed. Set $ \Upsilon_j : = \sigma^*_j L_j $ and $v^* :=  \sigma^*_j v$ for $v \in L^2(V_{j, \delta})$. Our goal is to estimate $d(\Upsilon, L)$ using the triangular inequality \eqref{lem: triangular inequality distance subspaces}:
\begin{align} \label{eq: ladidadida}
d(\Upsilon_j, L ) \leq d(\Upsilon_j, \Upsilon_j^\chi) + d(\Upsilon_j^\chi , L),
\end{align}
where $\Upsilon_j^\chi$ is an intermediate subspace defined via a suitable cutoff.

Recall from Lemma \ref{lem: cuttoff function for V delta} that there exist constants $0<a<A<1$ and a cutoff function $\chi_\delta \in C^2(\rr^2) $ such that for all $j \in \{1,\dots, M \}$:
\begin{itemize}
    \item $0 \leq \chi_\delta \leq 1$, with $\chi_\delta \equiv 1$ in $V_{j,a \delta}$ and $\chi_\delta \equiv 0 $ in $\rr^2 \setminus \overline{V_{j, A \delta}}$,
    \item for all $\beta \in \mathbb{N}^2 $ with $1 \leq | \beta | \leq 2$, there exists $C_0 > 0 $ such that $ \|\partial^\beta \chi_\delta\| \leq  C_0 \delta^{- | \beta |}$, and the normal derivative of $\chi_\delta$ vanishes on $\Gamma$.
\end{itemize}
Moreover, there exists $a_0 > 0$ such that $ |x-A_j| > a_0 \delta $ for all $x \in V_{j,\delta} \setminus \overline{V_{j, a \delta}} $. We then define the subspace $\Upsilon_j^\chi$ by
\[ 
\Upsilon_j^\chi : = \{ \chi_\delta v^* :\, v^* \in \Upsilon_j \} \subseteq L^2(\rr^2).
\]
We first estimate $d(\Upsilon_j, \Upsilon_j^\chi)$. By definition, we have
\[
d(\Upsilon_j, \Upsilon_j^\chi) = \sup_{0 \neq v^* \in \Upsilon_j} \frac{\|v^* - P_j v^*\|}{\|v^*\|},
\]
where $P_j:  L^2(\rr^2) \rightarrow \Upsilon_j^\chi$ denotes orthogonal projector. Due to the construction of $\chi$, for any $v^* \in \Upsilon_j$, 
\[
\|v^* - \chi_\delta v^*\|_{L^2(\mathbb{R}^2)} = \|(1 - \chi_\delta) v^*\|_{L^2(\mathbb{R}^2)} \leq  \|v^*\|_{L^2(V_{j,\delta} \setminus \overline{V_{j, a\delta}})}.
\]
Now, we use the Agmon estimate from Lemma \ref{lem: agmon truncated curved} for the first $\kappa(\theta_j)$ eigenfunctions of $N_j^V$ to obtain constants $b_,B > 0$ such that for all $v \in L_j$,
\[
\int_{V_{j,\delta}} e^{b \alpha |x-A_j |} (\frac{1}{\alpha^2} | \nabla v|^2 + | v |^2) \mathrm{d}x \leq B \| v  \|^2_{L^2(V_{j, \delta})}.
\]
Therefore,
\begin{align*}
    \int_{V_{j, \delta} \setminus \overline{V_{j, a \delta}}} (\frac{1}{\alpha^2} |\nabla v|^2 + | v |^2 ) \mathrm{d} x &=  \int_{V_{j, \delta} \setminus \overline{V_{j, a \delta}}} e^{-b \alpha |x-A_j |} \cdot e^{b \alpha |x-A_j |} (\frac{1}{\alpha^2} |\nabla v|^2 + | v^\ast |^2 ) \mathrm{d} x \\
   &  \leq e^{- b \alpha a_0 \delta} \int_{V_{j, \delta} \setminus \overline{V_{j, a \delta}}} e^{b \alpha |x-A_j | } (\frac{1}{\alpha^2} |\nabla v|^2 + | v^\ast |^2 ) \mathrm{d} x \\ 
     &\leq e^{- b \alpha a_0 \delta} \int_{V_{j, \delta} } e^{b \alpha |x-A_j | } (\frac{1}{\alpha^2} |\nabla v|^2 + | v^\ast |^2 ) \mathrm{d} x 
\leq Be^{- b \alpha a_0 \delta} \| v  \|^2_{L^2(V_{j,\delta})}.
\end{align*}
Setting $c:= (b a_0)/2$, this implies
\[
\int_{V_{j, \delta} \setminus \overline{V_{j, a \delta}}} (\frac{1}{\alpha^2} |\nabla v|^2 + | v |^2 ) \mathrm{d} x \leq Be^{- 2 c \alpha \delta} \| v  \|^2_{L^2(V_{j,\delta})}.
\]
In particular, 
\begin{align*}
 \| v^* - \chi_\delta v^* \|^2_{L^2(\rr^2)} \leq  \|v^*\|_{L^2(V_{j,\delta} \setminus \overline{V_{j, a\delta}})}&=\|v\|_{L^2(V_{j,\delta} \setminus \overline{V_{j, a\delta}})}\leq B e^{-2 c \alpha \delta} \| v  \|^2_{L^2(V_{j,\delta})} 
= B e^{-2 c \alpha \delta} \| v^*  \|^2_{L^2(\rr^2)}.
\end{align*}
Hence,
\[
\frac{\| v^* - P_j v^*\|}{\| v^*\|} =  \frac{\inf_{u \in \Upsilon^{\chi}_{j}} \| v^* - u\|}{\| v*\|} \leq \frac{\| v^* - \chi_\delta v^*\|}{\| v^*\|} = \frac{\|(1- \chi_\delta) v^*\|}{\| v^*\|} \leq \sqrt{B} e^{- c \alpha \delta},
\]
and thus
\begin{align}\label{IN1}
    d(\Upsilon_j, \Upsilon_j^\chi) = \sup_{0 \neq v^* \in \Lambda_j} \frac{\| v^* - P_j v^*\|}{\| v^*  \|} \leq \sqrt{B} e^{-c \alpha \delta}.
\end{align}

We now turn to estimate $d(\Upsilon_j^\chi, L)$. Let $\psi_1, \ldots \psi_{\kappa(\theta_j)}$ denote the first $\kappa(\theta_j)$ eigenfunctions of $N_j^V$ associated with $E_1, \ldots, E_{\kappa(\theta_j)}$. Define $\tilde{\psi}_n := \chi_\delta \psi_n$ for $n=1,\ldots,\kappa(\theta_j)$. Since $\psi_n$ is an eigenfunction of $N_j^V$, we have
\[
- \Delta \psi_n = E_n  \psi_n \,\,\text{ in } V_{j,\delta}, \quad \alpha(\partial_\nu \psi_n^+ - \partial_\nu\psi_n^- ) = \frac{1}{2}\big( \psi_n^+ + \psi_n^- \big)\,\, \text{ on } \Gamma \cap V_{j,\delta},
\]
where $\partial_\nu$ is the normal derivative and we recall that $\psi_n^{\pm}$ denotes the restriction of $\psi_n$ onto $\Omega_\pm$. As $\chi_\delta $ is smooth with a support contained in $V_{j,\delta}$, we have $\tilde{\psi_n} \in H^1(\rr^2)$. Moreover, we have 
\begin{align*}
\Delta \tilde\psi_n &= \Delta \chi_\delta \cdot  \psi_n + 2  \nabla \chi_\delta \cdot \nabla \psi_n  + E_n \chi_\delta \psi_n \in L^2(\rr^2),
\end{align*}
and since $\partial_\nu\chi_\delta$ vanishes on $\Gamma$,  
\begin{align*}
\alpha\big( \partial_\nu(\tilde{\psi_n})^+ - \partial_\nu(\tilde{\psi_n})^- \big)& = 
\alpha\left( \partial_\nu \chi_\delta \psi_n^+   + \partial_\nu \psi_n^+ \chi_\delta  - \chi_\delta\partial_\nu   \psi_n^-   - \chi_\delta\partial_\nu\psi_n^-  \right)  =
\frac{1}{2}( \tilde{\psi_n^+} + \tilde{\psi_n^-} ) 
\end{align*}
holds on  $\Gamma \cap V_{j,\delta}$. Hence,  $\tilde \psi_n\in D(H^\Gamma_\alpha)$, and one easily shows that $\tilde{\psi}_1,\dots, \tilde{\psi}_{\kappa(\theta_j)}$ are linearly independent. Thus, by Proposition \ref{lem: distance between subspaces}, we have
\begin{align*}
d(\Upsilon_j^\chi, L) \leq \frac{\varepsilon}{\eta} \sqrt{\frac{\kappa(\theta_j)}{\lambda}}
\text{ with }
\varepsilon= \max_{n} \| (H^\Gamma_\alpha - E_n ) \tilde\psi_n\|, \ \eta = \frac{1}{2} \text{dist}(I, \spec(H^\Gamma_\alpha)\setminus I), 
\end{align*}
with $I$ an interval containing $E_1, \ldots , E_{\kappa(\theta_j)}$,  and
\[
\lambda = \text{the smallest eigenvalue of the Gram matrix } ( \langle \tilde\psi_k, \tilde\psi_l \rangle)_{k,l}. 
\]
We are now going to construct a suitable $I$ and estimate $\varepsilon$, $\lambda$, and $\eta$. We first estimate $\varepsilon$.  For this, we compute the norm of $ (H^\Gamma_\alpha - E_n)\tilde{\psi}_n = -(\Delta \chi_\delta) \psi_n - 2 \langle \nabla \chi_\delta , \nabla \psi_n\rangle $. Since the supports of $\nabla \chi_\delta$ and $\Delta \chi_\delta$ lie in $ V_{j, A\delta} \setminus \overline{V_{j, a\delta}} $, by the Agmon estimate, we obtain
\begin{align*}
\int_{\rr^2} | (\Delta \chi_\delta) \psi_n|^2 \mathrm{d}x \leq \frac{C_0^2}{\delta^4} \int_{V_{j, b \delta} \setminus \overline{V_{j,a \delta}}} | \psi_n |^2 \mathrm{d}x  \leq \frac{B C_0^2}{\delta^4} e^{-2c \alpha \delta } \| \psi_n \|^2_{L^2(V_{j,\delta})} =\frac{B C_0^2}{\delta^4} e^{-2c \alpha \delta };\\
 \int_{\rr^2} | \nabla \chi_\delta \cdot \nabla \psi_n |^2 \mathrm{d}x \leq \int_{\rr^2} | \nabla \chi_\delta |^2 |\nabla \psi_n|^2  \mathrm{d}x \leq \frac{C_0^2}{\delta^2} \int_{V_{j,A\delta} \setminus \overline{V_{j,a\delta}}}  |\nabla\psi_n|^2 \mathrm{d}x \leq \frac{B C_0^2 \alpha^2}{\delta^2} e^{-2 c\alpha \delta}.
 \end{align*}
Since $1/\delta^2 = o (\alpha/\delta)$ as $\alpha\to\infty$, it follows that $ \varepsilon = \| (H^\Gamma_\alpha - E_n)\tilde\psi_n\|_{L^2} = \mathcal{O}(\frac{\alpha}{\delta} e^{-c\alpha\delta})$.

Let us now estimate the smallest eigenvalue of the Gram matrix:
\begin{align*}
    | \langle \tilde{ \psi}_k , \tilde{\psi}_n \rangle_{L^2(\rr^2)} -  \langle \psi_k, \psi_n \rangle_{L^2(\rr^2)} | &= \left|\,\int_{\rr^2} (\chi_\delta^2 -1 ) \psi_k \psi_n \mathrm{d}x\right| \leq \int_{V_{j, A\delta} \setminus \overline{V_{j, a\delta}}} |\psi_k \psi_n| \mathrm{d}x\\
    &\leq \frac{1}{2} \bigg(\int_{V_{j, A\delta} \setminus \overline{V_{j, a\delta}}} |\psi_k|^2 \mathrm{d}x + \int_{V_{j, A\delta} \setminus \overline{V_{j, a\delta}}} | \psi_n |^2 \mathrm{d}x \bigg)\leq B e^{-2c\alpha \delta}.
\end{align*}
This implies that $\langle \tilde{\psi}_k, \tilde{\psi}_n \rangle_{L^2(\rr^2)} = \delta_{k,n} + \mathcal{O}(e^{-2c\alpha \delta})$. By standard perturbation arguments,  the lowest eigenvalue satisfies $\lambda = 1 + \mathcal{O}(e^{-2c\alpha \delta})$, ensuring that $\tilde{\psi}_1,\dots,\tilde{\psi}_{\kappa(\theta)}$ are linearly independent. 

Finally, choose the interval $I:= ( \alpha^2 (\mathcal{E}_1 - h), \alpha^2(\mathcal{E}_\mathcal{K} +h))$ with $h :=(- 1/4 - \mathcal{E}_{\mathcal{K}} )/2$ and $\mathcal{E}_1, \ldots, \mathcal{E}_{\mathcal{K}}$ as in Lemma \ref{lem: corner induced EV}. Then, $\{ E_1, \dots , E_{\kappa(\theta)}  \} \subset I $, and Lemma \ref{lem: corner induced EV} guarantees that $ \eta \geq (h \alpha^2)/8$. Combining these estimates, we get
\[
d(\Upsilon_j^\chi, L) \leq \frac{\varepsilon}{\eta} \sqrt{\frac{\kappa(\theta_j)}{\lambda}} \leq \frac{8\mathcal{O}(\frac{\alpha}{\delta} e^{-2c\alpha\delta} )}{\alpha^2h} \cdot \sqrt{\frac{\kappa(\theta_j)}{1+ \mathcal{O}(e^{-2c\alpha\delta})}} = \mathcal{O}(\frac{e^{-2c \alpha \delta}}{\alpha \delta}).
\]
This, together with \eqref{IN1} and \eqref{eq: ladidadida}, implies the desired result and completes the proof.
\end{proof}
With the help of Lemma \ref{lem: estimate for subspaces}, we can now derive norm and trace estimates similar to those stated in Corollary \ref{lem: upper bound LT in V delta }.
\begin{lemma}
\label{lem: upper bound orthogonal complement in R2}
Let $u \in H^1(\rr^2)$ satisfy $u\perp L $. Then there exist $b,c >0$ such that, under the asymptotic regime \eqref{eq: asymptotics} and for any $ j \in \{1,\dots,M \}$, the following estimates hold:
    \begin{align*}
        \| \sigma_j u\|^2_{L^2(V_{j,\delta})} &\leq b \delta^2(n_j^V(\sigma_j u) + \frac{\alpha^2}{4} \|\sigma_j u\|^2_{L^2(V_{j,\delta})} ) + b \alpha^2 \delta^2 e^{-c \alpha \delta} \| u \|^2_{L^2};\\
        \int_{\partial_* V_{j,\delta} } |\sigma_j u|^2 \dS &\leq b \alpha \delta^2(n_j^V(\sigma_j u) + \frac{\alpha^2}{4} \|\sigma_j u\|^2_{L^2(V_{j,\delta})} ) + b \alpha^3 \delta^2 e^{-c \alpha \delta} \| u \|^2_{L^2}.
        \end{align*}
\end{lemma}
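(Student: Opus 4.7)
My plan is to decompose $v:=\sigma_j u\in L^2(V_{j,\delta})$ as $v=v_\parallel+v_\perp$ with $v_\parallel\in L_j$ and $v_\perp\in L_j^\perp$ (orthogonality in $L^2(V_{j,\delta})$), then apply Corollary \ref{lem: upper bound LT in V delta } to $v_\perp$ (which is where the non-resonance hypothesis enters) and control the exceptional piece $v_\parallel$ using the subspace distance estimate of Lemma \ref{lem: estimate for subspaces} together with the orthogonality $u\perp L$.

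The first step is to bound $\|v_\parallel\|$. Since $v_\perp\perp v_\parallel$ in $L^2(V_{j,\delta})$ one has
\[
\|v_\parallel\|^2=\langle v,v_\parallel\rangle_{L^2(V_{j,\delta})}=\langle u,\sigma_j^* v_\parallel\rangle_{L^2(\rr^2)}=\langle u,(I-P_L)\sigma_j^* v_\parallel\rangle_{L^2(\rr^2)},
\]
where $P_L$ is the orthogonal projector onto $L$ and the last equality uses $u\perp L$. Cauchy--Schwarz combined with $d(\sigma_j^*L_j,L)=\mathcal{O}(e^{-c\alpha\delta})$ from Lemma \ref{lem: estimate for subspaces} then yields $\|v_\parallel\|\le C e^{-c\alpha\delta}\|u\|$.

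Next I would turn the estimates of Corollary \ref{lem: upper bound LT in V delta } into estimates on $v$ itself. Since $v_\perp\in H^1(V_{j,\delta})\cap L_j^\perp$, that corollary yields $\|v_\perp\|^2\le b\delta^2\bigl(n_j^V(v_\perp)+\tfrac{\alpha^2}{4}\|v_\perp\|^2\bigr)$ and the analogous trace bound. Because $L_j$ is a spectral subspace of $N_j^V$, the form diagonalizes, $n_j^V(v)=n_j^V(v_\parallel)+n_j^V(v_\perp)$, so
\[
n_j^V(v_\perp)+\tfrac{\alpha^2}{4}\|v_\perp\|^2=\bigl(n_j^V(v)+\tfrac{\alpha^2}{4}\|v\|^2\bigr)-\bigl(n_j^V(v_\parallel)+\tfrac{\alpha^2}{4}\|v_\parallel\|^2\bigr).
\]
By Lemma \ref{lem: eigenvalues in corner neighborhoods} the eigenvalues $E_k(N_j^V)$ for $k\le\kappa(\theta_j)$ lie below $-\alpha^2/4$ with a uniform gap $-\varepsilon\alpha^2$, while Proposition \ref{lem: big prop}(i) bounds them from below by $-C\alpha^2$; hence $\bigl|n_j^V(v_\parallel)+\tfrac{\alpha^2}{4}\|v_\parallel\|^2\bigr|\le C\alpha^2\|v_\parallel\|^2\le C'\alpha^2 e^{-2c\alpha\delta}\|u\|^2$. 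Writing $\|v\|^2=\|v_\parallel\|^2+\|v_\perp\|^2$ and using that $\alpha^2\delta^2$ is bounded below under the regime \eqref{eq: asymptotics} to absorb the lower-order exponentially small contributions produces the first claimed inequality.

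For the trace bound I would split $|v|^2\le 2|v_\parallel|^2+2|v_\perp|^2$ on $\partial_*V_{j,\delta}$. The $v_\perp$-contribution is controlled by the trace part of Corollary \ref{lem: upper bound LT in V delta } combined with the form identity above. For $v_\parallel$, which lives in the finite-dimensional eigenspace $L_j$, I would combine the Agmon estimate of Lemma \ref{lem: agmon truncated curved} with a trace inequality on a tubular neighborhood of $\partial_*V_{j,\delta}$ of width $\sim\delta$ lying inside $V_{j,\delta}$; since this neighborhood sits at distance $\gtrsim\delta$ from the corner $A_j$, the Agmon weight produces an additional factor $e^{-c'\alpha\delta}$, giving $\int_{\partial_*V_{j,\delta}}|v_\parallel|^2\dS\le C\alpha^2\delta\,e^{-c'\alpha\delta}\|v_\parallel\|^2\le C''\alpha^3\delta^2 e^{-c''\alpha\delta}\|u\|^2$ after using $\alpha\delta\ge 1$. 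The main technical obstacle I anticipate is precisely this last step: tracking how the constant in the trace inequality scales with the width $\delta$ of the annular region and then balancing the unwanted $1/\delta$ factor against the Agmon weight so that it is absorbed into the exponentially small remainder. The other ingredients slot in fairly mechanically once Corollary \ref{lem: upper bound LT in V delta } and Lemma \ref{lem: estimate for subspaces} are in hand.
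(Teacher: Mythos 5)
Your proposal is correct and, for the norm estimate, follows essentially the same route as the paper: decompose $v=\sigma_j u$ into its component $v_\parallel$ in $L_j$ and the orthogonal remainder, bound $\|v_\parallel\|\lesssim e^{-c\alpha\delta}\|u\|$ via the subspace distance of Lemma \ref{lem: estimate for subspaces} together with $u\perp L$ (your duality computation $\|v_\parallel\|^2=\langle u,(I-P_L)\sigma_j^*v_\parallel\rangle$ is a clean equivalent of the paper's operator-norm identity $\|\sigma_j^*P_j\sigma_j(1-P)\|=d(\sigma_j^*L_j,L)$), apply Corollary \ref{lem: upper bound LT in V delta } to $v_\perp$, and use that $L_j$ is a spectral subspace so that $n_j^V$ diagonalizes and $n_j^V(v_\parallel)=\mathcal{O}(\alpha^2)\|v_\parallel\|^2$. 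The one place you genuinely diverge is the boundary term for $v_\parallel$: the paper gets $\int_{\partial_*V_{j,\delta}}|v_\parallel|^2\,\dS\leq\frac{1}{\alpha}\bigl(n_j^V(v_\parallel)+b_3\alpha^2\|v_\parallel\|^2\bigr)$ directly from the Robin lower bound $R^\delta_{\Gamma,\alpha}\geq-c\alpha^2$ of Lemma \ref{lem: lower bound robin bc V delta} (the same device used for $v_\perp$ inside Corollary \ref{lem: upper bound LT in V delta }), whereas you propose the Agmon estimate of Lemma \ref{lem: agmon truncated curved} combined with a $\delta$-scaled trace inequality on a collar around $\partial_*V_{j,\delta}$. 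Your route works and yields the stated $\alpha^3\delta^2e^{-c\alpha\delta}$ bound after absorbing the $1/\delta$ trace constant with $\alpha\delta\to\infty$, but it costs you the extra scaled trace inequality (with uniformity of the Lipschitz constants of the collar), which is exactly the technical overhead the Robin-form argument avoids; since Lemma \ref{lem: lower bound robin bc V delta} is already available, that is the shorter path. A minor point: for the bound $|n_j^V(v_\parallel)+\tfrac{\alpha^2}{4}\|v_\parallel\|^2|\lesssim\alpha^2\|v_\parallel\|^2$ you should cite the eigenvalue asymptotics $E_n(N_j^V)=\mathcal{O}(\alpha^2)$ from Lemma \ref{lem: eigenvalues in corner neighborhoods} (or Corollary \ref{lem: ev dirichlet neuman truncated curved}) rather than Proposition \ref{lem: big prop}(i), which concerns the whole-plane operator $H^\alpha_\theta$; the fact you need is true and proved in the paper, so this is a citation slip, not a gap.
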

\begin{proof}
    Consider the orthogonal projectors
    $P: L^2(\rr^2) \rightarrow L$, $P_j:L^2(V_{j,\delta}) \rightarrow L_j$
    and set, for $u \perp L$,
    \[
    v = \sigma_ju \in L^2(V_{j,\delta}),\quad v_P = P_jv \in L^2(V_{j,\delta}), \quad v_0 =v  - v_P = (1-P_j) v \in L^2(V_{j,\delta}).
    \]
    Our goal is to estimate the norm of $v$ by splitting $ \| v  \|^2 = \| v_P\|^2 + \| v_0\|^2 $. 
    An upper bound for $ \| v_0 \|$ can be derived using Corollary \ref{lem: upper bound LT in V delta } as $v_0$ is orthogonal to $L_j$. To estimate for $\| v_P  \|$, we use that $u = (1-P)u $, hence
    \begin{align} \label{eq: upper bound orthogonal complement in R2 1} 
    \| v_P  \|_{L^2(V_{j,\delta})} = \| \sigma_j^* P_j v \|_{L^2(R^2)} = \|\sigma^*_j P_j \sigma_j (1-P)u\|_{L^2(V_{0j,\delta})} \leq \| \sigma_j^* P_j \sigma_j(1-P)\| \| u \|_{L^2(\rr^2)} 
    \end{align}
    Note that $\sigma_j^* P_j \sigma_j: L^2(\rr^2) \to \sigma_j^* L_j $ is the orthogonal projector onto $ \sigma^*_j L_j$. Using Lemma \ref{lem: estimate for subspaces}, there exists $c>0$ such that
    \[
    \| \sigma_j^* P_j \sigma_j(1-P)\| = \| \sigma_j^* P_j \sigma_j-\sigma_j^* P_j \sigma_jP)\| =d(\sigma_j^* L_j, L_j)  = \mathcal{O}(e^{-\frac{c}{2}\alpha \delta}).
    \]
    Combining this with \eqref{eq: upper bound orthogonal complement in R2 1}, for some $b_0 >0$ we have
    $\| v_P\|_{L^2(V_{j,\delta})}^2 \leq b_0 e^{-c\alpha\delta} \| u \|^2_{L^2(\rr^2)}$.
    Hence, by Corollary \ref{lem: upper bound LT in V delta }, there also exists $b_1 >0$ such that
    \begin{equation} \label{eq: upper bound orthogonal complement in R2 2} 
    \| v  \|^2 = \|v_0\|^2 + \| v_P  \|^2 \leq b_1 \delta^2 (n_j^V(v_0) + \frac{\alpha^2}{4} \|v_0\| ) + b_0^2 e^{-2c \alpha \delta} \| u  \|^2.
    \end{equation}
    To estimate $n_j^V(v_0)$, recall that since $P_j$ is a spectral projector for $N_j^V$,  we have $ n_j^V(v) = n_j^V(v_0) + n_j^V(v_p) $. Furthermore, as $v_P \in L_j$, we have
    \[
    E_1(N_j^V)\| v_P  \|^2_{L^2()V_{j,\delta}} \leq n_j^V(v_P) \leq E_{\kappa(\theta_j)}(N_j^V) \| v_P  \|^2_{L^2(V_{j,\delta})}.
    \]
    By Lemma \ref{lem: eigenvalues in corner neighborhoods}, eigenvalues satisfy $E_n(N_j^V) = \mathcal{O}(\alpha^2) $. Hence, for some $b_2 > 0$, 
    \[
    |n_j^V(v_P)| \leq b_2 \alpha^2 e^{-2 c \alpha\delta} \| u \|^2_{L^2(\rr^2)},
    \]
    which implies
    \begin{align} \label{eq: upper bound orthogonal complement in R2 3} 
      n_j^V(v_0) \leq n_j^V(v) + b_2 \alpha^2 e^{-2c \alpha \delta} \| u \|^2_{L^2(\rr^2)}.
    \end{align}
    Substituting \eqref{eq: upper bound orthogonal complement in R2 3} into \eqref{eq: upper bound orthogonal complement in R2 2} gives: 
    \begin{align*}
         \| v  \|^2_{L^2(V_{j,\delta})} &\leq b_1 \delta^2 (n_j^V (v) + \frac{\alpha^2}{4} \|v_0\|^2_{L^2(V_{j,\delta})}) + (b_2 b_1 \alpha^2 \delta^2 e^{-2c \alpha \delta} + b_0^2 e^{-2c\alpha \delta}) \| u \|^2_{L^2(\rr^2)}\\
          &\leq b_1 \delta^2( n_j^V (v) + \frac{\alpha^2}{4} \| v  \|^2_{L^2(V_{j,\delta})}) + b_0 \alpha^2 \delta^2 e^{-2c \alpha \delta} \| u \|^2_{L^2(\rr^2)} 
    \end{align*}
    which yields the first claimed estimate
    
    For the second estimate, observe that
    \[
    \int_{\partial_* V_{j,\delta}} | v |^2 \dS = \int_{\partial_* V_{j,\delta}} | v_p + v_0|^2 \leq 2 \int_{\partial_* V_{j,\delta}} |v_P |^2 \dS + 2 \int_{\partial_* V_{j,\delta}} |v_0 |^2 \dS .
    \]
    Using  Corollary \ref{lem: upper bound LT in V delta } and estimate \eqref{eq: upper bound orthogonal complement in R2 3}, we obtain
    \begin{align*}
         \int_{\partial_* V_{j,\delta}} |v_0 |^2 \dS &\leq b_1 \alpha \delta^2 (n_j^V(v_0) + \frac{\alpha^2}{4} \|v_0\|^2_{L^2(V_{j,\delta})}) \\
        & \leq b_1 \alpha \delta^2 (n_j^V(v) + \frac{\alpha^2}{4} \| v  \|^2_{L^2(V_{j,\delta})}) + b_1b_2 \alpha^3 \delta^2 e^{-2c\alpha\delta} \| u \|^2_{L^2(\rr^2)} 
    \end{align*}
    For the integral involving $v_P$, by Lemma \ref{lem: lower bound robin bc V delta}, there exists $b_3 >0$ such that
    \[
    n_j^V(v_P) - \alpha \int_{\partial_* V_{j,\delta}} |v_P |^2 \dS = \int_{V_{j,\delta}} |\nabla v_P|^2 \mathrm{d}x- \alpha \int_{\Gamma \cap V_{j,\delta} } |v_P |^2 \dS = r_j^V(v_P) \geq -b_3 \alpha^2 \| v_P  \|^2_{L^2(V_{j,\delta})}.
    \]
    Applying Corollary \ref{lem: upper bound LT in V delta } and previous estimates for $\| v_P \|^2$ and $ n_j^V(v_P) $ yields, for some $ b_4 >0$,
    \[ \int_{\partial_* V_{j,\delta}} |v_P |^2 \dS \leq \frac{1}{\alpha} (n_j^V(v_P) + b_3 \alpha^2 \| v_P  \|^2_{L^2(V_{j,\delta})}) \leq  b_4 \alpha e^{-2c \alpha\delta} \| u \|^2_{L^2(\rr^2)}.
    \]
    Collecting the above estimates concludes the proof of the lemma.
\end{proof}
Thanks to Proposition \ref{lem: upper bound side induced ev}, we conclude the proof of Theorem \ref{theo: side induced} by proving the following proposition. 
\begin{proposition}\label{lem: lower bound side induced}
 For any fixed $n \in \mathbb{N}$,  under the asymptotic regime \ref{eq: asymptotics} it holds that
    \[
    \Lambda_{\mathcal{K} + n} (H^\Gamma_\alpha) \geq - \frac{\alpha^2}{4} + E_n\big(\bigoplus_{j = 1}^M (D_j- \frac{k_j^2}{4}) \big) + \mathcal{O}\left(\frac{\log \alpha}{\sqrt{\alpha}}\right).
    \]
\end{proposition}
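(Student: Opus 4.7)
The plan is to apply Proposition~\ref{lem: comparing operators but fancy} to compare the shifted restricted operator $B := (H^\Gamma_\alpha + \alpha^2/4)|_{L^\perp}$ on $\mathcal{H} := L^\perp \subset L^2(\rr^2)$, whose $n$th Rayleigh quotient equals $\Lambda_{\mathcal{K}+n}(H^\Gamma_\alpha) + \alpha^2/4$, with the effective one-dimensional operator $B' := \bigoplus_{j=1}^M(D_j - k_j^2/4)$ on $\mathcal{H}' := \bigoplus_{j=1}^M L^2(0, l_j)$. Proposition~\ref{lem: comparing operators but fancy} will give $\Lambda_n(B') \leq \Lambda_n(B) + \mathcal{O}(\varepsilon_1 + \varepsilon_2)$, which after rearrangement yields the claimed lower bound. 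Note that $\Lambda_n(B) = \mathcal{O}(1)$ as $\alpha\to\infty$ by the upper bound already obtained in Proposition~\ref{lem: upper bound side induced ev}, so a constant $c_0$ independent of $\alpha$ can be chosen so that the estimate provided by Proposition~\ref{lem: comparing operators but fancy} is effective.

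I define $J: D(b)\cap L^\perp \to D(b')$ in three steps: first restrict $u$ to each tube via $\sigma^W_j u$; then apply the transverse projector $P_j$ of Lemma~\ref{lem: ev strip neuman} to obtain $w_j := P_j\sigma^W_j u \in H^1(I_{j,\delta})$; finally multiply by a cutoff $\eta_j \in C^1([0, l_j])$ equal to one on the bulk of $I_{j,\delta}$ and vanishing near the endpoints $\{0, l_j\}$ over a transition zone of width $\rho>0$ (to be chosen), and extend $(Ju)_j := \eta_j w_j$ by zero. This ensures $(Ju)_j \in H^1_0(0, l_j)$. The orthogonal decomposition $\|u\|^2 = \sum_j\|\sigma_j u\|^2 + \sum_j(\|w_j\|^2 + \|z_j\|^2) + \|u|_{\Omega^*_\delta}\|^2$ (with $z_j$ the transverse-mode residual from the proof of Lemma~\ref{lem: ev strip neuman}) yields
\[
\|u\|^2 - \|Ju\|^2 = \sum_j\|\sigma_j u\|^2 + \sum_j\|z_j\|^2 + \|u|_{\Omega^*_\delta}\|^2 + \sum_j\int_{I_{j,\delta}}(1-\eta_j^2)|w_j|^2 \,\mathrm{d}s.
\]
The corner pieces $\|\sigma_j u\|^2$ are absorbed into $\delta^2 b(u)$ via Lemma~\ref{lem: upper bound orthogonal complement in R2}, which is where both the orthogonality $u \perp L$ and the non-resonance hypothesis enter. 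The residuals $\|z_j\|^2$ are controlled by $\mathcal{O}(\delta^2)\, b(u)$ via the spectral gap $E_2(T^\beta_{\delta,\alpha}) + \alpha^2/4 \gtrsim 1/\delta^2$ built into the proof of Lemma~\ref{lem: ev strip neuman}. The piece on $\Omega^*_\delta$ is controlled by $(4/\alpha^2)\, b(u)$ through the trivial bound $b(u) \geq (\alpha^2/4)\|u|_{\Omega^*_\delta}\|^2$. Finally the cutoff defect, supported on a set of measure $\mathcal{O}(\rho)$, is estimated by a trace-Sobolev argument linking $|w_j|$ near the endpoints of $I_{j,\delta}$ to $\int_{\partial_*V_{j,\delta}}|u|^2 \dS$, which in turn is bounded by $\alpha\delta^2\,b(u)$ plus exponentially small terms through the second assertion of Lemma~\ref{lem: upper bound orthogonal complement in R2}.

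For the form discrepancy, expanding $b'(Ju) = \sum_j\int[|(\eta_j w_j)'|^2 - (k_j^2/4)\eta_j^2|w_j|^2]\,\mathrm{d}s$ and subtracting the lower bound of Lemma~\ref{lem: ev strip neuman} produces the correction $\sum_j\int[a_N\delta|w_j'|^2 + \eta_j'^2|w_j|^2 + 2|\eta_j\eta_j'||w_j||w_j'|]\,\mathrm{d}s$, plus corner residuals handled again by Lemma~\ref{lem: upper bound orthogonal complement in R2}; each cutoff-derivative term is controlled by the same trace mechanism as above. Balancing the dominant errors $\mathcal{O}(\delta)$, $\mathcal{O}(\alpha\delta^2/\rho)$ and $\mathcal{O}(\rho)$ with $\delta = C\log\alpha/\alpha$ (as in \eqref{eq: asymptotics}) and $\rho \sim 1/\sqrt\alpha$ produces $\varepsilon_1,\varepsilon_2 = \mathcal{O}(\log\alpha/\sqrt\alpha)$, and Proposition~\ref{lem: comparing operators but fancy} gives the claim. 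The principal technical obstacle is precisely this balancing: the $H^1_0$-requirement in $B'$ forces $\eta_j$ to vanish at $\partial I_{j,\delta}$, and the resulting terms $\int \eta_j'^2|w_j|^2 \lesssim \rho^{-1}\sup_{\text{trans}}|w_j|^2$ can only be absorbed through the boundary trace bound on $\partial_*V_{j,\delta}$, where the non-resonance hypothesis is indispensable; optimizing the trade-off between the $\rho^{-1}$ blow-up and the $\alpha\delta^2$ gain from the trace inequality is exactly what dictates the $\log\alpha/\sqrt\alpha$ rate.
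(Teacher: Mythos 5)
Your proposal follows essentially the same route as the paper: the abstract comparison is Proposition~\ref{lem: comparing operators but fancy} applied on $\mathcal{H}=L^\perp$ with the transverse projectors $P_j$ of Lemma~\ref{lem: ev strip neuman}, the corner contributions and the traces on $\partial_* V_{j,\delta}$ are absorbed exactly as you describe via Lemma~\ref{lem: upper bound orthogonal complement in R2} (this is indeed where $u\perp L$ and non-resonance enter), and the final balancing gives $\varepsilon_1,\varepsilon_2=\mathcal{O}(\log\alpha/\sqrt\alpha)$. The only technical difference is your construction of $J$: the paper enforces the Dirichlet condition on $I_{j,\delta}$ by subtracting $(P_ju)(\tau_j^0)\chi_j^0+(P_ju)(\tau_j^l)\chi_j^l$ with \emph{fixed} cutoffs and trading the cross terms via $(x+y)^2\le(1+\varepsilon)x^2+2y^2/\varepsilon$ (then passing from $I_{j,\delta}$ to $(0,l_j)$ with Lemma~\ref{lem: make it Laplace - k^2/4 }), whereas you use a multiplicative cutoff over a transition zone of width $\rho$ — this works, but note that with $\rho\sim 1/\sqrt\alpha$ your dominant error $\alpha\delta^2/\rho$ is $\log^2\alpha/\sqrt\alpha$, so you should take $\rho\sim\log\alpha/\sqrt\alpha$ to recover the stated rate.
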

\begin{proof}
Let $n \in \mathbb{N}$. The main idea is to apply Proposition \ref{lem: comparing operators but fancy} with the following choices:
\begin{align*}
    \mathcal{H} &:= L^\perp \text{ in } L^2(\rr^2), \quad \mathcal{H}' := \bigoplus_{j =1}^M L^2(I_{j,\delta}),\\
    B&:= H^\Gamma_\alpha + \frac{\alpha^2}{4} + A_0\frac{\log \alpha}{\alpha}\quad\text{with} \quad D(b) = H^1(\rr^2) \cap \mathcal{H}, \\
    B' &= \bigoplus_{j =1}^M (D_{j,\delta} - \frac{k^2_j}{4} )\quad\text{with} \quad D(b') = \bigoplus_{j=1}^M H^1_0(I_{j,\delta}),
\end{align*} 
where $A_0 >0$ is a constant to be chosen later. We also construct a suitable linear map $J: D(B)\mapsto D(B')$ and $\varepsilon_1, \varepsilon_2 >0$ such that the following hold: 
\begin{enumerate}
    \item[(A)] $B \geq - \frac{k^2_{max}}{4}$ and $ \Lambda_n(B)=\mathcal{O}(1)$ as $\alpha\to \infty$, where $ k_{max} = \max ( \|k_1\|_\infty, \dots, \| k_M \|_\infty ) $;
    \item[(B)] $ \varepsilon_1 < 1/(1+ \Lambda_n(B) + \frac{k^2_{max}}{4}) $;
    \item[(C)] $\| u \|^2 - \| Ju \|^2 \leq \varepsilon_1 (b(u) + \| u \|^2 (1+ \frac{k^2_{max}}{4}))$ for all $u\in D(B)$;
    \item[(D)] $ b'(Ju)- b(u) \leq \varepsilon_2 (b(u) + \| u \|^2(1+ \frac{k^2_{max}}{4}))$ for all $u\in D(B)$.
\end{enumerate}
Then it follows that 
\[
E_n(B') \leq \Lambda_n(B) + \frac{(\varepsilon_1 \Lambda_n(B) + \varepsilon_2)(\Lambda_n(B) +1 + \frac{k^2_{max}}{4})}{1-\varepsilon_1(\Lambda_n(B)+1+\frac{k^2_{max}}{4})}.
\]
Ensuring $\varepsilon_1 , \varepsilon_2 = \mathcal{O}\big(\frac{\log \alpha}{\sqrt{\alpha}}\big)$ together with Lemma \ref{lem: make it Laplace - k^2/4 } yields
\[
\Lambda_{\mathcal{K}+n}(H^\Gamma_\alpha) = \Lambda_n(B)- \frac{\alpha^2}{4} + \mathcal{O}\left(\frac{\log \alpha}{\alpha}\right) \geq - \frac{\alpha^2}{4} +  E_n\big( \bigoplus_{j=1}^M (D_j - \frac{k_j^2}{4}) \big) + \mathcal{O}\left(\frac{\log \alpha}{\sqrt \alpha}\right).
\]
To construct $J$ and find suitable $\varepsilon_1$ and $\varepsilon_2$, for any $u \in \mathcal{H}$, denote the restriction:
\begin{align*} 
v_j &:= \text{restriction of } u \text{ onto } V_{j,\delta}, \quad \| v_j  \|:= \| v_j  \|_{L^2(V_{j,\delta})},\\
 w_j &:= \text{restriction of } u \text{ onto } W_{j,\delta}, \quad \| w_j  \|:= \| w_j  \|_{L^2(W_{j,\delta})} \\
 u_* &:= \text{restriction of } u \text{ onto } \Omega_\delta^*, \quad \| u_* \|:= \| u_* \|_{L^2(\Omega^*_\delta)}.
\end{align*}
We begin by verifying (A). Using the quadratic form of $B$,
\begin{align*}
b(u) = \int_{\rr^2} |\nabla u|^2 \mathrm{d}x - \alpha \int_{\Gamma} |u|^2 \dS + \frac{\alpha^2}{4} \int_{\rr^2} |u|^2 \mathrm{d}x + A_0\frac{\log \alpha}{\alpha} \| u \|^2_{L^2(\rr^2 )} ,
\end{align*}
we split this as 
\begin{align}\label{eq: endzeit b geq}
\begin{split}
b(u) \geq & \sum_{j=1}^M (n_j^V(v_j) + \frac{\alpha^2}{4}\| v_j  \|^2)  + \sum_{j=1}^M (n_j^W(w_j) + \frac{\alpha^2}{4} \| w_j  \|^2) + \frac{\alpha^2}{4} \| u_* \|^2 + A_0\frac{\log \alpha}{\alpha} \| u \|^2_{L^2(\rr^2)},
\end{split}
\end{align}
where $n_j^V$ and $n_j^W$ are the Neumann forms on $V_{j,\delta}$ and $W_{j,\delta}$ respectively. We first estimate the terms containing $v_j$.
Using Lemma \ref{lem: upper bound orthogonal complement in R2}, there exist $a_0, c >0$ such that suitable norm and trace estimates for $v_j$ hold. Under the asymptotic regime \eqref{eq: asymptotics} with a constant $C \geq 3/c$, these estimates simplify to
\begin{align*}
\| v_j  \|^2 &\leq a_0 \frac{\log^2 \alpha}{\alpha^2} (n_j^V(v_j) + \frac{\alpha^2}{4}\| v_j  \|^2 ) + a_0 \frac{\log^3 \alpha}{\alpha^3}\| u \|^2_{L^2(\rr^2)},\\
\int_{\partial_* V_{j,\delta}} | v_j|^2 \dS &\leq a_0 \frac{\log^2 \alpha}{\alpha} (n_j^V(v_j) + \frac{\alpha^2}{4}\| v_j  \|^2 ) + a_0 \frac{\log^3}{\alpha^2} \| u \|^2_{L^2(\rr^2)}.
\end{align*}
From these, it follows that for each $j \in \{1,\dots M \}$,
\begin{align}\label{eq: fin n_j^W + alpha^2/4 lower bound}
n_j^V(v_j) + \frac{\alpha^2}{4} \| v_j  \|^2 \geq \frac{1}{2a_0} \bigg(\frac{\alpha^2}{\log^2 \alpha} \| v_j  \|^2 + \frac{\alpha}{\log^2\alpha}\int_{\partial_* V_{j,\delta}} | v_j|^2 \dS  \bigg) - \frac{ \log \alpha}{\alpha} \| u \|^2_{L^2(\rr^2)}.
\end{align}
Next, the $w_j$ terms are estimated using Lemma \ref{lem: ev strip neuman}.There exist $a_N,\beta >0$ such that
\[
n_j^W(w_j) \geq (1-a_N \frac{\log \alpha}{\alpha}) \| P_ju'\|^2 + \int_{I_{j,\delta}} (-\frac{\alpha^2}{4} - \frac{k_j^2}{4}) | P_j u |^2 \dS - a_N\frac{1+C\log \alpha}{\alpha} \| P_j u \|^2,
\]
where $ P_j: \mathcal{H} \to L^2(I_{j,\delta})$ is defined by
\[
 (P_ju)(s) = \int_{-\delta}^\delta \overline{\psi(t)}w_j (\phi_j (s,t)) \sqrt{1- tk_j(s) }  \mathrm{ d}t,
 \]
 with $\psi \in H^1(-\delta,\delta) $ a normalized eigenfunction associated with the first eigenvalue of  $T^\beta_{\delta,\alpha}$, where $T^\beta_{\delta,\alpha}$ defined in Definition \ref{def t X}. Applying the Cauchy-Schwarz inequality and normalization of $\psi$ gives
\begin{align*}
    \| P_j u \|^2 &= \int_{I_{j,\delta}} \left| \int_{-\delta}^\delta \overline{\psi(t)}  
w_j(\phi_j(s,t)) \sqrt{1-tk_j(s)} \text{ d}t \right|^2 \mathrm{d}s \leq  \int_{W_{j,\delta}}| w_j |^2 \mathrm{d}x = \| w_j \|^2.
\end{align*}
Hence,
\begin{align} \label{eq: fin n_j^V + alpha^2/4 lower bound}
\begin{split}
n_j^W(w_j) + \frac{\alpha^2}{4} \geq & (1-a_N \frac{\log \alpha}{\alpha}) \| P_j u' \|^2 + \frac{\alpha^2}{4} (\| w_j  \|^2 - \| P_j  \|^2) \\
&- \int_{I_{j,\delta}} \frac{k_j^2}{4} | P_j u |^2 \dS - a_N \frac{(C + 1)\log \alpha}{\alpha} \| w_j  \|^2.
\end{split}
\end{align}
Plugging  \eqref{eq: fin n_j^V + alpha^2/4 lower bound} and \eqref{eq: fin n_j^W + alpha^2/4 lower bound} into \eqref{eq: endzeit b geq}, and choosing $A_0:= M + (C + 1)a_N $, we obtain
\begin{align}\label{eq: endzeit  b geq 2}
\begin{split}
b(u) \geq \frac{\alpha^2}{2 a_0\log^2 \alpha} \sum_{j = 1}^M  \| v_j  \|^2 + \frac{\alpha}{2 a_0 \log^2 \alpha} \sum_{j=1}^M \int_{\partial_* V_{j,\delta}} | v_j|^2 \dS + (1- a_N\frac{\log \alpha}{\alpha}) \sum_{j = 1}^M \| P_j u' \|^2 \\
+ \frac{\alpha^2}{4} \sum_{j = 1}^M(\| w_j  \|^2 - \| P_j u \|^2) - \sum_{j =1}^M \int_{I_{j,\delta}} \frac{k_j^2}{4} | P_j u |^2 \dS + \frac{\alpha^2}{4} \| u_* \|^2.
\end{split}
\end{align}
Since
\begin{align} \label{eq: endzeit lower bound k^2max /4} 
- \sum_{j =1}^M \int_{I_{j,\delta}} \frac{k_j^2}{4} | P_j u |^2 \dS \geq - \sum_{j=1}^M \frac{\| k_j \|^2_\infty}{4} \| P_j  \|^2 \geq -\frac{k^2_{max}}{4} \sum_{j=1}^M \| w_j  \|^2 \geq - \frac{k^2_{max}}{4} \| u \|^2_{L^2(\rr^2)},
\end{align}
we conclude that $B \geq -\frac{k^2_{max}}{4}$, establishing the first par of (A). Together with Proposition \ref{lem: upper bound side induced ev}, it follows that $\Lambda_n(B)=\mathcal{O}(1)$ as $\alpha \to \infty$. That is, we can choose a $\lambda_n \in \rr$ independent of $\alpha$ such that
\[
-\frac{k^2_{max}}{4} \leq \Lambda_n(B) \leq \lambda_n, \quad \frac{1}{1+\Lambda_n(B) + \frac{k^2_{max}}{4}} \geq \frac{1}{1+ \lambda_n + \frac{k_{max}^2}{4}},
\]
which completes the proof of (A). Note that this also implies (B) once we show that $\varepsilon_1 = \mathcal{O}(\frac{\log \alpha}{\sqrt{\alpha}})$.

Let us now construct the linear map $J$. Let $\chi_j^{0},\chi_j^{l} \in C^1([0,l_j])$ satisfy
\[
\chi_j^l(s) = \begin{cases}
    1, & s \text{ near  }0, \\0, & s \text{ near }l_j.
\end{cases} \quad , \quad \chi_j^0 = 1- \chi_j^l,
\]
and fix $\chi_0 >0$ such that
 \[
 \|\chi^{l/0}_j \|_\infty +\| (\chi^{l/0}_j)' \|_\infty \leq \chi_0, \quad \text{ for all } \, j = 1, \ldots,M.
\]
Recall $ I_{j,\delta} = ( \lambda_j^0(\delta) , l_j - \lambda^l_j(\delta) ) := (\tau_j^0, \tau_j^l)$ and $ \tau_j^0 = \mathcal{O}(\delta) $, $ \tau_j^l = l_j - \mathcal{O}(\delta)$, and evaluate at the endpoints accordingly:
 \[
 \chi_j^l(\tau_j^0) =1, \quad \chi_j^l(\tau_j^0)=0,\quad \chi_j^0(\tau_j^1) = 0 , \quad \chi^0_j(\tau_j^l)=1,
 \]
for large enough $\alpha$. Define the linear map $J: D(B) \rightarrow D(B')$, $Ju = (J_ju)$, where 
\[
(J_ju)(s) := (P_j u) (s) -(P_j u) (\tau_j^0)\chi_j^0(s)-(P_j u) (\tau_j^l)\chi_j^l(s).
\]
Before proving (B)-(D), let us first show some useful estimates. Using the Cauchy-Schwarz inequality and properties of $\psi_j$, we obtain
\begin{align}\label{eq: endzeit Pj leq 1}
\begin{split}
 | P_ju(\tau_j^0)|^2 + |P_j u( \tau_j^l)|^2 =& \left(\int_{-\delta}^\delta \overline{\psi_j} \sqrt{1- t k_j(s)} w_j(\phi_j(\tau_j^0 ,t)) \mathrm{d}t \right)^2\\
 &\qquad + \left(\int_{-\delta}^\delta \overline{\psi_j} \sqrt{1- t k_j(s)} w_j(\phi_j(\tau_j^l ,t)) \mathrm{d}t \right)^2
\\ 
\leq & \int_{-\delta}^\delta | w_j(\phi_j(\tau_j^0,t)) |^2 (1- tk_j(s)) \mathrm{d}t +\int_{-\delta}^\delta | w_j(\phi_j(\tau_j^l,t)) |^2 (1- tk_j(s)) \mathrm{d}t\\
 \leq & 2 \left(\int_{-\delta}^\delta |w_j(\phi_j(\tau_j^0,t))|^2 +\int_{-\delta}^\delta | w_j(\phi(\tau_j^l,t))|^2  \right) =2 \int_{\partial_* W_{j,\delta}}| w_j |^2 \dS.
 \end{split}
\end{align}
Using the inequality $ (x+y)^2 \geq (1-\varepsilon)x^2 +y^2/\varepsilon $ for some $\varepsilon >0$, it follows that
\begin{align*}
    \| J_ju\|^2 &= \int_{I_{j,\delta}} |P_ju (s) - (P_ju)(\tau_j^0) \chi_j^0(s)  -(P_ju)(\tau_j^l) \chi_j^l(s) |^2 \mathrm{d}s \\
    & \geq (1- \varepsilon) \int_{I_{j,\delta}} |P_ju|^2 \mathrm{d}s - \frac{1}{\varepsilon} \int_{I_{j,\delta}} |(P_ju)(\tau_j^0) \chi_j^0(s)  + (P_ju)(\tau_j^l) \chi_j^l(s) |^2 \mathrm{d}s.
\end{align*}
 Using  \ref{eq: endzeit Pj leq 1}  we obtain
\begin{align*}
    \int_{I_{j,\delta}} |(P_j u) (\tau_j^0)\chi_j^0(s)+(P_j u) (\tau_j^l)\chi_j^l(s)|^2 \mathrm{d}s &\leq 2 \int_{I_{j,\delta}} |(P_j u) (\tau_j^0)\chi_j^0(s)|^2 +|(P_j u) (\tau_j^l)\chi_j^l(s)|^2 \\
    & \leq 2 l_j \chi_0^2 (|P_ju(\tau_j^0)|^2 + |P_j u( \tau_k^l)|^2 ) \leq  4l_j \chi_0^2 \int_{\partial_* W_{j,\delta}}| w_j |^2 \mathrm{d}s .
\end{align*}
Thus, taking $ l = \max_j l_j$ yields
\[
\| J_ju \|^2 \geq (1-\varepsilon) \| P_j u \|^2 - \frac{4 l \chi_0^2}{\varepsilon} \int_{\partial_* W_{j,\delta}}| w_j |^2 \mathrm{d}s.
\]
We can now verify (C) and complete the proof of (B). Note that
\begin{align*}
    \| u \|^2 - \| Ju \|^2 &= \sum_{j =1}^M \| v_j  \|^2 + \sum_{j =1}^M \| w_j  \|^2 +\| u_* \|^2 - \sum_{j =1}^M \| J_ju \|^2  \\
    &\leq  \sum_{j=1}^M \| v_j  \|^2+ \sum_{j=1}^M \| w_j  \|^2 + \| u_* \|^2 - (1- \varepsilon) \sum_{j=1}^M \| P_j u \|^2 + \frac{4 l \chi_0}{\varepsilon} \sum_{j=1}^M \int_{\partial_* W_{j,\delta}}| w_j |^2 \mathrm{d}s \\
    &\leq \sum_{j = 1}^M \| v_j  \|^2 + \sum_{j=1}^M (\| w_j  \|^2 - \| P_j u \|^2) +  \| u_* \|^2 + \frac{4 l \chi_0}{\varepsilon} \sum_{j=1}^M \int_{\partial_* V_{j,\delta}} | v_j|^2 \mathrm{d}s + \varepsilon \| u \|^2.
\end{align*}
Combining this with \eqref{eq: endzeit  b geq 2} and \eqref{eq: endzeit lower bound k^2max /4}, we get
\[
\| u \|^2 - \| Ju \|^2 \leq \left(\frac{2a_0 \log^2 \alpha}{\alpha^2} + \frac{4}{\alpha^2} + \frac{8 l \chi_0^2 a_0 \log^2 \alpha}{\alpha \varepsilon} \right) \left(b(u)+ \frac{k^2_{max}}{4}\right) + \varepsilon\| u \|^2.
\]
Choosing $\varepsilon = \frac{ \log \alpha}{\sqrt \alpha}$ yields a constant $c_1 >0$ such that
\[
\| u \|^2 - \| Ju \|^2 \leq \frac{c_1 \log \alpha}{\sqrt \alpha} \left(b(u) + (1+\frac{k^2_{max}}{4}) \| u \|^2, \right) 
\]
which proves (B) and (C) with $\varepsilon_1:=c_1\varepsilon= \mathcal{O}(\frac{\log \alpha}{\sqrt{\alpha}})$.

We are now going to verify (D). Let us estimate $b'(Ju)$. Using the inequality $(x+y)^2 \leq (1+ \varepsilon)x^2 + 2\frac{y^2}{\varepsilon} $ for any $x,y \in \rr$ and $\varepsilon \in (0,1)$:
\begin{align*}
     \|(J_ju)'\|^2 &= \int_{I_{j,\delta}} | (P_ju)'(s) - P_ju(\tau_j^0) (\chi_j^0)'(s) - P_j u( \tau_k^l) (\chi_j^l)'(s) |^2 \mathrm{d}s \\
     &\leq (1+\varepsilon) \int_{I_{j,\delta}} |(P_j u)'(s)|^2 \mathrm{d}s + \frac{2}{\varepsilon }  \int_{I_{j,\delta}} | P_ju(\tau_j^0) (\chi_j^0)'(s) +P_j u( \tau_k^l) (\chi_j^l)'(s)|^2 \mathrm{d}s
\end{align*}
By the same arguments as before, we have
\begin{align*}
    \int_{I_{j,\delta}} |P_ju(\tau_j^0) (\chi_j^0)'(s) + P_j u( \tau_k^l) (\chi_j^l)'(s) |^2 \mathrm{d}s \leq 2 \int_{I_{j,\delta}} |P_ju(\tau_j^0) (\chi_j^0)'(s)|^2 + | P_j u( \tau_k^l) (\chi_j^l)'(s)|^2 \mathrm{d}s \\
     \leq 2l_j \chi_0 2 \left(|P_j u(\tau_j^0) |^2 +| P_j u(\tau_j^l) |^2\right) \leq 4l \chi_0^2 \int_{\partial_* W_{j,\delta}}| w_j |^2 \mathrm{d}s.
\end{align*}
To estimate the remaining part of $b'(Ju)$, we use the identity $(x+y)^2 \geq (1-\varepsilon)x^2-\frac{1}{\varepsilon} y^2 $ for $x,y \in \rr$ and $ \varepsilon > 0$,
\begin{align*}
     \int_{I_{j,\delta}} \frac{k^2_j(s)}{4} | J_j u |^2 \mathrm{d}s &\geq (1-\varepsilon)\int_{I_{j,\delta}} \frac{k^2_j(s)}{4} | P_j u |^2 \mathrm{d}s - \frac{1}{\varepsilon} \int_{I_{j,\delta}} \frac{k^2_j(s)}{4} | P_ju(\tau_j^0) (\chi_j^0)'(s) + P_j u( \tau_k^l) (\chi_j^l)'(s)|^2 \mathrm{d}s \\
   & \geq (1-\varepsilon)\int_{I_{j,\delta}} \frac{k^2_j(s)}{4} | P_j u |^2 \mathrm{d}s - \frac{\| k_j\|^2_\infty}{4\varepsilon} \int_{ I_{j,\delta}} |P_ju(\tau_j^0) (\chi_j^0)'(s) + P_j u( \tau_k^l) (\chi_j^l)'(s) |^2 \mathrm{d}s \\
   &  \geq (1-\varepsilon)\int_{I_{j,\delta}} \frac{k^2_j(s)}{4} | P_j u |^2 \mathrm{d}s - \frac{ l \chi_0^2 k^2_{max}}{\varepsilon} \int_{ \partial_* W_{j,\delta}}| w_j |^2 \dS.
\end{align*}
In summary, we have the following inequalities for $B$ and $B'$:
\begin{align*}
&b'(Ju) \leq (1+\varepsilon) \sum_{j=1}^M \| (P_ju)'\|^2 + \frac{(8 + k^2_{max})l \chi_0^2 }{\varepsilon} \sum_{j=1}^M \int_{\partial_* W_{j,\delta}}| w_j |^2 \dS - (1-\varepsilon) \sum_{j=1}^M \int_{I_{j,\delta}} \frac{k_j(s)}{4} | P_j u |^2 \mathrm{d}s;\\
 &b(u) + \frac{k_{max}^2}{4} \| u \|^2_{L^2(\rr^2)} \geq \big(1- a_N\frac{\log \alpha}{\alpha}\big) \sum_{j =1 }^M \| P_ju'\|^2 +  \frac{\alpha}{2a_0\log^2 \alpha} \sum_{j=1}^M \int_{W_{j,\delta}}| w_j |^2\dS;\\
 &b(u) \geq \big(1- a_N\frac{\log \alpha}{\alpha}\big) \sum_{j =1 }^M \| P_ju'\|^2 + \frac{\alpha}{2a_0\log^2 \alpha} \sum_{j=1}^M \int_{W_{j,\delta}}| w_j |^2\dS
- \sum_{j=1}^M \int_{I_{j,\delta}} \frac{k_j^2(s)}{4} | P_j u |^2\mathrm{d}s.
\end{align*}
Therefore,
\begin{align*}
    b'(Ju) &- b(u) \leq  (1+\varepsilon) \sum_{j=1}^M \|(P_ju)'\|^2 + \frac{(8 + k^2_{max})l \chi_0^2 }{\varepsilon} \sum_{j=1}^M \int_{\partial_* W_{j,\delta}} |w_j|^2 \dS   + \sum_{j=1}^M \int_{I_{j,\delta}} \frac{k_j^2(s)}{4} |P_j u|^2\mathrm{d}s \\
     &\qquad-(1-\varepsilon) \sum_{j=1}^M \int_{I_{j,\delta}} \frac{k^2_j(s)}{4} |P_j u|^2 \mathrm{d}s -\left(1- a_N\frac{\log \alpha}{\alpha}\right) \sum_{j =1 }^M \|P_ju'\|^2 \\
    \leq &\left(\varepsilon + a_N \frac{\log \alpha}{\alpha}\right) \sum_{j=1}^M \|P_ju'\|^2 + \frac{(8 + k^2_{max})l \chi_0^2 }{\varepsilon} \sum_{j=1}^M \int_{\partial_* W_{j,\delta}} |w_j|^2 \dS + \varepsilon \frac{k^2_{max}}{4} \| u \|^2_{L^2(\rr^2)} \\
    \leq& \left( \frac{\varepsilon + a_N \frac{\log \alpha}{\alpha}}{1- a_N \frac{\log \alpha}{\alpha}} + \frac{(8+k_{max}^2)l \chi_0^2}{\varepsilon} \cdot \frac{2 a_0 \log^2 \alpha}{\alpha} \right) \left(b(u) + \frac{k^2_{max}}{4} \| u \|^2_{L^2(\rr^2)} \right)+ \varepsilon \frac{k_{max}^2}{4} \| u \|^2_{L^2(\rr^2)}. 
\end{align*}
Choosing $\varepsilon = \frac{\log \alpha}{\sqrt \alpha}$, there exists $c_2 > 0$ such that
 \[
 b'(Ju) - b(u) \leq \frac{c_2 \log \alpha}{\sqrt{\alpha}} \left(b(u) + (1+ \frac{k_{max}^2}{4}) \| u \|^2_{L 2(\rr^2)}\right) ,
\]
which proves (D) with $\varepsilon_2 := c_2 \frac{\log \alpha}{\sqrt \alpha}=\mathcal{O}\big( \frac{\log \alpha}{\sqrt \alpha}\big)$.
\end{proof}

\section*{Funding}
The authors disclosed receipt of the following financial support for the research, authorship, and/or publication of this article: All authors were partially supported by the Deutsche For\-schungsgemein\-schaft (German Research Foundation, DFG), project 491606144.
\section*{Conflicting Interests}
The authors declared no potential conflicts of interest with respect to the research, authorship, and/or publication of this article.


\end{document}